\documentclass[12pt]{article}
\usepackage{amsmath,amsthm,amscd}
\setlength{\oddsidemargin}{8pt}
\setlength{\evensidemargin}{8pt}
\setlength{\textwidth}{444pt}
\setlength{\topmargin}{0pt}
\setlength{\textheight}{600pt}

\newtheorem{theorem}{Theorem}[section]
\newtheorem{corollary}[theorem]{Corollary}
\newtheorem{lemma}[theorem]{Lemma}
\newtheorem{proposition}[theorem]{Proposition}
\theoremstyle{remark}
\newtheorem{remark}[theorem]{Remark}
\numberwithin{equation}{section}
\DeclareMathOperator{\Gal}{Gal}
\DeclareMathOperator{\Fil}{Fil}
\DeclareMathOperator{\Ker}{Ker}

\DeclareMathOperator{\NS}{NS}
\DeclareMathOperator{\CH}{CH}
\DeclareMathOperator{\Hom}{Hom}
\DeclareMathOperator{\Sym}{Sym}
\DeclareMathOperator{\Sing}{Sing}
\title{The double cover of cubic surfaces\\
 branched along their Hessian}
\author{Atsushi Ikeda}
\date{}
\begin{document}
\maketitle
\footnote[0]
{2000 \textit{Mathematics Subject Classification}.
14C22, 14C30, 14J29.}
\footnote[0]{Partially supported by Grant-in-Aid for Young Scientists
(B) 20740014, Japan Society for the Promotion of Science.}
\begin{abstract}
 We prove the relation between the Hodge structure of the double cover
 of a nonsingular cubic surface branched along its Hessian and the Hodge
 structure of the triple cover of $\mathbf{P}^{3}$ branched along the
 cubic surface.
 And we introduce a method to study the infinitesimal variations of
 Hodge structure of the double cover of the cubic surface.
 Using these results, we compute the N\'{e}ron-Severi lattices for the
 double cover of a generic cubic surface and the Fermat cubic surface.
\end{abstract}
\section{Introduction}
Let $X\subset{\mathbf{P}^{3}}$ be a nonsingular cubic surface over the
complex numbers $\mathbf{C}$.
It is well-known that $X$ contains $27$ lines in $\mathbf{P}^{3}$.
A point $p\in{X}$ is called an Eckardt point if there are three lines
through $p$ on $X$.
The classification of nonsingular cubic surfaces by the configuration of
their Eckardt points is given in the book $\cite{s}$.
Although the configuration of the Eckardt points varies by a deformation
of $X$, the N\'{e}ron-Severi lattice for $X$ is constant.
In order to detect the difference of the configuration of the Eckardt
points, we consider the N\'{e}ron-Severi lattice for the double cover of
$X$ branched along its Hessian.
Let $B\subset{X}$ be the zeros of the Hessian of the defining equation
of $X$.
Then $B$ has at most node as its singularities, and a point $p\in{X}$ is
a node of $B$ if and only if $p$ is an Eckardt point on $X$.
Therefore an Eckardt point on $X$ corresponds to an ordinary double
point on the finite double cover $Y'$ over $X$ branched along $B$.
Let $\phi:Y\rightarrow{X}$ be the composition of the minimal resolution
of $Y'$ and the finite double cover.
Then an Eckardt point $e$ on $X$ corresponds to the $(-2)$-curve
$\phi^{-1}(e)$ on $Y$, and a line $L$ on $X$ splits by the pull-back
$\phi^{*}$ into two $(-3)$-curves $L^{+}$ and $L^{-}$ on $Y$, where we
can chose the component $L^{+}$ of $\phi^{*}L$ so that the union of $27$
rational curves $\bigcup_{L}L^{+}$ is a disjoint union.
We remark that $Y$ is a minimal surface of general type with the
geometric genus $4$, and the double cover $\phi$ is the canonical
morphism of $Y$.
In this paper, we regard an Eckardt point $e$ on $X$ as the class
$[\phi^{-1}(e)]$ in the N\'{e}ron-Severi lattice of $Y$, and we compute
the Hodge structure on $H^{2}(Y,\mathbf{Z})$.\par
There is another way to study cubic surfaces by using the Hodge
structure of some associated variety.
Let $\rho:V\rightarrow\mathbf{P}^{3}$ be the triple Galois cover
branched along a cubic surface $X$.
The Hodge structure on $H^{3}(V,\mathbf{Z})$ with the Galois action was
considered by Allcock, Carlson and Toledo $\cite{act}$ to understand the
moduli space of cubic surfaces as a ball quotient.
In this paper, we investigate the relation between the Hodge
structures $H^{2}(Y,\mathbf{Z})$ and $H^{3}(V,\mathbf{Z})$, and we prove
that there is an isomorphism
\begin{equation}\label{mi}
 \Bigl(\bigwedge^{2}H^{3}(V,\mathbf{Q})(1)
  \Bigr)^{\Gal{(\rho)}}\simeq
  \frac{H^{2}(Y,\mathbf{Q})}{\sum_{L}\mathbf{Q}L^{+}}
\end{equation}
of Hodge structures.
More precise statement in $\mathbf{Z}$-coefficients is given in
Theorem~$\ref{mt}$.
We remark that $V$ is a nonsingular cubic $3$-fold in $\mathbf{P}^{4}$,
and the Hodge structures of cubic $3$-folds were studied by
Clemens-Griffiths $\cite{cg}$ and Tjurin $\cite{t}$.
Let $S$ be the set of lines on a nonsingular cubic $3$-fold
$V\subset\mathbf{P}^{4}$.
It is a nonsingular projective surface, which is called the Fano surface
of lines on $V$.
Then the isomorphisms of Hodge structures
$H^{3}(V,\mathbf{Z})(1)\simeq
H^{1}(S,\mathbf{Z})$
and
$\bigwedge^{2}H^{1}(S,\mathbf{Q})\simeq
H^{2}(S,\mathbf{Q})$
are proved there.
In order to relate the Hodge structure $H^{2}(Y,\mathbf{Q})$ with
$H^{2}(S,\mathbf{Q})$, we regard the surface $Y$ as a kind of variety of
lines.
Let $\Lambda(\mathbf{P}^{3})$ be the Grassmannian variety of all lines
in $\mathbf{P}^{3}$.
We show that $Y$ is isomorphic to the variety
$$
Y_{3}=\{(p,L)\in\mathbf{P}^{3}\times\Lambda(\mathbf{P}^{3})\mid
\text{$L$ intersects $X$ at $p$ with the multiplicity $\geq{3}$}\},
$$
and the double cover $\phi:Y\rightarrow{X}$ corresponds to the first
projection
$Y_{3}\rightarrow{X};\
(p,L)\mapsto{p}$.
Then the second projection
$Y_{3}\rightarrow\Lambda(\mathbf{P}^{3});\
(p,L)\mapsto{L}$
is a birational morphism to its image
$Z_{3}\subset\Lambda(\mathbf{P}^{3})$,
and the Fano surface $S$ of the triple cover $V$ of $\mathbf{P}^{3}$ is
a triple cover of $Z_{3}$ by
$S\rightarrow{Z_{3}};\
L\mapsto{\rho(L)}$.
By the isomorphism
$
H^{2}(S,\mathbf{Q})^{\Gal{(\rho)}}\simeq
H^{2}(Z_{3},\mathbf{Q})
\simeq
\frac{H^{2}(Y,\mathbf{Q})}{\sum_{L}\mathbf{Q}L^{+}},
$
we get the isomorphism~$(\ref{mi})$.\par
By using this isomorphism (Theorem~$\ref{mt}$), we compute the
N\'{e}ron-Severi lattice $\NS{(Y)}$ of $Y$.
For a generic cubic surface $X$, we prove the theorem of
Noether-Lefschetz type (Theorem~\ref{nsg}), which says that
$\NS{(Y)}$ is generated by $(-3)$-curves on $Y$ corresponding to lines
on $X$ for a generic cubic surface.
We use the theory of the infinitesimal variations of Hodge structures
$\cite{cggh}$ to compute that the rank of $\NS{(Y)}$ is $28$ for a
generic cubic surface $X$.
We introduce a method to compute the Hodge cohomology
$H^{q}(Y,\Omega_{Y}^{p})$ for $Y$, which is a generalization of the
classical method by Griffiths $\cite{g}$.
And it enables us to compute the infinitesimal variations of Hodge
structure of $Y$.
In order to prove that the $(-3)$-curves on $Y$ generate the
N\'{e}ron-Severi group over $\mathbf{Z}$, we need the computation of the
determinant of the lattice, for which the identification in
Theorem~$\ref{mt}$ is used.
For a special cubic surface, the rank of $\NS{(Y)}$ is grater than $28$.
If $X$ is the Fermat cubic surface, then $\NS{(Y)}$ is of rank
$h^{1}(Y,\Omega_{Y}^{1})=44$, and the $\mathbf{Q}$-vector space
$\mathbf{Q}\otimes\NS{(Y)}$ is generated by $(-2)$-curves corresponding
to their Eckardt points and $(-3)$-curves corresponding to lines on $X$.
More precisely, the generator of $\NS{(Y)}$ over $\mathbf{Z}$ is given
in Theorem~$\ref{nsf}$.
For the proof of Theorem~$\ref{nsf}$, we use the computation of the
N\'{e}ron-Severi lattice of the Fano surface $S$ for the Fermat cubic
$3$-fold by Roulleau~$\cite{r}$.\par
The contents of this paper is the followings.
In Section~$\ref{vl}$, we introduce the variety $Y_{3}$ for a
nonsingular cubic surface $X$, and compute the numerical invariants for
the surface $Y_{3}$.
In Section~$\ref{dc}$, we prove that the first projection
$Y_{3}\rightarrow{X}$ is the double cover branched along the Hessian
$B$.
And we compute the intersection number on $Y=Y_{3}$ of the curve
$\phi^{-1}(e)$ corresponding to an Eckardt point $e$ on $X$ and the
curves $L^{\pm}$ corresponding to a line $L$ on $X$.
Then we give some relations of these curves in the N\'{e}ron-Severi
group of $Y$.
In Section~$\ref{ct}$, we review some results on nonsingular cubic
$3$-folds and their Fano surfaces in $\cite{cg}$ and $\cite{t}$.
In Section~$\ref{cs}$, we prove the relation $(\ref{mi})$ between the
Hodge structure of $Y$ and the Hodge structure of the triple cover
$V\rightarrow\mathbf{P}^{3}$.
And we determine the torsion part
$
\bigl(\frac{H^{2}(Y,\mathbf{Z})}{\sum_{L}\mathbf{Z}L^{+}}\bigr)_{\mathrm{tor}}
$
and the lattice structure on the free part
$
\bigl(\frac{H^{2}(Y,\mathbf{Z})}{\sum_{L}\mathbf{Z}L^{+}}\bigr)_{\mathrm{free}}.
$
In Section~$\ref{nsl}$, we compute the N\'{e}ron-Severi lattice of $Y$
for a generic cubic surface and the Fermat cubic surface.
In Section~$\ref{jr}$, we give a method to describe the Hodge cohomology
of $Y$, and we compute the infinitesimal variations of Hodge structure
for $Y$.
\section{Varieties of lines}\label{vl}
We denote by $\Lambda(\mathbf{P}^{n})$ the Grassmannian variety of all
lines in the projective space $\mathbf{P}^{n}$ over the complex numbers
$\mathbf{C}$, and by $\mathcal{O}_{\Lambda(\mathbf{P}^{n})}(1)$ the
line bundle which gives the Pl\"{u}cker embedding of
$\Lambda(\mathbf{P}^{n})$.
We denote by $\Gamma(\mathbf{P}^{n})$ be the flag variety of all pairs
$(p,L)$ of a point $p\in\mathbf{P}^{n}$ and a line
$L\subset\mathbf{P}^{n}$ which contains the point $p$;
$$
\Gamma(\mathbf{P}^{n})
=\{(p,L)\in\mathbf{P}^{n}\times\Lambda(\mathbf{P}^{n})\mid
p\in{L}\}.
$$
We remark that their canonical bundles are given by
$K_{\Lambda(\mathbf{P}^{n})}
\simeq\mathcal{O}_{\Lambda(\mathbf{P}^{n})}(-n-1)$
and
$K_{\Gamma(\mathbf{P}^{n})}
\simeq\varPhi^{*}\mathcal{O}_{\mathbf{P}^{n}}(-2)
\otimes\varPsi^{*}\mathcal{O}_{\Lambda(\mathbf{P}^{n})}(-n)$,
where $\varPhi:\Gamma(\mathbf{P}^{n})\rightarrow\mathbf{P}^{n}$ is the
first projection and
$\varPsi:\Gamma(\mathbf{P}^{n})\rightarrow\Lambda(\mathbf{P}^{n})$ is
the second projection.
Let
$
\mathcal{Q}_{\Lambda(\mathbf{P}^{n})}
=\{H^{0}(L,\mathcal{O}_{\mathbf{P}^{n}}(1)\vert_{L})\}
_{L\in\Lambda(\mathbf{P}^{n})}
$
be the tautological bundle on $\Lambda(\mathbf{P}^{n})$,
and let $\mathcal{S}$ be the subbundle of
$\varPsi^{*}\mathcal{Q}_{\Lambda(\mathbf{P}^{n})}$
whose fiber at $(p,L)\in\Gamma(\mathbf{P}^{n})$ is
$$
\mathcal{S}(p,L)
=\Ker{(H^{0}(L,\mathcal{O}_{\mathbf{P}^{n}}(1)\vert_{L})
{\longrightarrow}
H^{0}(p,\mathcal{O}_{\mathbf{P}^{n}}(1)\vert_{p})
)}.
$$
Then the Chow ring of $\Gamma(\mathbf{P}^{n})$ is
$$
\CH(\Gamma(\mathbf{P}^{n}))
\simeq
\mathbf{Z}[s,t]\big/\bigl(t^{n+1},\
\sum_{i=0}^{n}s^{n-i}t^{i}\bigr),
$$
where $s=c_{1}(\mathcal{S})$ and
$t=c_{1}(\varPhi^{*}\mathcal{O}_{\mathbf{P}^{n}}(1))$
(cf.\ \cite[(14.6)]{f}).\par
Let $X\subset\mathbf{P}^{3}$ be a nonsingular cubic surface.
We define subvarieties of $\Gamma(\mathbf{P}^{3})$ by
$$
Y_{m}=\{(p,L)\in\Gamma(\mathbf{P}^{3})\mid
\text{$L$ intersects $X$ at $p$ with the multiplicity $\geq{m}$}\}
$$
for $1\leq{m}\leq3$ and
$$
Y_{\infty}=\{(p,L)\in\Gamma(\mathbf{P}^{3})\mid
\text{$L$ is contained in $X$}\}.
$$
By the first projection $\varPhi$, $Y_{1}$ is a $\mathbf{P}^{2}$-bundle
over $X$, and $Y_{2}$ is a $\mathbf{P}^{1}$-bundle over $X$.
By \cite[Theorem~3.5]{i1}, $Y_{3}$ is a nonsingular projective
irreducible surface, and the first projection $\varPhi\vert_{Y_{3}}$ is a
generically finite morphism of degree $2$ over $X$.
Since $X$ contains $27$ lines in $\mathbf{P}^{3}$, $Y_{\infty}$ is a
disjoint union of $27$ rational curves.\par
Let $F\in{H^{0}(\mathbf{P}^{3},\mathcal{O}_{\mathbf{P}^{3}}(3))}$ be a
section which define the cubic surface $X$.
The restriction
$F\vert_{L}\in{H^{0}(L,\mathcal{O}_{\mathbf{P}^{3}}(3)\vert_{L})}$
is contained in the image of the natural injective homomorphism
$$
\mathcal{S}(p,L)^{\otimes{m}}\otimes
H^{0}(L,\mathcal{O}_{\mathbf{P}^{3}}(3-m)\vert_{L})
\longrightarrow
H^{0}(L,\mathcal{O}_{\mathbf{P}^{3}}(3)\vert_{L})
$$
if and only if the pair $(p,L)$ is contained in $Y_{m}$.
Hence, for $1\leq{m\leq3}$, the subvariety $Y_{m}$ is defined as the
zeros of a regular section of the vector bundle
$$
\frac{\varPsi^{*}\Sym^{3}\mathcal{Q}_{\Lambda(\mathbf{P}^{3})}}
{\mathcal{S}^{\otimes{m}}\otimes
\varPsi^{*}\Sym^{3-m}\mathcal{Q}_{\Lambda(\mathbf{P}^{3})}}
\simeq
\varPhi^{*}\mathcal{O}_{\mathbf{P}^{3}}(4-m)\otimes
\varPsi^{*}\Sym^{m-1}\mathcal{Q}_{\Lambda(\mathbf{P}^{3})}
$$
on $\Gamma(\mathbf{P}^{n})$, where the isomorphism is given in
\cite[\textsection{2}]{i2}.
\begin{proposition}\label{ni}
 $Y_{3}$ is a minimal surface of general type with the geometric
 genus $p_{g}(Y_{3})=4$, the irregularity $q(Y_{3})=0$ and the square of
 the canonical divisor $K_{Y_{3}}^{2}=6$, and the first projection
 $\varPhi\vert_{Y_{3}}$ is the canonical map of the surface $Y_{3}$.
\end{proposition}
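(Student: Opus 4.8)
\section*{Proof proposal}

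The plan is to read off every invariant from the ambient flag variety $\Gamma(\mathbf{P}^{3})$, exploiting that $Y_{3}$ is the zero locus of a regular section of the rank-$3$ bundle $\mathcal{E}=\varPhi^{*}\mathcal{O}_{\mathbf{P}^{3}}(1)\otimes\varPsi^{*}\Sym^{2}\mathcal{Q}_{\Lambda(\mathbf{P}^{3})}$ (the case $m=3$ of the bundle above). First I would record the Chern data in $\CH(\Gamma(\mathbf{P}^{3}))=\mathbf{Z}[s,t]/(t^{4},\,s^{3}+s^{2}t+st^{2}+t^{3})$. The tautological sequence $0\to\mathcal{S}\to\varPsi^{*}\mathcal{Q}_{\Lambda(\mathbf{P}^{3})}\to\varPhi^{*}\mathcal{O}_{\mathbf{P}^{3}}(1)\to0$ shows that $\varPsi^{*}\mathcal{Q}_{\Lambda(\mathbf{P}^{3})}$ has Chern roots $s$ and $t$ (so that $\varPsi^{*}c_{1}(\mathcal{O}_{\Lambda(\mathbf{P}^{3})}(1))=s+t$), whence $\mathcal{E}$ has Chern roots $2s+t,\ s+2t,\ 3t$; in particular $c_{1}(\mathcal{E})=3s+6t$ and $c_{3}(\mathcal{E})=6s^{2}t+15st^{2}+6t^{3}$. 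Since for $n=3$ one has $c_{1}(K_{\Gamma(\mathbf{P}^{3})})=-2t-3(s+t)=-3s-5t$, the adjunction formula $K_{Y_{3}}=(K_{\Gamma(\mathbf{P}^{3})}\otimes\det\mathcal{E})|_{Y_{3}}$ produces the key identity
\[
 K_{Y_{3}}=(-3s-5t+3s+6t)|_{Y_{3}}=t|_{Y_{3}}=(\varPhi|_{Y_{3}})^{*}\mathcal{O}_{X}(1).
\]

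Because $t$ is the $\varPhi$-pullback of an ample class on $\mathbf{P}^{3}$, the line bundle $K_{Y_{3}}=(\varPhi|_{Y_{3}})^{*}\mathcal{O}_{X}(1)$ is nef, and it is big since $\varPhi|_{Y_{3}}$ is generically finite (equivalently, since $K_{Y_{3}}^{2}>0$ below); hence $Y_{3}$ is a minimal surface of general type, as a $(-1)$-curve would meet the nef $K_{Y_{3}}$ negatively. For the self-intersection I would compute $K_{Y_{3}}^{2}=t^{2}\cdot c_{3}(\mathcal{E})$ in $\CH^{5}(\Gamma(\mathbf{P}^{3}))$: the relation $t^{4}=0$ annihilates every term except $6s^{2}t^{3}$, and the normalization $\int_{\Gamma(\mathbf{P}^{3})}s^{2}t^{3}=1$ — which follows from $t^{3}=\varPhi^{*}[\mathrm{pt}]$ together with the identification $\mathcal{S}|_{\varPhi^{-1}(p)}\cong\mathcal{O}_{\mathbf{P}^{2}}(1)$ on the fibre $\varPhi^{-1}(p)\cong\mathbf{P}^{2}$ of lines through $p$ — gives $K_{Y_{3}}^{2}=6$.

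It remains to fix $p_{g}$, $q$ and $\chi(\mathcal{O}_{Y_{3}})$, which I would extract from the Koszul resolution $0\to\wedge^{3}\mathcal{E}^{\vee}\to\wedge^{2}\mathcal{E}^{\vee}\to\mathcal{E}^{\vee}\to\mathcal{O}_{\Gamma(\mathbf{P}^{3})}\to\mathcal{O}_{Y_{3}}\to0$. Twisting by $\varPhi^{*}\mathcal{O}_{\mathbf{P}^{3}}(1)$ and running the hypercohomology spectral sequence, the point is that the groups $H^{*}(\Gamma(\mathbf{P}^{3}),\wedge^{i}\mathcal{E}^{\vee}\otimes\varPhi^{*}\mathcal{O}_{\mathbf{P}^{3}}(1))$ should vanish in the relevant range, leaving $H^{0}(Y_{3},K_{Y_{3}})=H^{0}(\Gamma(\mathbf{P}^{3}),\varPhi^{*}\mathcal{O}_{\mathbf{P}^{3}}(1))=H^{0}(\mathbf{P}^{3},\mathcal{O}_{\mathbf{P}^{3}}(1))$, the last step because $\varPhi_{*}\mathcal{O}_{\Gamma(\mathbf{P}^{3})}=\mathcal{O}_{\mathbf{P}^{3}}$. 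This yields $p_{g}(Y_{3})=4$ and, at the same time, exhibits the canonical linear system as the pullback of the space of linear forms on $\mathbf{P}^{3}$, so that the canonical map of $Y_{3}$ is $\varPhi|_{Y_{3}}$ followed by $X\hookrightarrow\mathbf{P}^{3}$. Either the untwisted Koszul complex or Hirzebruch--Riemann--Roch (with $c_{1}(Y_{3})=-t|_{Y_{3}}$ and $c_{2}(Y_{3})$ computed from the normal bundle sequence $0\to T_{Y_{3}}\to T_{\Gamma(\mathbf{P}^{3})}|_{Y_{3}}\to\mathcal{E}|_{Y_{3}}\to0$) then gives $\chi(\mathcal{O}_{Y_{3}})=5$, and Noether's formula closes the argument via $q(Y_{3})=1-\chi(\mathcal{O}_{Y_{3}})+p_{g}(Y_{3})=0$.

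The hard part is this last paragraph: the cohomology computations of the bundles $\wedge^{i}\mathcal{E}^{\vee}$ and their twists on the five-dimensional homogeneous space $\Gamma(\mathbf{P}^{3})$, which I would carry out either by Bott vanishing or by iterated Leray pushforward along $\varPhi$ and $\varPsi$. Everything else is formal once the identity $K_{Y_{3}}=t|_{Y_{3}}$ and the normalization $\int_{\Gamma(\mathbf{P}^{3})}s^{2}t^{3}=1$ are established.
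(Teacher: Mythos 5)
Your proposal is correct, and the vanishings you defer to Bott or Leray do in fact hold, but your route is genuinely different from the paper's. The paper never forms the Koszul resolution of $\mathcal{O}_{Y_{3}}$: it exploits the flag $Y_{3}\subset{Y_{2}}\subset{Y_{1}}\subset\Gamma(\mathbf{P}^{3})$, computing $K_{Y_{1}}$, $K_{Y_{2}}$, $K_{Y_{3}}$ by divisorial adjunction one step at a time, and then obtains $p_{g}=4$, $q=0$ and the canonical-map statement simultaneously from the residue sequence $0\to{K_{Y_{2}}}\to(\varPhi^{*}\mathcal{O}_{\mathbf{P}^{3}}(1))\vert_{Y_{2}}\to{K_{Y_{3}}}\to0$ combined with Serre duality on the threefold $Y_{2}$; the only cohomology it ever needs is that of split line bundles $\varPhi^{*}\mathcal{O}_{\mathbf{P}^{3}}(a)\otimes\varPsi^{*}\mathcal{O}_{\Lambda(\mathbf{P}^{3})}(b)$ restricted along the chain. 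You instead run the standard zero-locus machine for the rank-$3$ bundle $\mathcal{E}$: adjunction via $\det\mathcal{E}$ (giving the same $K_{Y_{3}}=t\vert_{Y_{3}}$; your normalization $\deg(s^{2}t^{3})=1$ via $\mathcal{S}\vert_{\varPhi^{-1}(p)}\simeq\mathcal{O}_{\mathbf{P}^{2}}(1)$ is right, and your $K_{Y_{3}}^{2}=6$ computation coincides with the paper's), plus the twisted Koszul complex for $p_{g}$. The vanishings you need are available exactly by your proposed methods: $\mathcal{E}^{\vee}\otimes\varPhi^{*}\mathcal{O}_{\mathbf{P}^{3}}(1)\simeq\varPsi^{*}\Sym^{2}\mathcal{Q}_{\Lambda(\mathbf{P}^{3})}^{\vee}$ pushes forward along $\varPhi$ to $\mathcal{O}_{\mathbf{P}^{3}}(-2)$ (equivalently Bott kills it on the Grassmannian), $\bigwedge^{2}\mathcal{E}^{\vee}\otimes\varPhi^{*}\mathcal{O}_{\mathbf{P}^{3}}(1)$ contains the factor $\varPhi^{*}\mathcal{O}_{\mathbf{P}^{3}}(-1)$ and is therefore annihilated by $R\varPsi_{*}$, and $\bigwedge^{3}\mathcal{E}^{\vee}\otimes\varPhi^{*}\mathcal{O}_{\mathbf{P}^{3}}(1)\simeq{K_{\Gamma(\mathbf{P}^{3})}}$, whose cohomology sits only in degree $5$. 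The substantive divergence is $q$: the paper reads $q=0$ directly off the residue sequence (via $H^{i}(Y_{2},K_{Y_{2}})\simeq{H^{3-i}(Y_{2},\mathcal{O}_{Y_{2}})^{\vee}}$), whereas you must route it through $\chi(\mathcal{O}_{Y_{3}})=5$ — by the untwisted Koszul complex, or by Noether's formula with $c_{2}(Y_{3})=54$ from the normal-bundle sequence — and then use $\chi=1-q+p_{g}$; note that this last identity is just the definition of $\chi(\mathcal{O}_{Y_{3}})$, while Noether's formula is what produces $\chi$ from $K_{Y_{3}}^{2}+c_{2}(Y_{3})$, so your phrasing conflates the two (harmlessly). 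In sum, your approach buys a self-contained argument that never mentions the auxiliary varieties $Y_{1}$ and $Y_{2}$, at the price of cohomology of non-split homogeneous bundles and an extra Chern-class computation; the paper's stepwise descent keeps everything at the level of split line bundles and extracts all three invariants $p_{g}$, $q$ and the canonical map from a single short exact sequence.
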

\begin{proof}
 Since
 $$
 \begin{cases}
  \mathcal{O}_{\Gamma(\mathbf{P}^{3})}(Y_{1})\simeq
  \varPhi^{*}\mathcal{O}_{\mathbf{P}^{3}}(3),\\
  \mathcal{O}_{Y_{1}}(Y_{2})\simeq
  (\varPhi^{*}\mathcal{O}_{\mathbf{P}^{3}}(2)\otimes\mathcal{S})\vert_{Y_{1}}
  \simeq(\varPhi^{*}\mathcal{O}_{\mathbf{P}^{3}}(1)
  \otimes\varPsi^{*}\mathcal{O}_{\Lambda(\mathbf{P}^{3})}(1))\vert_{Y_{1}},\\
  \mathcal{O}_{Y_{2}}(Y_{3})\simeq
  (\varPhi^{*}\mathcal{O}_{\mathbf{P}^{3}}(1)\otimes\mathcal{S}^{\otimes{2}})
  \vert_{Y_{2}}
  \simeq(\varPhi^{*}\mathcal{O}_{\mathbf{P}^{3}}(-1))
  \otimes\varPsi^{*}\mathcal{O}_{\Lambda(\mathbf{P}^{3})}(2))\vert_{Y_{2}}
 \end{cases}
 $$
 and
 $K_{\Gamma(\mathbf{P}^{3})}
 =\varPhi^{*}\mathcal{O}_{\mathbf{P}^{3}}(-2)
 \otimes\varPsi^{*}\mathcal{O}_{\Lambda(\mathbf{P}^{3})}(-3)$,
 we have
 $$
 \begin{cases}
  K_{Y_{1}}\simeq(\varPhi^{*}\mathcal{O}_{\mathbf{P}^{3}}(1)
  \otimes\varPsi^{*}\mathcal{O}_{\Lambda(\mathbf{P}^{3})}(-3))\vert_{Y_{1}},\\
  K_{Y_{2}}\simeq(\varPhi^{*}\mathcal{O}_{\mathbf{P}^{3}}(2)
  \otimes\varPsi^{*}\mathcal{O}_{\Lambda(\mathbf{P}^{3})}(-2))\vert_{Y_{2}},\\
  K_{Y_{3}}\simeq(\varPhi^{*}\mathcal{O}_{\mathbf{P}^{3}}(1))\vert_{Y_{3}}.
 \end{cases}
 $$
 Since
 $H^{i}(\Gamma(\mathbf{P}^{3}),\varPhi^{*}\mathcal{O}_{\mathbf{P}^{3}}(-3))
 =0$
 and
 $H^{i}(\Gamma(\mathbf{P}^{3}),\varPhi^{*}\mathcal{O}_{\mathbf{P}^{3}}(-2))
 =0$
 for any $i$, the restriction induces isomorphisms
 $$
 \begin{cases}
  H^{i}(\Gamma(\mathbf{P}^{3}),\mathcal{O}_{\Gamma(\mathbf{P}^{3})})
  \simeq
  H^{i}(Y_{1},\mathcal{O}_{Y_{1}}),\\
  H^{i}(\Gamma(\mathbf{P}^{3}),\varPhi^{*}\mathcal{O}_{\mathbf{P}^{3}}(1))
  \simeq
  H^{i}(Y_{1},(\varPhi^{*}\mathcal{O}_{\mathbf{P}^{3}}(1))\vert_{Y_{1}})
 \end{cases}
 $$
 for any $i$.
 Since
 $H^{i}(\Gamma(\mathbf{P}^{3}),\varPhi^{*}\mathcal{O}_{\mathbf{P}^{3}}(j)
 \otimes\varPsi^{*}\mathcal{O}_{\Lambda(\mathbf{P}^{3})}(-1))=0$
 for any $i$ and $j$, we have
 $H^{i}(Y_{1},(\varPhi^{*}\mathcal{O}_{\mathbf{P}^{3}}(j)
 \otimes\varPsi^{*}\mathcal{O}_{\Lambda(\mathbf{P}^{3})}(-1))
 \vert_{Y_{1}})=0$ for any $i$ and $j$, hence the restriction induces
 isomorphisms
 $$
 \begin{cases}
  H^{i}(Y_{1},\mathcal{O}_{Y_{1}})
  \simeq
  H^{i}(Y_{2},\mathcal{O}_{Y_{2}}),\\
  H^{i}(Y_{1},(\varPhi^{*}\mathcal{O}_{\mathbf{P}^{3}}(1))\vert_{Y_{1}})
  \simeq
  H^{i}(Y_{2},(\varPhi^{*}\mathcal{O}_{\mathbf{P}^{3}}(1))\vert_{Y_{2}})
 \end{cases}
 $$
 for any $i$, and the dimension of these cohomology groups are
 $$
 h^{i}(Y_{2},\mathcal{O}_{Y_{2}})=
 h^{i}(\Gamma(\mathbf{P}^{3}),\mathcal{O}_{\Gamma(\mathbf{P}^{3})})=
 \begin{cases}
  1&\text{if $i=0$},\\
  0&\text{if $i\neq0$},
 \end{cases}
 $$
 and
 $$
 h^{i}(Y_{2},(\varPhi^{*}\mathcal{O}_{\mathbf{P}^{3}}(1))\vert_{Y_{2}})=
 h^{i}(\Gamma(\mathbf{P}^{3}),\varPhi^{*}\mathcal{O}_{\mathbf{P}^{3}}(1))=
 \begin{cases}
  4&\text{if $i=0$},\\
  0&\text{if $i\neq0$}.
 \end{cases}
 $$
 By the exact sequence
 $$
 0\longrightarrow
 K_{Y_{2}}
 \longrightarrow
 (\varPhi^{*}\mathcal{O}_{\mathbf{P}^{3}}(1))\vert_{Y_{2}}
 \longrightarrow
 K_{Y_{3}}
 \longrightarrow0
 $$
 and the duality
 $$
 H^{i}(Y_{2},K_{Y_{2}})
 \simeq
 H^{3-i}(Y_{2},\mathcal{O}_{Y_{2}})^{\vee},
 $$
 we have $p_{g}(Y_{3})=4$ and $q(Y_{3})=0$, and $\varPhi\vert_{Y_{3}}$
 is the canonical map.
 Since
 $K_{Y_{3}}\simeq(\varPhi^{*}\mathcal{O}_{\mathbf{P}^{3}}(1))\vert_{Y_{3}}$
 is nef and the image of the canonical map is the surface $X$,
 the surface $Y_{3}$ is a minimal surface of general type.\par
 Since $Y_{3}$ is defined as the zeros of a regular section of the
 vector bundle
 $$
 \frac{\varPsi^{*}\Sym^{3}\mathcal{Q}_{\Lambda(\mathbf{P}^{3})}}
 {\mathcal{S}^{\otimes{3}}}
 \simeq
 \varPhi^{*}\mathcal{O}_{\mathbf{P}^{3}}(1)\otimes
 \varPsi^{*}\Sym^{2}\mathcal{Q}_{\Lambda(\mathbf{P}^{3})},
 $$
 its class in the Chow ring of $\Gamma(\mathbf{P}^{3})$ is
 $$
 [Y_{3}]
 =c_{3}(\varPhi^{*}\mathcal{O}_{\mathbf{P}^{3}}(1)\otimes
 \varPsi^{*}\Sym^{2}\mathcal{Q}_{\Lambda(\mathbf{P}^{3})})
 =6s^{2}t+15st^{2}+6t^{3}
 \in\CH^{3}(\Gamma(\mathbf{P}^{3})),
 $$
 hence
 $$
 K_{Y_{3}}^{2}
 =\deg{(c_{1}(\varPhi^{*}\mathcal{O}_{\mathbf{P}^{3}}(1))^{2}\cdot[Y_{3}])}
 =6.
 $$
\end{proof}
\begin{remark}
 Proposition~\ref{ni} implies that the Hodge number
 $h^{1}(Y_{3},\Omega_{Y_{3}}^{1})=44$.
 Minimal surfaces with such numerical invariants are classified by
 Horikawa, and $Y_{3}$ is of type Ib in \cite{h}.
 Since $Y_{3}$ is simply connected by \cite[Theorem~12.1]{h}, we have
 $H_{1}(Y_{3},\mathbf{Z})=0$, hence $H^{i}(Y_{3},\mathbf{Z})$ has no
 torsion element for any $i$.
\end{remark}
Since the cubic surface $X$ is recovered as the image of the canonical
map of $Y_{3}$, we have the following Torelli type theorem.
\begin{corollary}
 The isomorphism class of the cubic surface $X$ is uniquely determined by
 the isomorphism class of $Y_{3}$.
\end{corollary}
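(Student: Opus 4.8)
The plan is to show that the cubic surface $X$ is an intrinsic invariant of the abstract surface $Y_3$, by realizing it as the image of the canonical map. By Proposition~\ref{ni}, $Y_3$ is a minimal surface of general type with $p_g(Y_3)=4$ and $q(Y_3)=0$, and the first projection $\varPhi\vert_{Y_3}$ coincides with the canonical map of $Y_3$, whose image is the cubic surface $X\subset\mathbf{P}^3$. Since $h^0(Y_3,K_{Y_3})=p_g(Y_3)=4$, the canonical map is a morphism $\varphi_{|K_{Y_3}|}\colon Y_3\to\mathbf{P}(H^0(Y_3,K_{Y_3})^{\vee})\simeq\mathbf{P}^3$ attached to the complete linear system $|K_{Y_3}|$, and $X=\varphi_{|K_{Y_3}|}(Y_3)$.

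First I would record the functoriality of the canonical map. An isomorphism $f\colon Y_3\xrightarrow{\sim}Y_3'$ between the surfaces associated with two nonsingular cubic surfaces $X,X'$ induces an isomorphism $f^{*}\colon H^0(Y_3',K_{Y_3'})\xrightarrow{\sim}H^0(Y_3,K_{Y_3})$ on spaces of global sections of the canonical bundles. This identification carries the complete linear system $|K_{Y_3'}|$ to $|K_{Y_3}|$, hence induces a projective linear isomorphism $\mathbf{P}^3\xrightarrow{\sim}\mathbf{P}^3$ of the two canonical target spaces intertwining $\varphi_{|K_{Y_3'}|}\circ f$ with $\varphi_{|K_{Y_3}|}$.

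It then follows that the canonical images are projectively equivalent: the projective linear isomorphism above sends $X=\varphi_{|K_{Y_3}|}(Y_3)$ onto $X'=\varphi_{|K_{Y_3'}|}(Y_3')$. In particular $X$ and $X'$ are isomorphic as projective varieties, which is exactly the assertion of the corollary.

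The essential content has already been supplied by Proposition~\ref{ni}, so there is no serious obstacle here; the statement is a formal consequence. The only point needing a word of care is that the canonical map must be taken with respect to the complete linear system $|K_{Y_3}|$, so that it, and hence its image $X$, depends solely on the isomorphism class of $Y_3$ and not on the chosen embedding $Y_3\subset\Gamma(\mathbf{P}^3)$.
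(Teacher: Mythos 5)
Your proposal is correct and is exactly the paper's argument: the paper deduces the corollary from Proposition~\ref{ni} by the remark that $X$ is recovered as the image of the canonical map of $Y_{3}$, which is precisely what you spell out (with the functoriality of $|K_{Y_{3}}|$ under isomorphisms made explicit). No gap; your write-up just fills in the routine details the paper leaves implicit.
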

\begin{proposition}\label{inf}
 Each component of $Y_{\infty}$ is a $(-3)$-curve on $Y_{3}$.
\end{proposition}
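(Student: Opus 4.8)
The plan is to exhibit each component of $Y_{\infty}$ as a smooth rational curve that maps isomorphically onto a line of $X$ under $\varPhi$, and then to read off its self-intersection by combining the formula $K_{Y_{3}}\simeq(\varPhi^{*}\mathcal{O}_{\mathbf{P}^{3}}(1))\vert_{Y_{3}}$ from Proposition~\ref{ni} with the adjunction formula on the nonsingular surface $Y_{3}$.

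First I would fix one of the $27$ lines $L\subset{X}$ and consider the corresponding component $C=\{(p,L)\in\Gamma(\mathbf{P}^{3})\mid p\in{L}\}$ of $Y_{\infty}$, which is precisely the fiber $\varPsi^{-1}([L])$ of the second projection over the point $[L]\in\Lambda(\mathbf{P}^{3})$. Since $L$ is contained in $X$, the restriction $F\vert_{L}$ vanishes identically, so every pair $(p,L)$ with $p\in{L}$ lies in $Y_{3}$; hence $C\subset{Y_{3}}$. The first projection restricts to the morphism $\varPhi\vert_{C}\colon C\rightarrow{L}$, $(p,L)\mapsto{p}$, whose inverse $p\mapsto(p,L)$ shows that $\varPhi\vert_{C}\colon C\xrightarrow{\sim}L$ is an isomorphism. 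In particular $C\simeq\mathbf{P}^{1}$ is smooth and rational, recovering the description of $Y_{\infty}$ as a disjoint union of $27$ rational curves.

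Next I would compute the intersection number $K_{Y_{3}}\cdot{C}$. By Proposition~\ref{ni} we have $K_{Y_{3}}\simeq(\varPhi^{*}\mathcal{O}_{\mathbf{P}^{3}}(1))\vert_{Y_{3}}$, and since $\varPhi$ carries $C$ isomorphically onto the line $L$, restricting $\varPhi^{*}\mathcal{O}_{\mathbf{P}^{3}}(1)$ to $C$ yields a line bundle of degree $\deg(\mathcal{O}_{\mathbf{P}^{3}}(1)\vert_{L})=1$. Hence $K_{Y_{3}}\cdot{C}=1$.

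Finally, because $Y_{3}$ is a nonsingular projective surface, $C$ is a Cartier divisor and the adjunction formula $C^{2}+K_{Y_{3}}\cdot{C}=2p_{a}(C)-2$ applies. As $C\simeq\mathbf{P}^{1}$ has arithmetic genus $0$, this gives $C^{2}=-2-K_{Y_{3}}\cdot{C}=-3$, so $C$ is a $(-3)$-curve. The argument carries no genuine obstacle once $K_{Y_{3}}$ is identified as in Proposition~\ref{ni}; the only point demanding a little care is to confirm that $\varPhi\vert_{C}$ is an isomorphism onto an honest line (and not a multiple cover or a curve of higher degree), since it is exactly this that forces both $p_{a}(C)=0$ and $K_{Y_{3}}\cdot{C}=1$.
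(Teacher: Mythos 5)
Your proof is correct, and it takes a genuinely different route from the paper's. You work one component at a time: you observe that $C=L^{+}$ is the image of the section $p\mapsto(p,L)$ of $\varPhi$ over the line $L$, hence $\varPhi\vert_{C}:C\rightarrow{L}$ is an isomorphism, compute $K_{Y_{3}}\cdot{C}=\deg\bigl((\varPhi^{*}\mathcal{O}_{\mathbf{P}^{3}}(1))\vert_{C}\bigr)=1$ exactly, and conclude $C^{2}=-3$ by adjunction. The paper instead argues globally: it uses $\mathcal{O}_{Y_{3}}(Y_{\infty})\simeq\mathcal{S}^{\otimes{3}}\vert_{Y_{3}}$ and the Chow ring of $\Gamma(\mathbf{P}^{3})$ to compute the total self-intersection $(Y_{\infty}.Y_{\infty})=-81$, then bounds each component's self-intersection above by $-3$ (adjunction plus nefness of $K_{Y_{3}}$ gives $\leq-2$, and the component cannot be a $(-2)$-curve since the canonical map sends it to a line rather than contracting it), and finally forces equality since the $27$ disjoint components must sum to $-81$. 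Your argument is shorter and more elementary, avoiding the Chern class computation entirely, and it exploits the identification $K_{Y_{3}}\simeq(\varPhi^{*}\mathcal{O}_{\mathbf{P}^{3}}(1))\vert_{Y_{3}}$ quantitatively where the paper uses the canonical map only qualitatively; the paper's route, on the other hand, establishes the isomorphism $\mathcal{O}_{Y_{3}}(Y_{\infty})\simeq\mathcal{S}^{\otimes{3}}\vert_{Y_{3}}$, which is reused later (e.g.\ in the proof of Proposition~3.5, relation~(3.1)), so its extra effort is not wasted within the paper's overall architecture.
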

\begin{proof}
 Since
 $\mathcal{O}_{Y_{3}}(Y_{\infty})\simeq
 \mathcal{S}^{\otimes{3}}\vert_{Y_{3}}$,
 the self intersection number of $Y_{\infty}$ on $Y_{3}$ is
 $$
 (Y_{\infty}.Y_{\infty})
 =\deg{(c_{1}(\mathcal{S}^{\otimes{3}})^{2}\cdot[Y_{3}])}
 =-81.
 $$
 The self intersection number of a component of $Y_{\infty}$ is less
 than $-1$ because $K_{Y_{3}}$ is nef, and the component is not a
 $(-2)$-curve because its image by the canonical map is a line in
 $\mathbf{P}^{3}$.
 Since $Y_{\infty}$ is a disjoint union of $27$ rational curves,
 each component of $Y_{\infty}$ is $(-3)$-curve on $Y_{\infty}$.
\end{proof}
\begin{remark}\label{z3}
 The second projection
 $$
 \varPsi\vert_{Y_{3}}:
 Y_{3}\longrightarrow\Lambda(\mathbf{P}^{3});\
 (p,L)\longmapsto{L},
 $$
 is birational to its image $Z_{3}=\varPsi(Y_{3})$,
 which induces an isomorphism
 $Y_{3}\setminus{Y_{\infty}}\simeq
 Z_{3}\setminus{Z_{\infty}}$,
 where
 $Z_{\infty}=\{L\in{\Lambda(\mathbf{P}^{3})\mid{L\subset{X}}}\}$
 is equal to the singular locus of $Z_{3}$.
\end{remark}
\section{The double cover branched along Hessian}\label{dc}
For simplicity, we denote the first projection
$\varPhi\vert_{Y_{3}}:Y_{3}\rightarrow{X}$ by $\phi:Y\rightarrow{X}$.
Let $R$ be the ramification divisor of $\phi:Y\rightarrow{X}$.
Since $R$ is the zeros of the determinant of the differential
$d\phi:T_{Y}\rightarrow\phi^{*}{T_{X}}$,
its class in $\CH^{1}(Y)$ is
$$
[R]=c_{1}(K_{Y}\otimes\phi^{*}{K_{X}}^{\vee})
=c_{1}((\varPhi^{*}\mathcal{O}_{\mathbf{P}^{3}}(2))\vert_{Y}).
$$
We denote by $B=\phi_{*}R$ the branch divisor of $\phi$.
Let
$
F(x_{0},\dots,x_{3})
\in\mathbf{C}[x_{0},\dots,x_{3}]
$
be a cubic polynomial which defines the nonsingular cubic surface $X$.
\begin{proposition}
 $B\subset{X}$ is the zeros of the Hessian
 $$
 \det{
 \Bigl(\frac{\partial^{2}{F}}{\partial{x_{i}}\partial{x_{j}}}\Bigr)
 _{0\leq{i,j}\leq{3}}}
 \in{H^{0}(X,\mathcal{O}_{\mathbf{P}^{3}}(4)\vert_{X})}.
 $$
\end{proposition}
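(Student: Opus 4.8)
The plan is to analyze the double cover fibrewise over $X$ and to identify the branch condition with the vanishing of the Hessian by a direct linear-algebra computation. Fix a point $p\in X$ and write $n=\nabla F(p)$ and $H=(\partial_{i}\partial_{j}F(p))_{0\le i,j\le 3}$ for the gradient and Hessian matrix at $p$. For a direction $v$, the line through $p$ with direction $v$ meets $X$ at $p$ with multiplicity equal to the order of vanishing at $t=0$ of $F(p+tv)=t\,n^{T}v+\tfrac{t^{2}}{2}\,v^{T}Hv+t^{3}F(v)$, using $F(p)=0$ and homogeneity. Hence $(p,L)\in Y_{3}$ exactly when $n^{T}v=0$ and $v^{T}Hv=0$, so the fibre $\phi^{-1}(p)$ is the intersection of the line $\{n^{T}v=0\}$ with the conic $\{v^{T}Hv=0\}$ in the $\mathbf{P}^{2}$ of directions at $p$. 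Equivalently, $\phi$ is the double cover given by the binary quadratic form $q_{p}(v)=v^{T}Hv$ on the pencil of tangent directions $\mathbf{P}(\bar W)$, where $W=\{v\mid n^{T}v=0\}$ and $\bar W=W/\langle p\rangle$; the branch divisor $B$ is precisely the locus where $q_{p}$ acquires a double root, i.e.\ the discriminant locus of this family of binary quadratics.

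Next I would identify this discriminant with $\det H$. Euler's identities for the homogeneous cubic $F$ give $Hp=2n$ and $n^{T}p=0$, so $p\in W$ and, for $e\in W$, $e^{T}Hp=2e^{T}n=0$; thus the first row and column of the Gram matrix of $Q(v)=v^{T}Hv$ restricted to $W$ vanish, confirming that $q_{p}$ descends to the two-dimensional space $\bar W$. Choosing a frame $A=[\,p\ e_{1}\ e_{2}\ f\,]$ of $\mathbf{C}^{4}$ with $e_{1},e_{2}\in W$ and $n^{T}f\ne 0$, a cofactor expansion of the Gram matrix $A^{T}HA$ along the column of $p$ (whose only nonzero entry is $p^{T}Hf=2n^{T}f$) yields the bordered-determinant identity $(\det A)^{2}\det H=-(2n^{T}f)^{2}\det G'$, where $G'$ is the $2\times 2$ Gram matrix of $q_{p}$ on $\bar W$. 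Since $\det G'$ is, up to the constant $-\tfrac14$, the discriminant of $q_{p}$, this shows $\det H(p)=(\text{nonzero factor})\cdot\operatorname{disc}(q_{p})$.

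Finally I would promote this pointwise computation to an equality of divisors. Because $X$ is nonsingular, $n=\nabla F$ is nowhere zero on $X$, so near any $p$ one can choose the frame $A$ holomorphically with $\det A\ne 0$ and $n^{T}f\ne 0$; the factor $-(2n^{T}f)^{2}/(\det A)^{2}$ is then a local unit, and the Hessian and the discriminant (the local equation of $B$) differ by a nowhere-vanishing holomorphic function. Hence $B=\{\det H=0\}\cap X$ as divisors, with no discrepancy in multiplicities. As a consistency check, $B=\phi_{*}R\in|\mathcal{O}_{\mathbf{P}^{3}}(4)\vert_{X}|$ by the class of $R$ computed above, matching the degree $4$ of the Hessian. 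The main obstacle is the step matching scheme structures rather than mere supports: it is not enough to show set-theoretically that the two asymptotic directions coincide exactly on $\{\det H=0\}$; the bordered-determinant identity is what secures the equality of the two sections up to a unit, and thus the full statement.
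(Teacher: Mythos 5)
Your proof is correct and takes essentially the same route as the paper: both describe the fibre $\phi^{-1}(p)$ as the zero locus of the quadratic form $v\mapsto v^{T}Hv$ restricted to the pencil of tangent directions at $p$, and both convert the resulting degeneracy condition into the vanishing of $\det H$ via a gradient-bordered determinant identity resting on the Euler relations $Hp=2n$, $n^{T}p=0$ and the nonvanishing of $n$ (the paper carries this out in the affine chart $x_{0}\neq0$, with its polynomials $F_{1},F_{2}$ playing the roles of $n^{T}v$ and $\tfrac{1}{2}v^{T}Hv$, and an explicit $4\times4$ bordered matrix in place of your Gram-matrix frame). The one point where you go beyond the paper is the divisor-level conclusion: the paper's argument is purely pointwise, establishing $p\in B\Leftrightarrow\det H(p)=0$, whereas your unit-factor argument also matches multiplicities.
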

\begin{proof}
 For $p=[a_{0}:a_{1}:a_{2}:a_{3}]\in\mathbf{P}^{3}$, if
 $a_{0}\neq0$, then there is an isomorphism
 $$
 \mathbf{P}^{2}\overset{\sim}{\longrightarrow}
 \varPhi^{-1}(p)\subset\Gamma(\mathbf{P}^{3});\
 q=[b_{1}:b_{2}:b_{3}]\longmapsto{(p,L_{(p,q)})},
 $$
 where $L_{(p,q)}$ denotes the line through the
 points $p$ and $[0:b_{1}:b_{2}:b_{3}]$
 in $\mathbf{P}^{3}$;
 $$
 L_{(p,q)}=\{[a_{0}t_{0}:a_{1}t_{0}+b_{1}t_{1}:
 \dots:a_{3}t_{0}+b_{3}t_{1}]
 \in\mathbf{P}^{3}\mid[t_{0}:t_{1}]\in\mathbf{P}^{1}\}.
 $$
 For $0\leq{i}\leq{3}$, we set a polynomial $F_{i}(x,z)$ on variables
 $(x_{0},\dots,x_{3},z_{1},\dots,z_{3})$ inductively by
 $$
 F_{0}(x,z)=F(x_{1},\dots,x_{3})
 $$
 and
 \begin{equation}\label{defeq}
 F_{i}(x,z)=\frac{1}{i}\sum_{j=1}^{3}
 \frac{\partial{F_{i-1}}}{\partial{x_{j}}}(x,z)z_{j}.
 \end{equation}
 Since
 \begin{multline*}
  F(a_{0}t_{0},a_{1}t_{0}+b_{1}t_{1},a_{2}t_{0}+b_{2}t_{1},
  a_{3}t_{0}+b_{3}t_{1})\\
  =F_{0}(a,b)t_{0}^{3}+F_{1}(a,b)t_{0}^{2}t_{1}
  +F_{2}(a,b)t_{0}t_{1}^{2}+F_{3}(a,b)t_{1}^{3},
 \end{multline*}
 if $p\in{X}$, then
 $$
 \phi^{-1}(p)
 \simeq
 \{q=[b_{1}:b_{2}:b_{3}]\in\mathbf{P}^{2}\mid
 F_{1}(a,b)=0,\ F_{2}(a,b)=0
 \}.
 $$
 $p\in{X}$ is contained in $B$ if and only if
 there exists
 $[b_{1}:b_{2}:b_{3}]\in\mathbf{P}^{2}$ such that
 $F_{1}(a,b)=F_{2}(a,b)=0$ and the rank of the matrix
 \begin{multline*}
 \begin{pmatrix}
  \frac{\partial{F_{1}}}{\partial{z_{1}}}(a,b)&
  \frac{\partial{F_{1}}}{\partial{z_{2}}}(a,b)&
  \frac{\partial{F_{1}}}{\partial{z_{3}}}(a,b)\\
  \frac{\partial{F_{2}}}{\partial{z_{1}}}(a,b)&
  \frac{\partial{F_{2}}}{\partial{z_{2}}}(a,b)&
  \frac{\partial{F_{2}}}{\partial{z_{3}}}(a,b)
 \end{pmatrix}\\
 =
 \begin{pmatrix}
  \frac{\partial{F}}{\partial{x_{1}}}(a)&
  \frac{\partial{F}}{\partial{x_{2}}}(a)&
  \frac{\partial{F}}{\partial{x_{3}}}(a)\\
  \sum_{j=1}^{3}\frac{\partial^{2}{F}}{\partial{x_{j}}\partial{x_{1}}}(a)
  b_{j}&
  \sum_{j=1}^{3}\frac{\partial^{2}{F}}{\partial{x_{j}}\partial{x_{2}}}(a)
  b_{j}&
  \sum_{j=1}^{3}\frac{\partial^{2}{F}}{\partial{x_{j}}\partial{x_{3}}}(a)
  b_{j}
 \end{pmatrix}
 \end{multline*}
 is less than $2$.
 Since
 $(\frac{\partial{F}}{\partial{x_{1}}}(a),
 \frac{\partial{F}}{\partial{x_{2}}}(a),
 \frac{\partial{F}}{\partial{x_{3}}}(a))
 \neq(0,0,0)$,
 the condition on the rank of the matrix is equivalent to the existence
 of $b_{0}\in\mathbf{C}$ such that
 $$
 b_{0}
 \begin{pmatrix}
  \frac{\partial{F}}{\partial{x_{1}}}(a)&
  \frac{\partial{F}}{\partial{x_{2}}}(a)&
  \frac{\partial{F}}{\partial{x_{3}}}(a)
 \end{pmatrix}
 +
 \begin{pmatrix}
  b_{1}&b_{2}&b_{3}
 \end{pmatrix}
 \begin{pmatrix}
  \frac{\partial^{2}{F}}{\partial{x_{1}}^{2}}(a)&
  \frac{\partial^{2}{F}}{\partial{x_{1}}\partial{x_{2}}}(a)&
  \frac{\partial^{2}{F}}{\partial{x_{1}}\partial{x_{3}}}(a)\\
  \frac{\partial^{2}{F}}{\partial{x_{2}}\partial{x_{1}}}(a)&
  \frac{\partial^{2}{F}}{\partial{x_{2}}^{2}}(a)&
  \frac{\partial^{2}{F}}{\partial{x_{2}}\partial{x_{3}}}(a)\\
  \frac{\partial^{2}{F}}{\partial{x_{3}}\partial{x_{1}}}(a)&
  \frac{\partial^{2}{F}}{\partial{x_{3}}\partial{x_{2}}}(a)&
  \frac{\partial^{2}{F}}{\partial{x_{3}}^{2}}(a)
 \end{pmatrix}
 =0.
 $$
 Then $F_{1}(a,b)=0$ implies $F_{2}(a,b)=0$, because
 \begin{align*}
 F_{2}(a,b)&=\frac{1}{2}
 \begin{pmatrix}
 b_{1}&b_{2}&b_{3}
 \end{pmatrix}
 \begin{pmatrix}
 \frac{\partial^{2}{F}}{\partial{x_{1}}^{2}}(a)&
 \frac{\partial^{2}{F}}{\partial{x_{1}}\partial{x_{2}}}(a)&
 \frac{\partial^{2}{F}}{\partial{x_{1}}\partial{x_{3}}}(a)\\
 \frac{\partial^{2}{F}}{\partial{x_{2}}\partial{x_{1}}}(a)&
 \frac{\partial^{2}{F}}{\partial{x_{2}}^{2}}(a)&
 \frac{\partial^{2}{F}}{\partial{x_{2}}\partial{x_{3}}}(a)\\
 \frac{\partial^{2}{F}}{\partial{x_{3}}\partial{x_{1}}}(a)&
 \frac{\partial^{2}{F}}{\partial{x_{3}}\partial{x_{2}}}(a)&
 \frac{\partial^{2}{F}}{\partial{x_{3}}^{2}}(a)
 \end{pmatrix}
 \begin{pmatrix}
 b_{1}\\
 b_{2}\\
 b_{3}
 \end{pmatrix}\\
 &=-\frac{b_{0}}{2}
 \begin{pmatrix}
 \frac{\partial{F}}{\partial{x_{1}}}(a)&
 \frac{\partial{F}}{\partial{x_{2}}}(a)&
 \frac{\partial{F}}{\partial{x_{3}}}(a)
 \end{pmatrix}
 \begin{pmatrix}
 b_{1}\\
 b_{2}\\
 b_{3}
 \end{pmatrix}
 =-\frac{b_{0}}{2}F_{1}(a,b).
 \end{align*}
 Hence, $p\in{X}$ is contained in $B$ if and only if
 there exists
 $[b_{0}:b_{1}:b_{2}:b_{3}]\in\mathbf{P}^{3}$ such that
 $$
 \begin{pmatrix}
  b_{0}&b_{1}&b_{2}&b_{3}
 \end{pmatrix}
 \begin{pmatrix}
  0&
  \frac{\partial{F}}{\partial{x_{1}}}(a)&
  \frac{\partial{F}}{\partial{x_{2}}}(a)&
  \frac{\partial{F}}{\partial{x_{3}}}(a)\\
  \frac{\partial{F}}{\partial{x_{1}}}(a)&
  \frac{\partial^{2}{F}}{\partial{x_{1}}^{2}}(a)&
  \frac{\partial^{2}{F}}{\partial{x_{1}}\partial{x_{2}}}(a)&
  \frac{\partial^{2}{F}}{\partial{x_{1}}\partial{x_{3}}}(a)\\
  \frac{\partial{F}}{\partial{x_{2}}}(a)&
  \frac{\partial^{2}{F}}{\partial{x_{2}}\partial{x_{1}}}(a)&
  \frac{\partial^{2}{F}}{\partial{x_{2}}^{2}}(a)&
  \frac{\partial^{2}{F}}{\partial{x_{2}}\partial{x_{3}}}(a)\\
  \frac{\partial{F}}{\partial{x_{3}}}(a)&
  \frac{\partial^{2}{F}}{\partial{x_{3}}\partial{x_{1}}}(a)&
  \frac{\partial^{2}{F}}{\partial{x_{3}}\partial{x_{2}}}(a)&
  \frac{\partial^{2}{F}}{\partial{x_{3}}^{2}}(a)
 \end{pmatrix}
 =
 \begin{pmatrix}
  0&
  0&
  0&
  0
 \end{pmatrix},
 $$
 and it is equivalent to
 \begin{align*}
  0&=
  \det{
  \begin{pmatrix}
   0&
   \frac{\partial{F}}{\partial{x_{1}}}(a)&
   \frac{\partial{F}}{\partial{x_{2}}}(a)&
   \frac{\partial{F}}{\partial{x_{3}}}(a)\\
   \frac{\partial{F}}{\partial{x_{1}}}(a)&
   \frac{\partial^{2}{F}}{\partial{x_{1}}^{2}}(a)&
   \frac{\partial^{2}{F}}{\partial{x_{1}}\partial{x_{2}}}(a)&
   \frac{\partial^{2}{F}}{\partial{x_{1}}\partial{x_{3}}}(a)\\
   \frac{\partial{F}}{\partial{x_{2}}}(a)&
   \frac{\partial^{2}{F}}{\partial{x_{2}}\partial{x_{1}}}(a)&
   \frac{\partial^{2}{F}}{\partial{x_{2}}^{2}}(a)&
   \frac{\partial^{2}{F}}{\partial{x_{2}}\partial{x_{3}}}(a)\\
   \frac{\partial{F}}{\partial{x_{3}}}(a)&
   \frac{\partial^{2}{F}}{\partial{x_{3}}\partial{x_{1}}}(a)&
   \frac{\partial^{2}{F}}{\partial{x_{3}}\partial{x_{2}}}(a)&
   \frac{\partial^{2}{F}}{\partial{x_{3}}^{2}}(a)
  \end{pmatrix}}\\
  &=
  \frac{a_{0}^{2}}{4}\cdot
  \det{
  \begin{pmatrix}
   \frac{\partial^{2}{F}}{\partial{x_{0}}^{2}}(a)&
   \frac{\partial^{2}{F}}{\partial{x_{0}}\partial{x_{1}}}(a)&
   \frac{\partial^{2}{F}}{\partial{x_{0}}\partial{x_{2}}}(a)&
   \frac{\partial^{2}{F}}{\partial{x_{0}}\partial{x_{3}}}(a)\\
   \frac{\partial^{2}{F}}{\partial{x_{1}}\partial{x_{0}}}(a)&
   \frac{\partial^{2}{F}}{\partial{x_{1}}^{2}}(a)&
   \frac{\partial^{2}{F}}{\partial{x_{1}}\partial{x_{2}}}(a)&
   \frac{\partial^{2}{F}}{\partial{x_{1}}\partial{x_{3}}}(a)\\
   \frac{\partial^{2}{F}}{\partial{x_{2}}\partial{x_{0}}}(a)&
   \frac{\partial^{2}{F}}{\partial{x_{2}}\partial{x_{1}}}(a)&
   \frac{\partial^{2}{F}}{\partial{x_{2}}^{2}}(a)&
   \frac{\partial^{2}{F}}{\partial{x_{2}}\partial{x_{3}}}(a)\\
   \frac{\partial^{2}{F}}{\partial{x_{3}}\partial{x_{0}}}(a)&
   \frac{\partial^{2}{F}}{\partial{x_{3}}\partial{x_{1}}}(a)&
   \frac{\partial^{2}{F}}{\partial{x_{3}}\partial{x_{2}}}(a)&
   \frac{\partial^{2}{F}}{\partial{x_{3}}^{2}}(a)
  \end{pmatrix}}.
 \end{align*}
 Hence $B$ is defined by the Hessian on
 $X\setminus\{x_{0}\neq0\}$.
 In the same way, we can show that $B$ is defined by the Hessian
 on $X\setminus\{x_{i}\neq0\}$ for $1\leq{i}\leq{3}$.
\end{proof}
Let $E$ be the sum of all components of $R$ which contract to
points by $\phi$, and let $D$ be the divisor such that $R=D+E$.
For a line $L$ on $X$, we denote by $L^{+}$ the corresponding component
of $Y_{\infty}$;
$$
L^{+}=\{(p,L')\in{\Gamma(\mathbf{P}^{3})}\mid{L'=L}\}.
$$
Let $L^{-}$ be the other component of $\phi^{*}(L)$ dominating $L$ by
$\phi$, and let $Y_{\infty}^{-}$ be the sum of $L^{-}$ for all lines on
$X$.
A point $p$ on the cubic surface $X$ is called an Eckardt point if there
are three lines through $p$ on $X$.
\begin{theorem}\label{div}
 The divisor $D$ is a disjoint union of nonsingular curves,
 $E$ is a disjoint union of $(-2)$-curves on $Y$, and
 $Y_{\infty}^{-}$ is a disjoint union of $(-3)$-curves on $Y$.
 The divisors $R+Y_{\infty}$,
 $R+Y_{\infty}^{-}$ and $E+Y_{\infty}+Y_{\infty}^{-}$ are reduced
 simple normal crossing divisors.
 The branch divisor $B$ has at most nodes as its singularities, and the
 singular locus of  $B$ is equal to the set of Eckardt points of $X$.
 A line $L$ on $X$ intersects $B$ at two points with each multiplicity
 $2$, and
 $$
 \phi^{*}L=L^{+}+L^{-}
 +\sum_{e\in{L\cap\Sing{(B)}}}\phi^{-1}(e).
 $$
\end{theorem}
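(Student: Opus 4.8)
The plan is to work locally, using the explicit description of the fibres of $\phi$ as the intersection $\ell_{p}\cap C_{p}$ of the line $\ell_{p}=\{F_{1}(a,b)=0\}$ and the conic $C_{p}=\{F_{2}(a,b)=0\}$ in the $\mathbf{P}^{2}$ of directions at $p$, together with the two standard local models of a double cover: $w^{2}=y$ over a smooth point of $B$ and $w^{2}=uv$ over a node. The computational heart is an identity for the Hessian along a line. Choosing coordinates so that $L=\{x_{2}=x_{3}=0\}$, the condition $L\subset X$ forces $F\in(x_{2},x_{3})$, so every second derivative $\partial_{x_{i}}\partial_{x_{j}}F$ with $i,j\in\{0,1\}$ again lies in $(x_{2},x_{3})$ and hence vanishes along $L$. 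Thus on $L$ the Hessian matrix has the block form $\left(\begin{smallmatrix}0&N\\ N^{T}&M\end{smallmatrix}\right)$ with $N=(\partial_{x_{i}}\partial_{x_{j}}F)_{i\in\{0,1\},\,j\in\{2,3\}}$, and a direct Schur-complement computation gives $\det\bigl(\begin{smallmatrix}0&N\\ N^{T}&M\end{smallmatrix}\bigr)=(\det N)^{2}$. Writing $h=\det N$, a binary quadratic form of degree $2$ on $L\simeq\mathbf{P}^{1}$, we conclude that the Hessian restricts to the perfect square $h^{2}$ on $L$; in particular $L$ meets $B$ exactly at the two zeros of $h$, each with multiplicity $2$.

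This square root drives the splitting. Since the cover is $w^{2}=(\text{Hessian})$ with $w$ a section of $\varPhi^{*}\mathcal{O}_{\mathbf{P}^{3}}(2)\vert_{Y}$, over $L$ it becomes $w^{2}=h^{2}=(w-h)(w+h)$, so $\phi^{-1}(L)$ splits into the two sections $\{w=\pm h\}$. One of them is the tautological component $L^{+}\subset Y_{\infty}$; the other is its image under the deck involution $\iota$ of $\phi$, which acts by $w\mapsto-w$ and lifts to an automorphism of $Y$ because it fixes the nodes lying over Eckardt points. Hence $L^{-}=\iota(L^{+})$, and since $\iota$ preserves intersection numbers while $L^{+}$ is a $(-3)$-curve by Proposition~\ref{inf}, the curve $L^{-}$ is also a $(-3)$-curve; likewise $Y_{\infty}^{-}=\iota(Y_{\infty})$ is a disjoint union because $Y_{\infty}$ is.

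For the ramification divisor I separate the two local models. A point $p$ lies in $\Sing(B)$ precisely when the fibre $\ell_{p}\cap C_{p}$ is positive-dimensional, i.e.\ when $C_{p}\supset\ell_{p}$; as each line of $X$ through $p$ contributes a point of $\ell_{p}\cap C_{p}$ on the line $\ell_{p}$, this happens exactly when three lines pass through $p$, that is at the Eckardt points, and the local normal form of the discriminant shows the singularity is a node. Over the smooth locus of $B$ the cover $w^{2}=y$ is branched along a smooth divisor, so the corresponding part $D$ is smooth and is the normalization of $B$ at its nodes, hence a disjoint union of nonsingular curves; over each node the cover is the $A_{1}$-singularity $w^{2}=uv$, whose minimal resolution contributes the single $(-2)$-curve $\phi^{-1}(e)=E_{e}$ and separates the two branches of $B$, so $E$ is a disjoint union of $(-2)$-curves. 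A computation in the resolution chart $u=u_{1},\,v=u_{1}v_{1},\,w=u_{1}w_{1}$ (giving $w_{1}^{2}=v_{1}$) shows that the pull-back of the local equation $v-cu$ of $L$ through the node is $u_{1}(w_{1}^{2}-c)$, so $E_{e}$ occurs in $\phi^{*}L$ with multiplicity exactly $1$, while $L^{+},L^{-}$ meet $E_{e}$ transversally at the distinct points $w_{1}=\pm\sqrt{c}$ and the two branches of $D$ meet it at $w_{1}=0,\infty$. This gives $\phi^{*}L=L^{+}+L^{-}+\sum_{e\in L\cap\Sing(B)}\phi^{-1}(e)$.

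Finally I assemble these pictures to verify the three normal-crossing assertions, and here lies the main subtlety. At a smooth tangency point of $L$ with $B$ the model $w^{2}=y$, $L=\{y=x^{2}\}$ gives $D=\{w=0\}$ and $L^{\pm}=\{w=\pm x\}$: three curves through one point, pairwise transverse. Hence $D+Y_{\infty}+Y_{\infty}^{-}$ is \emph{not} normal crossing, whereas each of $R+Y_{\infty}$, $R+Y_{\infty}^{-}$ and $E+Y_{\infty}+Y_{\infty}^{-}$ contains at most two of these three curves and is therefore simple normal crossing; the node computation above yields the same conclusion at the Eckardt points, where the relevant curves meet $E_{e}$ at four distinct points. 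The one global input needed is that the non-Eckardt tangency points of $L$ lie on no other line of $X$: identifying $h=\det N$ with the Jacobian of the pair $(\partial_{x_{2}}F,\partial_{x_{3}}F)\vert_{L}$, whose zeros are the points where the residual conic of some plane through $L$ is tangent to $L$, one checks that a point $L\cap L'$ is a zero of $h$ only when the third coplanar line also passes through it, i.e.\ only at an Eckardt point. This is the step I expect to be most delicate, alongside the perfect-square identity for the Hessian on $L$, which underpins the whole argument.
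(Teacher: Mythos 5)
Your approach is genuinely different from the paper's, and its main computational ingredients are correct: the block-determinant identity $\det\bigl(\begin{smallmatrix}0&N\\ N^{T}&M\end{smallmatrix}\bigr)=(\det N)^{2}$, which shows the Hessian restricts to a perfect square $h^{2}$ on any line $L\subset X$ (a fact the paper never isolates); the resolution-chart computation $u_{1}(w_{1}^{2}-c)$ giving coefficient $1$ for $\phi^{-1}(e)$ in $\phi^{*}L$; and the deck-involution shortcut deducing that $L^{-}$ is a $(-3)$-curve directly from Proposition~\ref{inf}, where the paper instead computes $(L^{-}.L^{-})=-3$ from the pullback formula. Nevertheless there are genuine gaps, and they sit exactly where the paper does its real work.

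First, the singularity structure of $B$ --- smooth away from the Eckardt points, a node at each Eckardt point --- is asserted, not proved. You write that $p\in\Sing{(B)}$ ``precisely when'' the fibre is positive-dimensional, and that ``the local normal form of the discriminant shows the singularity is a node,'' but no computation backs either claim; in the paper these are exactly the case (2) and case (3) computations (smoothness of $B$ via $\varphi_{2}$ and $\sigma$, and the two transverse branches $B_{1},B_{2}$ at an Eckardt point), together with Lemma~\ref{nor}(3) for the implication ``positive-dimensional fibre $\Rightarrow$ three \emph{distinct} lines through $p$,'' of which your line-count only gives the converse. Since the local models $w^{2}=y$ and $w^{2}=uv$ presuppose precisely this structure of $B$, and your third and fourth paragraphs rest entirely on those models, the argument as written is circular at its core. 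Second, ``the two zeros of $h$, each with multiplicity $2$'' silently assumes $h$ has simple roots; a double root would mean $L$ meets $B$ at one point with multiplicity $4$, which would invalidate the transversality of $L^{\pm}$ and $D$, the hypothesis $c\neq0$ in your chart computation, and the SNC claims. This gap is fillable inside your own framework --- writing $F=x_{2}A+x_{3}B$, your $h$ is the Jacobian of the binary quadrics $A\vert_{L},B\vert_{L}$, its discriminant is a nonzero multiple of their resultant, and a common root of $A\vert_{L}$ and $B\vert_{L}$ would be a singular point of $X$ --- but the argument has to be made. Third, you use throughout that $\phi:Y\rightarrow X$ is the minimal resolution of the finite double cover $\{w^{2}=\mathrm{Hessian}\}$: this is what provides the section $w$, the involution $\iota$ on $Y$, and the identification of $\phi^{-1}(e)$ with the exceptional curve of an $A_{1}$-resolution. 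In the paper this identification appears only as a remark \emph{after} Theorem~\ref{div}, i.e.\ as a consequence of it; to use it as an input you need a separate argument (Stein factorization, uniqueness of the square root of $\mathcal{O}_{X}(B)$ in $\mathrm{Pic}(X)$ because $\NS{(X)}$ is torsion-free, and minimality of $Y$ plus irreducibility of the Eckardt fibres to pin down the singularity type), and that last step is again entangled with the unproved nodal structure of $B$.
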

First, we normalize for $p\in{X}$ the equation of $X$ by a
transformation of the homogeneous coordinate in order to introduce a
local coordinate of $X$ around $p$ and to compute the local equation of
these divisors.
\begin{lemma}\label{nor}
 Let
 $F(x)=\sum{c_{ijk}x_{0}^{3-i-j-k}}x_{1}^{i}x_{2}^{j}x_{3}^{k}$
 be an equation of a nonsingular cubic surface $X$, and let $p$ be a
 point on $X$.
 \begin{enumerate}
  \item If $\phi^{-1}(p)$ is a set of distinct two points, then
	$F(x)$ is normalized by a transformation of the homogeneous
	coordinate to satisfy
	$p=[1:0:0:0]$,
	$c_{000}=c_{100}=c_{010}=c_{200}=c_{020}=0$
	and $c_{001}=c_{110}=1$.
  \item If $\phi^{-1}(p)$ is a point, then
	$F(x)$ is normalized by a transformation of the homogeneous
	coordinate to satisfy
	$p=[1:0:0:0]$,
	$c_{000}=c_{100}=c_{010}=c_{200}=c_{110}=0$
	and $c_{001}=c_{020}=c_{210}=1$.
  \item If $\phi^{-1}(p)\simeq{\mathbf{P}^{1}}$, then
	$F(x)$ is normalized by a transformation of the homogeneous
	coordinate to satisfy
	$p=[1:0:0:0]$,
	$c_{000}=c_{100}=c_{010}=c_{200}=c_{110}=c_{020}=c_{210}=c_{120}=0$
	and $c_{001}=3c_{300}=3c_{030}=1$.
 \end{enumerate}
\end{lemma}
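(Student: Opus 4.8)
The plan is to move $p$ to standard position and then exhaust the remaining projective freedom case by case. First I would choose homogeneous coordinates so that $p=[1:0:0:0]$; since $p\in X$ this already forces $c_{000}=0$. Writing $F=x_0^2\ell+x_0Q+C$, where $\ell,Q,C$ are the linear, quadratic and cubic parts in $x_1,x_2,x_3$, smoothness of $X$ at $p$ means $\ell\neq0$ (it is the equation of the tangent plane $T_pX$), so after a linear change of $(x_1,x_2,x_3)$ and a rescaling I may assume $\ell=x_3$, that is $c_{100}=c_{010}=0$ and $c_{001}=1$. With $a=(1,0,0,0)$ the expressions computed in the previous proposition specialize to $F_1(a,b)=b_3$ and $F_2(a,b)=q(b_1,b_2)+(\text{terms divisible by }b_3)$, where $q(b_1,b_2)=c_{200}b_1^2+c_{110}b_1b_2+c_{020}b_2^2$ is the restriction of $Q$ to the tangent plane. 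Hence $\phi^{-1}(p)$ is cut out in the line $\{b_3=0\}$ by the binary quadratic $q$, and the three cases of the lemma correspond exactly to $q$ having two distinct roots, a double root, or vanishing identically.

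The residual group I would use consists of the projective transformations fixing $p$ and the normalization of $\ell$. On the tangent plane section $P:=F(x_0,x_1,x_2,0)=x_0\,q(x_1,x_2)+g(x_1,x_2)$, with $g=c_{300}x_1^3+c_{210}x_1^2x_2+c_{120}x_1x_2^2+c_{030}x_2^3$, these act by $q\mapsto\lambda\,(q\circ A)$ and $g\mapsto(\mu_1x_1+\mu_2x_2)(q\circ A)+g\circ A$ for $A\in\mathrm{GL}_2$ acting on $(x_1,x_2)$ and shears $x_0\mapsto\lambda x_0+\mu_1x_1+\mu_2x_2$, together with independent rescalings of the coordinates. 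The point is that the monomials of $F$ divisible by $x_3$ are irrelevant here, since every coefficient constrained by the lemma apart from $c_{001}$ has $x_3$-degree $0$, i.e. is a coefficient of $P$. In Case (1) I would use $A$ to send the two roots of $q$ to $[1:0]$ and $[0:1]$, giving $c_{200}=c_{020}=0$, and then rescale to $c_{110}=1$; no cubic condition is imposed. In Case (3) we have $q\equiv0$ and $P=g$; here $g\not\equiv0$ (otherwise $x_3\mid F$ and $X$ would be reducible), and, granting that $g$ is squarefree, its three distinct roots can be sent to those of $x_1^3+x_2^3$ by the $3$-transitivity of $\mathrm{PGL}_2$ on $\mathbf{P}^1$, after which a rescaling yields $c_{210}=c_{120}=0$ and $c_{300}=c_{030}=\tfrac13$.

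The delicate case is (2), and it rests on the same mechanism that gives squarefreeness in (3). After using $A$ to make $q\propto x_2^2$ (so $c_{200}=c_{110}=0$) and rescaling to $c_{020}=1$, the shear contribution $(\mu_1x_1+\mu_2x_2)(q\circ A)$ only affects $c_{120}$ and $c_{030}$, so the normalization of $c_{210}$ must come from $A$: the substitution $x_1\mapsto x_1+bx_2$ changes $c_{210}$ into $c_{210}+3b\,c_{300}$. Thus when $c_{300}\neq0$ I can choose $b$ to make $c_{210}$ nonzero and then rescale it to $1$. The main obstacle is the subcase $c_{300}=0$, where this shear is useless: then $F(x_0,x_1,0,0)=c_{300}x_1^3=0$, so the line $\ell_0=\{x_2=x_3=0\}$ lies on $X$. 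Here I would argue directly from smoothness: along $\ell_0$ one computes $\partial F/\partial x_0=\partial F/\partial x_1=0$ and $\partial F/\partial x_2=c_{210}x_1^2$, while $\partial F/\partial x_3$ restricts to $x_0^2+c_{101}x_0x_1+c_{201}x_1^2$, a binary quadratic in $[x_0:x_1]$ which necessarily has a zero on $\ell_0$ with $x_1\neq0$; were $c_{210}$ also $0$, the surface $X$ would be singular at that zero, a contradiction. The identical computation (a repeated factor of $g$ produces a line of $X$ along which all four partials vanish simultaneously at some point) is what rules out a repeated factor in Case (3).

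Hence $c_{210}\neq0$ in every instance of Case (2), and I may rescale it to $1$. The only remaining point is bookkeeping: one must check that the several coordinate rescalings can be arranged simultaneously so as to fix $c_{001}=1$ together with the prescribed values of the cubic coefficients. Since $x_3$ does not occur in $P$, its scaling adjusts $c_{001}$ independently of $q$ and $g$, and the scalings of $x_0,x_1,x_2$ supply enough freedom to hit the required values; this is a routine count of parameters against conditions in each of the three cases.
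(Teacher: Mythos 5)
Your proposal is correct and follows essentially the same route as the paper: the same initial normalization of $p$ and the tangent plane, the same case division according to the binary quadratic $q$ cutting out $\phi^{-1}(p)$, and, crucially, the same smoothness argument (a line of $X$ along which $\partial F/\partial x_{0},\partial F/\partial x_{1},\partial F/\partial x_{2}$ vanish identically, plus a zero of the quadratic $\partial F/\partial x_{3}$ on that line) to rule out $(c_{300},c_{210})=(0,0)$ in case (2) and a repeated factor of $g$ in case (3). The only difference is presentational: where you invoke transitivity of $\mathrm{PGL}_{2}$ and a routine parameter count for the rescalings, the paper writes out explicit matrices, including the cube-root-of-unity matrix in case (3).
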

\begin{proof}
 First, we can chose a homogeneous coordinate $[x_{0}:\dots:x_{3}]$ as
 $p=[1:0:0:0]$.
 Then $p\in{X}$ implies that $c_{000}=0$.
 Since $X$ is nonsingular at $p$,
 $(c_{100},c_{010},c_{001})\neq(0,0,0)$.
 We may assume that $c_{001}\neq0$.
 By the the transformation
 $$
 \begin{pmatrix}
  x_{1}\\
  x_{2}\\
  x_{3}
 \end{pmatrix}
 \longmapsto
 \begin{pmatrix}
  1&0&0\\
  0&1&0\\
  c_{100}&c_{010}&c_{001}
 \end{pmatrix}
 \begin{pmatrix}
  x_{1}\\
  x_{2}\\
  x_{3}
 \end{pmatrix},
 $$
 we may assume that $(c_{100},c_{010},c_{001})=(0,0,1)$.
 \begin{enumerate}
  \item We consider the case where $\phi^{-1}(p)$ is a set of
	distinct two points.
	Then the quadratic form
	$c_{200}x_{1}^{2}+c_{110}x_{1}x_{2}+c_{020}x_{2}^{2}$
	is factorized into independent linear forms;
	$$
	c_{200}x_{1}^{2}+c_{110}x_{1}x_{2}+c_{020}x_{2}^{2}
	=(\alpha_{1}{x_{1}}+\alpha_{2}{x_{2}})
	(\beta_{1}{x_{1}}+\beta_{2}{x_{2}}).
	$$
	By the transformation
	$$
	\begin{pmatrix}
	 x_{1}\\
	 x_{2}
	\end{pmatrix}
	\longmapsto
	\begin{pmatrix}
	 \alpha_{1}&\alpha_{2}\\
	 \beta_{1}&\beta_{2}
	\end{pmatrix}
	\begin{pmatrix}
	 x_{1}\\
	 x_{2}
	\end{pmatrix},
	$$
	$F(x)$ is normalized to satisfy
	$(c_{200},c_{110},c_{020})=(0,1,0)$.
  \item We consider the case where $\phi^{-1}(p)$ is a point.
	Then the quadratic form
	$c_{200}x_{1}^{2}+c_{110}x_{1}x_{2}+c_{020}x_{2}^{2}$
	is the square of a nonzero linear form;
	$$
	c_{200}x_{1}^{2}+c_{110}x_{1}x_{2}+c_{020}x_{2}^{2}
	=(\alpha_{1}{x_{1}}+\alpha_{2}{x_{2}})^{2},
	$$
	and we may assume that $\alpha_{2}\neq0$.
	By the transformation
	$$
	\begin{pmatrix}
	 x_{1}\\
	 x_{2}
	\end{pmatrix}
	\longmapsto
	\begin{pmatrix}
	 1&0\\
	 \alpha_{1}&\alpha_{2}
	\end{pmatrix}
	\begin{pmatrix}
	 x_{1}\\
	 x_{2}
	\end{pmatrix},
	$$
	we may assume that
	$(c_{200},c_{110},c_{020})=(0,0,1)$.
	If $c_{210}\neq0$, then by the transformation
	$$
	\begin{pmatrix}
	 x_{1}\\
	 x_{2}
	\end{pmatrix}
	\longmapsto
	\begin{pmatrix}
	 \sqrt{c_{210}}&0\\
	 0&1
	\end{pmatrix}
	\begin{pmatrix}
	 x_{1}\\
	 x_{2}
	\end{pmatrix},
	$$
	$F(x)$ is normalized to satisfy $c_{210}=1$.
	If $c_{210}=0$ and $c_{300}\neq0$, then by the transformation
	$$
	\begin{pmatrix}
	 x_{1}\\
	 x_{2}
	\end{pmatrix}
	\longmapsto
	\begin{pmatrix}
	 1&-\frac{1}{3c_{300}}\\
	 0&1
	\end{pmatrix}
	\begin{pmatrix}
	 x_{1}\\
	 x_{2}
	\end{pmatrix},
	$$
	$F(x)$ is normalized to satisfy $c_{210}=1$.
	If $(c_{300},c_{210})=(0,0)$, then $X$ is singular at
	$[a:1:0:0]$, where $a$ is a root of the quadratic equation
	$$
	\frac{\partial{F}}{\partial{x_{3}}}(s,1,0,0)
	=s^{2}+c_{101}s+c_{201}=0.
	$$
  \item We consider the case where
	$\phi^{-1}(p)\simeq{\mathbf{P}^{1}}$.
	Then we have $(c_{200},c_{110},c_{020})=(0,0,0)$, and the cubic
	form
	$c_{300}x_{1}^{3}+c_{210}x_{1}^{2}x_{2}
	+c_{120}x_{1}x_{2}^{2}+c_{030}x_{2}^{3}$
	is factorized into nonzero linear forms;
	$$
	c_{300}x_{1}^{3}+c_{210}x_{1}^{2}x_{2}
	+c_{120}x_{1}x_{2}^{2}+c_{030}x_{2}^{3}
	=(\alpha_{1}{x_{1}}+{\alpha_{2}}{x_{2}})
	(\beta_{1}{x_{1}}+\beta_{2}{x_{2}})
	(\gamma_{1}{x_{1}}+\gamma_{2}{x_{2}}).
	$$
	We have $\alpha_{1}\beta_{2}-\alpha_{2}\beta_{1}\neq0$,
	$\beta_{1}\gamma_{2}-\beta_{2}\gamma_{1}\neq0$,
	and $\gamma_{1}\alpha_{2}-\gamma_{2}\alpha_{1}\neq0$, because
	for example
	if $\alpha_{1}\beta_{2}-\alpha_{2}\beta_{1}=0$, then $X$ is
	singular at $[a:-\alpha_{2}:\alpha_{1}:0]$, where $a$ is a root
	of the quadratic equation
	$$
	\frac{\partial{F}}{\partial{x_{3}}}
	(s,-\alpha_{2},\alpha_{1},0)
	=s^{2}+(c_{011}\alpha_{1}-c_{101}\alpha_{2})s
	+(c_{021}\alpha_{1}^{2}-c_{111}\alpha_{1}\alpha_{2}
	+c_{201}\alpha_{2}^{2})=0.
	$$
	Let $\omega\in\mathbf{C}$ be a primitive $3$-rd root of unity.
	By the transformation
	$$
	\begin{pmatrix}
	 x_{1}\\
	 x_{2}
	\end{pmatrix}
	\longmapsto
	\begin{pmatrix}
	 \frac{\alpha_{1}\beta_{2}\gamma_{1}
	 +\alpha_{1}\beta_{1}\gamma_{2}\omega
	 +\alpha_{2}\beta_{1}\gamma_{1}\omega^{2}}
	 {\sqrt[3]{d}}
	 &\frac{-\alpha_{2}\beta_{1}\gamma_{2}
	 -\alpha_{2}\beta_{2}\gamma_{1}\omega
	 -\alpha_{1}\beta_{2}\gamma_{2}\omega^{2}}
	 {\sqrt[3]{d}}\\
	 \frac{-\alpha_{1}\beta_{2}\gamma_{1}
	 -\alpha_{2}\beta_{1}\gamma_{1}\omega
	 -\alpha_{1}\beta_{1}\gamma_{2}\omega^{2}}
	 {\sqrt[3]{d}}
	 &\frac{\alpha_{2}\beta_{1}\gamma_{2}
	 +\alpha_{1}\beta_{2}\gamma_{2}\omega
	 +\alpha_{2}\beta_{2}\gamma_{1}\omega^{2}}
	 {\sqrt[3]{d}}
	\end{pmatrix}
	\begin{pmatrix}
	 x_{1}\\
	 x_{2}
	\end{pmatrix},
	$$
	where
	\begin{align*}
	 d&=
	 \det{
	 \begin{pmatrix}
	  \alpha_{1}\beta_{2}\gamma_{1}
	  +\alpha_{1}\beta_{1}\gamma_{2}\omega
	  +\alpha_{2}\beta_{1}\gamma_{1}\omega^{2}
	  &-\alpha_{2}\beta_{1}\gamma_{2}
	  -\alpha_{2}\beta_{2}\gamma_{1}\omega
	  -\alpha_{1}\beta_{2}\gamma_{2}\omega^{2}\\
	  -\alpha_{1}\beta_{2}\gamma_{1}
	  -\alpha_{2}\beta_{1}\gamma_{1}\omega
	  -\alpha_{1}\beta_{1}\gamma_{2}\omega^{2}
	  &\alpha_{2}\beta_{1}\gamma_{2}
	  +\alpha_{1}\beta_{2}\gamma_{2}\omega
	  +\alpha_{2}\beta_{2}\gamma_{1}\omega^{2}
	 \end{pmatrix}}\\
	 &=(\omega-\omega^{2})(\alpha_{1}\beta_{2}-\alpha_{2}\beta_{1})
	 (\beta_{1}\gamma_{2}-\beta_{2}\gamma_{1})
	 (\gamma_{1}\alpha_{2}-\gamma_{2}\alpha_{1})
	 \neq0,
	\end{align*}
	$F(x)$ is normalized to satisfy
	$(c_{300},c_{210},c_{120},c_{020})
	=(\frac{1}{3},0,0,\frac{1}{3})$.
 \end{enumerate}
\end{proof}
\begin{proof}[Proof of Theorem~\ref{div}]
 For $p\in{X}$, by Lemma~\ref{nor},
 we may assume that $p=[1:0:0:0]$,
 $c_{000}=c_{100}=c_{010}=0$ and $c_{001}=1$.
 Then
 $$
 X\setminus\{x_{0}\neq0\}\simeq
 \{(\xi_{1},\xi_{2},\xi_{3})\in\mathbf{C}^{3}\mid
 F(1,\xi_{1},\xi_{2},\xi_{3})=0\},
 $$
 and $(\xi_{1},\xi_{2})$ gives a local coordinate of $X$ at $p$ because
 $\frac{\partial{F}}{\partial{x_{3}}}(p)=c_{001}\neq0$.
 For $[s_{1}:s_{2}]\in\mathbf{P}^{1}$, we set a line on $\mathbf{P}^{3}$
 by
 $$
 L_{[s_{1}:s_{2}]}=\{[x_{0}:\dots:x_{3}]\in\mathbf{P}^{3}\mid
 s_{1}x_{2}=s_{2}{x_{1}},\ x_{3}=0\},
 $$
 which intersects $X$ at $p$ with multiplicity $\geq2$.
 For $0\leq{i}\leq{3}$, we set a polynomial by
 $$
 f_{i}(\xi_{1},\xi_{2},\xi_{3},\zeta_{2},\zeta_{3})
 =F_{i}(1,\xi_{1},\xi_{2},\xi_{3},1,\zeta_{2},\zeta_{3}),
 $$
 where $F_{i}(x,z)$ is the polynomial defined in $(\ref{defeq})$.
 Then $Y$ is locally defined by these polynomials on a neighborhood of
 $(p,L_{[1:0]})\in{Y}$;
 $$
 Y\simeq\{(\xi_{1},\xi_{2},\xi_{3},\zeta_{2},\zeta_{3})\in\mathbf{C}^{5}
 \mid
 f_{0}(\xi,\zeta)=f_{1}(\xi,\zeta)=f_{2}(\xi,\zeta)=0\}.
 $$
 In order to give a local coordinate of $Y$, we divide the case into
 three types.
 \begin{enumerate}
  \item \label{c1}
	The case where $\phi^{-1}(p)$ is a set of distinct two
	points.
	By Lemma~\ref{nor}, we may assume that
	$c_{000}=c_{100}=c_{010}=c_{200}=c_{020}=0$
	and $c_{001}=c_{110}=1$.
	Then we have
	$\phi^{-1}(p)=\{(p,L_{[1:0]}),(p,L_{[0:1]})\}$.
	Since
	$$
	\begin{vmatrix}
	 \frac{\partial{f_{0}}}{\partial{\xi_{3}}}(0,0,0,0,0)&
	 \frac{\partial{f_{0}}}{\partial{\zeta_{2}}}(0,0,0,0,0)&
	 \frac{\partial{f_{0}}}{\partial{\zeta_{3}}}(0,0,0,0,0)\\
	 \frac{\partial{f_{1}}}{\partial{\xi_{3}}}(0,0,0,0,0)&
	 \frac{\partial{f_{1}}}{\partial{\zeta_{2}}}(0,0,0,0,0)&
	 \frac{\partial{f_{1}}}{\partial{\zeta_{3}}}(0,0,0,0,0)\\
	 \frac{\partial{f_{2}}}{\partial{\xi_{3}}}(0,0,0,0,0)&
	 \frac{\partial{f_{2}}}{\partial{\zeta_{2}}}(0,0,0,0,0)&
	 \frac{\partial{f_{2}}}{\partial{\zeta_{3}}}(0,0,0,0,0)
	\end{vmatrix}
	=
	\begin{vmatrix}
	 1&0&0\\
	 c_{101}&0&1\\
	 c_{201}&1&c_{101}
	\end{vmatrix}
	=-1\neq0,
	$$
	$(\xi_{1},\xi_{2})$ gives a local coordinate of $Y$ at
	$(p,L_{[1:0]})$ and $\phi$ is a local isomorphism in a
	neighborhood of $(p,L_{[1:0]})$.
	When $L_{[1:0]}$ is contained in $X$,
	$L_{[1:0]}^{+}\subset{Y}$ is locally isomorphic to
	$\{(\xi_{1},\xi_{2})\mid\xi_{2}=0\}$, and
	when $L_{[0:1]}$ is contained in $X$,
	$L_{[0:1]}^{-}\subset{Y}$ is locally isomorphic to
	$\{(\xi_{1},\xi_{2})\mid\xi_{1}=0\}$.
	Hence, if $(p,L_{[1:0]})\in{L_{[1:0]}^{+}\cap{L_{[0:1]}^{-}}}$,
	then $L_{[1:0]}^{+}$ intersects $L_{[0:1]}^{-}$ transversally at
	$(p,L_{[1:0]})\in{Y}$.
	In the same way, we can see the picture of a neighborhood
	of $(p,L_{[0:1]})$.
  \item \label{c2}
	The case where $\phi^{-1}(p)$ is a point.
	By Lemma~\ref{nor}, we may assume that
	$c_{000}=c_{100}=c_{010}=c_{200}=c_{110}=0$
	and $c_{001}=c_{020}=c_{210}=1$.
	Then
	$\phi^{-1}(p)=\{(p,L_{[1:0]})\}$.
	Since
	$$
	\begin{vmatrix}
	 \frac{\partial{f_{0}}}{\partial{\xi_{2}}}(0,0,0,0,0)&
	 \frac{\partial{f_{0}}}{\partial{\xi_{3}}}(0,0,0,0,0)&
	 \frac{\partial{f_{0}}}{\partial{\zeta_{3}}}(0,0,0,0,0)\\
	 \frac{\partial{f_{1}}}{\partial{\xi_{2}}}(0,0,0,0,0)&
	 \frac{\partial{f_{1}}}{\partial{\xi_{3}}}(0,0,0,0,0)&
	 \frac{\partial{f_{1}}}{\partial{\zeta_{3}}}(0,0,0,0,0)\\
	 \frac{\partial{f_{2}}}{\partial{\xi_{2}}}(0,0,0,0,0)&
	 \frac{\partial{f_{2}}}{\partial{\xi_{3}}}(0,0,0,0,0)&
	 \frac{\partial{f_{2}}}{\partial{\zeta_{3}}}(0,0,0,0,0)
	\end{vmatrix}
	=
	\begin{vmatrix}
	 0&1&0\\
	 0&c_{101}&1\\
	 1&c_{201}&c_{101}
	\end{vmatrix}
	=1\neq0,
	$$
	there are holomorphic functions $\varphi_{2}(\xi_{1},\zeta_{2})$,
	$\varphi_{3}(\xi_{1},\zeta_{2})$
	and $\mu_{3}(\xi_{1},\zeta_{2})$
	on a neighborhood of $(\xi_{1},\zeta_{2})=(0,0)$
	such that
	$$
	\varphi_{2}(0,0)=0,\
	\varphi_{3}(0,0)=0,\
	\mu_{3}(0,0)=0
	$$
	and
	$$
	f_{i}(\xi_{1},\varphi_{2}(\xi_{1},\zeta_{2}),
	\varphi_{3}(\xi_{1},\zeta_{2}),
	\zeta_{2},\mu_{3}(\xi_{1},\zeta_{2}))=0
	$$
	for $0\leq{i}\leq{2}$.
	We remark that
	\begin{align*}
	 &\varphi_{2}(\xi_{1},\zeta_{2})
	 \equiv-3c_{300}\xi_{1}
	 +(-9c_{300}^{2}c_{101}^{2}+9c_{300}^{2}c_{120}c_{101}
	 +9c_{300}^{2}c_{201}-3c_{300}c_{101})\xi_{1}^{2}\\
	 &\hspace*{100pt}
	 +(-6c_{300}c_{101}+6c_{300}c_{120}-2)\xi_{1}\zeta_{2}
	 -\zeta_{2}^{2}\mod
	 (\xi_{1}^{3},\xi_{1}^{2}\zeta_{2},
	 \xi_{1}\zeta_{2}^{2},\zeta_{2}^{3}),\\
	 &\varphi_{3}(\xi_{1},\zeta_{2})
	 \equiv-9c_{300}^{2}\xi_{1}^{2}\mod
	 (\xi_{1}^{3},\xi_{1}^{2}\zeta_{2},
	 \xi_{1}\zeta_{2}^{2},\zeta_{2}^{3}),\\
	 &\mu_{3}(\xi_{1},\zeta_{2})
	 \equiv(9c_{300}^{2}c_{101}-9c_{300}^{2}c_{120}+3c_{300})\xi_{1}^{2}
	 +6c_{300}\xi_{1}\zeta_{2}\mod
	 (\xi_{1}^{3},\xi_{1}^{2}\zeta_{2},
	 \xi_{1}\zeta_{2}^{2},\zeta_{2}^{3}).
	\end{align*}
	Then $(\xi_{1},\zeta_{2})$ is a local coordinate of $Y$ at
	$(p,L_{[1:0]})$, and $R=D$ is locally isomorphic to
	$\{(\xi_{1},\zeta_{2})\mid
	\frac{\partial\varphi_{2}}{\partial{\zeta_{2}}}
	(\xi_{1},\zeta_{2})=0\}$,
	and it is nonsingular at $(p,L_{[1:0]})$ because
	$\frac{\partial^{2}\varphi_{2}}{\partial{\zeta_{2}}^{2}}
	(0,0)=-2\neq0$.
	There is a holomorphic function $\sigma(\xi_{1})$ on a
	neighborhood of $\xi_{1}=0$ such that $\sigma(0)=0$ and
	$
	\frac{\partial\varphi_{2}}{\partial{\zeta_{2}}}
	(\xi_{1},\sigma(\xi_{1}))=0.
	$
	Then $B\subset{X}$ is locally isomorphic to
	$\{(\xi_{1},\xi_{2})\mid
	\xi_{2}=\varphi_{2}(\xi_{1},\sigma(\xi_{1}))\}$,
	and it is nonsingular at $p$.
	When $L_{[1:0]}$ is contained in $X$, we have
	$c_{300}=0$ and there is a holomorphic function
	$\eta_{2}(\xi_{1},\zeta_{2})$ such that
	$\varphi_{2}(\xi_{1},\zeta_{2})=\zeta_{2}
	\eta_{2}(\xi_{1},\zeta_{2})$.
	Then $L_{[1:0]}^{+}\subset{Y}$ is locally isomorphic to
	$\{(\xi_{1},\zeta_{2})\mid\zeta_{2}=0\}$, and
	$L_{[1:0]}^{-}\subset{Y}$ is locally isomorphic to
	$\{(\xi_{1},\zeta_{2})\mid\eta_{2}(\xi_{1},\zeta_{2})=0\}$.
	Since
	$$
	\begin{pmatrix}
	 \frac{\partial^{2}\varphi_{2}}
	 {\partial{\xi_{1}}\partial{\zeta_{2}}}(0,0)
	 &\frac{\partial\zeta_{2}}{\partial\xi_{1}}(0,0)
	 &\frac{\partial\eta_{2}}{\partial{\xi_{1}}}(0,0)\\
	 \frac{\partial^{2}\varphi_{2}}{\partial{\zeta_{2}}^{2}}(0,0)
	 &\frac{\partial\zeta_{2}}{\partial{\zeta_{2}}}(0,0)
	 &\frac{\partial\eta_{2}}{\partial\zeta_{2}}(0,0)
	\end{pmatrix}
	=
	\begin{pmatrix}
	 -2&0&-2\\
	 -2&1&-1
	\end{pmatrix},
	$$
	$D$ intersects $L_{[1:0]}^{+}$ and $L_{[1:0]}^{-}$ transversally,
	and $L_{[1:0]}^{+}$ intersects $L_{[1:0]}^{-}$ transversally at
	$(p,L_{[1:0]})\in{Y}$.
	Since $L$ is locally isomorphic to
	$\{(\xi_{1},\xi_{2})\mid\xi_{2}=0\}$ and
	$$
	\begin{cases}
	 \varphi_{2}(\xi_{1},\sigma(\xi_{1}))
	 \vert_{\xi_{1}=0}
	 =0,\\
	 \frac{d}{d{\xi_{1}}}(\varphi_{2}(\xi_{1},\sigma(\xi_{1})))
	 \vert_{\xi_{1}=0}
	 =0,\\
	 \frac{d^{2}}{d{\xi_{1}}^{2}}(\varphi_{2}(\xi_{1},\sigma(\xi_{1})))
	 \vert_{\xi_{1}=0}
	 =2\neq0,
	\end{cases}
	$$
	$L$ intersects $B$ at $p$ with multiplicity $2$.
  \item \label{c3}
	The case where $\phi^{-1}(p)\simeq{\mathbf{P}^{1}}$.
	By Lemma~\ref{nor}, we may assume that
	$c_{000}=c_{100}=c_{010}=c_{200}=c_{110}=c_{020}=c_{210}=c_{120}=0$
	and $c_{001}=3c_{300}=3c_{030}=1$.
	Since
	\begin{multline*}
	\begin{vmatrix}
	 \frac{\partial{f_{0}}}{\partial{\xi_{1}}}(0,0,0,\zeta_{2},0)&
	 \frac{\partial{f_{0}}}{\partial{\xi_{3}}}(0,0,0,\zeta_{2},0)&
	 \frac{\partial{f_{0}}}{\partial{\zeta_{3}}}(0,0,0,\zeta_{2},0)\\
	 \frac{\partial{f_{1}}}{\partial{\xi_{1}}}(0,0,0,\zeta_{2},0)&
	 \frac{\partial{f_{1}}}{\partial{\xi_{3}}}(0,0,0,\zeta_{2},0)&
	 \frac{\partial{f_{1}}}{\partial{\zeta_{3}}}(0,0,0,\zeta_{2},0)\\
	 \frac{\partial{f_{2}}}{\partial{\xi_{1}}}(0,0,0,\zeta_{2},0)&
	 \frac{\partial{f_{2}}}{\partial{\xi_{3}}}(0,0,0,\zeta_{2},0)&
	 \frac{\partial{f_{2}}}{\partial{\zeta_{3}}}(0,0,0,\zeta_{2},0)
	\end{vmatrix}\\
	 =
	\begin{vmatrix}
	 0&1&0\\
	 0&c_{101}+c_{011}\zeta_{2}&1\\
	 1&c_{201}+c_{111}\zeta_{2}+c_{021}\zeta_{2}^{2}&
	 c_{101}+c_{011}\zeta_{2}
	\end{vmatrix}
	=1\neq0,
	\end{multline*}
	there are holomorphic functions $\varphi_{1}(\xi_{2},\zeta_{2})$,
	$\varphi_{3}(\xi_{2},\zeta_{2})$
	and $\mu_{3}(\xi_{2},\zeta_{2})$
	on a neighborhood of $\{(\xi_{2},\zeta_{2})\mid\xi_{2}=0\}$
	such that
	$$
	\varphi_{1}(0,\zeta_{2})=0,\
	\varphi_{3}(0,\zeta_{2})=0,\
	\mu_{3}(0,\zeta_{2})=0
	$$
	and
	$$
	f_{i}(\varphi_{1}(\xi_{2},\zeta_{2}),\xi_{2},
	\varphi_{3}(\xi_{2},\zeta_{2}),
	\zeta_{2},\mu_{3}(\xi_{2},\zeta_{2}))=0
	$$
	for $0\leq{i}\leq2$.
	We remark that
	\begin{align*}
	 &\varphi_{1}(\xi_{2},\zeta_{2})
	 \equiv
	 -\zeta_{2}^{2}\xi_{2}+(c_{101}\zeta_{2}+c_{011}\zeta_{2}^{2}
	 +c_{101}\zeta_{2}^{4}+c_{011}\zeta_{2}^{5})\xi_{2}^{2}
	 \mod(\xi_{2}^{3}),\\
	 &\varphi_{3}(\xi_{2},\zeta_{2})\equiv0\mod(\xi_{2}^{3}),\\
	 &\mu_{3}(\xi_{2},\zeta_{2})\equiv
	 (-\zeta_{2}-\zeta_{2}^{4})\xi_{2}^{2}\mod(\xi_{2}^{3}).
	\end{align*}
	There is a holomorphic function $\eta_{1}(\xi_{2},\zeta_{2})$
	such that
	$$
	\varphi_{1}(\xi_{2},\zeta_{2})
	=\xi_{2}\eta_{1}(\xi_{2},\zeta_{2}).
	$$
	Since $R$ is locally isomorphic to
	$\{(\xi_{2},\zeta_{2})\mid
	\frac{\partial\varphi_{1}}{\partial{\zeta_{2}}}
	(\xi_{2},\zeta_{2})=0\}$,
	$E$ is locally isomorphic to
	$\{(\xi_{2},\zeta_{2})\mid\xi_{2}=0\}$
	and $D$ is locally isomorphic to
	$\{(\xi_{2},\zeta_{2})\mid
	\frac{\partial\eta_{1}}{\partial{\zeta_{2}}}
	(\xi_{2},\zeta_{2})=0\}$.
	We remark that
	$L_{[1:\lambda]}\subset{X}$ if and only if  $\lambda^{3}+1=0$.
	Hence $p$ is an Eckardt point on $X$.
	We assume that $\lambda^{3}+1=0$.
	Then $L_{[1:\lambda]}$ is locally isomorphic to
	$\{(\xi_{1},\xi_{2})\mid\xi_{2}=\lambda\xi_{1}\}$
	and
	$\phi^{*}L_{[1:\lambda]}$
	is locally isomorphic to
	$\{(\xi_{2},\zeta_{2})\mid
	\xi_{2}=\lambda\varphi_{1}(\xi_{2},\zeta_{2})
	\}$,
	hence
	$L_{[1:\lambda]}^{+}+L_{[1:\lambda]}^{-}$
	is locally isomorphic to
	$\{(\xi_{2},\zeta_{2})\mid
	1=\lambda\eta_{1}(\xi_{2},\zeta_{2})\}$.
	Since $\eta_{1}(0,\zeta_{2})=-\zeta_{2}^{2}$,
	$$
	(0,\zeta_{2})\in{L_{[1:\lambda]}^{+}+L_{[1:\lambda]}^{-}}
	\Longleftrightarrow
	1=-\lambda\zeta_{2}^{2}
	\Longleftrightarrow
	\zeta_{2}^{2}=\lambda^{2}.
	$$
	Then $L_{[1:\lambda]}^{+}$ intersects $E$ transversally at
	$(p,L_{[1:\lambda]})$ by
	$$
	\frac{\partial}{\partial{\zeta_{2}}}(1-\lambda\eta_{1})
	\Big\vert_{(\xi_{2},\zeta_{2})=(0,\lambda)}=2\lambda^{2}\neq0,
	$$
	and $L_{[1:\lambda]}^{-}$ intersects $E$ transversally at
	$(p,L_{[1:-\lambda]})$ by
	$$
	\frac{\partial}{\partial{\zeta_{2}}}(1-\lambda\eta_{1})
	\Big\vert_{(\xi_{2},\zeta_{2})=(0,-\lambda)}=-2\lambda^{2}\neq0.
	$$
	Since
	$\frac{\partial\eta_{1}}{\partial{\zeta_{2}}}(0,\zeta_{2})
	=-2\zeta_{2}$,
	$$
	(0,\zeta_{2})\in{D}
	\Longleftrightarrow
	\zeta_{2}=0.
	$$
	Then $D$ intersects $E$ transversally at $(p,L_{[1:0]})$ by
	$$
	\frac{\partial^{2}\eta_{1}}{\partial{\zeta_{2}}^{2}}(0,0)=-2\neq0.
	$$
	There is a holomorphic function $\sigma(\xi_{2})$ on a
	neighborhood of $\xi_{2}=0$ such that $\sigma(0)=0$ and
	$\frac{\partial\eta_{1}}{\partial{\zeta_{2}}}
	(\xi_{2},\sigma(\xi_{2}))=0$.
	Then the image $B_{1}$ of the local component of $D$ at
	$(p,L_{[1:0]})$ by $\phi$ is locally isomorphic to
	$\{(\xi_{1},\xi_{2})\mid\xi_{1}
	=\varphi_{1}(\xi_{2},\sigma(\xi_{2}))\}$.
	Since
	$\frac{\partial}{\partial\xi_{2}}
	(\varphi_{1}(\xi_{2},\sigma(\xi_{2})))\vert_{\xi_{2}=0}=0$,
	$B_{1}$ intersects $L_{[1:\lambda]}$ transversally at $p$.
	In the same way, we can show that $D$  intersects $E$
	transversally at $(p,L_{[0:1]})$, and there is a holomorphic
	function $\tau(\xi_{1})$ on a neighborhood of $\xi_{1}=0$ such
	that $\frac{d{\tau}}{d\xi_{1}}(0)=0$ and the image $B_{2}$ of
	the local component of $D$ at $(p,L_{[0:1]})$ by $\phi$
	is locally isomorphic to
	$\{(\xi_{1},\xi_{2})\mid\xi_{2}=\tau(\xi_{1})\}$.
	Then $B_{2}$ intersects $L_{[1:\lambda]}$ and $B_{1}$
	transversally at $p$.
	This implies that $B$ has a node at $p$, and $L_{[1:\lambda]}$
	intersects $B$ at $p$ with multiplicity $2$.
 \end{enumerate}
 By the above observation, we have
 $\phi^{*}L=L^{+}+L^{-}
 +\sum_{e\in{L\cap\Sing{(B)}}}\phi^{-1}(e)$
 for a line $L$ on $X$, and $B\cap{L}$ is a set of distinct two point
 because $(B.L)=4$.
 Hence we have
 \begin{align*}
  (L^{-}.\ L^{-})&=(L^{-}.\ \phi^{*}L-L^{+}
  -\sum_{e\in{L\cap\Sing{(B)}}}\phi^{-1}(e))\\
  &=(L.\ L)-(L^{-}.\ L^{+}+\sum_{e\in{L\cap\Sing{(B)}}}\phi^{-1}(e))
  =-1-2=-3.
 \end{align*}
 Each component of $E$ corresponds to an Eckardt point on $X$, and it is
 a $(-2)$-curve on $Y$, because $\phi$ is the canonical map of $Y$ by
 Proposition~\ref{ni}.
\end{proof}
\begin{remark}
 There are at most two Eckardt points on a line $L\subset{X}$, hence
 there are at most $18$ Eckardt points on $X$.
 If $X$ has $18$ Eckardt points, then $X$ is isomorphic to the Fermat
 cubic surface \cite{s}.
\end{remark}
\begin{remark}
 Let $\phi':Y'\rightarrow{X}$ be the finite double cover of $X$ branched
 along $B$.
 Then $Y'$ may have ordinary double points, and $Y$ is the minimal
 resolution of $Y'$,
\end{remark}
\begin{remark}\label{int}
 By Theorem~$\ref{div}$, for lines $L_{1},L_{2},L$ on $X$ and Eckardt
 points $e_{1},e_{2},e$ on $X$, the intersection numbers on $Y$ are
 computed by
 \begin{align*}
  &(L_{1}^{+}.L_{2}^{+})=(L_{1}^{-}.L_{2}^{-})=
  \begin{cases}
   0&\text{if $L_{1}\neq{L_{2}}$},\\
   -3&\text{if $L_{1}={L_{2}}$},
  \end{cases}\\
  &(L_{1}^{+}.L_{2}^{-})=
  \begin{cases}
   0&\text{if $L_{1}\cap{L_{2}}=\emptyset$},\\
   1&\text{if $L_{1}\cap{L_{2}}$ is a point which is not an Eckardt
   point},\\
   0&\text{if $L_{1}\cap{L_{2}}$ is a point which is an Eckardt
   point},\\
   0&\text{if $L_{1}=L_{2}$ and there are two Eckardt points on
   $L_{1}=L_{2}$},\\
   1&\text{if $L_{1}=L_{2}$ and there is only one Eckardt point on
   $L_{1}=L_{2}$},\\
   2&\text{if $L_{1}=L_{2}$ and there are no Eckardt points on
   $L_{1}=L_{2}$},
  \end{cases}\\
  &(\phi^{-1}(e_{1}).\phi^{-1}(e_{2}))=
  \begin{cases}
   0&\text{if $e_{1}\neq{e_{2}}$}.\\
   -2&\text{if $e_{1}=e_{2}$},
  \end{cases}\\
  &(L^{+}.\phi^{-1}(e))=(L^{-}.\phi^{-1}(e))=
  \begin{cases}
   0&\text{if $e\notin{L}$},\\
   1&\text{if $e\in{L}$}.
  \end{cases}
 \end{align*}
\end{remark}
\begin{proposition}
 Any $(-2)$-curve on $Y$ is a component of $E$, and any $(-3)$-curve
 on $Y$ is a component of $Y_{\infty}+Y_{\infty}^{-}$.
\end{proposition}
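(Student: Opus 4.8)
The plan is to read off both statements from the single identity $K_{Y}\simeq(\varPhi^{*}\mathcal{O}_{\mathbf{P}^{3}}(1))\vert_{Y}=\phi^{*}\mathcal{O}_{X}(1)$ established in Proposition~\ref{ni}, combined with the adjunction formula. For a smooth rational curve $C$ on $Y$ one has $-2=C^{2}+(K_{Y}.C)$, so a $(-2)$-curve satisfies $(K_{Y}.C)=0$ and a $(-3)$-curve satisfies $(K_{Y}.C)=1$. Since $(K_{Y}.C)=(\mathcal{O}_{X}(1).\phi_{*}C)=\deg(\phi\vert_{C})\cdot\deg\phi(C)$ whenever $\phi(C)$ is a curve, and $\mathcal{O}_{X}(1)$ is very ample, this numerical input alone pins down the geometry of $\phi(C)$.

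First I would treat the $(-2)$-curves. Here $(K_{Y}.C)=0$ together with the ampleness of $\mathcal{O}_{X}(1)$ forces $\phi_{*}C=0$, i.e.\ $\phi(C)$ is a point and $C$ is contracted by $\phi$. Recalling that $\phi$ factors as the minimal resolution $\pi:Y\rightarrow{Y'}$ followed by the finite double cover $\phi':Y'\rightarrow{X}$, and that a finite morphism contracts no curve, I would conclude that $C$ is already contracted by $\pi$; hence $C$ is an exceptional curve of the resolution. By Theorem~\ref{div} the singularities of $Y'$ are ordinary double points lying over the Eckardt points of $X$, and each is resolved by the single $(-2)$-curve $\phi^{-1}(e)$, a component of $E$. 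Therefore $C=\phi^{-1}(e)$ for some Eckardt point $e$, so $C$ is a component of $E$.

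Next I would treat the $(-3)$-curves. Now $(K_{Y}.C)=1=\deg(\phi\vert_{C})\cdot\deg\phi(C)$ with both factors positive integers, so $\phi\vert_{C}:C\rightarrow\phi(C)$ is birational and $\phi(C)$ is a line $L\subset{X}$; in particular $L$ is one of the $27$ lines. Since $C$ maps dominantly (indeed birationally) onto $L$ it is not contracted, hence it is a component of $\phi^{*}L$ dominating $L$. By the decomposition $\phi^{*}L=L^{+}+L^{-}+\sum_{e\in{L\cap\Sing(B)}}\phi^{-1}(e)$ of Theorem~\ref{div}, the contracted terms $\phi^{-1}(e)$ are excluded and the only components dominating $L$ are $L^{+}$ and $L^{-}$, whence $C=L^{+}$ or $C=L^{-}$, a component of $Y_{\infty}+Y_{\infty}^{-}$.

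I expect the main subtlety to lie in the $(-2)$-curve step, namely in verifying that the locus contracted by $\phi$ consists of nothing beyond the resolution curves $\phi^{-1}(e)$; this is precisely where the factorization of $\phi$ through the finite cover $\phi'$ and the explicit description of the singularities of $Y'$ furnished by Theorem~\ref{div} are indispensable. The $(-3)$-curve step is then essentially formal once the pullback formula for lines is available.
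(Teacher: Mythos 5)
Your proposal is correct and follows essentially the same route as the paper: both compute $(K_{Y}.C)$ by adjunction, use $K_{Y}\simeq\phi^{*}\mathcal{O}_{X}(1)$ together with the projection formula to conclude that a $(-2)$-curve is contracted to a point and a $(-3)$-curve maps birationally onto a line on $X$, and then identify $C$ among the known components via Theorem~\ref{div}. The only cosmetic difference is in the $(-2)$-curve case, where the paper appeals directly to the definition of $E$ as the contracted components of the ramification divisor, while you pass through the factorization of $\phi$ into the minimal resolution followed by the finite double cover; both identifications are valid.
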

\begin{proof}
 Let $C$ be a $(-2)$-curve on $Y$.
 Since
 $(\phi_{*}C.\ \mathcal{O}_{\mathbf{P}^{3}}(1)\vert_{X})
 =(C.\ K_{Y})=0$,
 the image of $C$ by the morphism $\phi$ is a point on $X$, hence
 $C$ is a component of $E$.
 Let $C$ be a $(-3)$-curve on $Y$.
 Since
 $(\phi_{*}C.\ \mathcal{O}_{\mathbf{P}^{3}}(1)\vert_{X})
 =(C.\ K_{Y})=1$,
 the image of $C$ by the morphism $\phi$ is a line on $X$, hence
 $C$ is is a component of $Y_{\infty}+Y_{\infty}^{-}$.
\end{proof}
\begin{remark}
 We can check that the divisor $Y_{\infty}+Y_{\infty}^{-}$ is connected.
 Hence, if a divisor $W$ on $Y$ is a disjoint union of irreducible
 components of $Y_{\infty}+Y_{\infty}^{-}$, and $W$ contains a component
 of $\phi^{*}L$ for any line $L$ on $X$, then $W=Y_{\infty}$ or
 $W=Y_{\infty}^{-}$.
\end{remark}
Let $\psi=\varPsi\vert_{Y}:
Y\rightarrow{Z}=Z_{3}\subset\lambda(\mathbf{P}^{3})$ be the
second projection in Remark~\ref{z3},
and let
$[\mathcal{O}_{Z}(1)]\in
H^{2}(Z,\mathbf{Z})$
be the class of a hyperplane section by the
Pl\"{u}cker embedding $\Lambda(\mathbf{P}^{3})\subset\mathbf{P}^{5}$.
Let $Z_{\infty}$ be the set of
all lines on the cubic surface $X$.
For a line $L_{0}\in{Z_{\infty}}$, we set
$Z_{\infty}(L_{0})=\{L\in{Z_{\infty}}\mid{L_{0}\neq{L}},\
L_{0}\cap{L}\neq\emptyset\}$,
which is a set of $10$ lines.
\begin{proposition}\label{rel}
 There are the following relations in the N\'{e}ron-Severi group
 $\NS{(Y)}$$:$
 \begin{equation}\label{rel1}
 \psi^{*}[\mathcal{O}_{Z}(3)]=\phi^{*}[\mathcal{O}_{X}(3)]
 +\sum_{L\in{Z_{\infty}}}L^{+}
 \end{equation}
 and
 \begin{equation}\label{rel2}
 \psi^{*}[\mathcal{O}_{Z}(1)]=
 3\phi^{*}L_{0}-L_{0}^{+}+\sum_{L\in{Z_{\infty}(L_{0})}}L^{+}
 \end{equation}
 for any line $L_{0}\in{Z_{\infty}}$.
\end{proposition}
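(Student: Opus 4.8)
The first relation is a consequence of a Chern-class identity for the tautological bundles on $\Gamma(\mathbf{P}^{3})$. The defining evaluation sequence $0\to\mathcal{S}\to\varPsi^{*}\mathcal{Q}_{\Lambda(\mathbf{P}^{3})}\to\varPhi^{*}\mathcal{O}_{\mathbf{P}^{3}}(1)\to0$ together with the identification of the Pl\"{u}cker bundle $\mathcal{O}_{\Lambda(\mathbf{P}^{3})}(1)\simeq\det\mathcal{Q}_{\Lambda(\mathbf{P}^{3})}$ give, upon taking determinants, the isomorphism $\varPsi^{*}\mathcal{O}_{\Lambda(\mathbf{P}^{3})}(1)\simeq\mathcal{S}\otimes\varPhi^{*}\mathcal{O}_{\mathbf{P}^{3}}(1)$; this is in fact already forced by the line-bundle computations carried out in the proof of Proposition~\ref{ni}. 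Restricting to $Y$ yields $\psi^{*}\mathcal{O}_{Z}(1)\simeq\mathcal{S}\vert_{Y}\otimes\phi^{*}\mathcal{O}_{X}(1)$. Since $\mathcal{S}^{\otimes3}\vert_{Y}\simeq\mathcal{O}_{Y}(Y_{\infty})$ with $Y_{\infty}=\sum_{L}L^{+}$ by Proposition~\ref{inf}, and $\phi^{*}\mathcal{O}_{X}(1)\simeq K_{Y}$ by Proposition~\ref{ni}, taking the third tensor power of the displayed isomorphism gives $\psi^{*}\mathcal{O}_{Z}(3)\simeq\mathcal{O}_{Y}(Y_{\infty})\otimes\phi^{*}\mathcal{O}_{X}(3)$, which is exactly $(\ref{rel1})$.

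For the second relation I represent $\mathcal{O}_{\Lambda(\mathbf{P}^{3})}(1)$ by the special Schubert divisor $\sigma_{1}(L_{0})=\{L\mid L\cap L_{0}\neq\emptyset\}$ attached to the line $L_{0}\in Z_{\infty}$, so that $\psi^{*}[\mathcal{O}_{Z}(1)]$ is the class of the effective divisor $\Delta:=\psi^{-1}(\sigma_{1}(L_{0}))$ on $Y$. The first step is to determine the support of $\Delta$ by a degree argument. If $(p,L)\in Y$ with $L\neq L_{0}$ meets $L_{0}$ at a point $q$, then $q\in L_{0}\subset X$; and if moreover $p\neq q$, then $L$ would meet $X$ with total multiplicity $\geq 3+1>\deg X$, forcing $L\subset X$, i.e.\ $L\in Z_{\infty}(L_{0})$. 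Hence $\Delta$ has no horizontal component, and its support consists only of $L_{0}^{+}$ and $L_{0}^{-}$ (the locus $p=q$ lying over $L_{0}$), the $\psi$-contracted curves $L^{+}$ for $L\in Z_{\infty}(L_{0})$, and the Eckardt fibres $\phi^{-1}(e)$ for $e\in L_{0}\cap\Sing(B)$ (the case $p=q=e$). We may therefore write $\Delta=a\,L_{0}^{+}+b\,L_{0}^{-}+\sum_{L\in Z_{\infty}(L_{0})}c_{L}\,L^{+}+\sum_{e}d_{e}\,\phi^{-1}(e)$ with positive integer multiplicities.

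The heart of the matter is the computation of these multiplicities. Because each $L^{+}$ is contracted by $\psi$, the projection formula gives $\Delta\cdot L^{+}=0$ for every line $L$ on $X$; pairing this against $L_{0}^{+}$ and against each $L_{1}^{+}$ with $L_{1}\in Z_{\infty}(L_{0})$, and using the intersection table of Remark~\ref{int}, produces linear relations among $a,b,c_{L},d_{e}$. One further equation, obtained by intersecting $\Delta$ with the non-contracted curve $L_{0}^{-}$ and evaluating the same number from the class $3\psi^{*}[\mathcal{O}_{Z}(1)]=\sum_{L}L^{+}+3\phi^{*}\mathcal{O}_{X}(1)$ of $(\ref{rel1})$, pins everything down to $a=2$, $b=3$, $c_{L}=1$ and $d_{e}=3$. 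Substituting $\phi^{*}L_{0}=L_{0}^{+}+L_{0}^{-}+\sum_{e\in L_{0}\cap\Sing(B)}\phi^{-1}(e)$ from Theorem~\ref{div} then rewrites $\Delta=2L_{0}^{+}+3L_{0}^{-}+3\sum_{e}\phi^{-1}(e)+\sum_{L\in Z_{\infty}(L_{0})}L^{+}$ as $3\phi^{*}L_{0}-L_{0}^{+}+\sum_{L\in Z_{\infty}(L_{0})}L^{+}$, giving $(\ref{rel2})$. The main obstacle is precisely this multiplicity count — above all the coefficient $2$ along the contracted curve $L_{0}^{+}$, which reflects that $\sigma_{1}(L_{0})$ is singular of multiplicity $2$ at the point $[L_{0}]$, together with the coefficient $3$ along $L_{0}^{-}$ and the Eckardt fibres; alternatively, all of these can be read off directly from the local normal forms of Lemma~\ref{nor} used in the proof of Theorem~\ref{div}.
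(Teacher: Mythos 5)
Your treatment of $(\ref{rel1})$ is the paper's argument: taking determinants in the evaluation sequence $0\to\mathcal{S}\to\varPsi^{*}\mathcal{Q}_{\Lambda(\mathbf{P}^{3})}\to\varPhi^{*}\mathcal{O}_{\mathbf{P}^{3}}(1)\to0$ gives $\mathcal{S}\simeq\varPsi^{*}\mathcal{O}_{\Lambda(\mathbf{P}^{3})}(1)\otimes\varPhi^{*}\mathcal{O}_{\mathbf{P}^{3}}(-1)$, which combined with $\mathcal{O}_{Y}(Y_{\infty})\simeq\mathcal{S}^{\otimes3}\vert_{Y}$ is precisely the isomorphism the paper writes down. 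Your plan for $(\ref{rel2})$ --- determine the support of $\Delta=\psi^{*}H_{L_{0}}$ by the degree argument, then solve for the multiplicities using $(\Delta.L^{+})=0$ for the $\psi$-contracted curves together with intersection numbers evaluated through $(\ref{rel1})$ and Remark~\ref{int} --- is also the paper's strategy, and it does work as you state it in the case where $L_{0}$ carries no Eckardt point.

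However, your multiplicity count has a genuine gap when $L_{0}$ contains an Eckardt point, a case that cannot be discarded: the proposition is applied in Section~\ref{nsl} to the Fermat cubic, where every line carries two Eckardt points. Suppose $L_{0}$ contains one Eckardt point $e$ and write $\Delta=a_{+}L_{0}^{+}+a_{-}L_{0}^{-}+b\,\phi^{-1}(e)+\sum_{L\in Z_{\infty}(L_{0})}a_{L}L^{+}$. The equations you allow yourself are, by Remark~\ref{int}: $-3a_{+}+a_{-}+b=0$ from $(\Delta.L_{0}^{+})=0$; $a_{-}=3a_{L}$ for the eight $L\in Z_{\infty}(L_{0})$ with $e\notin L$; $b=3a_{L}$ for the two $L\in Z_{\infty}(L_{0})$ through $e$; and the single further equation $(\Delta.L_{0}^{-})=4$ (since by $(\ref{rel1})$ one has $3(\Delta.L_{0}^{-})=3+1+8=12$), which reads $a_{+}-3a_{-}+b+\sum_{L\in Z_{\infty}(L_{0}),\,e\notin L}a_{L}=4$. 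This system forces $b=3$ but leaves $a_{-}$ undetermined: both $(a_{+},a_{-},b)=(2,3,3)$ with all $a_{L}=1$, and $(a_{+},a_{-},b)=(3,6,3)$ with $a_{L}=2$ on the eight lines and $a_{L}=1$ on the two lines, satisfy every one of these equations in positive integers, so nothing is ``pinned down.'' The missing input is the intersection with the Eckardt fibres, which are \emph{not} contracted by $\psi$: from $(\ref{rel1})$, $(\psi^{*}[\mathcal{O}_{Z}(3)].\phi^{-1}(e))=0+1+2=3$, hence $(\Delta.\phi^{-1}(e))=1$, i.e.\ $a_{+}+a_{-}-2b+\sum_{L\in Z_{\infty}(L_{0}),\,e\in L}a_{L}=1$; adding one such equation per Eckardt point on $L_{0}$ is exactly how the paper's second and third cases conclude $a_{+}=2$, $a_{-}=3$, $b=3$, $a_{L}=1$. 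Your two fallbacks do not close the gap: the observation that $\sigma_{1}(L_{0})$ has multiplicity $2$ at $[L_{0}]$ is only a heuristic (the coefficient of a pullback along a curve contracted to a point is not in general the multiplicity of the image point), and the appeal to the local normal forms of Lemma~\ref{nor} is not carried out.
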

\begin{proof}
 Since $Y_{\infty}=\coprod_{L\in{Z_{\infty}}}L^{+}$, the relation
 $(\ref{rel1})$ is given by
 $$
 \mathcal{O}_{Y}(Y_{\infty})\simeq
 \mathcal{S}^{\otimes{3}}\vert_{Y}
 \simeq\psi^{*}\mathcal{O}_{Z}(3)\otimes
 \phi^{*}\mathcal{O}_{X}(-3).
 $$
 For $L_{0}\in\Lambda(\mathbf{P}^{3})$,
 $$
 H_{L_{0}}=\{L\in\Lambda(\mathbf{P}^{3})\mid{L_{0}\cap{L}\neq\emptyset}\}
 $$
 is a hyperplane section by the Pl\"{u}cker embedding
 $\Lambda(\mathbf{P}^{3})\subset\mathbf{P}^{5}$.
 We prove that
 $$
 \psi^{*}H_{L_{0}}
 =2L_{0}^{+}+3L_{0}^{-}+3\sum_{e\in{L_{0}}\cap\Sing{(B)}}\phi^{-1}(e)
 +\sum_{L\in{Z_{\infty}(L_{0})}}L^{+}
 $$
 for $L_{0}\in{Z_{\infty}}$.
 It gives the relation~$(\ref{rel2})$ by the relation in
 Theorem~$\ref{div}$.
 For $(p,L)\in\psi^{-1}(H_{L_{0}})\subset{Y}$, if $p\in{L_{0}}$
 then
 $$
 (p,L)\in\phi^{-1}(L_{0})
 ={L_{0}^{+}\cup{L_{0}^{-}}\cup
 \bigcup_{e\in{L_{0}}\cap\Sing{(B)}}\phi^{-1}(e)},
 $$
 and if $p\notin{L_{0}}$ then $L\subset{X}$.
 Hence the support of $\psi^{*}H_{L_{0}}$ is
 $$
 \psi^{-1}(H_{L_{0}})=
 {L_{0}^{+}\cup{L_{0}^{-}}\cup
 \bigcup_{e\in{L_{0}}\cap\Sing{(B)}}\phi^{-1}(e)}
 \cup
 \bigcup_{L\in{Z_{\infty}(L_{0})}}L^{+}.
 $$
 We compute the multiplicity of each component.
 \begin{enumerate}
  \item The case where there are no Eckardt points on the line $L_{0}$.
	We set integers $a_{+}$, $a_{-}$ and $a_{L}$ by
	$$
	\psi^{*}[\mathcal{O}_{Z}(1)]=
	\psi^{*}H_{L_{0}}
	=a_{+}L_{0}^{+}+a_{-}L_{0}^{-}
	+\sum_{L\in{Z_{\infty}(L_{0})}}a_{L}L^{+}.
	$$
	Since $(\psi^{*}[\mathcal{O}_{Z}(1)].\ L^{+})=0$ for
	$L\in{Z_{\infty}}$,
	$$
	0=(\psi^{*}H_{L_{0}}.\ L^{+})
	=
	\begin{cases}
	 -3a_{+}+2a_{-}&\text{if $L=L_{0}$},\\
	 a_{-}-3a_{L}&\text{if $L\in{Z_{\infty}(L_{0})}$}.
	\end{cases}
	$$
	By the relation $(\ref{rel1})$,
	$$
	(\psi^{*}[\mathcal{O}_{Z}(3)].\ L_{0}^{-})
	=(\phi^{*}[\mathcal{O}_{X}(3)].\ L_{0}^{-})
	+(L_{0}^{+}.\ L_{0}^{-})
	+\sum_{L\in{Z_{\infty}(L_{0})}}(L^{+}.\ L_{0}^{-})
	=3+2+10,
	$$
	hence we have
	$$
	5=(\psi^{*}H_{L_{0}}.\ L_{0}^{-})
	=2a_{+}-3a_{-}+\sum_{L\in{Z_{\infty}(L_{0})}}a_{L}.
	$$
	These equations imply that $a_{+}=2$, $a_{-}=3$ and $a_{L}=1$
	for $L\in{Z_{\infty}(L_{0})}$.
  \item The case where there is only one Eckardt point $e$ on the line
	$L_{0}$.
	We denote by $Z_{\infty}(e,L_{0})\subset{Z_{\infty}}(L_{0})$ the
	set of two lines through the point $e$.
	We set integers $a_{+}$, $a_{-}$, $b$ and $a_{L}$ by
	$$
	\psi^{*}[\mathcal{O}_{Z}(1)]=
	\psi^{*}H_{L_{0}}
	=a_{+}L_{0}^{+}+a_{-}L_{0}^{-}+b\phi^{-1}(e)
	+\sum_{L\in{Z_{\infty}(L_{0})}}a_{L}L^{+}.
	$$
	Since $(\psi^{*}[\mathcal{O}_{Z}(1)].\ L^{+})=0$ for
	$L\in{Z_{\infty}}$,
	$$
	0=(\psi^{*}H_{L_{0}}.\ L^{+})
	=
	\begin{cases}
	 -3a_{+}+a_{-}+b&\text{if $L=L_{0}$},\\
	 a_{-}-3a_{L}&
	 \text{if ${L}\in{Z_{\infty}(L_{0})}\setminus
	 Z_{\infty}(e,L_{0})$},\\
	 b-3a_{L}&
	 \text{if ${L}\in{Z_{\infty}(e,L_{0})}$}.
	\end{cases}
	$$
	By the relation $(\ref{rel1})$,
	$$
	(\psi^{*}[\mathcal{O}_{Z}(3)].\ L_{0}^{-})
	=(\phi^{*}[\mathcal{O}_{X}(3)].\ L_{0}^{-})
	+(L_{0}^{+}.\ L_{0}^{-})
	+\sum_{L\in{Z_{\infty}(L_{0})}}(L^{+}.\ L_{0}^{-})
	=3+1+8
	$$
	and
	\begin{align*}
	(\psi^{*}[\mathcal{O}_{Z}(3)].\ \phi^{-1}(e))
	&=(\phi^{*}[\mathcal{O}_{X}(3)].\ \phi^{-1}(e))
	+(L_{0}^{+}.\ \phi^{-1}(e))
	+\sum_{L\in{Z_{\infty}(L_{0})}}(L^{+}.\ \phi^{-1}(e))\\
	&=0+1+2,
	\end{align*}
	hence we have
	$$
	4=(\psi^{*}H_{L_{0}}.\ L_{0}^{-})
	=a_{+}-3a_{-}+b
	+\sum_{{L}\in{Z_{\infty}(L_{0})}\setminus
	Z_{\infty}(e,L_{0})}a_{L}
	$$
	and
	$$
	1=(\psi^{*}H_{L_{0}}.\ \phi^{-1}(e))
	=a_{+}+a_{-}-2b+\sum_{{L}\in{Z_{\infty}(e,L_{0})}}a_{L}.
	$$
	These equations imply that $a_{+}=2$, $a_{-}=3$, $b=3$ and
	$a_{L}=1$ for $L\in{Z_{\infty}(L_{0})}$.
  \item The case where there are two Eckardt points $e_{1}$, $e_{2}$ on
	the line $L_{0}$.
	We set integers $a_{+}$, $a_{-}$, $b_{1}$, $b_{2}$ and $a_{L}$
	by
	$$
	\psi^{*}[\mathcal{O}_{Z}(1)]=
	\psi^{*}H_{L_{0}}
	=a_{+}L_{0}^{+}+a_{-}L_{0}^{-}
	+b_{1}\phi^{-1}(e_{1})+b_{2}\phi^{-1}(e_{2})
	+\sum_{L\in{Z_{\infty}(L_{0})}}a_{L}L^{+}.
	$$
	Since $(\psi^{*}[\mathcal{O}_{Z}(1)].\ L^{+})=0$ for
	$L\in{Z_{\infty}}$,
	\begin{align*}
	0&=(\psi^{*}H_{L_{0}}.\ L^{+})\\
	&=
	\begin{cases}
	 -3a_{+}+b_{1}+b_{2}&\text{if $L=L_{0}$},\\
	 a_{-}-3a_{L}&
	 \text{if ${L}\in{Z_{\infty}(L_{0})}\setminus
	 \bigl(Z_{\infty}(e_{1},L_{0}){\cup}Z_{\infty}(e_{2},L_{0})
	 \bigr)$},\\
	 b_{i}-3a_{L}&
	 \text{if ${L}\in{Z_{\infty}(e_{i},L_{0})}$}.
	\end{cases}
	\end{align*}
	By the relation $\ref{rel1}$,
	$$
	(\psi^{*}[\mathcal{O}_{Z}(3)].\ L_{0}^{-})
	=(\phi^{*}[\mathcal{O}_{X}(3)].\ L_{0}^{-})
	+(L_{0}^{+}.\ L_{0}^{-})
	+\sum_{L\in{Z_{\infty}(L_{0})}}(L^{+}.\ L_{0}^{-})
	=3+0+6
	$$
	and
	\begin{align*}
	&(\psi^{*}[\mathcal{O}_{Z}(3)].\ \phi^{-1}(e_{i}))\\
	=&(\phi^{*}[\mathcal{O}_{X}(3)].\ \phi^{-1}(e_{i}))
	+(L_{0}^{+}.\ \phi^{-1}(e_{i}))
	+\sum_{L\in{Z_{\infty}(L_{0})}}(L^{+}.\ \phi^{-1}(e_{i}))\\
	=&0+1+2,
	\end{align*}
	hence we have
	$$
	3=(\psi^{*}H_{L_{0}}.\ L_{0}^{-})
	=-3a_{-}+b_{1}+b_{2}
	+\sum_{{L}\in{Z_{\infty}(L_{0})}
	\setminus(Z_{\infty}(e_{1},L_{0}){\cup}Z_{\infty}(e_{2},L_{0})
	)}a_{L}
	$$
	and
	$$
	1=(\psi^{*}H_{L_{0}}.\ \phi^{-1}(e_{i}))
	=a_{+}+a_{-}-2b_{i}+\sum_{{L}\in{Z_{\infty}(e_{i},L_{0})}}a_{L}.
	$$
	These equations imply that $a_{+}=2$, $a_{-}=3$, $b_{1}=3$,
	$b_{2}=3$ and $a_{L}=1$ for $L\in{Z_{\infty}(L_{0})}$.
 \end{enumerate}
\end{proof}
\section{Periods of cubic $3$-folds}\label{ct}
We review some works on cubic $3$-folds by Clemens-Griffiths \cite{cg}
and Tjurin \cite{t}.
Let $V\subset{\mathbf{P}^{4}}$ be a nonsingular cubic $3$-folds.
We define a subvariety $W$ of
$\mathbf{P}^{4}\times\Lambda(\mathbf{P}^{4})$
by
$$
W=\{(p,L)\in\mathbf{P}^{4}\times\Lambda(\mathbf{P}^{4})\mid
p\in{L}\subset{V}\},
$$
and we define a subvariety $S$ of $\Lambda(\mathbf{P}^{4})$ by
$$
S=\{L\in\Lambda(\mathbf{P}^{4})\mid
L\subset{V}\},
$$
which is a nonsingular surface and called the Fano surface of lines on
$V$.
The first projection $\phi:W\rightarrow{V}$ is a generically finite
morphism of degree $6$, and the second projection $\psi:W\rightarrow{S}$
is a $\mathbf{P}^{1}$-bundle.
\begin{theorem}[Clemens-Griffiths \cite{cg}, Theorem~11.19]\label{aj}
 The homomorphism
 $$
 \phi_{*}\circ\psi^{*}:
 H^{3}(S,\mathbf{Z})\longrightarrow
 H^{3}(V,\mathbf{Z})
 $$
 is an isomorphism of Hodge structures.
\end{theorem}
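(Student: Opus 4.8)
The plan is to prove that $\phi_{*}\circ\psi^{*}$ is an injective morphism of Hodge structures between two lattices of equal rank, and then that it carries a unimodular form to a unimodular form, so that it is in fact an isomorphism over $\mathbf{Z}$. First I would observe that $\phi_{*}\circ\psi^{*}$ is a morphism of weight-$3$ Hodge structures. The second projection $\psi:W\to S$ is a proper morphism (a $\mathbf{P}^{1}$-bundle), so $\psi^{*}:H^{3}(S,\mathbf{Z})\to H^{3}(W,\mathbf{Z})$ is a morphism of Hodge structures; and $\phi:W\to V$ is a proper morphism of smooth projective threefolds, so its Gysin map $\phi_{*}:H^{3}(W,\mathbf{Z})\to H^{3}(V,\mathbf{Z})$ is degree-preserving (since $\dim W=\dim V=3$) and is again a morphism of Hodge structures. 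Hence so is the composite. I would then record that both sides are torsion-free of rank $10$: for the cubic threefold $V$ one has $b_{3}(V)=10$ with $h^{3,0}=0$ and $h^{2,1}=5$, and for the Fano surface $S$, Poincar\'{e} duality gives $H^{3}(S,\mathbf{Z})\cong H_{1}(S,\mathbf{Z})$, which is free of rank $2q(S)=10$.

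Since $\phi_{*}\circ\psi^{*}$ respects the Hodge bigrading, to prove that it becomes an isomorphism after $\otimes\mathbf{Q}$ it suffices to prove injectivity on a single Hodge piece, say $H^{1,2}(S)=H^{2}(S,\Omega^{1}_{S})\cong H^{0}(S,\Omega^{1}_{S})^{\vee}$, which has dimension $q(S)=5$, mapping to $H^{1,2}(V)=H^{2}(V,\Omega^{1}_{V})$, also of dimension $5$; the $(2,1)$-statement then follows by complex conjugation, giving an isomorphism over $\mathbf{C}$ and hence over $\mathbf{Q}$. This is the \emph{crux}. I would carry it out through the cylinder (Abel--Jacobi) description of the map: on homology it sweeps a $1$-cycle on $S$ out into $V$ along the tautological lines, so on holomorphic data it is computed by restricting a global form to the universal $\mathbf{P}^{1}$-family and integrating along the fibers. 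To see that this kills nothing, I would combine Tjurin's identification of $\Omega^{1}_{S}$ in terms of the tautological subbundle with Griffiths' residue description of $H^{2,1}(V)$ as a graded piece of the Jacobian ring of the defining cubic, and match the two under the correspondence. The main obstacle is exactly this explicit comparison: it is the only place where the geometry of the lines on $V$ genuinely enters and injectivity fails to be formal, since one must rule out a nonzero global form all of whose fiber integrals vanish.

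Finally, to upgrade the rational isomorphism to one over $\mathbf{Z}$, I would use compatibility with the intersection pairings. The cup-product pairing on $H^{3}(V,\mathbf{Z})$ is unimodular by Poincar\'{e} duality, and the principal polarization of $\mathrm{Alb}(S)\cong J^{3}(S)$ induces a unimodular alternating form on $H^{3}(S,\mathbf{Z})$; the cylinder correspondence intertwines these two forms up to sign. As the map is already a rational isomorphism, it therefore carries a unimodular lattice isomorphically onto a unimodular sublattice, so its determinant is $\pm1$ and it is an isomorphism of integral Hodge structures. Equivalently, the induced isogeny $\mathrm{Alb}(S)\to J(V)$ of principally polarized abelian varieties has degree $1$.
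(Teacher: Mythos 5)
You are reviewing a statement that the paper itself does not prove: Theorem~\ref{aj} is quoted verbatim from Clemens--Griffiths \cite{cg} (their Theorem~11.19), and Section~\ref{ct} is explicitly a review section. So your proposal is an attempt to reprove a deep classical theorem, and it must stand on its own. The formal part of your plan is fine: $\phi_{*}\circ\psi^{*}$ is indeed a morphism of weight-$3$ Hodge structures, both sides are torsion free of rank $10$ (for $H^{3}(S,\mathbf{Z})\simeq H_{1}(S,\mathbf{Z})$ the absence of torsion is Collino's theorem, cited in this paper as \cite{c}), and a rational isomorphism would follow from injectivity on the single piece $H^{1,2}$ by conjugation and dimension count.

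However, both substantive steps have genuine gaps. First, the injectivity on $H^{1,2}(S)$ is precisely the geometric content of the theorem, and you do not prove it: ``combine Tjurin's identification of $\Omega^{1}_{S}$ with Griffiths' residue description of $H^{2,1}(V)$ and match the two'' names the right ingredients (\cite{t}, \cite{g}) but is a plan, not an argument; nothing in the proposal excludes a nonzero class all of whose fiber integrals vanish, which you yourself flag as the crux. Second, and more seriously, the integral upgrade is circular as stated. Since $H^{6}(S)=0$, there is no cup-product form on $H^{3}(S,\mathbf{Z})$, and $\mathrm{Alb}(S)$ of a surface carries no a priori principal polarization: the principal polarization you invoke is the one transported from $J(V)$ through the very isomorphism being proved (this is exactly how it is obtained in \cite{cg}, and how this paper uses it --- see the proof of Proposition~\ref{ht}, which quotes $2\tau=\iota^{*}\theta$ from Lemma~11.27 of \cite{cg}). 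What the cylinder correspondence intertwines with the intersection form of $V$ is computable without circularity: by the projection formula and the decomposition $[W\times_{V}W]=[\Delta_{W}]+[I']$ one gets
\begin{equation*}
\deg_{V}\bigl((\phi_{*}\psi^{*}\alpha\cup\phi_{*}\psi^{*}\beta)\cap[V]\bigr)
=\deg_{S}\bigl((\alpha\cup I_{*}\beta)\cap[S]\bigr),
\end{equation*}
the pairing through the incidence correspondence $I\subset S\times S$; proving that \emph{this} intrinsic form is unimodular is equivalent to the theorem (given the rational isomorphism), not a route to it, and the factor-of-two phenomena recorded in this paper ($2\tau=\iota^{*}\theta$, $\deg(\tau^{\cup2}\cap[S])=5$) show how delicate that normalization is. Clemens--Griffiths need a separate geometric/topological argument for the integral statement; some replacement for that input is missing from your proposal.
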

Let $J$ be the intermediate Jacobian of the Hodge structure
$H^{3}(V,\mathbf{Z})$.
Then the complex torus $J$ is a principally polarized abelian variety of
dimension $5$.
We denote by $\theta\in{H^{2}(J,\mathbf{Z})}$ the class of the
polarization.
Let $A$ be the Albanese variety of $S$, and $\iota:S\rightarrow{A}$ the
Albanese morphism.
By Theorem~$\ref{aj}$, there is a natural isomorphism $A\simeq{J}$ of
abelian varieties.
Let us denote by $\theta\in{H^{2}(A,\mathbf{Z})}$ the corresponding
principal polarization on $A$.
The primitive part of
$H^{2}(A,\mathbf{Z})$ is defined as the space
$$
H^{2}_{\mathrm{prim}}(A,\mathbf{Z})
=\Ker{\bigl(\theta^{\cup4}:H^{2}(A,\mathbf{Z})\longrightarrow
H^{10}(A,\mathbf{Z});\
\alpha\longmapsto\theta^{\cup4}\cup\alpha\bigr)},
$$
and the primitive part of
$H^{2}(S,\mathbf{Z})$ is defined as the space
$$
H^{2}_{\mathrm{prim}}(S,\mathbf{Z})
=\Ker{\bigl([\mathcal{O}_{S}(1)]:H^{2}(S,\mathbf{Z})\longrightarrow
H^{4}(S,\mathbf{Z});\
\beta\longmapsto[\mathcal{O}_{S}(1)]\cup\beta\bigr)},
$$
where
$[\mathcal{O}_{S}(1)]\in
H^{2}(S,\mathbf{Z})$
is the class of a hyperplane section by the
Pl\"{u}cker embedding $\Lambda(\mathbf{P}^{4})\subset\mathbf{P}^{9}$.
We define a symmetric form on $H^{2}(A,\mathbf{Z})$ by
$$
\langle\ ,\ \rangle_{A}:
H^{2}(A,\mathbf{Z})\times
H^{2}(A,\mathbf{Z})\longrightarrow
\mathbf{Z};\
(\alpha_{1},\alpha_{2})\longmapsto
\deg{\Bigl(\Bigl(\frac{\theta^{\cup3}}{3!}
\cup\alpha_{1}\cup\alpha_{2}\Bigr)\cap[A]\Bigr)},
$$
and a symmetric form on
$H^{2}(S,\mathbf{Z})$ by
$$
\langle\ ,\ \rangle_{S}:
H^{2}(S,\mathbf{Z})\times
H^{2}(S,\mathbf{Z})\longrightarrow
\mathbf{Z};\
(\beta_{1},\beta_{2})\longmapsto
\deg{((\beta_{1}\cup\beta_{2})\cap[S])}.
$$
We remark that these symmetric forms give polarizations of Hodge
structures on the primitive part $H^{2}_{\mathrm{prim}}(A,\mathbf{Z})$
and $H^{2}_{\mathrm{prim}}(S,\mathbf{Z})$.
\begin{proposition}\label{ht}
 The homomorphism
 $\iota^{*}:H^{2}(A,\mathbf{Z})\rightarrow
 H^{2}(S,\mathbf{Z})$
 induces the isomorphism
 $$
 \bigl(H^{2}_{\mathrm{prim}}(A,\mathbf{Z}),\
 \langle\ ,\ \rangle_{A}\bigr)\simeq
 \bigl(H^{2}_{\mathrm{prim}}(S,\mathbf{Z}),\
 \langle\ ,\ \rangle_{S}\bigr)
 $$
 of polarized Hodge structures.
\end{proposition}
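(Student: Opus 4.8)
The plan is to factor $\iota^{*}$ through the cup product and to reduce everything to the symplectic geometry of $H^{1}(S,\mathbf{Z})\simeq H^{3}(V,\mathbf{Z})(1)$. First I would use that $\iota\colon S\to A$ is the Albanese morphism, so that $\iota^{*}\colon H^{1}(A,\mathbf{Z})\to H^{1}(S,\mathbf{Z})$ is an isomorphism of integral Hodge structures. Since $A$ is an abelian variety, cup product gives $H^{2}(A,\mathbf{Z})\simeq\bigwedge^{2}H^{1}(A,\mathbf{Z})$, and because $\iota^{*}$ is a ring homomorphism, the map $\iota^{*}\colon H^{2}(A,\mathbf{Z})\to H^{2}(S,\mathbf{Z})$ is identified with the cup product map $u\colon\bigwedge^{2}H^{1}(S,\mathbf{Z})\to H^{2}(S,\mathbf{Z})$. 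By the isomorphism $\bigwedge^{2}H^{1}(S,\mathbf{Q})\simeq H^{2}(S,\mathbf{Q})$ of Clemens--Griffiths and Tjurin quoted in the introduction, $u$ is an isomorphism after tensoring with $\mathbf{Q}$; as the pullback by a holomorphic map it is automatically a morphism of Hodge structures. Thus $\iota^{*}\colon H^{2}(A,\mathbf{Q})\to H^{2}(S,\mathbf{Q})$ is already an isomorphism of Hodge structures, and $\iota^{*}$ is injective over $\mathbf{Z}$ with image of finite index.

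Next I would check compatibility with the two symmetric forms. The key input is the theorem of Clemens--Griffiths that the Fano surface represents the minimal class, i.e.\ the Poincar\'e dual of $\iota_{*}[S]$ equals $\theta^{\cup3}/3!$ in $H^{6}(A,\mathbf{Z})$. Granting this, the projection formula yields, for $\alpha_{1},\alpha_{2}\in H^{2}(A,\mathbf{Z})$,
$$\langle\iota^{*}\alpha_{1},\iota^{*}\alpha_{2}\rangle_{S}=\deg\bigl((\iota^{*}\alpha_{1}\cup\iota^{*}\alpha_{2})\cap[S]\bigr)=\deg\Bigl(\Bigl(\tfrac{\theta^{\cup3}}{3!}\cup\alpha_{1}\cup\alpha_{2}\Bigr)\cap[A]\Bigr)=\langle\alpha_{1},\alpha_{2}\rangle_{A},$$
so $\iota^{*}$ is an isometry on the nose. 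In particular both forms are nondegenerate over $\mathbf{Q}$, and unwinding the definitions of the primitive parts shows that $\alpha$ is primitive iff $\langle\theta,\alpha\rangle_{A}=0$ and $\beta$ is primitive iff $\langle[\mathcal{O}_{S}(1)],\beta\rangle_{S}=0$; that is, $H^{2}_{\mathrm{prim}}(A,\mathbf{Q})=\theta^{\perp}$ and $H^{2}_{\mathrm{prim}}(S,\mathbf{Q})=[\mathcal{O}_{S}(1)]^{\perp}$ for these forms.

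To match the primitive parts I must show that $\iota^{*}\theta$ and $[\mathcal{O}_{S}(1)]$ span the same line, for then the isometry sends $\theta^{\perp}$ onto $(\iota^{*}\theta)^{\perp}=[\mathcal{O}_{S}(1)]^{\perp}$. Both classes are defined for the whole family of cubic $3$-folds and are therefore invariant under the monodromy action on $H^{2}(S,\mathbf{Q})\simeq\bigwedge^{2}H^{3}(V,\mathbf{Q})(1)$. Since the monodromy on $H^{3}(V,\mathbf{Q})$ is the full symplectic group and the space of its invariants in $\bigwedge^{2}$ of the standard representation is one-dimensional (spanned by the symplectic form, whose image under $u$ is $\iota^{*}\theta$), the monodromy-invariant classes in $H^{2}(S,\mathbf{Q})$ form a line; hence $\iota^{*}\theta$ and $[\mathcal{O}_{S}(1)]$ are proportional. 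Comparing self-intersections $\langle\iota^{*}\theta,\iota^{*}\theta\rangle_{S}=\langle\theta,\theta\rangle_{A}=20$ and $\langle[\mathcal{O}_{S}(1)],[\mathcal{O}_{S}(1)]\rangle_{S}=\deg S=45$ pins down the relation $3\,\iota^{*}\theta=2\,[\mathcal{O}_{S}(1)]$. It follows at once that $\iota^{*}$ carries $H^{2}_{\mathrm{prim}}(A,\mathbf{Z})$ into $H^{2}_{\mathrm{prim}}(S,\mathbf{Z})$, since for $\alpha\in\theta^{\perp}$ one has $\langle[\mathcal{O}_{S}(1)],\iota^{*}\alpha\rangle_{S}$ proportional to $\langle\theta,\alpha\rangle_{A}=0$.

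The main obstacle is the integral surjectivity onto $H^{2}_{\mathrm{prim}}(S,\mathbf{Z})$, that is, upgrading the rational isomorphism of primitive parts to an isomorphism of lattices. Here I would use that $H^{2}(S,\mathbf{Z})$ is torsion-free (the Fano surface has $H_{1}(S,\mathbf{Z})$ free, so $H^{2}$ has no torsion) and hence unimodular for the cup product. Given $\beta\in H^{2}_{\mathrm{prim}}(S,\mathbf{Z})$, its rational preimage $\alpha=(\iota^{*})^{-1}\beta$ is automatically primitive by the isometry, so it suffices to prove $\beta\in\mathrm{im}(u)=\iota^{*}H^{2}(A,\mathbf{Z})$. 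The plan is to compute the finite group $\mathrm{coker}(u)$ explicitly, using a symplectic basis of $H^{1}(S,\mathbf{Z})$ together with the relation $3\,\iota^{*}\theta=2\,[\mathcal{O}_{S}(1)]$, which shows $2\,[\mathcal{O}_{S}(1)]\in\mathrm{im}(u)$ and so forces the cokernel to be generated by the class of $[\mathcal{O}_{S}(1)]$. Using unimodularity one then identifies the quotient map $H^{2}(S,\mathbf{Z})\to\mathrm{coker}(u)$ with reduction of $\langle[\mathcal{O}_{S}(1)],\ \cdot\ \rangle_{S}$, which vanishes on $[\mathcal{O}_{S}(1)]^{\perp}$ while sending $[\mathcal{O}_{S}(1)]$ to a nonzero value because $\langle[\mathcal{O}_{S}(1)],[\mathcal{O}_{S}(1)]\rangle_{S}=45$ is odd; hence every primitive integral class lies in $\mathrm{im}(u)$ and $\iota^{*}$ is surjective onto $H^{2}_{\mathrm{prim}}(S,\mathbf{Z})$. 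I expect precisely this last lattice-theoretic step---controlling $\mathrm{coker}(u)$ sharply enough to read off the primitive sublattice---to be the delicate point, the rest being formal consequences of the Albanese identification, the minimal-class formula, and the monodromy computation.
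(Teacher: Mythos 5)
Your proposal is correct in substance and reaches the proposition by a genuinely different route at exactly the two points where the paper leans on the literature. The isometry step is the same in both: the projection formula applied to the minimal-class identity $\iota_{*}[S]=\theta^{\cup3}/3!$ of Clemens--Griffiths. After that, the paper quotes Clemens--Griffiths for the incidence-divisor relations $2\tau=\iota^{*}\theta$, $3\tau=[\mathcal{O}_{S}(1)]$, $\langle\tau,\tau\rangle_{S}=5$, and quotes Collino both for the torsion-freeness of $H_{1}(S,\mathbf{Z})$ and for the fact that $\iota^{*}$ has cokernel $\mathbf{Z}/2\mathbf{Z}$; the isomorphism of primitive parts then falls out of a snake-lemma chase whose decisive input is that $\langle\tau,\tau\rangle_{S}=5$ is odd while $\langle\tau,\iota^{*}\alpha\rangle_{S}$ is always even. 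You instead (i) derive the proportionality $3\,\iota^{*}\theta=2\,[\mathcal{O}_{S}(1)]$ from invariant theory: both classes are flat over the moduli space, the monodromy of cubic threefolds is Zariski dense in $Sp_{10}$, the invariants of $Sp$ in $\bigwedge^{2}$ of the standard representation form a line, and the constant is pinned down by $\deg(\theta^{\cup5}\cap[A])=5!$ against $\deg S=45$; and (ii) replace Collino's cokernel statement by a direct index computation: since $H^{2}(S,\mathbf{Z})$ is unimodular, the index of $\iota^{*}H^{2}(A,\mathbf{Z})$ is the square root of the discriminant of the form $(x,y)\mapsto\deg\bigl(x\wedge y\wedge\tfrac{\theta^{\cup3}}{3!}\cap[A]\bigr)$ on $\bigwedge^{2}H^{1}(A,\mathbf{Z})$, which in a symplectic basis works out to $\pm4$, giving index $2$. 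Your closing parity argument ($45$ odd, $\langle[\mathcal{O}_{S}(1)],\iota^{*}\alpha\rangle_{S}$ even) is the paper's own trick transported from $\tau$ to $[\mathcal{O}_{S}(1)]=3\tau$. What your route buys is independence from Clemens--Griffiths Lemma~11.27/\S10 and from Collino's computation of $\mathrm{coker}(\iota_{*})$ on $H_{2}$; the price is a heavier black box (Beauville's theorem that the monodromy of cubic threefolds is the full symplectic group, which you assert without citation) plus an explicit computation on a rank-$45$ lattice.

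Two caveats. First, the clause ``which shows $2\,[\mathcal{O}_{S}(1)]\in\mathrm{im}(u)$ and so forces the cokernel to be generated by the class of $[\mathcal{O}_{S}(1)]$'' is, as literally written, a non sequitur: divisibility of $2\,[\mathcal{O}_{S}(1)]$ only bounds the order of that class in $\mathrm{coker}(u)$ by $2$; it says nothing about generation. What rescues the argument is the discriminant computation you announce but do not carry out: it yields $|\mathrm{coker}(u)|=2$, and then the parity argument shows $[\mathcal{O}_{S}(1)]\notin\mathrm{im}(u)$, so its class does generate. That computation is genuinely needed and it does work, but it must be done; your sketch has the logical order inverted. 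Second, you have not escaped Collino entirely: the torsion-freeness of $H^{2}(S,\mathbf{Z})$, which you invoke in passing via ``$H_{1}(S,\mathbf{Z})$ free,'' is a nontrivial theorem (it is precisely the other half of what the paper cites Collino for), and without it both your unimodularity argument and the surjectivity onto $H^{2}_{\mathrm{prim}}(S,\mathbf{Z})$ (which would contain any torsion classes) break down.
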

\begin{proof}
 By \cite[Lemma~9.13 and (10.14)]{cg}, the homomorphism
 $
 \iota^{*}:
 H^{2}(A,\mathbf{Z})\rightarrow
 H^{2}(S,\mathbf{Z})
 $
 is injective with a finite cokernel.
 By \cite[(2.3.5)]{c}, the homology group
 $H_{1}(S,\mathbf{Z})$ has no torsion element, and
 the cokernel of
 $
 \iota_{*}:
 H_{2}(S,\mathbf{Z})\rightarrow
 H_{2}(A,\mathbf{Z})
 $
 is isomorphic to $\mathbf{Z}/2\mathbf{Z}$.
 Hence $H^{2}(S,\mathbf{Z})$ has no torsion element, and the cokernel of
 $\iota^{*}:H^{2}(A,\mathbf{Z})\rightarrow
 H^{2}(S,\mathbf{Z})$
 is isomorphic to $\mathbf{Z}/2\mathbf{Z}$.
 Since $[\iota(S)]=\frac{\theta^{\cup3}}{6}\in{H^{6}(A,\mathbf{Z})}$ by
 \cite[Proposition~13.1]{cg},
 we have
 $$
 \iota_{*}((\iota^{*}\alpha_{1}\cup\iota^{*}\alpha_{2})\cap[S])
 =(\alpha_{1}\cup\alpha_{2})\cap\iota_{*}[S]
 =(\alpha_{1}\cup\alpha_{2})\cap
 \Bigl(\frac{\theta^{\cup3}}{6}\cap[A]\Bigr)\\
 =\Bigl(\frac{\theta^{\cup{3}}}{6}\cup\alpha_{1}\cup\alpha_{2}\Bigr)\cap[A]
 $$
 for $\alpha_{1},\alpha_{2}\in{H^{2}(A,\mathbf{Z})}$, hence the
 homomorphism $\iota^{*}$ is compatible with the symmetric forms.
 Let $\tau\in{H^{2}(S,\mathbf{Z})}$ be the class of an incidence divisor
 \cite[\textsection{2}]{cg}.
 Since $3\tau=[\mathcal{O}_{S}(1)]$ by \cite[\textsection{10}]{cg}, the
 primitive part
 $H^{2}_{\mathrm{prim}}(S,\mathbf{Z})$ is equal to the space orthogonal
 to $\tau$.
 Since $2\tau=\iota^{*}\theta$ by \cite[Lemma~11.27]{cg}, we have
 $$
 \iota_{*}((2\tau\cup\iota^{*}\alpha)\cap[S])
 =\iota_{*}((\iota^{*}\theta\cup\iota^{*}\alpha)\cap[S])
 =\Bigl(\frac{\theta^{\cup{4}}}{6}\cup\alpha\Bigr)\cap[A]
 $$
 for any $\alpha\in{H^{2}(A,\mathbf{Z})}$.
 Hence we have a commutative diagram of exact sequences
 $$
 \begin{matrix}
  &&0&&0&&&&\\
  &&\downarrow&&\downarrow&&&&\\
  &&H^{2}_{\mathrm{prim}}(A,\mathbf{Z})&&
  H^{2}_{\mathrm{prim}}(S,\mathbf{Z})&&&&\\
  &&\downarrow&&\downarrow&&&&\\
  0&\longrightarrow&
  H^{2}(A,\mathbf{Z})&\overset{\iota^{*}}{\longrightarrow}
  &H^{2}(S,\mathbf{Z})&\longrightarrow
  &\mathbf{Z}/2\mathbf{Z}&\longrightarrow&0\\
  &&
  \frac{\theta^{\cup4}}{12}\downarrow\qquad&&\quad\downarrow\tau
  &&&&\\
  &&
  \mathbf{Z}&=&\mathbf{Z}.&
  &&&
 \end{matrix}
 $$
 Since $\theta$ is a principal polarization, the image of the
 homomorphism
 $$
 \frac{\theta^{\cup4}}{12}:H^{2}(A,\mathbf{Z})\rightarrow\mathbf{Z};\
 \alpha\longmapsto\deg{\Bigl(\Bigl(
 \frac{\theta^{\cup4}}{12}\cup\alpha\Bigr)\cap[A]\Bigr)}
 $$
 is $2\mathbf{Z}$.
 And the image of the homomorphism
 $$
 \tau:H^{2}(S,\mathbf{Z})\rightarrow\mathbf{Z};\
 \alpha\longmapsto\deg{((\tau\cup\alpha)\cap[S])}
 $$
 is not contained in $2\mathbf{Z}$,
 because $\deg{(\tau^{\cup2}\cap[S])}=5\notin2\mathbf{Z}$ by
 \cite[(10.8)]{cg}.
 Hence $\tau:H^{2}(S,\mathbf{Z})\rightarrow\mathbf{Z}$
 is surjective, and
 $\iota^{*}:H^{2}_{\mathrm{prim}}(A,\mathbf{Z})\rightarrow
 H^{2}_{\mathrm{prim}}(S,\mathbf{Z})$
 is an isomorphism.
\end{proof}
\section{Periods of cubic surfaces}\label{cs}
Let $X\subset{\mathbf{P}^{3}}$ be a nonsingular cubic surface defined by
$F(x_{0},\dots,x_{3})\in\mathbf{C}[x_{0},\dots,x_{3}]$.
Let $V\subset\mathbf{P}^{4}$ be the cubic $3$-fold defined by
$F(x_{0},\dots,x_{3})+x_{4}^{3}\in\mathbf{C}[x_{0},\dots,x_{4}]$.
Then the projection
$$
\rho:V\rightarrow\mathbf{P}^{3};\
[x_{0}:\dots:x_{3}:x_{4}]\mapsto[x_{0}:\dots:x_{3}]
$$
is the triple Galois cover branched along the cubic surface $X$.
Let $S$ be the Fano surface of lines on $V$.
Then the Galois group $\Gal{(\rho)}\simeq\mathbf{Z}/3\mathbf{Z}$ of the
cover $\rho$ acts on the surface $S$.
\begin{lemma}\label{tc}
 Let $L$ be a line in $\mathbf{P}^{4}$.
 If $L$ is contained in $V$, then its image
 $\rho(L)\subset\mathbf{P}^{3}$ by $\rho$ is a line in $\mathbf{P}^{3}$,
 and it is contained in $X$ or intersects $X$ at only one point with
 multiplicity $3$.
\end{lemma}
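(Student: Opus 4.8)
The plan is to realize $\rho$ as the linear projection of $\mathbf{P}^{4}$ from the point $p_{0}=[0:0:0:0:1]$. First I would observe that $p_{0}\notin{V}$, since $F(0,0,0,0)+1^{3}=1\neq0$; hence $\rho$ is a morphism on all of $V$, and for a line $L\subset{V}$ the center $p_{0}$ does not lie on $L$. A linear projection from a point restricts to an isomorphism onto a line on any line avoiding the center, so $\rho\vert_{L}:L\rightarrow\rho(L)$ is an isomorphism and $\rho(L)$ is a line in $\mathbf{P}^{3}$. This settles the first assertion.

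Next I would identify $\rho^{-1}(X)\cap{L}$. Writing a point of $L$ as $[x_{0}:\dots:x_{4}]$, the defining equation of $V$ gives $F(x_{0},\dots,x_{3})=-x_{4}^{3}$ along $L$, so $\rho([x])=[x_{0}:\dots:x_{3}]$ lies on $X$ if and only if $F(x_{0},\dots,x_{3})=0$, that is, if and only if $x_{4}=0$. Thus the points of $L$ whose image lies on $X$ are exactly the zeros of the linear form $x_{4}\vert_{L}$ on $L\simeq\mathbf{P}^{1}$.

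Then I would split into two cases according to whether $x_{4}\vert_{L}$ vanishes identically. If $x_{4}\vert_{L}\equiv0$, then $L$ lies in the hyperplane $\{x_{4}=0\}$, hence in $V\cap\{x_{4}=0\}=X$, and $\rho(L)=L\subset{X}$, which gives the first alternative. Otherwise $x_{4}\vert_{L}$ is a nonzero section of $\mathcal{O}_{L}(1)$ and vanishes at a single point $q$ with multiplicity $1$, so $\rho(L)$ meets $X$ set-theoretically only at the point $\rho(q)$.

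Finally I would pin down the multiplicity by a degree count. Since $\rho(L)$ is a line not contained in the cubic surface $X$, the divisor cut on $\rho(L)$ by $F$ has degree $3$. Pulling this divisor back by the isomorphism $\rho\vert_{L}$ identifies it with the divisor of $F(x_{0},\dots,x_{3})\vert_{L}=-(x_{4}\vert_{L})^{3}$, namely $3[q]$. Hence $\rho(L)$ meets $X$ at the single point $\rho(q)$ with multiplicity exactly $3$, which is the second alternative. The whole argument is a bookkeeping of divisors under the totally ramified projection; the only point requiring care is the identification $F\vert_{L}=-x_{4}^{3}\vert_{L}$ coming from the equation of $V$, which converts the triple ramification of $\rho$ along $X$ into the required intersection multiplicity $3$. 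I do not expect a genuine obstacle here.
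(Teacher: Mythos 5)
Your proof is correct and is essentially the paper's argument in coordinate-free form: the paper parametrizes $L$ explicitly and extracts the identity $F(a_{0}t_{0}+b_{0}t_{1},\dots,a_{3}t_{0}+b_{3}t_{1})=-t_{1}^{3}$, which is exactly your $F\vert_{L}=-(x_{4}\vert_{L})^{3}$, and both arguments split into the same two cases ($L$ inside the hyperplane $x_{4}=0$ or meeting it in one point) and read off the multiplicity $3$ from that cube. The only cosmetic difference is that you invoke the general fact that projection from a point off $L$ maps $L$ isomorphically to a line, where the paper checks this by hand via $F(a)=0$, $F(b)=-1$.
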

\begin{proof}
 Let $H_{4}\subset\mathbf{P}^{4}$ be the hyperplane defined by the
 equation $x_{4}=0$.
 If $L$ is contained in $H_{4}\cap{V}$, then it is clear that $\rho(L)$
 is a line contained in $X$.
 We assume that $L\cap{H_{4}}$ is a point
 $[a_{0}:\dots:a_{3}:0]\in\mathbf{P}^{4}$.
 By taking a point $[b_{0}:\dots:b_{3}:1]\in{L\setminus{H_{4}}}$,
 the line $L$ is written as
 $$
 L=\{[a_{0}t_{0}+b_{0}t_{1}:\dots:a_{3}t_{0}+b_{3}t_{1}:t_{1}]
 \in\mathbf{P}^{4}\mid
 [t_{0}:t_{1}]\in\mathbf{P}^{1}\}.
 $$
 If $L\subset{V}$, then
 $$
 F(a_{0}t_{0}+b_{0}t_{1},\dots,a_{3}t_{0}+b_{3}t_{1})+t_{1}^{3}=0
 \in\mathbf{C}[t_{0},t_{1}].
 $$
 Since $F(b_{0},\dots,b_{3})+1=0$ and $F(a_{0},\dots,a_{3})=0$,
 we have $(b_{1},\dots,b_{3})\neq(0,\dots,0)$ and
 $[a_{0}:\dots:a_{3}]\neq[b_{0}:\dots:b_{3}]$, hence
 $$
 \rho(L)=\{[a_{0}t_{0}+b_{0}t_{1}:\dots:a_{3}t_{0}+b_{3}t_{1}]
 \in\mathbf{P}^{3}\mid
 [t_{0}:t_{1}]\in\mathbf{P}^{1}\}.
 $$
 is a line in $\mathbf{P}^{3}$.
 Since
 $
 F(a_{0}t_{0}+b_{0}t_{1},\dots,a_{3}t_{0}+b_{3}t_{1})=-t_{1}^{3},
 $
 the line $\rho(L)$ intersects $X$ at the point
 $[a_{0}:\dots:a_{3}]\in\mathbf{P}^{3}$ with multiplicity $3$.
\end{proof}
Let $Z=Z_{3}$ be the surface in Remark~\ref{z3}.
By Lemma~\ref{tc}, the line $\rho(L)$ represents a point of $Z$ for a
line $L$ on $V$.
Let us abuse notation by
$$
\rho:S\longrightarrow{Z};\
L\longmapsto\rho(L).
$$
We set
$$
S_{\infty}=\{L\in{\Lambda(\mathbf{P}^{4})}\mid{L\subset{V\cap{H_{4}}}}\},
$$
which is a set of $27$ points on $S$.
\begin{lemma}
 $\rho:
 S\rightarrow{Z}
 $
 is the quotient morphism by the $\Gal{(\rho)}$-action, and $S_{\infty}$
 is the set of the fixed point by the $\Gal{(\rho)}$-action on $S$.
\end{lemma}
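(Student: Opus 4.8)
The strategy is to describe the fibres of $\rho$ explicitly and to read both assertions off from them. First I would fix a generator $\sigma$ of $\Gal(\rho)\simeq\mathbf{Z}/3\mathbf{Z}$, acting on $\mathbf{P}^{4}$ by $\sigma[x_{0}:\cdots:x_{3}:x_{4}]=[x_{0}:\cdots:x_{3}:\omega x_{4}]$ for a primitive cube root of unity $\omega$. This preserves $V$ and induces the action $L\mapsto\sigma(L)$ on $S$, and since the centre of projection $o=[0:0:0:0:1]$ satisfies $\rho\circ\sigma=\rho$, the map $\rho\colon S\to Z$ is $\Gal(\rho)$-invariant. Hence it factors through the quotient $S/\Gal(\rho)$, and the two things to prove become: the induced morphism $S/\Gal(\rho)\to Z$ is an isomorphism, and the fixed locus of $\sigma$ on $S$ is exactly $S_{\infty}$.

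The computational core is the fibre description. For $\ell\in Z$ the preimage $\rho^{-1}(\ell)\subset V$ is $V\cap\Pi_{\ell}$, where $\Pi_{\ell}\simeq\mathbf{P}^{2}$ is the plane spanned by $\ell$ and $o$ (note $o\notin V$, so $\rho$ is a morphism). Parametrising $\ell$ by $[s:u]\mapsto[\gamma_{0}:\cdots:\gamma_{3}]$ with the $\gamma_{i}$ linear and using the coordinate $v=x_{4}$ on $\Pi_{\ell}$, the plane cubic $V\cap\Pi_{\ell}$ has equation $F(\gamma(s,u))+v^{3}=0$. If $\ell\notin Z_{\infty}$, then by definition of $Z$ the line $\ell$ meets $X$ at one point with multiplicity $3$, so $F(\gamma)=c\,m^{3}$ with $c\neq0$ and $m$ linear; thus $v^{3}+c\,m^{3}=\prod_{j=0}^{2}(v-\eta_{j}m)$ splits into three distinct lines of $V$ lying over $\ell$, none through $o$, and $\sigma$ (multiplication of $v$ by $\omega$) permutes them cyclically via $\eta_{j}\mapsto\omega\eta_{j}$. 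So $\rho^{-1}(\ell)$ is a single free $\Gal(\rho)$-orbit. If $\ell\in Z_{\infty}$, then $F|_{\ell}\equiv0$, the cubic degenerates to $v^{3}=0$, and its reduction is the single line $\ell\subset H_{4}=\{x_{4}=0\}$, which is $\sigma$-fixed; so $\rho^{-1}(\ell)$ is one fixed point.

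With the fibres in hand the first assertion follows formally. The morphism $\rho$ is proper with finite fibres, hence finite; its fibres are precisely the $\Gal(\rho)$-orbits, so $S/\Gal(\rho)\to Z$ is bijective, and it is birational because over $Z\setminus Z_{\infty}$ the orbits are free of length $3=|\Gal(\rho)|$. To upgrade a finite, bijective, birational morphism to an isomorphism I would use that $Z$ is normal: by Remark~\ref{z3} and Proposition~\ref{inf}, $Z=Z_{3}$ is obtained from the smooth surface $Y=Y_{3}$ by contracting the $27$ disjoint $(-3)$-curves $L^{+}$ to the points of $Z_{\infty}$, and contracting a curve of negative self-intersection on a smooth projective surface yields a normal surface (Grauert). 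Since $S/\Gal(\rho)$ is normal, being a finite quotient of the smooth surface $S$, the finite birational bijection $S/\Gal(\rho)\to Z$ is an isomorphism, i.e.\ $\rho$ is the quotient morphism.

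For the fixed locus, the fixed-point set of $\sigma$ on $\mathbf{P}^{4}$ is $H_{4}\sqcup\{o\}$. A $\sigma$-invariant line $L\subset V$ restricts to an automorphism of $L\simeq\mathbf{P}^{1}$ of order dividing $3$: if it is trivial then $L\subset H_{4}$, and if it has order $3$ its two fixed points lie in $H_{4}\sqcup\{o\}$, forcing $L$ through $o$, which is impossible since $o\notin V$. Hence the $\sigma$-fixed lines are exactly those contained in $H_{4}\cap V=X$, which is precisely $S_{\infty}$; equivalently $S_{\infty}$ is the fibre of $\rho$ over $Z_{\infty}$ found above. I expect the main obstacle to be the normality input identifying $Z$ with the quotient, which is why I route it through the contraction description of $Z$ coming from Section~\ref{dc}; the splitting of the plane cubic $v^{3}+c\,m^{3}$ into three lines permuted by $\sigma$ is the only genuine calculation.
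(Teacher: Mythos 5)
Your computational core is the same as the paper's. The paper parametrizes a line $L\subset V$ lying over $L'=\rho(L)$, writes $F(a_{0}t_{0}+b_{0}t_{1},\dots,a_{3}t_{0}+b_{3}t_{1})=ct_{1}^{3}$, and solves $-(a_{4}t_{0}+b_{4}t_{1})^{3}=ct_{1}^{3}$ to get $a_{4}=0$ and $b_{4}^{3}=-c$; this is exactly your factorization $v^{3}+c\,m^{3}=\prod_{j}(v-\eta_{j}m)$, and both arguments yield that $\rho$ is surjective with fibres equal to $\Gal(\rho)$-orbits, free of length $3$ over $Z\setminus Z_{\infty}$ and a single fixed point over $Z_{\infty}$. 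Your fixed-locus argument (the fixed locus of $\sigma$ in $\mathbf{P}^{4}$ is $H_{4}\sqcup\{o\}$, and a $\sigma$-stable line not contained in $H_{4}$ would have to pass through $o\notin V$) is a cleaner justification of the paper's one-line claim that $L=\sigma(L)$ forces $L\subset H_{4}$.

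The divergence is in what ``quotient morphism'' is taken to mean, and there your extra step has a gap. The paper's proof stops at the set-theoretic statement above --- surjectivity plus fibres contained in orbits --- and this, with compactness of $S$, is all that is used later (e.g.\ for the \'{e}tale Galois cover $S\setminus S_{\infty}\to Z\setminus Z_{\infty}$ in Proposition~\ref{mp}). You attempt the stronger identification of $Z$ with $S/\Gal(\rho)$ as a variety, and your normality input does not deliver it: Grauert's theorem produces \emph{some} proper birational morphism from $Y$ onto a normal surface contracting the $27$ curves $L^{+}$; it does not say that the particular image $Z=\varPsi(Y)\subset\Lambda(\mathbf{P}^{3})$, with its structure as a subvariety of the Grassmannian, is normal. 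What your argument actually establishes is a finite, birational, bijective morphism $S/\Gal(\rho)\to Z$ (equivalently, that the normalization of $Z$ is bijective onto $Z$), and such a morphism fails to be an isomorphism precisely when $Z$ has unibranch non-normal points --- which is exactly what must be excluded, so the reasoning is circular at this step. Closing it would require showing $\mathcal{O}_{Z}=\varPsi_{*}\mathcal{O}_{Y}$, i.e.\ normality of the image itself; neither you nor the paper proves this (the paper only asserts the quotient-singularity structure of $Z$ afterwards, without proof, in Remark~\ref{qs}). If you instead read the lemma as the paper does, your fibre computation and fixed-locus argument already constitute a complete proof.
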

\begin{proof}
 Let $\omega\in\mathbf{C}$ be a primitive $3$-rd root of unity.
 The automorphism
 $$
 \sigma:V\longrightarrow{V};\
 [x_{0}:\dots:x_{3}:x_{4}]\longmapsto[x_{0}:\dots:x_{3}:\omega{x_{4}}]
 $$
 is a generator of the Galois group $\Gal{(\rho)}$.
 For a line $L$ on $V$, we have $\rho(L)=\rho(\sigma(L))$, and if
 $L=\sigma(L)$, then $L$ is contained in $H_{4}$.
 Hence $S_{\infty}$ is the set of fixed points of the
 $\Gal{(\rho)}$-action on $S$.
 Let
 $$
 L'=\{[a_{0}t_{0}+b_{0}t_{1}:\dots:a_{3}t_{0}+b_{3}t_{1}]
 \in\mathbf{P}^{3}\mid
 [t_{0}:t_{1}]\in\mathbf{P}^{1}\}
 $$
 be a line in $\mathbf{P}^{3}$ which intersects $X$ at
 $[a_{0}:\dots:a_{3}]$ with multiplicity $\geq3$.
 Then there exists  $c\in\mathbf{C}$ such that
 $$
 F(a_{0}t_{0}+b_{0}t_{1},\dots,a_{3}t_{0}+b_{3}t_{1})
 =ct_{1}^{3}.
 $$
 If a line
 $$
 L=\{[a_{0}t_{0}+b_{0}t_{1}:\dots:a_{3}t_{0}+b_{3}t_{1}:
 a_{4}t_{0}+b_{4}t_{1}]
 \in\mathbf{P}^{4}\mid
 [t_{0}:t_{1}]\in\mathbf{P}^{1}\}
 $$
 is contained in $V$, then
 $$
 -(a_{4}t_{0}+b_{4}t_{1})^{3}
 =F(a_{0}t_{0}+b_{0}t_{1},\dots,a_{3}t_{0}+b_{3}t_{1})
 =ct_{1}^{3},
 $$
 hence $a_{4}=0$ and $b_{4}^{3}=-c$.
 This imply that the morphism $\rho:S\rightarrow{Z}$ is surjective, and
 the fiber at $L'\in{Z}$ is contained in a
 $\Gal{(\rho)}$-orbit.
\end{proof}
\begin{remark}\label{qs}
 Each singularity of $Z$ is isomorphic to the quotient of
 $\mathbf{C}^{2}$ by the cyclic group generated by the action
 $(a,b)\mapsto(\omega{a},\omega{b})$.
 Hence we have
 $$
 H^{i}(Z,Z\setminus{Z_{\infty}},\mathbf{Z})\simeq
 \begin{cases}
  (\mathbf{Z}/3\mathbf{Z})^{\oplus27}&\text{if $i=3$,}\\
  \mathbf{Z}^{\oplus27}&\text{if $i=4$,}\\
  0&\text{if $i\neq3,4$}.
 \end{cases}
 $$
\end{remark}
Let $\phi:Y=Y_{3}\rightarrow{X}$ be the double cover branched along its
Hessian, and let $Y_{\infty}$ be the distinguished divisor on $Y$ which
is introduced in Section~\ref{vl}.
By Remark~$\ref{qs}$, the restriction homomorphism
$
H^{2}(Z,\mathbf{Z})\rightarrow
H^{2}(Z\setminus{Z_{\infty}},\mathbf{Z})
\simeq
H^{2}(Y\setminus{Y_{\infty}},\mathbf{Z})
$
is injective with a finite cokernel, hence
$
\psi^{*}:H^{2}(Z,\mathbf{Z})\rightarrow
H^{2}(Y,\mathbf{Z})
$
is injective.
Since $H^{2}(Y,\mathbf{Z})$ is torsion free,
$H^{2}(Z,\mathbf{Z})$ is also torsion free.
The period integral
$$
H^{0}(Y,\Omega_{Y}^{2}(\log{Y_{\infty}}))
\longrightarrow
\Hom{(H_{2}(Y\setminus{Y_{\infty}},\mathbf{Z}),
\mathbf{C})};\
\omega\longmapsto
\Bigl[\gamma\mapsto\int_{\gamma}\omega\Bigr]
$$
defines Hodge structures of pure weight $2$ on
$H^{2}(Z,\mathbf{Z})$ and
$H^{2}(Z\setminus{Z_{\infty}},\mathbf{Z})$.
For $\gamma\in{H^{2}(Z\setminus{Z_{\infty}},\mathbf{Z})}$, there is a
unique $\bar{\gamma}\in{H^{2}(Z,\mathbf{Q})}$ such that the restriction
of $\bar{\gamma}$ to $H^{2}(Z\setminus{Z_{\infty}},\mathbf{Q})$ is equal
to the class of $\gamma$ in the rational cohomology group.
We define the primitive part of
$H^{2}(Z,\mathbf{Z})$ and
$H^{2}(Z\setminus{Z_{\infty}},\mathbf{Z})$
by
$$
H^{2}_{\mathrm{prim}}(Z,\mathbf{Z})
=\Ker{\bigl([\mathcal{O}_{Z}(1)]:
H^{2}(Z,\mathbf{Z})\longrightarrow
H^{4}(Z,\mathbf{Z});\
\gamma\longmapsto[\mathcal{O}_{Z}(1)]\cup{\gamma}\bigr)},
$$
$$
H^{2}_{\mathrm{prim}}(Z\setminus{Z_{\infty}},\mathbf{Z})
=\Ker{\bigl([\mathcal{O}_{Z}(1)]:
H^{2}(Z\setminus{Z_{\infty}},\mathbf{Z})\longrightarrow
H^{4}(Z,\mathbf{Q});\
\gamma\longmapsto[\mathcal{O}_{Z}(1)]\cup\bar{\gamma}\bigr)}.
$$
We define symmetric forms on
$H^{2}(Z,\mathbf{Z})$ and
$H^{2}(Z\setminus{Z_{\infty}},\mathbf{Z})$
by
$$
\langle\ ,\ \rangle_{Z}:
H^{2}(Z,\mathbf{Z})\times
H^{2}(Z,\mathbf{Z})
\longrightarrow\mathbf{Z};\
(\gamma_{1},\gamma_{2})\longmapsto
\deg{(({\gamma_{1}}\cup{\gamma_{2}})\cap[Z])},
$$
$$
\langle\ ,\ \rangle_{Z}:
H^{2}(Z\setminus{Z_{\infty}},\mathbf{Z})\times
H^{2}(Z\setminus{Z_{\infty}},\mathbf{Z})
\longrightarrow\mathbf{Q};\
(\gamma_{1},\gamma_{2})\longmapsto
\deg{((\bar{\gamma_{1}}\cup\bar{\gamma_{2}})\cap[Z])}.
$$
These symmetric forms give polarizations of Hodge structures on the
primitive part
$H^{2}_{\mathrm{prim}}(Z,\mathbf{Z})$
and
$H^{2}_{\mathrm{prim}}(Z\setminus{Z_{\infty}},\mathbf{Z})$.
\begin{proposition}\label{mp}
 The homomorphism
 $$
 H^{2}(Z\setminus{Z_{\infty}},\mathbf{Z})
 \overset{\rho^{*}}{\longrightarrow}
 H^{2}(S\setminus{S_{\infty}},\mathbf{Z})
 \simeq
 H^{2}(S,\mathbf{Z})
 $$
 induces an isomorphism
 $H^{2}(Z\setminus{Z_{\infty}},\mathbf{Z})_{\mathrm{free}}
 \simeq
 H^{2}(S,\mathbf{Z})^{\Gal{(\rho)}}$
 of Hodge structures and an isomorphism
 $$
 \bigl(H^{2}_{\mathrm{prim}}
 (Z\setminus{Z_{\infty}},\mathbf{Z})_{\mathrm{free}},\
 3\langle\ ,\ \rangle_{Z}\bigr)\simeq
 \bigl(H^{2}_{\mathrm{prim}}(S,\mathbf{Z})^{\Gal{(\rho)}},\
 \langle\ ,\ \rangle_{S}\bigr)
 $$
 of polarized Hodge structures.
\end{proposition}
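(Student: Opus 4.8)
The plan is to use that $\rho$ restricts to a \emph{free} quotient away from the fixed points and then to run the Cartan--Leray spectral sequence of that quotient, where a single vanishing coming from the cubic threefold $V$ forces the whole statement. First I would record the geometric input: by the preceding lemma $\rho\colon S\to Z$ is the quotient by $G=\Gal(\rho)\simeq\mathbf{Z}/3\mathbf{Z}$, the fixed locus is exactly $S_{\infty}$, and it maps onto $Z_{\infty}=\Sing(Z)$. Hence the restriction
$$
\rho\colon S\setminus S_{\infty}\longrightarrow Z\setminus Z_{\infty}
$$
is an unramified $G$-cover of smooth varieties. I would then set up the Cartan--Leray spectral sequence
$$
E_{2}^{p,q}=H^{p}(G,H^{q}(S\setminus S_{\infty},\mathbf{Z}))\Longrightarrow H^{p+q}(Z\setminus Z_{\infty},\mathbf{Z}),
$$
using $H^{q}(S\setminus S_{\infty},\mathbf{Z})=H^{q}(S,\mathbf{Z})$ for $q\le 2$ (as $S_{\infty}$ is a finite point set, already used in the text). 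The edge homomorphism $H^{2}(Z\setminus Z_{\infty},\mathbf{Z})\to E_{\infty}^{0,2}\subseteq H^{2}(S,\mathbf{Z})^{G}$ is precisely $\rho^{*}$.

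The crux is the vanishing of $E_{2}^{2,1}$. Since $H_{1}(S,\mathbf{Z})$ is torsion free (Clemens--Griffiths), $H^{1}(S,\mathbf{Z})$ is a free $\mathbf{Z}$-module, so $H^{1}(S,\mathbf{Z})^{G}$ injects into $H^{1}(S,\mathbf{Q})^{G}$. Under the $G$-equivariant isomorphism $H^{1}(S,\mathbf{Q})\simeq H^{3}(V,\mathbf{Q})(1)$ this becomes $H^{3}(V,\mathbf{Q})^{G}$, which is $H^{3}(\mathbf{P}^{3},\mathbf{Q})=0$ because $\rho\colon V\to\mathbf{P}^{3}$ is the $G$-quotient. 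Thus $H^{1}(S,\mathbf{Z})^{G}=0$, and as $G$ is cyclic $E_{2}^{2,1}=H^{2}(G,H^{1}(S,\mathbf{Z}))$ is a quotient of $H^{1}(S,\mathbf{Z})^{G}=0$, hence vanishes. The only differentials leaving $E_{2}^{0,2}$ land in $E_{2}^{2,1}=0$ and in $E_{3}^{3,0}\subseteq H^{3}(G,\mathbf{Z})=0$, so $E_{\infty}^{0,2}=E_{2}^{0,2}=H^{2}(S,\mathbf{Z})^{G}$ and $\rho^{*}$ is surjective onto $H^{2}(S,\mathbf{Z})^{G}$. The kernel of the edge map is the filtration piece assembled from $E_{\infty}^{1,1}$ and $E_{\infty}^{2,0}$, both annihilated by $|G|=3$, so $\ker\rho^{*}$ is $3$-torsion and dies in the free part. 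Therefore $\rho^{*}$ induces an isomorphism $H^{2}(Z\setminus Z_{\infty},\mathbf{Z})_{\mathrm{free}}\simeq H^{2}(S,\mathbf{Z})^{G}$, and it is an isomorphism of Hodge structures because $\rho$ is algebraic and carries the log $2$-forms defining the weight-$2$ structure on the left to holomorphic forms on $S$.

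To upgrade to the polarized statement I would first note that $G$ fixes $[\mathcal{O}_{S}(1)]$, so it preserves the primitive decomposition and $H^{2}_{\mathrm{prim}}(S,\mathbf{Z})^{G}=H^{2}(S,\mathbf{Z})^{G}\cap H^{2}_{\mathrm{prim}}(S,\mathbf{Z})$. A Pl\"{u}cker computation shows that $\rho^{*}[\mathcal{O}_{Z}(1)]$ is a nonzero rational multiple of $[\mathcal{O}_{S}(1)]$ in $H^{2}(S,\mathbf{Q})$, so $\rho^{*}$ sends primitive classes to primitive classes and restricts to the integral isomorphism $H^{2}_{\mathrm{prim}}(Z\setminus Z_{\infty},\mathbf{Z})_{\mathrm{free}}\simeq H^{2}_{\mathrm{prim}}(S,\mathbf{Z})^{G}$. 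The factor $3$ is the projection formula for the finite degree-$3$ map, $\rho_{*}\rho^{*}=3$: for primitive $\gamma_{1},\gamma_{2}$ with extensions $\bar\gamma_{i}\in H^{2}(Z,\mathbf{Q})$ one gets $\langle\rho^{*}\gamma_{1},\rho^{*}\gamma_{2}\rangle_{S}=\deg((\rho^{*}\bar\gamma_{1}\cup\rho^{*}\bar\gamma_{2})\cap[S])=3\deg((\bar\gamma_{1}\cup\bar\gamma_{2})\cap[Z])=3\langle\gamma_{1},\gamma_{2}\rangle_{Z}$.

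The main obstacle is the \emph{integral} surjectivity of $\rho^{*}$ onto the $G$-invariants: the transfer alone only identifies the image up to index a power of $3$, and what removes this ambiguity is exactly the vanishing $H^{1}(S,\mathbf{Z})^{G}=0$, i.e. the absence of invariant third cohomology on the cubic threefold. A secondary point to verify carefully is the proportionality $\rho^{*}[\mathcal{O}_{Z}(1)]\in\mathbf{Q}^{\times}\cdot[\mathcal{O}_{S}(1)]$, which is what guarantees that $\rho^{*}$ respects primitivity and hence restricts to an isomorphism of the primitive lattices.
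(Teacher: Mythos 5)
Your proposal is correct and takes essentially the same route as the paper: both run the Cartan--Leray spectral sequence of the \'etale $\mathbf{Z}/3$-cover $S\setminus S_{\infty}\rightarrow Z\setminus Z_{\infty}$ to get surjectivity of $\rho^{*}$ onto $H^{2}(S,\mathbf{Z})^{\Gal(\rho)}$ with $3$-power-torsion kernel, then use $\rho^{*}\mathcal{O}_{Z}(1)=\mathcal{O}_{S}(1)$ (your ``rational multiple'' suffices) and $\rho_{*}[S]=3[Z]$ to handle primitivity and the factor $3$. The only real difference is how the key vanishing $H^{2}(\Gal(\rho),H^{1}(S,\mathbf{Z}))=0$ is obtained---you deduce it from $H^{3}(V,\mathbf{Q})^{\Gal(\rho)}=H^{3}(\mathbf{P}^{3},\mathbf{Q})=0$ plus periodicity of cyclic group cohomology, while the paper reads it off from the $\mathbf{Z}[\omega]$-module structure of $H^{3}(V,\mathbf{Z})$, which in addition pins down the exact kernel order $3^{6}$ used later in Proposition~\ref{tor}, though not needed for this statement.
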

\begin{proof}
 Since
 $\rho:S\setminus{S_{\infty}}\rightarrow
 Z\setminus{Z_{\infty}}$
 is a finite \'{e}tale Galois cover, we have the Cartan-Leray spectral
 sequence
 $$
 E_{2}^{p,q}
 =H^{p}(\Gal{(\rho)},H^{q}(S\setminus{S_{\infty}},\mathbf{Z}))
 \Longrightarrow
 H^{p+q}(Z\setminus{Z_{\infty}},\mathbf{Z}).
 $$
 Since the $\Gal{(\rho)}$-action on
 $H^{0}(S\setminus{S_{\infty}},\mathbf{Z})\simeq
 H^{0}(S,\mathbf{Z})\simeq\mathbf{Z}$
 is trivial, we have
 $$
 H^{p}(\Gal{(\rho)},H^{0}(S\setminus{S_{\infty}},\mathbf{Z}))
 \simeq
 \begin{cases}
  \mathbf{Z}&\text{if $p=0$,}\\
  0&\text{if $p$ is odd,}\\
  \mathbf{Z}/3\mathbf{Z}&\text{if $p\neq0$ is even.}
 \end{cases}
 $$
 Since
 $H^{1}(S\setminus{S_{\infty}},\mathbf{Z})\simeq
 H^{1}(S,\mathbf{Z})\simeq
 H^{3}(V,\mathbf{Z})$
 is a free $\mathbf{Z}$-module of rank $10$ and
 the $\Gal{(\rho)}$-action has no invariant part,
 it is regard as a free
 $\mathbf{Z}[\omega]$-module of rank $5$, where
 $\mathbf{Z}[\omega]\simeq
 \mathbf{Z}[\Gal{(\rho)}]/(\sum_{\sigma\in\Gal{(\rho)}}\sigma)$
 is the ring of Eisenstein integers \cite[(2.2)]{act}.
 Hence we have
 $$
 H^{p}(\Gal{(\rho)},H^{1}(S\setminus{S_{\infty}},\mathbf{Z}))
 \simeq
 \begin{cases}
  (\mathbf{Z}/3\mathbf{Z})^{\oplus5}&\text{if $p$ is odd,}\\
  0&\text{if $p$ is even.}
 \end{cases}
 $$
 By the spectral sequence, the homomorphism
 $$
 H^{2}(Z\setminus{Z_{\infty}},\mathbf{Z})
 \longrightarrow
 H^{0}(\Gal{(\rho)},H^{2}(S\setminus{S_{\infty}},\mathbf{Z}))
 \simeq
 H^{2}(S,\mathbf{Z})^{\Gal{(\rho)}}
 $$
 is surjective, and its kernel is of order $3^{6}$.
 Since
 $\rho^{*}\mathcal{O}_{Z}(1)=\mathcal{O}_{S}(1)$,
 we have
 $$
 \rho_{*}(([\mathcal{O}_{S}(1)]\cup\rho^{*}\bar{\gamma})\cap[S])
 =([\mathcal{O}_{Z}(1)]\cup\bar{\gamma})\cap\rho_{*}[S]
 =([\mathcal{O}_{Z}(1)]\cup\bar{\gamma})\cap3[Z]
 $$
 for $\gamma\in{H^{2}(Z\setminus{Z_{\infty}},\mathbf{Z})}$,
 hence
 $\gamma\in
 H^{2}_{\mathrm{prim}}(Z\setminus{Z_{\infty}},\mathbf{Z})$
 if and only if
 $\rho^{*}\bar{\gamma}\in
 H^{2}_{\mathrm{prim}}(S,\mathbf{Q})$.
 And we have
 $$
 \deg{((\rho^{*}\bar{\gamma_{1}}\cup\rho^{*}\bar{\gamma_{2}})\cap[S])}
 =\deg{((\bar{\gamma_{1}}\cup\bar{\gamma_{2}})\cap\rho_{*}[S])}
 =3\deg{((\bar{\gamma_{1}}\cup\bar{\gamma_{2}})\cap[Z])}
 $$
 for
 ${\gamma_{1}},{\gamma_{2}}
 \in{H^{2}(Z\setminus{Z_{\infty}},\mathbf{Z})}$.
\end{proof}
\begin{remark}\label{ld}
In the similar way, we can prove that the coinvariant part of the
$\Gal{(\rho)}$-action on $H_{2}(S,\mathbf{Z})$ is isomorphic to
$H_{2}(Z\setminus{Z_{\infty}},\mathbf{Z})$.
By the duality
$
H^{2}(Z\setminus{Z_{\infty}},\mathbf{Z})\simeq
H_{2}(Z,Z_{\infty},\mathbf{Z})\simeq
H_{2}(Z,\mathbf{Z}),
$
we have a commutative diagram
$$
\begin{CD}
 &\rho^{*}&&&&\rho_{*}&\\
 H^{2}(S\setminus{S_{\infty}},\mathbf{Z})^{\Gal{(\rho)}}&
 \overset{\simeq}{\leftarrow}&
 H^{2}(Z\setminus{Z_{\infty}},\mathbf{Z})_{\mathrm{free}}
 &\simeq&H_{2}(Z,\mathbf{Z})_{\mathrm{free}}
 &\hookleftarrow&H_{2}(S,\mathbf{Z})_{\Gal{(\rho)}}\\
 \uparrow\simeq&&\cup&&\cup&&\uparrow\simeq\\
 H^{2}(S,\mathbf{Z})^{\Gal{(\rho)}}&{\hookleftarrow}&
 H^{2}(Z,\mathbf{Z})&\simeq&H_{2}(Z\setminus{Z_{\infty}},\mathbf{Z})
 &\overset{\simeq}{\leftarrow}
 &H_{2}(S\setminus{S_{\infty}},\mathbf{Z})_{\Gal{(\rho)}}.
\end{CD}
$$
\end{remark}
\begin{remark}\label{yz}
 The restriction
 $
 H^{2}(Y,\mathbf{Z})
 \rightarrow
 H^{2}(Y\setminus{Y_{\infty}},\mathbf{Z})
 $
 induces an isomorphism
 $$
 \frac{H^{2}(Y,\mathbf{Z})}
 {\sum_{L\in{Z_{\infty}}}\mathbf{Z}L^{+}}
 \simeq
 H^{2}(Y\setminus{Y_{\infty}},\mathbf{Z})
 \simeq
 H^{2}(Z\setminus{Z_{\infty}},\mathbf{Z}),
 $$
 and the injection
 $
 \psi^{*}:H^{2}(Z,\mathbf{Z})
 \rightarrow
 H^{2}(Y,\mathbf{Z})
 $
 induces an isomorphism
 $$
 H^{2}(Z,\mathbf{Z})
 \simeq
 \Bigl(\sum_{L\in{Z_{\infty}}}\mathbf{Z}L^{+}\Bigr)^{\perp}
 \subset
 H^{2}(Y,\mathbf{Z}),
 $$
 where $\perp$ means the orthogonal complement in the unimodular lattice
 $$
 \langle\ ,\ \rangle_{Y}:
 H^{2}(Y,\mathbf{Z})\times
 H^{2}(Y,\mathbf{Z})
 \longrightarrow\mathbf{Z};\
 (\gamma_{1},\gamma_{2})\longmapsto
 \deg{(({\gamma_{1}}\cup{\gamma_{2}})\cap[Y])}.
 $$
\end{remark}
\begin{proposition}\label{tor}
 The homomorphism
 $$
 H^{2}(X,\mathbf{Z})
 \overset{\phi^{*}}{\longrightarrow}
 H^{2}(Y\setminus{Y_{\infty}},\mathbf{Z})
 {\simeq}H^{2}(Z\setminus{Z_{\infty}},\mathbf{Z})
 $$
 induces an isomorphism
 $$
 \frac{H_{\mathrm{prim}}^{2}(X,\mathbf{Z})}
 {3H_{\mathrm{prim}}^{2}(X,\mathbf{Z})}\simeq
 H^{2}
 (Z\setminus{Z_{\infty}},\mathbf{Z})_{\mathrm{tor}}
 $$
 of abelian groups.
\end{proposition}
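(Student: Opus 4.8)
The plan is to construct the isomorphism by hand, in three moves: check that $\phi^{*}$ carries the primitive lattice into the torsion subgroup, check that the resulting map is injective modulo $3$, and conclude by a count of orders. Both groups have order $3^{6}$. Indeed $H^{2}_{\mathrm{prim}}(X,\mathbf{Z})$ is the $E_{6}$-lattice of rank $6$, whose discriminant group is $\mathbf{Z}/3\mathbf{Z}$, so $H^{2}_{\mathrm{prim}}(X,\mathbf{Z})/3H^{2}_{\mathrm{prim}}(X,\mathbf{Z})\simeq(\mathbf{Z}/3\mathbf{Z})^{6}$; and the Cartan--Leray computation already carried out in the proof of Proposition~\ref{mp} shows that $H^{2}(Z\setminus Z_{\infty},\mathbf{Z})_{\mathrm{tor}}$, being the kernel of $\rho^{*}$, has order $3^{6}$. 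Thus any well-defined injection between them is automatically an isomorphism.

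\emph{The image is torsion.} By Proposition~\ref{mp} one has $H^{2}(Z\setminus Z_{\infty},\mathbf{Z})_{\mathrm{tor}}=\Ker\bigl(\rho^{*}\colon H^{2}(Z\setminus Z_{\infty},\mathbf{Z})\to H^{2}(S,\mathbf{Z})\bigr)$. Under the identifications of Remark~\ref{z3} the composite $H^{2}(X,\mathbf{Z})\xrightarrow{\phi^{*}}H^{2}(Y\setminus Y_{\infty},\mathbf{Z})\simeq H^{2}(Z\setminus Z_{\infty},\mathbf{Z})\xrightarrow{\rho^{*}}H^{2}(S,\mathbf{Z})$ is the pullback $g^{*}$ along the morphism $g=\phi\circ(\psi\vert_{Y\setminus Y_{\infty}})^{-1}\circ\rho\colon S\setminus S_{\infty}\to X$; by Lemma~\ref{tc} this $g$ sends a line $M\subset V$ to the point $M\cap H_{4}$, viewed in $X=V\cap H_{4}$. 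Hence for a line $L\subset X$, with associated point $\tilde L\in S_{\infty}$, the preimage $g^{-1}(L)$ is the incidence curve $\{M\in S\mid M\cap\tilde L\neq\emptyset\}$ with the finite set $S_{\infty}$ deleted. These incidence curves are the fibres of the universal incidence correspondence over the connected surface $S$, so their classes in $H^{2}(S,\mathbf{Z})$ all coincide; therefore $g^{*}([L]-[L'])=0$ for any two lines $L,L'$ on $X$. Since such differences span $H^{2}_{\mathrm{prim}}(X,\mathbf{Q})$ and $H^{2}(S,\mathbf{Z})$ is torsion free, $g^{*}$ annihilates $H^{2}_{\mathrm{prim}}(X,\mathbf{Z})$, i.e. $\phi^{*}$ maps it into $H^{2}(Z\setminus Z_{\infty},\mathbf{Z})_{\mathrm{tor}}$. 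As this torsion group is $3$-torsion (Remark~\ref{qs}), $\phi^{*}(3H^{2}_{\mathrm{prim}}(X,\mathbf{Z}))=0$ and the map descends to $H^{2}_{\mathrm{prim}}(X,\mathbf{Z})/3H^{2}_{\mathrm{prim}}(X,\mathbf{Z})$.

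\emph{Injectivity modulo $3$.} By Remark~\ref{yz} an element $\alpha\in H^{2}_{\mathrm{prim}}(X,\mathbf{Z})$ lies in the kernel precisely when $\phi^{*}\alpha\in\sum_{L}\mathbf{Z}L^{+}$. Writing $\phi^{*}\alpha=\sum_{L}n_{L}L^{+}$ and intersecting with $L'^{+}$, the projection formula with $\phi_{*}L'^{+}=[L']$ together with the intersection numbers of Remark~\ref{int} give $(\alpha.[L'])=(\phi^{*}\alpha.L'^{+})=-3n_{L'}$, so $(\alpha.[L'])\equiv0\pmod 3$ for each of the $27$ lines. The projections of the line classes to $H^{2}_{\mathrm{prim}}(X,\mathbf{Q})$ generate the dual lattice $H^{2}_{\mathrm{prim}}(X,\mathbf{Z})^{*}$ (differences of disjoint lines are roots generating $E_{6}$, while a single line represents the generator of $E_{6}^{*}/E_{6}$), so these congruences force $\tfrac{1}{3}\alpha\in(H^{2}_{\mathrm{prim}}(X,\mathbf{Z})^{*})^{*}=H^{2}_{\mathrm{prim}}(X,\mathbf{Z})$, that is $\alpha\in3H^{2}_{\mathrm{prim}}(X,\mathbf{Z})$. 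This gives the injection, and the order count upgrades it to the desired isomorphism.

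The conceptual crux is the vanishing $g^{*}\vert_{H^{2}_{\mathrm{prim}}}=0$, which is exactly what makes $\phi^{*}$ land in the torsion subgroup; I expect the genuine technical obstacle to be the integral lattice step in the last paragraph, namely passing from the mod-$3$ congruences $(\alpha.[L])\equiv0$ to $\alpha\in3H^{2}_{\mathrm{prim}}(X,\mathbf{Z})$. This requires the $27$ line classes to generate the full dual lattice $E_{6}^{*}$ over $\mathbf{Z}$ (not merely over $\mathbf{Q}$) and the precise identification of $H^{2}_{\mathrm{prim}}(X,\mathbf{Z})$ with $E_{6}$, whose discriminant group $\mathbf{Z}/3\mathbf{Z}$ is the source of every factor of $3$ above.
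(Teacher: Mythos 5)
Your overall architecture is workable, and the step you flagged as the likely ``technical obstacle'' (the $E_{6}^{*}$ lattice argument) is in fact correct as written: the primitive projections of the $27$ lines do generate $E_{6}^{*}$ over $\mathbf{Z}$, and together with the congruences $(\alpha.L')=-3n_{L'}$ from the projection formula and Remark~\ref{int} this does force $\alpha\in3H^{2}_{\mathrm{prim}}(X,\mathbf{Z})$. The genuine gap is in the step you treated as geometrically evident, the vanishing $g^{*}([L]-[L'])=0$. The class $g^{*}[L]$ is the class of the scheme-theoretic pullback divisor $g^{*}L$, not of the reduced curve $g^{-1}(L)$, so you must control multiplicities; worse, $g^{-1}(L)$ is not always an incidence curve dominating $L$. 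If $L$ contains an Eckardt point $e$ of $X$, then $e$ is a cone point of $V$: by the case $\phi^{-1}(p)\simeq\mathbf{P}^{1}$ in the proof of Theorem~\ref{div}, there is a one-parameter family of lines of $V$ through $e$, i.e.\ a curve in $S$ (for the Fermat cubic these are exactly the elliptic curves $\mathcal{E}_{i,j}^{\alpha}$ with $i<j\leq3$, and there every line of $X$ contains two Eckardt points, so this always happens). That curve lies in $g^{-1}(L)$, is contracted by $g$, occurs in $g^{*}L$ with some positive multiplicity, and has nonzero class in $H^{2}(S,\mathbf{Z})$; your argument says nothing about this contribution, so it does not establish $g^{*}[L]=\tau$, which is nevertheless true. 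The efficient repair does not need the Fano surface at all: subtracting relation~$(\ref{rel2})$ of Proposition~\ref{rel} for two lines gives $3\phi^{*}(L_{0}-L_{1})=L_{0}^{+}-L_{1}^{+}-\sum_{L\in{Z_{\infty}(L_{0})}}L^{+}+\sum_{L\in{Z_{\infty}(L_{1})}}L^{+}\in\sum_{L}\mathbf{Z}L^{+}$, and since differences of lines generate $H^{2}_{\mathrm{prim}}(X,\mathbf{Z})$, the image of $H^{2}_{\mathrm{prim}}(X,\mathbf{Z})$ in $H^{2}(Y,\mathbf{Z})/\sum_{L}\mathbf{Z}L^{+}\simeq H^{2}(Z\setminus{Z_{\infty}},\mathbf{Z})$ is killed by $3$, hence is torsion and the map descends mod $3$. (Alternatively, $g^{*}[L]=\tau$ follows by restricting $(\ref{rel2})$ to $Y\setminus{Y_{\infty}}$, pulling back by $\rho$, and using $[\mathcal{O}_{S}(1)]=3\tau$ and the torsion freeness of $H^{2}(S,\mathbf{Z})$.)

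With that patch your proof is correct and is a genuine alternative to the paper's. Both arguments rest on the same order count ($|H^{2}(Z\setminus{Z_{\infty}},\mathbf{Z})_{\mathrm{tor}}|=3^{6}$ from the Cartan--Leray computation in Proposition~\ref{mp}, against $|H^{2}_{\mathrm{prim}}(X,\mathbf{Z})/3H^{2}_{\mathrm{prim}}(X,\mathbf{Z})|=3^{6}$), and both use Proposition~\ref{rel} for well-definedness, but they split the remaining work in opposite ways. The paper proves \emph{surjectivity} directly: it identifies the torsion with $(\psi^{*}H^{2}(Z,\mathbf{Z}))^{\perp}/\sum_{L}\mathbf{Z}L^{+}$ and computes via Remark~\ref{int} that $\phi^{*}H^{2}_{\mathrm{prim}}(X,\mathbf{Z})+\sum_{L}\mathbf{Z}L^{+}$ and this primitive closure both have determinant $-3^{15}$, hence coincide (the identity recorded as Proposition~\ref{pr} and reused later); injectivity then comes from the order count. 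You prove \emph{injectivity} directly, by the lattice-duality argument, and get surjectivity from the order count. Your route makes transparent where the $3$-torsion comes from, namely the discriminant group $E_{6}^{*}/E_{6}\simeq\mathbf{Z}/3\mathbf{Z}$; the paper's route has the side benefit of establishing the lattice equality of Proposition~\ref{pr} on the nose, though your argument recovers it a posteriori from surjectivity.
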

\begin{proof}
 Since
 $
 \psi^{*}H^{2}(Z,\mathbf{Z})=
 \bigl(\sum_{L\in{Z_{\infty}}}\mathbf{Z}L^{+}\bigr)^{\perp}
 \subset{H^{2}(Y,\mathbf{Z})},
 $
 the primitive closure of the sublattice
 $\sum_{L\in{Z_{\infty}}}\mathbf{Z}L^{+}$ in $H^{2}(Y,\mathbf{Z})$ is
 $
 \bigl(\psi^{*}H^{2}(Z,\mathbf{Z})\bigr)^{\perp}
 \subset
 H^{2}(Y,\mathbf{Z}),
 $
 hence the torsion part of $H^{2}(Z\setminus{Z_{\infty}},\mathbf{Z})$ is
 $$
 H^{2}(Z\setminus{Z_{\infty}},\mathbf{Z})_{\mathrm{tor}}
 \simeq
 \Bigl(
 \frac{H^{2}(Y,\mathbf{Z})}{\sum_{L\in{Z_{\infty}}}\mathbf{Z}L^{+}}
 \Bigr)_{\mathrm{tor}}
 \simeq
 \frac{(\psi^{*}H^{2}(Z,\mathbf{Z}))^{\perp}}
 {\sum_{L\in{Z_{\infty}}}\mathbf{Z}L^{+}}.
 $$
 By the proof of Proposition~\ref{mp}, this is an abelian group of order
 $3^{6}$, hence the sublattice
 $(\psi^{*}H^{2}(Z,\mathbf{Z}))^{\perp}
 \subset
 H^{2}(Y,\mathbf{Z})$
 is of rank $27$ and
 $$
 \det{(\psi^{*}H^{2}(Z,\mathbf{Z}))^{\perp}}=(3^{6})^{-2}\cdot
 \det{\bigl(\sum_{L\in{Z_{\infty}}}\mathbf{Z}L^{+}\bigr)}
 =-3^{15}.
 $$
 Since $H_{\mathrm{prim}}^{2}(X,\mathbf{Z})$ is generated by the
 difference of two lines on $X$,
 by Proposition~\ref{rel}, we have
 $
 3\phi^{*}H_{\mathrm{prim}}^{2}(X,\mathbf{Z})\subset
 \sum_{L\in{Z_{\infty}}}\mathbf{Z}L^{+}
 $
 and
 $
 \phi^{*}H_{\mathrm{prim}}^{2}(X,\mathbf{Z})\subset
 (\psi^{*}H^{2}(Z,\mathbf{Z}))^{\perp}.
 $
 By Remark~$\ref{int}$, we can directly compute the determinant of
 the sublattice
 $
 \phi^{*}H_{\mathrm{prim}}^{2}(X,\mathbf{Z})
 +\sum_{L\in{Z_{\infty}}}\mathbf{Z}L^{+}
 \subset
 H^{2}(Y,\mathbf{Z}),
 $
 that is
 $
 \det{\bigl(\phi^{*}H_{\mathrm{prim}}^{2}(X,\mathbf{Z})
 +\sum_{L\in{Z_{\infty}}}\mathbf{Z}L^{+}\bigr)}
 =-3^{15}.
 $
 Hence we have
 $$
 \phi^{*}H_{\mathrm{prim}}^{2}(X,\mathbf{Z})
 +\sum_{L\in{Z_{\infty}}}\mathbf{Z}L^{+}
 =\psi^{*}H^{2}(Z,\mathbf{Z})^{\perp}.
 $$
 This implies that the homomorphism
 $$
 (\mathbf{Z}/3\mathbf{Z})^{\oplus6}\simeq
 \frac{H_{\mathrm{prim}}^{2}(X,\mathbf{Z})}
 {3H_{\mathrm{prim}}^{2}(X,\mathbf{Z})}
 \longrightarrow
 \frac{(\psi^{*}H^{2}(Z,\mathbf{Z}))^{\perp}}
 {\sum_{L\in{Z_{\infty}}}\mathbf{Z}L^{+}}
 $$
 is surjective.
 Since the order of these groups are both equal to $3^{6}$, it is an
 isomorphism.
\end{proof}
By proposition~$\ref{tor}$ and Remark~\ref{yz},
we have the isomorphism
$$
\frac{H^{2}(Y,\mathbf{Z})}
{\phi^{*}H^{2}_{\mathrm{prim}}(X,\mathbf{Z})
+\sum_{L\in{Z_{\infty}}}\mathbf{Z}L^{+}}
\simeq
H^{2}(Z\setminus{Z_{\infty}},\mathbf{Z})_{\mathrm{free}}.
$$
We denote by
$\bigl(\frac{H^{2}(Y,\mathbf{Z})}
{\phi^{*}H^{2}_{\mathrm{prim}}(X,\mathbf{Z})
+\sum_{L\in{Z_{\infty}}}\mathbf{Z}L^{+}}\bigr)_{0}$
the subspace of
$\frac{H^{2}(Y,\mathbf{Z})}
{\phi^{*}H^{2}_{\mathrm{prim}}(X,\mathbf{Z})
+\sum_{L\in{Z_{\infty}}}\mathbf{Z}L^{+}}$
orthogonal to
$[\psi^{*}\mathcal{O}_{Z}(1)]\in{H^{2}(Y,\mathbf{Z})}$.
We denote by
$\bigl(\bigwedge^{2}H^{3}(V,\mathbf{Z})\bigr)_{0}$
the kernel of the homomorphism
$$
\bigwedge^{2}H^{3}(V,\mathbf{Z})
\longrightarrow
\mathbf{Z};\
\alpha_{1}\wedge\alpha_{2}\longmapsto
\deg{((\alpha_{1}\cup\alpha_{2})\cap[V])},
$$
and denote by $H^{3}(V,\mathbf{Z})(1)$ the Hodge structure of weight $1$
which is defined from the Hodge structure $H^{3}(V,\mathbf{Z})$ by the
shift of the weight.
\begin{theorem}\label{mt}
 There is a natural injective homomorphism
 $$
 \Bigl(\bigwedge^{2}H^{3}(V,\mathbf{Z})(1)\Bigr)^{\Gal{(\rho)}}
 \longrightarrow
 \frac{H^{2}(Y,\mathbf{Z})}
 {\phi^{*}H^{2}_{\mathrm{prim}}(X,\mathbf{Z})
 +\sum_{L\in{Z_{\infty}}}\mathbf{Z}L^{+}}
 $$
 with the cokernel $\mathbf{Z}/2\mathbf{Z}$, which induces an
 isomorphism
 $$
 \Bigl(\bigwedge^{2}H^{3}(V,\mathbf{Z})(1)\Bigr)_{0}^{\Gal{(\rho)}}
 \simeq
 \Bigl(\frac{H^{2}(Y,\mathbf{Z})}
 {\phi^{*}H^{2}_{\mathrm{prim}}(X,\mathbf{Z})
 +\sum_{L\in{Z_{\infty}}}\mathbf{Z}L^{+}}\Bigr)_{0}
 $$
 of Hodge structures.
\end{theorem}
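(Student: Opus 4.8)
The plan is to realize the map as a chain of isomorphisms running through the Albanese variety $A$ of the Fano surface $S$, splicing together the results of Sections~\ref{ct} and~\ref{cs}. First I would pass from the cubic threefold to its intermediate Jacobian: by Theorem~\ref{aj} and the identification $A\simeq{J}$, the Hodge structure $H^{3}(V,\mathbf{Z})(1)$ is isomorphic to $H^{1}(A,\mathbf{Z})$, and this identification is compatible with the $\Gal(\rho)$-action, since the action of $\sigma:x_{4}\mapsto\omega{x_{4}}$ on $S$, on its Albanese, and on $H^{3}(V,\mathbf{Z})$ is induced by one and the same automorphism of $V$. Because $H^{2}(A,\mathbf{Z})\simeq\bigwedge^{2}H^{1}(A,\mathbf{Z})$ canonically for the abelian variety $A$, this yields a $\Gal(\rho)$-equivariant isomorphism of weight-$2$ Hodge structures
$$
\bigwedge^{2}H^{3}(V,\mathbf{Z})(1)\simeq{H^{2}(A,\mathbf{Z})}.
$$
Writing the principal polarization as $\theta=\sum_{i}a_{i}\wedge{b_{i}}$ in a symplectic basis, a direct computation gives $(a_{i}\wedge{b_{i}})\cup\theta^{\cup4}=4!\,[\mathrm{pt}]$, so the degree functional defining the subscript $0$ agrees with $\theta^{\cup4}/4!$; hence $\bigl(\bigwedge^{2}H^{3}(V,\mathbf{Z})(1)\bigr)_{0}$ corresponds exactly to $H^{2}_{\mathrm{prim}}(A,\mathbf{Z})$, as a rank count ($45=44+1$) confirms.

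Next I would take $\Gal(\rho)$-invariants and graft on the remaining propositions. The homomorphism $\iota^{*}:H^{2}(A,\mathbf{Z})\rightarrow{H^{2}(S,\mathbf{Z})}$ of Proposition~\ref{ht} is $\Gal(\rho)$-equivariant with cokernel $\mathbf{Z}/2\mathbf{Z}$. Applying the long exact sequence of $\Gal(\rho)$-cohomology to $0\rightarrow{H^{2}(A,\mathbf{Z})}\rightarrow{H^{2}(S,\mathbf{Z})}\rightarrow\mathbf{Z}/2\mathbf{Z}\rightarrow0$ and using that $|\Gal(\rho)|=3$ is prime to $2$—so that $H^{1}(\Gal(\rho),H^{2}(A,\mathbf{Z}))$ is $3$-torsion while $(\mathbf{Z}/2\mathbf{Z})^{\Gal(\rho)}=\mathbf{Z}/2\mathbf{Z}$—the connecting map vanishes, giving a short exact sequence in the invariant terms whose cokernel is still $\mathbf{Z}/2\mathbf{Z}$. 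I then compose with the isomorphism $H^{2}(S,\mathbf{Z})^{\Gal(\rho)}\simeq{H^{2}(Z\setminus{Z_{\infty}},\mathbf{Z})_{\mathrm{free}}}$ of Proposition~\ref{mp} and with the isomorphism
$$
H^{2}(Z\setminus{Z_{\infty}},\mathbf{Z})_{\mathrm{free}}\simeq
\frac{H^{2}(Y,\mathbf{Z})}{\phi^{*}H^{2}_{\mathrm{prim}}(X,\mathbf{Z})+\sum_{L\in{Z_{\infty}}}\mathbf{Z}L^{+}}
$$
supplied by Proposition~\ref{tor} and Remark~\ref{yz}. The resulting composite is injective with cokernel $\mathbf{Z}/2\mathbf{Z}$, which is the first assertion.

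For the statement on the primitive parts I would run the same chain restricted to the subscript-$0$ subspaces. The point is that by Proposition~\ref{ht} the map $\iota^{*}$ already restricts to an \emph{isomorphism} $H^{2}_{\mathrm{prim}}(A,\mathbf{Z})\simeq{H^{2}_{\mathrm{prim}}(S,\mathbf{Z})}$ of Hodge structures—the index-$2$ discrepancy being concentrated in the polarization direction via $2\tau=\iota^{*}\theta$—so on invariants it induces $H^{2}_{\mathrm{prim}}(A,\mathbf{Z})^{\Gal(\rho)}\simeq{H^{2}_{\mathrm{prim}}(S,\mathbf{Z})^{\Gal(\rho)}}$. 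Composing with the primitive parts in Propositions~\ref{mp} and~\ref{tor}, and observing that $[\mathcal{O}_{Z}(1)]$ pulls back to $[\psi^{*}\mathcal{O}_{Z}(1)]$ so that the primitivity condition matches the definition of the subscript-$0$ subspace of the quotient of $H^{2}(Y,\mathbf{Z})$, each map in the chain is an isomorphism of Hodge structures, and the asserted isomorphism follows.

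I expect the main obstacle to be the $\Gal(\rho)$-equivariance bookkeeping: checking that the Clemens-Griffiths identification $A\simeq{J}$ intertwines the Galois action on $S$ and its Albanese with the one on $H^{3}(V,\mathbf{Z})$, and then running the group-cohomology argument precisely enough to pin the cokernel to $\mathbf{Z}/2\mathbf{Z}$ after taking invariants. The identification of $\bigl(\bigwedge^{2}H^{3}(V,\mathbf{Z})(1)\bigr)_{0}$ with $H^{2}_{\mathrm{prim}}(A,\mathbf{Z})$—that is, that the degree form is carried to the class $\theta^{\cup4}/4!$—is a short but essential point that I would verify explicitly.
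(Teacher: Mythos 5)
Your proposal is correct and takes essentially the same route as the paper: the identical chain $\bigl(\bigwedge^{2}H^{3}(V,\mathbf{Z})(1)\bigr)^{\Gal{(\rho)}}\simeq\bigl(\bigwedge^{2}H^{1}(S,\mathbf{Z})\bigr)^{\Gal{(\rho)}}\simeq\bigl(\bigwedge^{2}H^{1}(A,\mathbf{Z})\bigr)^{\Gal{(\rho)}}=H^{2}(A,\mathbf{Z})^{\Gal{(\rho)}}\subset H^{2}(S,\mathbf{Z})^{\Gal{(\rho)}}$, passed down to the quotient of $H^{2}(Y,\mathbf{Z})$ via Propositions~\ref{mp} and \ref{tor} and Remark~\ref{yz}, with the $\mathbf{Z}/2\mathbf{Z}$ cokernel and the primitive-part isomorphism both coming from Proposition~\ref{ht}. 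Your added verifications---the $\Gal{(\rho)}$-equivariance, the vanishing of the connecting map from the $2$-torsion cokernel into the $3$-torsion group $H^{1}(\Gal{(\rho)},H^{2}(A,\mathbf{Z}))$, and the identification of the degree form with $\theta^{\cup4}/4!$---are precisely the details the paper's diagram leaves implicit, and they are correct.
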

\begin{proof}
 By Theorem~$\ref{aj}$, \cite[Lemma~9.13 and (10.14)]{cg},
 Proposition~$\ref{mp}$, Remark~\ref{yz} and
 Proposition~\ref{tor}, we have the following sequence of homomorphisms
 of Hodge structures;
 $$
 \begin{matrix}
  \bigl(\bigwedge^{2}H^{3}(V,\mathbf{Z})(1)\bigr)^{\Gal{(\rho)}}
  &\overset{\sim}{\rightarrow}&
  \bigl(\bigwedge^{2}H^{1}(S,\mathbf{Z})\bigr)^{\Gal{(\rho)}}
  &\overset{\sim}{\leftarrow}&
  \bigl(\bigwedge^{2}H^{1}(A,\mathbf{Z})\bigr)^{\Gal{(\rho)}}\\
  &&\cap&&\downarrow\simeq\\
  H^{2}(S\setminus{S_{\infty}},\mathbf{Z})^{\Gal{(\rho)}}
  &\overset{\sim}{\leftarrow}&
  H^{2}(S,\mathbf{Z})^{\Gal{(\rho)}}
  &\supset&
  H^{2}(A,\mathbf{Z})^{\Gal{(\rho)}}\\
  \simeq\uparrow&&&&\\
  H^{2}(Z\setminus{Z_{\infty}},\mathbf{Z})_{\mathrm{free}}
  &\overset{\sim}{\rightarrow}&
  H^{2}(Y\setminus{Y_{\infty}},\mathbf{Z})_{\mathrm{free}}
  &\overset{\sim}{\leftarrow}&
  \frac{H^{2}(Y,\mathbf{Z})}
  {\phi^{*}H^{2}_{\mathrm{prim}}(X,\mathbf{Z})
  +\sum_{L\in{Z_{\infty}}}\mathbf{Z}L^{+}}.
 \end{matrix}
 $$
 Since
 $\bigl(\bigwedge^{2}H^{3}(V,\mathbf{Z})(1)\bigr)_{0}$
 corresponds to
 $H_{\mathrm{prim}}^{2}(A,\mathbf{Z})$,
 and
 $\bigl(\frac{H^{2}(Y,\mathbf{Z})}
 {\phi^{*}H^{2}_{\mathrm{prim}}(X,\mathbf{Z})
 +\sum_{L\in{Z_{\infty}}}\mathbf{Z}L^{+}}\bigr)_{0}$
 corresponds to
 $H_{\mathrm{prim}}^{2}(S,\mathbf{Z})^{\Gal{(\rho)}}$,
 by Proposition~$\ref{ht}$ we have the isomorphism
 $$
 \Bigl(\bigwedge^{2}H^{3}(V,\mathbf{Z})(1)\Bigr)_{0}^{\Gal{(\rho)}}
 \simeq
 \Bigl(\frac{H^{2}(Y,\mathbf{Z})}
 {\phi^{*}H^{2}_{\mathrm{prim}}(X,\mathbf{Z})
 +\sum_{L\in{Z_{\infty}}}\mathbf{Z}L^{+}}\Bigr)_{0}.
 $$
\end{proof}
We denote by $A_{i}$ the positive definite root lattice of type $A_{i}$,
and by $\mathbf{1}$ the trivial lattice of rank $1$.
\begin{proposition}\label{lat}
 There are isomorphisms of lattices;
 $$
 \bigl(H^{2}(Z\setminus{Z_{\infty}},\mathbf{Z})_{\mathrm{free}},
 \langle\ ,\ \rangle_{Z}\bigr)\simeq
 \bigl({{\frac{1}{3}}\cdot\mathbf{1}}\bigr)\oplus
 \bigl(-{\frac{1}{3}}\cdot\mathbf{1}\bigr)^{\oplus4}
 \oplus
 \bigl(\frac{1}{3}{\cdot}A_{2}\bigr)^{\oplus4}
 \oplus{\bigl(-\frac{1}{3}{\cdot}A_{2}\bigr)}^{\oplus6},
 $$
 $$
 \bigl(H^{2}_{\mathrm{prim}}
 (Z\setminus{Z_{\infty}},\mathbf{Z})_{\mathrm{free}},\
 \langle\ ,\ \rangle_{Z}\bigr)\simeq
 {\bigl(-\frac{1}{3}{\cdot}A_{4}\bigr)}\oplus
 \bigl(\frac{1}{3}{\cdot}A_{2}\bigr)^{\oplus4}
 \oplus{\bigl(-\frac{1}{3}{\cdot}A_{2}\bigr)}^{\oplus6},
 $$
 $$
 \bigl(H^{2}(Z,\mathbf{Z}),\langle\ ,\ \rangle_{Z}\bigr)\simeq
 (3\cdot{\mathbf{1}})\oplus(-3\cdot\mathbf{1})^{\oplus4}\oplus
 A_{2}^{\oplus4}\oplus{(-A_{2})}^{\oplus6},
 $$
 $$
 \bigl(H_{\mathrm{prim}}^{2}(Z,\mathbf{Z}),
 \langle\ ,\ \rangle_{Z}\bigr)\simeq
 {(-3{\cdot}A_{4})}\oplus
 A_{2}^{\oplus4}\oplus{(-A_{2})}^{\oplus6}.
 $$
\end{proposition}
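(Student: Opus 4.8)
The plan is to reduce all four isometry statements to one computation, namely that of the $\Gal(\rho)$-invariant part of $H^2(A,\mathbf{Z})=\bigwedge^2 H^1(A,\mathbf{Z})$, where $A$ is the Albanese variety of $S$ (the intermediate Jacobian of $V$). Writing $M=H^1(A,\mathbf{Z})\simeq H^1(S,\mathbf{Z})\simeq H^3(V,\mathbf{Z})(1)$, recall that $M$ is free of rank $5$ over the Eisenstein integers $\mathbf{Z}[\omega]$ with $\sigma$ acting as multiplication by $\omega$ (as used in the proof of Proposition~\ref{mp}, following \cite{act}), and that for an abelian fivefold one has $H^2(A,\mathbf{Z})=\bigwedge^2_{\mathbf{Z}}M$ integrally. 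The chain of isomorphisms assembled in the proof of Theorem~\ref{mt} — Proposition~\ref{ht}, Proposition~\ref{mp}, Remark~\ref{yz} and Proposition~\ref{tor} — then transports the cup-product form $\langle\,,\,\rangle_A$ on $(\bigwedge^2_{\mathbf{Z}}M)^{\Gal(\rho)}$ onto each of the four lattices. I would compute the two lattices attached to $Z$ first, and obtain those attached to $Z\setminus Z_{\infty}$ afterwards from the $\tfrac13(1,1)$-quotient-singularity structure of $Z$ recorded in Remark~\ref{qs} together with the $3$-power torsion of Proposition~\ref{tor}.

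For the decomposition, I fix a $\mathbf{Z}[\omega]$-basis $e_1,\dots,e_5$ of $M$. Since $\sigma$ is $\mathbf{Z}[\omega]$-linear, $(\bigwedge^2_{\mathbf{Z}}M)^{\sigma}$ splits as the rank-$5$ ``diagonal'' lattice spanned by the visibly fixed classes $e_i\wedge\omega e_i$, together with, for each of the $\binom{5}{2}=10$ pairs $\{i,j\}$, a rank-$2$ invariant sublattice of the $\sigma$-stable block on $e_i\wedge e_j,\ e_i\wedge\omega e_j,\ \omega e_i\wedge e_j,\ \omega e_i\wedge\omega e_j$; the count $25=5+2\cdot10$ shows this is everything. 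Each pair block is a copy of $\mathbf{Z}[\omega]$ carrying (a multiple of) its norm form $a^2-ab+b^2$, hence is $\pm A_2$, the sign being the product of the signs of $h(e_i,e_i)$ and $h(e_j,e_j)$ for the Hermitian form $h$ attached to the principal polarization. Because $H^{2,1}(V)$ decomposes into $\omega$- and $\bar\omega$-eigenspaces of dimensions $4$ and $1$, the form $h$ has signature $(1,4)$, which yields exactly $4$ mixed pairs and $6$ like-signed pairs, matching the summands $A_2^{\oplus4}\oplus(-A_2)^{\oplus6}$.

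On the diagonal lattice the transported form has signature $(1,4)$ and contains the polarization class $\theta$ corresponding to $[\mathcal{O}_Z(1)]$; this is exactly why the primitive lattice differs from the full one only in this part. Removing $\mathbf{Z}\theta$ leaves the trace-zero diagonal Hermitian forms, which the Eisenstein structure identifies with an even, negative definite lattice of rank $4$ and determinant $5$, necessarily $-A_4$ (the unique lattice in this genus); this accounts for the summand $-3A_4$ (resp. $-\tfrac13A_4$) replacing the four rank-one summands in the primitive cases. The Hodge numbers $(h^{2,0},h^{1,1},h^{0,2})=(4,17,4)$ of $H^2(A,\mathbf{Z})^{\Gal(\rho)}$ force the signatures $(9,16)$ and $(8,16)$, in agreement with the proposed lattices, and the determinant $-3^{15}$ already computed in Proposition~\ref{tor} gives an independent check that fixes the overall scaling.

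The hard part is the passage from this rational picture to the integral one. Rationally the splitting into $A_2$-, $A_4$- and rank-one blocks and all the signs are immediate, but proving that the integral summands are \emph{exactly} $\pm A_2$, $-A_4$ and $\pm3\cdot\mathbf{1}$ — with the correct constants, e.g.\ distinguishing $A_2$ from the bare norm form $\tfrac12 A_2$ — requires precise control of the integral Eisenstein lattice $M$: its $\mathbf{Z}[\omega]$-unimodularity forced by the principal polarization, and the exact normalization of $\langle\,,\,\rangle_A$ under the identification $H^2(A)=\bigwedge^2 H^1(A)$. I expect to pin these down from the explicit lattice of \cite{act} and the determinant of Proposition~\ref{tor}, using Roulleau's N\'{e}ron--Severi computation for the Fermat Fano surface \cite{r} as a check on orientation. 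The last, and most delicate, step is the bookkeeping of the various powers of $3$ relating the $Z$- and $(Z\setminus Z_{\infty})$-lattices, which is governed by the discriminant groups of the quotient singularities of $Z$ (Remark~\ref{qs}) rather than by any uniform rescaling.
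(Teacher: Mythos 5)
Your route is in essence the paper's own: the paper also starts from the $\mathbf{Z}[\omega]$-basis of $H^{1}(A,\mathbf{Z})$ taken from \cite{act} (Lemma~\ref{sb}), splits the invariant part of $\bigwedge^{2}$ into the five diagonal classes $v_{i}\cup\omega{v_{i}}$ plus ten $\sigma$-stable pair blocks, identifies the invariant part of each pair block with $\pm A_{2}$ and the primitive diagonal part with $-A_{4}$, and transports the result through Proposition~\ref{mp}. So the issue is not the strategy but a specific integral point that your plan misses.

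The gap is in your starting lattice. What Proposition~\ref{mp} identifies (after the factor $3$) with $H^{2}(Z\setminus{Z_{\infty}},\mathbf{Z})_{\mathrm{free}}$ is $H^{2}(S,\mathbf{Z})^{\Gal{(\rho)}}$, \emph{not} $H^{2}(A,\mathbf{Z})^{\Gal{(\rho)}}=\bigl(\bigwedge^{2}_{\mathbf{Z}}M\bigr)^{\Gal{(\rho)}}$. By Proposition~\ref{ht} the map $\iota^{*}$ has cokernel $\mathbf{Z}/2\mathbf{Z}$, generated by the incidence class $\tau$ with $2\tau=\iota^{*}\theta$ (\cite[Lemma~11.27]{cg}); hence $H^{2}(S,\mathbf{Z})^{\Gal{(\rho)}}=\mathbf{Z}\tau+H^{2}(A,\mathbf{Z})^{\Gal{(\rho)}}$ is an index-$2$ overlattice of the lattice you propose to compute. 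This is fatal for the non-primitive statements: your ``diagonal lattice'' $\bigoplus_{i}\mathbf{Z}\,e_{i}\wedge\omega{e_{i}}$ is \emph{even} (each generator $e_{i}\wedge\omega{e_{i}}$ is isotropic) and has determinant $4$, so it cannot be isometric to the odd unimodular lattice $\mathbf{1}\oplus(-\mathbf{1})^{\oplus4}$; only after adjoining $\tau=\tfrac{1}{2}\theta$ does the diagonal block become the lattice $\tilde{U}_{0}\simeq\mathbf{1}\oplus(-\mathbf{1})^{\oplus4}$ used in the paper. In particular your own consistency check would expose this: the discriminant of your answer carries a factor $4$, while Proposition~\ref{tor} and Remark~\ref{yz} force discriminants that are (up to sign) powers of $3$. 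The two primitive isomorphisms do survive your argument, since the primitive part is the orthogonal complement of $\tau$ and lies inside $\iota^{*}H^{2}(A,\mathbf{Z})$, where your computation (including the genus-uniqueness identification of $-A_{4}$) is sound. Two smaller points: your sign rule for the pair blocks is stated backwards --- the invariant part of the $(i,j)$ block is $+A_{2}$ exactly when $h(e_{i},e_{i})$ and $h(e_{j},e_{j})$ have \emph{opposite} signs, which is what your count $4+6$ actually uses; and your plan to compute the $Z$-lattices first and descend to $Z\setminus{Z_{\infty}}$ runs against the grain of Proposition~\ref{mp}, which hands you $Z\setminus{Z_{\infty}}$ directly, the $Z$-statements then following ``in the similar way'' via the coinvariant/duality picture of Remark~\ref{ld}.
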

We define an alternating form on $H^{1}(A,\mathbf{Z})$ by
$$
\langle\ ,\ \rangle_{A}:
H^{1}(A,\mathbf{Z})\times
H^{1}(A,\mathbf{Z})\longrightarrow
\mathbf{Z};\
(\alpha_{1},\alpha_{2})\longmapsto
\deg{\Bigl(\Bigl(\frac{\theta^{\cup4}}{4!}
\cup\alpha_{1}\cup\alpha_{2}\Bigr)\cap[A]\Bigr)}.
$$
\begin{lemma}[\cite{act}\ (2.7)]\label{sb}
 There is a basis $(v_{0},\dots,v_{4})$ of the
 $\mathbf{Z}[\omega]$-module $H^{1}(A,\mathbf{Z})$ such that
 $$
 \Bigl(
  \langle{v_{i}},v_{j}\rangle_{A}
 \Bigr)
 _{0\leq{i,j}\leq{4}}
 =
 \begin{pmatrix}
  0&0&0&0&0\\
  0&0&0&0&0\\
  0&0&0&0&0\\
  0&0&0&0&0\\
  0&0&0&0&0
 \end{pmatrix},\quad
 \Bigl(\langle{v_{i}},\omega{v_{j}}\rangle_{A}
 \Bigr)
 _{0\leq{i,j}\leq{4}}
 =
 \begin{pmatrix}
  -1&0&0&0&0\\
  0&1&0&0&0\\
  0&0&1&0&0\\
  0&0&0&1&0\\
  0&0&0&0&1
 \end{pmatrix}.
 $$
\end{lemma}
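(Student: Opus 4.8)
The plan is to realize the alternating form $\langle\ ,\ \rangle_{A}$ as (essentially) the imaginary part of a unimodular Hermitian form over the Eisenstein integers $\mathbf{Z}[\omega]$ and then to diagonalize that Hermitian form, reading the two matrices in the statement off the diagonal entries and the signature. I would begin by assembling the structures already in hand. As in the proof of Proposition~\ref{mp}, $M=H^{1}(A,\mathbf{Z})\simeq H^{3}(V,\mathbf{Z})$ carries a fixed-point-free action of $\Gal(\rho)=\langle\sigma\rangle$, so $1+\sigma+\sigma^{2}=0$ on $M$ and $M$ is a free $\mathbf{Z}[\omega]$-module of rank $5$ with $\sigma$ acting as multiplication by $\omega$. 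The form $\langle\ ,\ \rangle_{A}$ is the principal polarization of $A=J$, hence $\mathbf{Z}$-unimodular, and it is $\sigma$-invariant: $\langle\omega x,\omega y\rangle_{A}=\langle x,y\rangle_{A}$, equivalently $\langle\omega x,y\rangle_{A}=\langle x,\bar\omega y\rangle_{A}$.

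Next I would introduce the Hermitian refinement $H\colon M\times M\to\mathbf{Z}[\omega]$ defined by
$$H(x,y)=\langle x,\omega y\rangle_{A}-\omega\langle x,y\rangle_{A}.$$
Using $1+\omega+\omega^{2}=0$ together with the invariance above, a direct computation shows that $H$ is linear in $x$, conjugate-linear in $y$, and Hermitian, $H(y,x)=\overline{H(x,y)}$; in particular $H(x,x)\in\mathbf{Z}$, and $\langle\ ,\ \rangle_{A}$ is recovered from $H$. A comparison of $\det_{\mathbf{Z}}\langle\ ,\ \rangle_{A}$ with the norm of $\det_{\mathbf{Z}[\omega]}H$ then shows that $H$ is unimodular over $\mathbf{Z}[\omega]$. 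In these terms the lemma becomes the assertion that this unimodular Hermitian $\mathbf{Z}[\omega]$-lattice is diagonal of type $\langle1\rangle^{\oplus4}\oplus\langle-1\rangle$.

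To diagonalize I would run a Hermitian Gram--Schmidt over the Euclidean ring $\mathbf{Z}[\omega]$: choose $v\in M$ with $H(v,v)$ a real unit $\pm1$, split off $M=\mathbf{Z}[\omega]v\perp v^{\perp}$ using unimodularity, and induct on the unimodular lattice $v^{\perp}$. This yields a $\mathbf{Z}[\omega]$-basis $(v_{0},\dots,v_{4})$ with $H(v_{i},v_{j})=\epsilon_{i}\delta_{ij}$, $\epsilon_{i}\in\{\pm1\}$, which unwinds to $\langle v_{i},v_{j}\rangle_{A}=0$ and $\langle v_{i},\omega v_{j}\rangle_{A}=\epsilon_{i}\delta_{ij}$. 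The number of indices with $\epsilon_{i}=-1$ is the negative index of $H$, which I would determine from the Hodge structure: $\sigma$ acts on $H^{1,0}(A)$ with eigenvalue multiplicities $(4,1)$ on the $(\omega,\bar\omega)$-eigenspaces, the computation underlying the ball-quotient description in \cite{act}, so $H$ has signature $(4,1)$ and exactly one $\epsilon_{i}$ equals $-1$. Reindexing so that this index is $0$ gives the displayed matrices.

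The main obstacle is the split-off step: one must know that at each stage the indefinite unimodular Hermitian $\mathbf{Z}[\omega]$-lattice represents a unit, so that it decomposes as an orthogonal direct sum of rank-one lattices. This is exactly where unimodularity and the indefinite signature $(4,1)$ are used, and where the Hodge-theoretic input is indispensable, since it is the signature computed from the eigenvalue multiplicities on $H^{1,0}(A)$ that both guarantees the decomposition and pins the diagonal down to four $+1$'s and a single $-1$.
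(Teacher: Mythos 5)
Your reduction to a Hermitian form is sound and in fact retraces the construction in \cite{act}: the form $H(x,y)=\langle x,\omega y\rangle_{A}-\omega\langle x,y\rangle_{A}$ is $\mathbf{Z}[\omega]$-linear in $x$, Hermitian, recovers $\langle\ ,\ \rangle_{A}$ as minus the coefficient of $\omega$, and transfers unimodularity from the symplectic form to $H$; the unwinding of a diagonal basis for $H$ into the two displayed matrices is also correct, and the signature $(4,1)$ does come from the eigenvalue multiplicities on the holomorphic part of the Hodge structure. For calibration: the paper itself gives no proof of this lemma at all --- it is quoted verbatim from \cite{act}, (2.7) --- so what you are really doing is reproving the cited result along the same lines as its source.

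The genuine gap is the step you yourself flag as ``the main obstacle'': you assert, but never prove, that at each stage of the induction the unimodular Hermitian $\mathbf{Z}[\omega]$-lattice represents a unit. Unimodularity plus indefiniteness is not formally sufficient for such a splitting: over $\mathbf{Z}$ the analogous claim is false (an even indefinite unimodular lattice such as the hyperbolic plane represents no unit whatsoever), so an input special to the Eisenstein integers is required --- for instance, that there is no even/odd dichotomy over $\mathbf{Z}[\omega]$ (the Hermitian hyperbolic plane $\begin{pmatrix}0&1\\1&0\end{pmatrix}$ represents $-1$ on the vector $(1,\omega)$), combined with the local classification of Hermitian lattices and strong approximation for the unitary group in the indefinite case, which together show that indefinite unimodular Hermitian $\mathbf{Z}[\omega]$-lattices are determined by rank and signature. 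This classification (due in various forms to Landherr, Jacobowitz, Eichler, and in the definite case Feit) is exactly what \cite{act} invoke, and it is the theorem your Gram--Schmidt step silently assumes. Saying the signature ``guarantees the decomposition'' is circular: the signature dictates what the diagonal must be \emph{if} a diagonalization exists; it does not produce one. There is also a wrinkle in the induction even granting the representation claim: if the unit you split off happens to be the negative one, the orthogonal complement has signature $(4,0)$ and is definite, so your inductive hypothesis (indefiniteness) no longer applies; you would need to show that a vector of square $+1$ can always be split off first, or else appeal to the classification of definite unimodular $\mathbf{Z}[\omega]$-lattices of rank at most $4$. As written, the argument assumes the arithmetic theorem it needs.
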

\begin{proof}[Proof of Proposition~\ref{lat}]
Using the basis in Lemma~\ref{sb}, the class of the principal
polarization is
$$
\theta=-v_{0}\cup\omega{v_{0}}+
\sum_{i=1}^{4}v_{i}\cup\omega{v_{i}}
\in
H^{2}(A,\mathbf{Z}).
$$
We set
$
\tau=\frac{1}{2}\theta\in
H^{2}(A,\mathbf{Q}).
$
Then $\tau$ corresponds to the class of an incidence divisor on $S$, and
$H^{2}(S,\mathbf{Z})$
is identified with the sublattice in $H^{2}(A,\mathbf{Q})$ generated by
$\tau$ and $H^{2}(A,\mathbf{Z})$.
We define sublattices in $H^{2}(A,\mathbf{Q})$ by
$$
U_{0}=\bigoplus_{i=0}^{4}
\mathbf{Z}{v_{i}}{\cup}\omega{v_{i}}
\subset
H^{2}(A,\mathbf{Z}),
$$
$$
\tilde{U}_{0}=
\mathbf{Z}\tau+U_{0}
=
\mathbf{Z}\tau\oplus
\bigoplus_{i=1}^{4}
\mathbf{Z}{v_{i}}{\cup}\omega{v_{i}}
\subset
H^{2}(A,\mathbf{Q}),
$$
$$
U'_{0}=
\mathbf{Z}({v_{0}}{\cup}\omega{v_{0}}
+{v_{1}}{\cup}\omega{v_{1}})
\oplus
\bigoplus_{i=1}^{3}
\mathbf{Z}({v_{i+1}}{\cup}\omega{v_{i+1}}
-{v_{i}}{\cup}\omega{v_{i}})
\subset
H^{2}(A,\mathbf{Z})
$$
and
$$
U_{i,j}=\mathbf{Z}v_{i}{\cup}v_{j}
\oplus\mathbf{Z}{v_{i}}{\cup}\omega{v_{j}}
\oplus\mathbf{Z}\omega{v_{i}}{\cup}{v_{j}}
\oplus\mathbf{Z}\omega{v_{i}}{\cup}\omega{v_{j}}
\subset
H^{2}(A,\mathbf{Z})
$$
for $0\leq{i<j}\leq4$.
Then we have orthogonal decompositions of lattices
$$
H^{2}(S,\mathbf{Z})=\tilde{U}_{0}
\oplus\bigoplus_{0\leq{i<j}\leq{4}}U_{i,j},
$$
$$
H_{\mathrm{prim}}^{2}(S,\mathbf{Z})
\simeq
H_{\mathrm{prim}}^{2}(A,\mathbf{Z})
=U'_{0}\oplus\bigoplus_{0\leq{i<j}\leq{4}}U_{i,j},
$$
which are compatible with the $\Gal{(\rho)}$-action.
The $\Gal{(\rho)}$-action on
$\tilde{U}_{0}\simeq\mathbf{1}\oplus(-\mathbf{1})^{\oplus4}$
and
$U'_{0}\simeq(-A_{4})$ are trivial, and the invariant parts of the
$\Gal{(\rho)}$-action on $U_{i,j}$ are
$$
U_{0,j}^{\Gal{(\rho)}}=
\mathbf{Z}({v_{0}}\cup{v_{j}}
+\omega{v_{0}}\cup\omega{v_{j}}
+\omega{v_{0}}\cup{v_{j}})
\oplus\mathbf{Z}({v_{0}}\cup{v_{j}}
+\omega{v_{0}}\cup\omega{v_{j}}
+{v_{0}}\cup\omega{v_{j}})
\simeq
A_{2}
$$
for $1\leq{j}\leq4$, and
$$
U_{i,j}^{\Gal{(\rho)}}=
\mathbf{Z}({v_{i}}\cup{v_{j}}
+\omega{v_{i}}\cup\omega{v_{j}}
+\omega{v_{i}}\cup{v_{j}})
\oplus\mathbf{Z}({v_{i}}\cup{v_{j}}
+\omega{v_{i}}\cup\omega{v_{j}}
+{v_{i}}\cup\omega{v_{j}})
\simeq
(-A_{2})
$$
for $1\leq{i<j}\leq{4}$.
Hence we have
$$
\bigl(H^{2}(S,\mathbf{Z})^{\Gal{(\rho)}},\langle\ ,\ \rangle_{S}\bigr)
\simeq
{\mathbf{1}}\oplus(-\mathbf{1})^{\oplus4}\oplus
A_{2}^{\oplus4}\oplus{(-A_{2})}^{\oplus6}
$$
and
$$
\bigl(H_{\mathrm{prim}}^{2}(S,\mathbf{Z})^{\Gal{(\rho)}},
\langle\ ,\ \rangle_{S}\bigr)\simeq
(-A_{4})\oplus
A_{2}^{\oplus4}\oplus{(-A_{2})}^{\oplus6}.
$$
By Proposition~\ref{mp}, we have the results for lattices
$H^{2}(Z\setminus{Z_{\infty}},\mathbf{Z})_{\mathrm{free}}$
and
$H_{\mathrm{prim}}^{2}
(Z\setminus{Z_{\infty}},\mathbf{Z})_{\mathrm{free}}$.
In the similar way, the statements for lattices
$H^{2}(Z,\mathbf{Z})$ and
$H_{\mathrm{prim}}^{2}(Z,\mathbf{Z})$
can be proved.
\end{proof}
\begin{proposition}\label{pr}
 $$
 \phi^{*}H_{\mathrm{prim}}^{2}(X,\mathbf{Z})
 +\sum_{L\in{Z_{\infty}}}\mathbf{Z}L^{+}
 =\bigl(\psi^{*}H^{2}(Z,\mathbf{Z})\bigr)^{\perp},
 $$
 $$
 \phi^{*}H^{2}(X,\mathbf{Z})
 +\sum_{L\in{Z_{\infty}}}\mathbf{Z}L^{+}
 =\bigl(\psi^{*}H_{\mathrm{prim}}^{2}(Z,\mathbf{Z})\bigr)^{\perp}.
 $$
\end{proposition}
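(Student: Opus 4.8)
The first displayed identity requires no new argument: in the proof of Proposition~\ref{tor} it was shown that $\phi^{*}H_{\mathrm{prim}}^{2}(X,\mathbf{Z})+\sum_{L\in Z_{\infty}}\mathbf{Z}L^{+}=(\psi^{*}H^{2}(Z,\mathbf{Z}))^{\perp}$, which is exactly what is asserted. So the plan is to derive the second identity from the first together with the lattice data of Proposition~\ref{lat}. Throughout I would abbreviate $N=\sum_{L\in Z_{\infty}}\mathbf{Z}L^{+}$, $M=\psi^{*}H^{2}(Z,\mathbf{Z})$ and $c=\psi^{*}[\mathcal{O}_{Z}(1)]\in M$. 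By Remark~\ref{yz} the lattice $H^{2}(Y,\mathbf{Z})$ is unimodular and torsion free, and $\psi^{*}$ embeds $H^{2}(Z,\mathbf{Z})$ isometrically onto the primitive sublattice $M=N^{\perp}$ (isometrically because $\psi$ is birational, so $\psi_{*}[Y]=[Z]$ and the projection formula applies). Note also $N\subseteq M^{\perp}$ with both of rank $27$, so $M^{\perp}\otimes\mathbf{Q}=N\otimes\mathbf{Q}$.

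I would first pin down the right-hand side. Put $M_{0}=\psi^{*}H_{\mathrm{prim}}^{2}(Z,\mathbf{Z})$. Since $\psi^{*}$ is an isometry, $M_{0}=M\cap c^{\perp}$; as the intersection of the two primitively embedded sublattices $M=N^{\perp}$ and $c^{\perp}$ it is itself primitive in $H^{2}(Y,\mathbf{Z})$, and it is nondegenerate of rank $24$. Hence $(\psi^{*}H_{\mathrm{prim}}^{2}(Z,\mathbf{Z}))^{\perp}=M_{0}^{\perp}$ has rank $28$, and because $H^{2}(Y,\mathbf{Z})$ is unimodular the discriminant groups of $M_{0}$ and $M_{0}^{\perp}$ are isomorphic, so $|\det M_{0}^{\perp}|=|\det M_{0}|=|\det H_{\mathrm{prim}}^{2}(Z,\mathbf{Z})|$. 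Reading the decomposition $(-3\cdot A_{4})\oplus A_{2}^{\oplus4}\oplus(-A_{2})^{\oplus6}$ of Proposition~\ref{lat}, this determinant has absolute value $(3^{4}\cdot5)\cdot3^{4}\cdot3^{6}=5\cdot3^{14}$.

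Next I would establish the inclusion $\phi^{*}H^{2}(X,\mathbf{Z})+N\subseteq M_{0}^{\perp}$ and then match determinants. Relation~(\ref{rel1}) gives $\phi^{*}[\mathcal{O}_{X}(1)]=c-\tfrac{1}{3}\sum_{L}L^{+}$ in $H^{2}(Y,\mathbf{Q})$, so $\phi^{*}[\mathcal{O}_{X}(1)]$ lies in $\mathbf{Q}c+N\otimes\mathbf{Q}$; and the first identity shows $\phi^{*}H_{\mathrm{prim}}^{2}(X,\mathbf{Z})\subseteq M^{\perp}=N^{\perp\perp}\subseteq N\otimes\mathbf{Q}$. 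Since $M_{0}$ is orthogonal to both $c$ and $N$, the inclusion follows. For the left-hand determinant I would set $\Lambda'=\mathbf{Z}\phi^{*}[\mathcal{O}_{X}(1)]+\phi^{*}H_{\mathrm{prim}}^{2}(X,\mathbf{Z})+N$, which by the first identity is the rank-$28$ extension of the saturated rank-$27$ lattice $M^{\perp}$ (of determinant $-3^{15}$) by the single class $\phi^{*}[\mathcal{O}_{X}(1)]$. Because $M^{\perp}\otimes\mathbf{Q}=N\otimes\mathbf{Q}$ while the component of $\phi^{*}[\mathcal{O}_{X}(1)]$ orthogonal to $N\otimes\mathbf{Q}$ is exactly $c$, with $\langle c,c\rangle_{Y}=15$ (computed from~(\ref{rel1}) and the intersection numbers of Remark~\ref{int}), the rank-one extension formula gives $|\det\Lambda'|=3^{15}\cdot15=5\cdot3^{16}$. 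Finally $\phi^{*}H^{2}(X,\mathbf{Z})+N$ contains $\Lambda'$ with index $3$ — coming from the index-$3$ inclusion $\mathbf{Z}[\mathcal{O}_{X}(1)]\oplus H_{\mathrm{prim}}^{2}(X,\mathbf{Z})\subset H^{2}(X,\mathbf{Z})$ and the injectivity of $\phi^{*}$ — so $|\det(\phi^{*}H^{2}(X,\mathbf{Z})+N)|=5\cdot3^{16}/3^{2}=5\cdot3^{14}$. As this equals $|\det M_{0}^{\perp}|$ and the two rank-$28$ lattices satisfy $\phi^{*}H^{2}(X,\mathbf{Z})+N\subseteq M_{0}^{\perp}$, the index between them is $1$ and they coincide.

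The geometric content is light; the delicate points are purely lattice-theoretic bookkeeping. The main obstacle I anticipate is justifying that the index $[\phi^{*}H^{2}(X,\mathbf{Z})+N:\Lambda']$ is exactly $3$: one must verify that $\phi^{*}\gamma\in N$ forces $\gamma\in H_{\mathrm{prim}}^{2}(X,\mathbf{Z})$, so that $\phi^{*}H^{2}(X,\mathbf{Z})\cap\Lambda'$ is precisely $\mathbf{Z}\phi^{*}[\mathcal{O}_{X}(1)]\oplus\phi^{*}H_{\mathrm{prim}}^{2}(X,\mathbf{Z})$ and nothing larger. This uses the rational splitting $\phi^{*}H^{2}(X,\mathbf{Q})=\mathbf{Q}c\oplus\phi^{*}H_{\mathrm{prim}}^{2}(X,\mathbf{Q})$ with $\phi^{*}H_{\mathrm{prim}}^{2}(X,\mathbf{Q})\subseteq N\otimes\mathbf{Q}$, which isolates $c$ as the unique part of $\phi^{*}H^{2}(X,\mathbf{Q})$ meeting $M\otimes\mathbf{Q}$. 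Once the index and the value $\langle c,c\rangle_{Y}=15$ are confirmed, the proposition is a determinant match resting on the first identity and Proposition~\ref{lat}.
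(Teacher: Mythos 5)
Your proof is correct, and its skeleton is the paper's: the first identity is quoted from the proof of Proposition~\ref{tor}, and the second is obtained from the inclusion $\phi^{*}H^{2}(X,\mathbf{Z})+\sum_{L}\mathbf{Z}L^{+}\subseteq\bigl(\psi^{*}H^{2}_{\mathrm{prim}}(Z,\mathbf{Z})\bigr)^{\perp}$ together with a determinant match, where the determinant $\pm3^{14}\cdot5$ of the right-hand side is computed exactly as you compute it, from Proposition~\ref{lat} and unimodularity of $H^{2}(Y,\mathbf{Z})$. Where you genuinely depart from the paper is the left-hand determinant. Writing, as you do, $N=\sum_{L}\mathbf{Z}L^{+}$, $M=\psi^{*}H^{2}(Z,\mathbf{Z})$, $c=\psi^{*}[\mathcal{O}_{Z}(1)]$ and $\Lambda'=\mathbf{Z}\phi^{*}[\mathcal{O}_{X}(1)]+\phi^{*}H^{2}_{\mathrm{prim}}(X,\mathbf{Z})+N$: the paper evaluates $\det\bigl(\phi^{*}H^{2}(X,\mathbf{Z})+N\bigr)=-3^{14}\cdot5$ by a direct Gram computation from the intersection table of Remark~\ref{int} on a rank-$28$ lattice, whereas you deduce it structurally, viewing $\Lambda'$ as the rank-one extension of $M^{\perp}=\phi^{*}H^{2}_{\mathrm{prim}}(X,\mathbf{Z})+N$ (determinant $-3^{15}$, already known from the proof of Proposition~\ref{tor}) by $\phi^{*}[\mathcal{O}_{X}(1)]$, whose component orthogonal to $N\otimes\mathbf{Q}$ is $c$ with $\langle c,c\rangle_{Y}=15$, and then dividing $-5\cdot3^{16}$ by the square of the index $3$. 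This trades the paper's brute-force determinant for data already established, at the cost of the index lemma, which you rightly flagged as the delicate point; your resolution of it is sound: the orthogonal projection of $\phi^{*}\delta$ onto $M\otimes\mathbf{Q}$ is $\tfrac{1}{3}\langle\delta,[\mathcal{O}_{X}(1)]\rangle_{X}\,c$, so $\phi^{*}\delta\in N$ forces $\delta\perp[\mathcal{O}_{X}(1)]$, hence $\phi^{*}H^{2}(X,\mathbf{Z})\cap\Lambda'=\phi^{*}\bigl(\mathbf{Z}[\mathcal{O}_{X}(1)]\oplus H^{2}_{\mathrm{prim}}(X,\mathbf{Z})\bigr)$ and the index is exactly $3$. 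One imprecision you should fix when writing this up: there is no ``rational splitting $\phi^{*}H^{2}(X,\mathbf{Q})=\mathbf{Q}c\oplus\phi^{*}H^{2}_{\mathrm{prim}}(X,\mathbf{Q})$,'' because $c\notin\phi^{*}H^{2}(X,\mathbf{Q})$: if $c=\phi^{*}\beta$, then since $c$ is orthogonal to every $L^{+}$ the projection formula gives $\langle\beta,L\rangle_{X}=0$ for all $27$ lines, forcing $\beta=0$. The statement you actually use---and which is correct---is the containment $\phi^{*}H^{2}(X,\mathbf{Q})\subseteq\mathbf{Q}c\oplus(N\otimes\mathbf{Q})$, the $c$-coordinate of $\phi^{*}\delta$ being $\tfrac{1}{3}\langle\delta,[\mathcal{O}_{X}(1)]\rangle_{X}$; with that rewording every step of your argument goes through.
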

\begin{proof}
 The first equality has been proved in the proof of
 Proposition~\ref{tor}.
 Since
 $$
 \psi^{*}H^{2}_{\mathrm{prim}}(Z,\mathbf{Z})
 =
 \bigl(\phi^{*}H^{2}(X,\mathbf{Z})
 +\sum_{L\in{Z_{\infty}}}\mathbf{Z}L^{+}\bigr)^{\perp},
 $$
 we have
 $$
 \phi^{*}H^{2}(X,\mathbf{Z})
 +\sum_{L\in{Z_{\infty}}}\mathbf{Z}L^{+}
 \subset
 (\psi^{*}H^{2}_{\mathrm{prim}}(Z,\mathbf{Z}))^{\perp},
 $$
 which are sublattices of rank $28$.
 We compute the determinant of these lattices.
 By Proposition~\ref{lat}, we have
 $$
 \det{H^{2}_{\mathrm{prim}}(Z,\mathbf{Z})}
 =3^{4}\cdot\det{(-A_{4})}
 \cdot(\det{A_{2}})^{4}\cdot(\det{(-A_{2})})^{6}
 =3^{14}\cdot5,
 $$
 hence
 $\det{(\psi^{*}H^{2}_{\mathrm{prim}}(Z,\mathbf{Z}))^{\perp}}
 =-3^{14}\cdot5$.
 On the other hand, by Remark~$\ref{int}$, we can directly compute
 the determinant of the sublattice
 $
 \phi^{*}H^{2}(X,\mathbf{Z})
 +\sum_{L\in{Z_{\infty}}}\mathbf{Z}L^{+}
 \subset
 H^{2}(Y,\mathbf{Z}),
 $
 that is
 $$
 \det{\Bigl(\phi^{*}H^{2}(X,\mathbf{Z})
 +\sum_{L\in{Z_{\infty}}}\mathbf{Z}L^{+}\Bigr)}
 =-3^{14}\cdot5,
 $$
 hence we have the second equality.
\end{proof}
\section{N\'{e}ron-Severi lattice}\label{nsl}
The N\'{e}ron-Severi group $\NS{(Y)}$ of the surface $Y$ is the subgroup
of $H^{2}(Y,\mathbf{Z})$ generated by algebraic cycles.
Since $H^{2}(X,\mathbf{Z})$ is generated by algebraic cycles,
$$
\NS{(Y)}_{0}=
\phi^{*}H^{2}(X,\mathbf{Z})
+\sum_{L\in{Z_{\infty}}}\mathbf{Z}L^{+}
\subset
H^{2}(Y,\mathbf{Z})
$$
is contained in $\NS{(Y)}$.
By the proof of Proposition~\ref{pr}, $\NS_{0}{(Y)}$ is a sublattice of
rank $28$ with the determinant $-3^{14}\cdot5$.
If there are no Eckardt points on $X$, then
$\NS{(Y)}_{0}=\sum_{L\in{Z_{\infty}}}(\mathbf{Z}L^{+}+\mathbf{Z}L^{-})$.
\begin{theorem}\label{nsg}
 $\NS{(Y)}=\NS{(Y)}_{0}$
 for a generic cubic surface $X$.
\end{theorem}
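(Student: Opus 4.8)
The plan is to split Theorem~\ref{nsg} into a lattice-theoretic part and a rank part, and to observe that the former is essentially free once Proposition~\ref{pr} is available, so that the whole content reduces to a Noether--Lefschetz type rank computation. First I would reduce to the assertion that $\rank\NS(Y)=28$ for generic $X$. Since $\phi^{*}H^{2}(X,\mathbf{Z})$ and the classes $L^{+}$ are algebraic, the sublattice $\NS(Y)_{0}$ is contained in $\NS(Y)$, so $\rank\NS(Y)\geq28$ always. By Proposition~\ref{pr} we have $\NS(Y)_{0}=(\psi^{*}H^{2}_{\mathrm{prim}}(Z,\mathbf{Z}))^{\perp}$ inside the unimodular lattice $(H^{2}(Y,\mathbf{Z}),\langle\ ,\ \rangle_{Y})$; being an orthogonal complement, $\NS(Y)_{0}$ is a primitive (saturated) sublattice. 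On the other hand $\NS(Y)$ is always primitive in $H^{2}(Y,\mathbf{Z})$, being the intersection of the torsion-free group $H^{2}(Y,\mathbf{Z})$ with $H^{1,1}$. Hence, if $\rank\NS(Y)=28$, then $\NS(Y)$ and $\NS(Y)_{0}$ are two primitive sublattices of the same rank with $\NS(Y)_{0}\subseteq\NS(Y)$, and primitivity of $\NS(Y)_{0}$ forces $\NS(Y)=\NS(Y)_{0}$. Thus no separate determinant or index argument is needed beyond Proposition~\ref{pr}, and it remains only to bound the rank.

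Next I would bound the rank by the theory of infinitesimal variations of Hodge structure \cite{cggh}. By Proposition~\ref{ni} the Hodge numbers of $Y$ are $(h^{2,0},h^{1,1},h^{0,2})=(4,44,4)$, and since $\NS(Y)_{0}\subset H^{1,1}(Y)$ is orthogonal to $H^{2,0}(Y)$ for the intersection form, its orthogonal complement $H_{\mathrm{trans}}=\NS(Y)_{0}^{\perp}\otimes\mathbf{C}$ carries all of $H^{2,0}(Y)$ and has Hodge numbers $(4,16,4)$; via Theorem~\ref{mt} the corresponding transcendental Hodge structure is identified with $(\bigwedge^{2}H^{3}(V,\mathbf{Q})(1))_{0}^{\Gal(\rho)}$. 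An extra generator of $\NS(Y)$ beyond $\NS(Y)_{0}$ is then exactly a nonzero rational Hodge class in the $16$-dimensional $(1,1)$-part of $H_{\mathrm{trans}}$. Following the Carlson--Green--Griffiths--Harris principle, at a very general $X$ the space $\NS(Y)\otimes\mathbf{Q}$ consists of the monodromy-invariant Hodge classes; any such class $\gamma$ stays of type $(1,1)$ along the whole family and therefore, to first order, satisfies $\mu(\mathrm{KS}(\xi)\otimes\gamma)=0$ for every tangent direction $\xi$ of the parameter space, where $\mu:H^{1}(Y,T_{Y})\otimes H^{1}(Y,\Omega_{Y}^{1})\to H^{2}(Y,\mathcal{O}_{Y})$ is cup product and $\mathrm{KS}$ is the Kodaira--Spencer map of the family of $Y$'s over the space of cubic forms $F$. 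Hence it suffices to show that $\ker(\bar\nabla|_{H^{1,1}(Y)})=\NS(Y)_{0}\otimes\mathbf{C}$, i.e.\ that the infinitesimal period map $\bar\nabla=\mu(\mathrm{KS}(-)\otimes-)$ is injective on the $(1,1)$-part of $H_{\mathrm{trans}}$.

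The heart of the argument, and the step I expect to be the main obstacle, is this injectivity, for which I would set up an explicit description of the Hodge cohomology $H^{q}(Y,\Omega_{Y}^{p})$ by a generalization of Griffiths' residue method \cite{g}. Since $Y=Y_{3}$ is cut out in the flag variety $\Gamma(\mathbf{P}^{3})$ as the zero locus of a section of $\varPhi^{*}\mathcal{O}_{\mathbf{P}^{3}}(1)\otimes\varPsi^{*}\Sym^{2}\mathcal{Q}_{\Lambda(\mathbf{P}^{3})}$ (Proposition~\ref{ni}), rather than as a hypersurface in projective space, the classical construction must be adapted to this ambient flag variety. I would then express both the image of $\mathrm{KS}$ and the cup product $\mu$ through multiplication in a suitable graded Jacobian-type ring attached to $F$, reducing the required injectivity to a nondegeneracy statement for these multiplication maps; equivalently, via Theorem~\ref{mt}, to the maximality of the infinitesimal variation of $H^{3}(V)$ for the family of cubic threefolds $V=\{F+x_{4}^{3}=0\}$. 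This computation of the IVHS of $Y$, and the verification that the relevant multiplication pairing has no kernel on the $16$-dimensional space in question, is the genuinely technical part.

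Granting this nondegeneracy, $\ker(\bar\nabla)$ on Hodge classes has dimension exactly $28$, so by the Noether--Lefschetz argument $\NS(Y)\otimes\mathbf{Q}=\NS(Y)_{0}\otimes\mathbf{Q}$ for very general $X$; combined with the primitivity of $\NS(Y)_{0}$ established from Proposition~\ref{pr}, this yields $\NS(Y)=\NS(Y)_{0}$ over $\mathbf{Z}$, with ``generic'' understood in the sense of very general. As a consistency check, the same conclusion is predicted by the monodromy of the family: by Allcock--Carlson--Toledo \cite{act} the intermediate Jacobian $A$ of $V$ is a principally polarized abelian fivefold with $\mathbf{Z}[\omega]$-action whose monodromy is Zariski dense in the relevant unitary group, so that for very general $X$ the $\Gal(\rho)$-invariant part of $H^{2}_{\mathrm{prim}}(A,\mathbf{Q})$ carries no nonzero Hodge class; by Theorem~\ref{mt} this is again the vanishing of the extra Hodge classes in $H_{\mathrm{trans}}$.
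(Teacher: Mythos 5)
Your proposal follows essentially the same route as the paper: the torsion-freeness of $\NS{(Y)}/\NS{(Y)}_{0}$ is extracted from Proposition~\ref{pr} (orthogonal complements in the unimodular lattice $H^{2}(Y,\mathbf{Z})$ are primitive), and the rank bound $\rank\NS{(Y)}=28$ for generic $X$ is obtained from the rank-$16$ statement for the infinitesimal period map, which the paper records as Proposition~\ref{ivd} and proves in Section~\ref{jr} by exactly the Jacobian-ring method adapted to $Y_{3}\subset\Gamma(\mathbf{P}^{3})$ that you outline. The only part you defer --- the nondegeneracy computation itself --- is precisely the content of Proposition~\ref{iv}, so your plan matches the paper's proof step for step.
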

The idea of the proof is based on the theory of infinitesimal variations
of Hodge structure \cite[Section~3]{cggh}.
Let
$\mathcal{M}\subset\mathbf{P}
(H^{0}(\mathbf{P}^{3},\mathcal{O}_{\mathbf{P}^{3}}(3))^{\vee})$
be the space of smooth cubic surfaces, and let
$\mathcal{Y}\rightarrow\mathcal{M}$
be the family of the surface $Y$.
We define a homomorphism by
$$
\epsilon:
H^{1}(Y,\Omega_{Y}^{1})\longrightarrow
\Hom{(T_{\mathcal{M}}([F]),H^{2}(Y,\mathcal{O}_{Y}))};\
\omega\longmapsto
\bigl[\xi\mapsto
c(\kappa(\xi)\cup\omega)
\bigr],
$$
where $T_{\mathcal{M}}([F])$ is the tangent space of $\mathcal{M}$
at $[F]\in{\mathcal{M}}$, $Y$ is the fiber of
$\mathcal{Y}\rightarrow\mathcal{M}$ at $[F]\in{\mathcal{M}}$,
$\kappa(\xi)\in{H^{1}(Y,T_{Y})}$ is the Kodaira-Spencer class of
$\xi{\in}T_{\mathcal{M}}([F])$, and
$$
c:H^{2}(Y,T_{Y}\otimes\Omega_{Y}^{1})
\longrightarrow
H^{2}(Y,\mathcal{O}_{Y})
$$
is the contraction homomorphism.
We remark that
$\mathbf{C}\otimes_{\mathbf{Z}}\NS{(Y)}$ is
isomorphic to the kernel of $\epsilon$ for a generic $[F]\in\mathcal{M}$.
\begin{proposition}\label{ivd}
 The homomorphism
 $\epsilon:
 H^{1}(Y,\Omega_{Y}^{1})\rightarrow
 \Hom{(T_{\mathcal{M}}([F]),H^{2}(Y,\mathcal{O}_{Y}))}$
 is of rank $16$.
\end{proposition}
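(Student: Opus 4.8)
The plan is to compute $\rank\epsilon$ by bounding it from both sides around the value $16$. For the upper bound, recall $\dim H^{1}(Y,\Omega_{Y}^{1})=h^{1,1}(Y)=44$ and $\dim H^{2}(Y,\mathcal{O}_{Y})=p_{g}(Y)=4$. The classes spanning $\NS(Y)_{0}=\phi^{*}H^{2}(X,\mathbf{Z})+\sum_{L}\mathbf{Z}L^{+}$ all extend to flat families of algebraic cycles over $\mathcal{M}$: the hyperplane class and the $27$ lines on $X$ persist through every deformation, and $H^{2}(X)$ is of pure type $(1,1)$ with constant (Tate) Hodge structure. Hence every such class $\omega$ has $c(\kappa(\xi)\cup\omega)=0$, so $\NS(Y)_{0}\otimes\mathbf{C}\subset\ker\epsilon$, giving $\rank\epsilon\le 44-28=16$. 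The whole content is therefore the reverse inequality: that $\epsilon$ is injective on a complementary $16$-dimensional transcendental $(1,1)$-subspace.

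For the lower bound I would transport the computation to the cubic $3$-fold $V$ defined by $F+x_{4}^{3}$. The isomorphism of Theorem~\ref{mt} is induced by the algebraic correspondences $Y_{3}$, $Z_{3}$ and the Fano surface $S$, all of which vary algebraically over $\mathcal{M}$; so it upgrades to an isomorphism of variations of Hodge structure identifying the transcendental sub-VHS of $H^{2}(Y)$ with $\bigl(\bigwedge^{2}H^{3}(V)(1)\bigr)^{\Gal(\rho)}_{0}$, compatibly with the Gauss--Manin connection. Under this identification $\epsilon$ becomes the $(1,1)\!\to\!(0,2)$ component of the IVHS of $\bigwedge^{2}H^{3}(V)(1)$ restricted to the deformations coming from $\mathcal{M}$. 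By Griffiths/Carlson--Griffiths that IVHS is multiplication in the Jacobian ring $R_{V}=\mathbf{C}[x_{0},\dots,x_{4}]/J_{F+x_{4}^{3}}$, and the map $a\wedge b\mapsto (\nabla_{\xi}a\wedge b)^{0,2}=\mu(\xi)(a)\wedge b$ on $\bigwedge^{2}$ reduces everything to the multiplication pairing of $R_{V}$.

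The key simplification is $R_{V}\cong R[x_{4}]/(x_{4}^{2})$, where $R=\mathbf{C}[x_{0},\dots,x_{3}]/J_{F}$ is the Jacobian ring of the cubic surface, an Artinian Gorenstein ring with Hilbert function $(1,4,6,4,1)$ and socle in degree $4$. The $\mathcal{M}$-directions are $\Gal(\rho)$-invariant, so they reduce to $\delta F\in R_{3}$ (the Jacobian-ideal directions integrate to the $\mathrm{PGL}_{4}$-action and act trivially on the IVHS, so $\epsilon$ factors through the quotient $T_{\mathcal{M}}([F])\twoheadrightarrow R_{3}$). Tracking the $\Gal(\rho)$-eigenspaces of $H^{2,1}(V)\cong R_{V,1}$ and $H^{1,2}(V)\cong R_{V,4}$ through the residue weight shift, the invariant $(1,1)$-part splits into a $16$-dimensional piece $R_{1}\otimes(R_{3}x_{4})\cong R_{1}\otimes R_{3}$ and a $1$-dimensional piece (containing the polarization), and on the $16$-dimensional piece the map is
$$
a\otimes b\ \longmapsto\ \bigl[\,\delta F\mapsto (\delta F\cdot a)\otimes b\,\bigr]\in\Hom(R_{3},\,R_{4}\otimes R_{3}),\qquad a\in R_{1},\ b\in R_{3}.
$$

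This last map is $\mathrm{id}_{R_{3}}$ tensored with the homomorphism $R_{1}\to(R_{3})^{\vee}\otimes R_{4}$ induced by the multiplication pairing $R_{1}\times R_{3}\to R_{4}\cong\mathbf{C}$. Since $R$ is graded Artinian Gorenstein with socle in degree $4$, this pairing is perfect, so the homomorphism is an isomorphism between two $16$-dimensional spaces; adding the remaining $1$-dimensional piece cannot raise the rank past $\dim\Hom(R_{3},R_{4}\otimes R_{3})=16$. Therefore $\rank\epsilon=16$, matching the upper bound and forcing $\dim\ker\epsilon=28=\NS(Y)_{0}\otimes\mathbf{C}$. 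The main obstacle I expect is not the ring computation but justifying that Theorem~\ref{mt} is an isomorphism of variations over $\mathcal{M}$ (Gauss--Manin compatibility of the correspondences) and bookkeeping the $\Gal(\rho)$-eigenspaces correctly against the residue weight shift; a self-contained alternative would be to bypass $V$ and compute $\epsilon$ directly from a Griffiths-type presentation of $H^{q}(Y,\Omega_{Y}^{p})$, which yields the same perfect-pairing punchline.
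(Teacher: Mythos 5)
You are correct, and your route is genuinely different from the paper's. The paper does not use Theorem~\ref{mt} or the cubic $3$-fold at all for this proposition: it deduces Proposition~\ref{ivd} by Serre duality from Proposition~\ref{iv}, which is proved in Section~\ref{jr} by a direct, self-contained computation on $Y=Y_{3}$ viewed as the zero locus of a section of a vector bundle on the flag variety $\Gamma(\mathbf{P}^{3})$ --- exactly the ``Griffiths-type presentation of $H^{q}(Y,\Omega_{Y}^{p})$'' that you mention in your last sentence as a fallback. Concretely, writing $V=H^{0}(\mathbf{P}^{3},\mathcal{O}_{\mathbf{P}^{3}}(1))$ (a notational clash with the $3$-fold), the paper identifies $H^{0}(Y,\Omega_{Y}^{2})\cong V$, identifies the Kodaira--Spencer image $\kappa(T_{\mathcal{M}}([F]))$ with the cokernel of $\nu:V^{\vee}\otimes V\to\Sym^{3}V$ --- which is precisely your $R_{3}$, of dimension $4$ --- and shows the cup-product map is induced by $\delta_{1}:V\otimes\Sym^{3}V\to\Sym^{2}V\otimes\Sym^{2}V$, $A\otimes B\mapsto\sum_{i}Ax_{i}\otimes\frac{\partial B}{\partial x_{i}}$; injectivity of the descended map on the $16$-dimensional space $H^{0}(Y,\Omega_{Y}^{2})\otimes\kappa(T_{\mathcal{M}}([F]))$ gives the rank in one stroke, so the paper needs no analogue of your upper-bound step via $\NS{(Y)}_{0}\subset\Ker{\epsilon}$. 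Your two-sided argument is sound: the upper bound works (with the minor caveat that the classes $L^{+}$ extend as flat algebraic families only locally on $\mathcal{M}$, since monodromy permutes the $27$ lines, which suffices for the infinitesimal statement); your Galois bookkeeping checks out (the invariant $(1,1)$-part of $\bigwedge^{2}H^{3}(V)(1)$ is $R_{1}\otimes R_{3}x_{4}\oplus\mathbf{C}x_{4}\otimes R_{4}$, of dimension $16+1$, matching $h^{1,1}=17$ of the quotient, and the invariant $(0,2)$-part is $R_{4}\otimes R_{3}x_{4}$ of dimension $4$); and the Macaulay--Gorenstein perfect pairing $R_{1}\times R_{3}\to R_{4}$ does what you claim. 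The real cost of your route is the step you flag yourself: upgrading Theorem~\ref{mt} to an isomorphism of variations of Hodge structure over $\mathcal{M}$, i.e.\ Gauss--Manin compatibility of the whole chain through $Z$, the Fano surface $S$, the Albanese variety and the Clemens--Griffiths cylinder map; this is true, because every map in the chain is induced by correspondences defined relatively over $\mathcal{M}$, but it is a nontrivial verification that the paper's method entirely avoids. In exchange, your argument explains the number $16$ conceptually as $\dim(R_{1}\otimes R_{3})$ via classical Jacobian-ring theory and Gorenstein duality, whereas the paper must establish injectivity of $\delta_{1}$ by hand; indeed the two proofs share the same algebraic heart, since the paper's $\delta_{1}$-injectivity is its substitute for your perfect-pairing punchline.
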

The computation of the infinitesimal variations of Hodge structure for
$Y$ is given in Section~$\ref{jr}$, and Proposition~$\ref{ivd}$ will be
proved there.
\begin{proof}[Proof of Theorem~$\ref{nsg}$]
 By Proposition~\ref{pr}, $\frac{\NS{(Y)}}{\NS_{0}{(Y)}}$ has no torsion
 element, and by Proposition~$\ref{ivd}$,
 the rank of $\NS{(Y)}$ is $28$ for
 a generic cubic surface $X$.
 Hence we have $\frac{\NS{(Y)}}{\NS_{0}{(Y)}}=0$ for a generic $X$.
\end{proof}
Next we study the surface $Y$ for the Fermat cubic surface $X$.
Let $X\subset\mathbf{P}^{3}$ be the cubic surface defined by
$F=x_{0}^{3}+\dots+x_{3}^{3}$.
Then the triple Galois cover $V$ of $\mathbf{P}^{3}$ branched along $X$
is the Fermat cubic $3$-fold defined by
$\tilde{F}=x_{0}^{3}+\dots+x_{3}^{3}+x_{4}^{3}$.
We set a point $e_{i,j}^{\alpha}$ on $V$ by
$$
e_{i,j}^{\alpha}=
\bigl\{[x_0:\cdots:x_{4}]\in\mathbf{P}^{4}\mid
x_{i}+\alpha{x_{j}=0},\
x_{k}=0\ \text{for}\ k\in\{0,1,\dots,4\}\setminus\{i,j\}
\bigr\}
$$
for $0\leq{i<j}\leq{4}$ and $\alpha\in\mathbf{C}$ with
$\alpha^{3}=1$.
The point $e_{i,j}^{\alpha}$ corresponds to an elliptic curve
$\mathcal{E}_{i,j}^{\alpha}$ on the Fano surface $S$ of lines on
$V\subset\mathbf{P}^{4}$ by
$$
\mathcal{E}_{i,j}^{\alpha}=
\{L\in{S}\mid{e_{i,j}^{\alpha}}\in{L}\}.
$$
\begin{theorem}[Roulleau \cite{r}, Theorem~3.13]\label{rou}
 For the Fermat cubic $3$-fold $V$, the N\'{e}ron-Severi lattice
 $\NS{(S)}$ is of rank $25$ with the determinant $3^{18}$, and
 $$
 \NS{(S)}=\mathbf{Z}\tau
 +\sum_{0\leq{i<j}\leq{4}}(\mathbf{Z}\mathcal{E}_{i,j}^{1}
 +\mathbf{Z}\mathcal{E}_{i,j}^{\omega}
 +\mathbf{Z}\mathcal{E}_{i,j}^{\omega^{2}}),
 $$
 where $\tau$ is the class of an incidence divisor.
\end{theorem}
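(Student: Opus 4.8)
The plan is to prove the three assertions—that $\rank\NS(S)=25$, that the listed curves generate $\NS(S)$ over $\mathbf{Z}$, and that $\det\NS(S)=3^{18}$—in that logical order, treating the integral generation as the final and hardest step. First I would record that every generator is genuinely algebraic: the incidence divisor supplies the class $\tau$, and each $\mathcal{E}_{i,j}^{\alpha}=\{L\in S\mid e_{i,j}^{\alpha}\in L\}$ is a reduced curve on $S$ because $e_{i,j}^{\alpha}$ is an Eckardt point of $V$, through which the lines of $V$ sweep out a plane cubic, i.e.\ an elliptic curve. Thus the right-hand side of the asserted equality is automatically contained in $\NS(S)$, and the whole content is to identify it with the full group.

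For the rank I would use the isomorphisms $H^{3}(V,\mathbf{Z})(1)\simeq H^{1}(S,\mathbf{Z})$ and $\bigwedge^{2}H^{1}(S,\mathbf{Q})\simeq H^{2}(S,\mathbf{Q})$ recalled in the introduction, together with Proposition~\ref{ht}. For the Fermat threefold the Albanese $A=\mathrm{Alb}(S)\simeq J$ is an abelian fivefold of CM type whose $H^{1}$ is a free $\mathbf{Z}[\omega]$-module (as already noted in the proof of Proposition~\ref{mp}, now carrying the full Fermat symmetry); classically $A$ is isogenous to the fifth power of the equianharmonic elliptic curve with complex multiplication by $\mathbf{Z}[\omega]$, so it has maximal Picard number $\rho(A)=h^{1,1}(A)=25$. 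Since $\iota^{*}$ is an isomorphism of rational Hodge structures on primitive parts, $\rho(S)=\rho(A)=25$, which is the maximum allowed by $h^{1,1}(S)=25$; the algebraic classes therefore span a $25$-dimensional space, while the transcendental lattice has rank $45-25=20$, so $\NS(S)$ is still far from all of $H^{2}(S,\mathbf{Z})$.

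Next I would assemble the intersection form. Adjunction on the genus-one curves, together with $K_{S}=3\tau$ and $\tau^{2}=5$ from Clemens--Griffiths, gives $\tau\cdot\mathcal{E}_{i,j}^{\alpha}=1$ and $(\mathcal{E}_{i,j}^{\alpha})^{2}=-3$. The cross terms $\mathcal{E}_{i,j}^{\alpha}\cdot\mathcal{E}_{k,l}^{\beta}$ count the lines of $V$ joining the two Eckardt points; the key Fermat-specific input is that when $\{i,j\}\cap\{k,l\}=\emptyset$ the line through $e_{i,j}^{\alpha}$ and $e_{k,l}^{\beta}$ lies entirely in $V$, because the Fermat form splits as a sum over the two disjoint coordinate blocks, whereas overlapping index pairs contribute differently. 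Organizing the $\binom{5}{2}$ index pairs by this incidence pattern—and exploiting the large automorphism group of $V$ (the semidirect product of $(\mathbf{Z}/3\mathbf{Z})^{4}$ with the symmetric group $\mathrm{S}_{5}$) to collapse the case analysis—produces the full Gram matrix of the $31$ generators and exhibits the six linear relations that cut the rank from $31$ down to $25$.

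The main obstacle is the final step: showing that these generators already saturate $\NS(S)$, so that the determinant is exactly $3^{18}$ and not $3^{18}$ times the square of a nontrivial index. The plan is to compare the explicit geometric lattice with the Eisenstein description of $H^{2}(A,\mathbf{Z})$ built from the basis of Lemma~\ref{sb}, transported through $\iota^{*}$ and the index-$2$ comparison of Proposition~\ref{ht}, exactly in the spirit of the lattice computations of Proposition~\ref{lat}. Concretely, one computes the discriminant group of the span of $\{\tau,\mathcal{E}_{i,j}^{\alpha}\}$, checks that it is $3$-elementary of the expected length, and verifies that no class of $\NS(S)$ admits a further division; this forces the index to be $1$ and the determinant to be $3^{18}$. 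This integral bookkeeping—matching the thirty-one geometric generators to the CM lattice and controlling the elementary divisors at the prime $3$—is the technical heart of the argument.
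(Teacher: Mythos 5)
Before anything else: the paper does not prove this statement. Theorem~\ref{rou} is imported verbatim from Roulleau \cite{r} and is used purely as an external input to Lemma~\ref{chi} and Theorem~\ref{nsf}; within this paper the citation \emph{is} the proof. So your attempt can only be measured against the strategy of the cited source, not against an internal argument.

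On its own terms your outline assembles the correct ingredients, and it is essentially the route of the cited paper. The rank count is right: $A=\mathrm{Alb}(S)$ is isogenous to the fifth power of the equianharmonic elliptic curve (a classical fact you should cite or prove), hence has maximal Picard number $25=h^{1,1}(A)$, and Proposition~\ref{ht} transports this to $\rank\NS(S)=25=h^{1,1}(S)$. Your Gram data are also correct: $\tau^{2}=5$ and $K_{S}=3\tau$ give, by adjunction on the genus-one curves, $\tau\cdot\mathcal{E}_{i,j}^{\alpha}=1$ and $(\mathcal{E}_{i,j}^{\alpha})^{2}=-3$; and $\mathcal{E}_{i,j}^{\alpha}\cdot\mathcal{E}_{k,l}^{\beta}=1$ exactly when $\{i,j\}\cap\{k,l\}=\emptyset$ (then and only then does the line joining the two Eckardt points lie on the Fermat threefold), and $0$ otherwise, including the case $\{i,j\}=\{k,l\}$ with $\alpha\neq\beta$.

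The genuine gap is the saturation step, and the mechanism you propose for it cannot work even in principle. Write $L=\mathbf{Z}\tau+\sum\mathbf{Z}\mathcal{E}_{i,j}^{\alpha}$ and $n=[\NS(S):L]$, so that $\det L=n^{2}\det\NS(S)$; once $\det L=3^{18}$ is computed, $n$ is a power of $3$. Checking that the discriminant group of $L$ is ``$3$-elementary of the expected length'' does not exclude $n=3$: an index-$3$ overlattice simply has discriminant group $(\mathbf{Z}/3\mathbf{Z})^{16}$, again $3$-elementary. Worse, integral index-$3$ overlattices of $L$ \emph{do} exist abstractly, because they correspond to isotropic elements of the discriminant form of $L$, which is a nondegenerate quadratic form on an $18$-dimensional vector space over $\mathbf{Z}/3\mathbf{Z}$ and therefore has nonzero isotropic vectors (any nondegenerate quadratic form in at least three variables over a finite field does). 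So no bookkeeping internal to $L$ can force $n=1$; the index must be killed by input from the other side, namely an explicit integral description of $\NS(S)$ itself --- e.g.\ identifying $\NS(A)$ with the $5\times5$ Hermitian matrices over $\mathbf{Z}[\omega]$, transporting it by $\iota^{*}$, and exploiting the index-$2$ comparison of Proposition~\ref{ht} together with $2\tau=\iota^{*}\theta$ --- which is what the cited proof actually carries out. Your sentence ``verifies that no class of $\NS(S)$ admits a further division'' is a restatement of $n=1$, not an argument for it; since this is precisely the hard part, the proposal as it stands does not prove the theorem, and in the context of this paper the correct move is the one the author makes: cite \cite{r}.
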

By using Theorem~$\ref{rou}$, we compute the N\'{e}ron-Severi lattice
$\NS{(Y)}$ for the Fermat cubic surface $X$.
The branch divisor $B$ of the double cover
$\phi:Y\rightarrow{X}$ is the sum of the elliptic curves
$$
B_{k}=\{[x_{0}:\dots:x_{3}]\in{X}\mid{x_{k}=0}\}
$$
for $0\leq{k}\leq{3}$, because the Hessian of $F$ is
$6^{4}x_{0}x_{1}x_{2}x_{3}$.
Let $D_{k}$ be the irreducible component of the ramification divisor $R$
of $\phi:Y\rightarrow{X}$ which corresponds to $B_{k}$,
and let $E_{i,j}^{\alpha}$ be the irreducible component of the
ramification divisor $R$ which corresponds to the Eckardt point
$$
\rho(e_{i,j}^{\alpha})=
\bigl\{[x_0:\cdots:x_{3}]\in\mathbf{P}^{3}\mid
x_{i}+\alpha{x_{j}=0},\
x_{k}=0\ \text{for}\ k\in\{0,1,2,3\}\setminus\{i,j\}
\bigr\}
$$
for $0\leq{i<j}\leq{3}$ and $\alpha\in\mathbf{C}$ with
$\alpha^{3}=1$.
We remark that $D_{k}$ is an elliptic curve, and the irreducible
decomposition of the ramification divisor is
$$
R=\sum_{k=0}^{3}D_{k}+\sum_{0\leq{i<j}\leq3}
(E_{i,j}^{1}+E_{i,j}^{\omega}+E_{i,j}^{\omega^{2}}).
$$
\begin{remark}\label{int2}
 For a line $L$ on the Fermat cubic surface $X$ and an Eckardt point $e$
 on $X$, the intersection numbers on $Y$ are computed by
 \begin{align*}
  &(D_{k}.D_{l})=
  \begin{cases}
   0&\text{if $k\neq{l}$},\\
   -3&\text{if $k=l$},
  \end{cases}\\
  &(D_{k}.L^{+})=(D_{k}.L^{-})=0,\\
  &(D_{k}.\phi^{-1}(e))=
  \begin{cases}
   0&\text{if $e\notin{B_{k}}$},\\
   1&\text{if $e\in{B_{k}}$}.
  \end{cases}
 \end{align*}
\end{remark}
\begin{lemma}\label{chi}
 There is an isomorphism
 $$
 \chi:
 \NS{(S)}^{\Gal{(\rho)}}\overset{\sim}{\longrightarrow}
 \frac{\NS{(Y)}}
 {\phi^{*}H^{2}_{\mathrm{prim}}(X,\mathbf{Z})+
 \sum_{L\in{Z_{\infty}}}\mathbf{Z}L^{+}}
 $$
 such that
 $$
 \begin{cases}
  \chi(\tau)=\pi(\phi^{*}L)
  &\text{for a line $L$ on X},\\
  \chi(\mathcal{E}_{i,j}^{\alpha})=\pi({E}_{i,j}^{\alpha})
  &\text{for $0\leq{i}\leq{j}\leq3$ and $\alpha^{3}=1$},\\
  \chi(\mathcal{E}_{k,4}^{1}+
  \mathcal{E}_{k,4}^{\omega}+\mathcal{E}_{k,4}^{\omega^{2}})
  =\pi(D_{k})
  &\text{for $0\leq{k}\leq3$},
 \end{cases}
 $$
 where $\pi$ denotes the natural surjective homomorphism
 $$
 \pi:\NS{(Y)}\longrightarrow
 \frac{\NS{(Y)}}
 {\phi^{*}H^{2}_{\mathrm{prim}}(X,\mathbf{Z})+
 \sum_{L\in{Z_{\infty}}}\mathbf{Z}L^{+}}.
 $$
\end{lemma}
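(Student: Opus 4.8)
The plan is to realize $\chi$ as the restriction to algebraic classes of the isomorphism of Hodge structures
$$
H^{2}(S,\mathbf{Z})^{\Gal{(\rho)}}\simeq
\frac{H^{2}(Y,\mathbf{Z})}
{\phi^{*}H^{2}_{\mathrm{prim}}(X,\mathbf{Z})+\sum_{L\in{Z_{\infty}}}\mathbf{Z}L^{+}}
$$
that is assembled in the proof of Theorem~\ref{mt} out of Proposition~\ref{mp}, Remark~\ref{yz} and Proposition~\ref{tor}. Writing $Q$ for the target and $\pi$ for the projection, the first step is to check that this isomorphism carries $\NS{(S)}^{\Gal{(\rho)}}$ onto $\pi(\NS{(Y)})$. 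By the Lefschetz $(1,1)$-theorem $\NS{(S)}^{\Gal{(\rho)}}$ is exactly the group of integral $(1,1)$-classes of the sub-Hodge-structure $H^{2}(S,\mathbf{Z})^{\Gal{(\rho)}}$, so it suffices to identify the integral $(1,1)$-classes of $Q$ with $\pi(\NS{(Y)})$. The key observation is that $\Ker{\pi}=\phi^{*}H^{2}_{\mathrm{prim}}(X,\mathbf{Z})+\sum_{L}\mathbf{Z}L^{+}$ is entirely of type $(1,1)$, since $H^{2}(X,\mathbf{Z})$ is generated by algebraic classes and the $L^{+}$ are curves. Hence for any lift $\gamma$ of a $(1,1)$-class of $Q$ one has $\gamma^{2,0}\in(\Ker{\pi})\otimes\mathbf{C}\cap H^{2,0}(Y)=0$, so $\gamma\in\NS{(Y)}$; thus the $(1,1)$-classes of $Q$ coincide with $\pi(\NS{(Y)})$. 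As $Q$ is torsion free (it is $H^{2}(Z\setminus{Z_{\infty}},\mathbf{Z})_{\mathrm{free}}$), this produces the abstract isomorphism $\chi$.

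It then remains to trace the generators of $\NS{(S)}^{\Gal{(\rho)}}$ from Theorem~\ref{rou} through the correspondence, which by construction is ``pull back along $\rho$, transport through $Y\setminus{Y_{\infty}}\simeq{Z\setminus{Z_{\infty}}}$, then lift to $Q$.'' For $\tau$ I would use $\rho^{*}\mathcal{O}_{Z}(1)=\mathcal{O}_{S}(1)=3\tau$ together with the relation $\psi^{*}[\mathcal{O}_{Z}(1)]=3\phi^{*}L_{0}-L_{0}^{+}+\sum_{L\in{Z_{\infty}(L_{0})}}L^{+}$ of Proposition~\ref{rel}. Reducing modulo $\sum_{L}\mathbf{Z}L^{+}$ gives $\pi(\psi^{*}[\mathcal{O}_{Z}(1)])=3\pi(\phi^{*}L)$, so $\chi(3\tau)=3\pi(\phi^{*}L)$, and torsion-freeness of $Q$ forces $\chi(\tau)=\pi(\phi^{*}L)$.

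For the elliptic curves the point is a geometric identification of images in $Z$. For $0\leq{i<j}\leq3$ the point $e_{i,j}^{\alpha}$ lies in $H_{4}$, hence is fixed by $\sigma$, its image $\rho(e_{i,j}^{\alpha})$ is the corresponding Eckardt point on $X$, and a line $L'\in{\mathcal{E}_{i,j}^{\alpha}}$ not contained in $H_{4}$ meets $H_{4}$ only at $e_{i,j}^{\alpha}$; so by Lemma~\ref{tc} the line $\rho(L')$ has contact $3$ with $X$ precisely at $\rho(e_{i,j}^{\alpha})$. Thus, away from $Z_{\infty}$, $\rho(\mathcal{E}_{i,j}^{\alpha})=\psi(E_{i,j}^{\alpha})=\psi(\phi^{-1}(\rho(e_{i,j}^{\alpha})))$. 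Since $\mathcal{E}_{i,j}^{\alpha}$ is $\Gal{(\rho)}$-invariant and $\rho$ restricts to the degree-$3$ quotient on it, $\rho^{*}[\rho(\mathcal{E}_{i,j}^{\alpha})]=[\mathcal{E}_{i,j}^{\alpha}]$ on $S\setminus{S_{\infty}}$, and transporting through $Y\setminus{Y_{\infty}}\simeq{Z\setminus{Z_{\infty}}}$ (where $E_{i,j}^{\alpha}$ is the strict transform of $\rho(\mathcal{E}_{i,j}^{\alpha})$) yields $\chi(\mathcal{E}_{i,j}^{\alpha})=\pi(E_{i,j}^{\alpha})$. For $D_{k}$ the three points $e_{k,4}^{1},e_{k,4}^{\omega},e_{k,4}^{\omega^{2}}$ lie off $H_{4}$ and form a single $\sigma$-orbit, all projecting to the coordinate vertex $p_{k}=[0:\cdots:1:\cdots:0]$; substituting $q+tp_{k}$ into $F=x_{0}^{3}+\cdots+x_{3}^{3}$ shows that the line $\overline{p_{k}q}$ meets $X$ with multiplicity $\geq3$ at $q\in{X}$ exactly when $q_{k}=0$, i.e.\ when $q\in{B_{k}}$. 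Hence $\rho(\mathcal{E}_{k,4}^{1}\cup\mathcal{E}_{k,4}^{\omega}\cup\mathcal{E}_{k,4}^{\omega^{2}})=\psi(D_{k})$, the three curves are the three sheets of $\rho$ over $\psi(D_{k})$, so $\rho^{*}[\psi(D_{k})]=\sum_{\alpha}[\mathcal{E}_{k,4}^{\alpha}]$, and transporting as before gives $\chi(\mathcal{E}_{k,4}^{1}+\mathcal{E}_{k,4}^{\omega}+\mathcal{E}_{k,4}^{\omega^{2}})=\pi(D_{k})$.

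The main obstacle I anticipate is the geometric bookkeeping in the last step: one must verify that the curves in question map birationally (respectively by the degree-$3$ quotient) onto their images in $Z$, that $E_{i,j}^{\alpha}$ and $D_{k}$ are the strict transforms of $\rho(\mathcal{E}_{i,j}^{\alpha})$ and $\psi(D_{k})$ under the birational map $\psi$ of Remark~\ref{z3}, and that all identities are read off correctly modulo $\sum_{L}\mathbf{Z}L^{+}$ after restricting to the complements of $Z_{\infty}$ and $Y_{\infty}$. The contact-order computation for the Fermat equation, though elementary, is precisely what pins down the correspondence between $D_{k}$ and the orbit $\{\mathcal{E}_{k,4}^{\alpha}\}$.
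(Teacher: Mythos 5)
Your proposal is correct and follows essentially the same route as the paper: transporting $\NS{(S)}^{\Gal{(\rho)}}$ through the Hodge-structure isomorphism of Proposition~\ref{mp}, Remark~\ref{yz} and Proposition~\ref{tor}, pinning down $\chi(\tau)$ via $3\tau=\rho^{*}[\mathcal{O}_{Z}(1)]$, Proposition~\ref{rel} and torsion-freeness, and matching the elliptic curves through the geometry of $\rho:S\rightarrow{Z}$. The only difference is that you spell out two points the paper leaves implicit — the Lefschetz $(1,1)$ argument showing the Hodge isometry restricts to an isomorphism of the algebraic parts, and the contact-order computation identifying $\rho(\mathcal{E}_{i,j}^{\alpha})$ with $\psi(E_{i,j}^{\alpha})$ and $\rho(\mathcal{E}_{k,4}^{\alpha})$ with $\psi(D_{k})$ — which is a welcome elaboration, not a different method.
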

\begin{proof}
 By Proposition~\ref{mp}, Remark~\ref{yz} and Proposition~\ref{tor}, we
 have the isomorphism of Hodge structures
 $$
 H^{2}(S,\mathbf{Z})^{\Gal{(\rho)}}\simeq
 \frac{H^{2}(Y,\mathbf{Z})}
 {\phi^{*}H^{2}_{\mathrm{prim}}(X,\mathbf{Z})+
 \sum_{L\in{Z_{\infty}}}\mathbf{Z}L^{+}},
 $$
 and this induces the isomorphism
 $$
 \chi:
 \NS{(S)}^{\Gal{(\rho)}}\overset{\sim}{\longrightarrow}
 \frac{\NS{(Y)}}
 {\phi^{*}H^{2}_{\mathrm{prim}}(X,\mathbf{Z})+
 \sum_{L\in{Z_{\infty}}}\mathbf{Z}L^{+}}.
 $$
 Since
 $3\tau=[\mathcal{O}_{S}(1)]=\rho^{*}[\mathcal{O}_{Z}(1)]$
 by \cite[\textsection{10}]{cg},
 and
 $\pi(\psi^{*}[\mathcal{O}_{Z}(1)])=\pi(3\phi^{*}L)$
 by Proposition~$\ref{rel}$,
 we have $\chi(3\tau)=\pi(3\phi^{*}L)$.
 Since
 $\frac{\NS{(Y)}}
 {\phi^{*}H^{2}_{\mathrm{prim}}(X,\mathbf{Z})+
 \sum_{L\in{Z_{\infty}}}\mathbf{Z}L^{+}}$
 is torsion free, we have $\chi(\tau)=\pi(\phi^{*}L)$.
 The triple cover
 $\rho:S\rightarrow{Z}$ induces a triple cover
 $\mathcal{E}_{i,j}^{\alpha}\rightarrow\psi({E}_{i,j}^{\alpha})$
 for $0\leq{i}\leq{j}\leq3$, and an isomorphism
 $\mathcal{E}_{k,4}^{\alpha}\overset{\sim}{\rightarrow}\psi(D_{k})$
 for $0\leq{k}\leq3$.
 These imply that
 $\chi(\mathcal{E}_{i,j}^{\alpha})=\pi({E}_{i,j}^{\alpha})$ and
 $\chi(\mathcal{E}_{k,4}^{1}+
 \mathcal{E}_{k,4}^{\omega}+\mathcal{E}_{k,4}^{\omega^{2}})
 =\pi(D_{k})$.
\end{proof}
\begin{theorem}\label{nsf}
 For the Fermat cubic surface $X$, the N\'{e}ron-Severi lattice
 $\NS{(Y)}$ is of rank $44$ with the determinant $-3^{12}$, and
 $$
 \NS{(Y)}=\sum_{L\in{Z_{\infty}}}(\mathbf{Z}L^{+}+\mathbf{Z}L^{-})
 +\sum_{0\leq{i<j}\leq{3}}
 (\mathbf{Z}E_{i,j}^{1}+\mathbf{Z}E_{i,j}^{\omega}
 +\mathbf{Z}E_{i,j}^{\omega^{2}})
 +\sum_{0\leq{k}\leq{3}}\mathbf{Z}D_{k}.
 $$
\end{theorem}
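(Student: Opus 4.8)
The plan is to exploit Lemma~\ref{chi}, which identifies the quotient $\NS(Y)\big/\bigl(\phi^{*}H^{2}_{\mathrm{prim}}(X,\mathbf{Z})+\sum_{L}\mathbf{Z}L^{+}\bigr)$ with the invariant part $\NS(S)^{\Gal(\rho)}$ of the Néron--Severi group of the Fano surface of the Fermat cubic threefold. Write $N=\phi^{*}H^{2}_{\mathrm{prim}}(X,\mathbf{Z})+\sum_{L}\mathbf{Z}L^{+}$, a sublattice of rank $27$ by Proposition~\ref{pr}, and let $M$ denote the right-hand side of the asserted equality. The inclusion $M\subseteq\NS(Y)$ is immediate, as every listed generator is the class of a curve. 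I would first check $N\subseteq M$: the summand $\sum_{L}\mathbf{Z}L^{+}$ lies in $M$ by definition, and since $H^{2}_{\mathrm{prim}}(X,\mathbf{Z})$ is spanned by differences of lines, the formula $\phi^{*}L=L^{+}+L^{-}+\sum_{e\in L\cap\Sing(B)}\phi^{-1}(e)$ from Theorem~\ref{div} shows $\phi^{*}H^{2}_{\mathrm{prim}}(X,\mathbf{Z})\subseteq M$ (each $\phi^{-1}(e)$ is one of the $E_{i,j}^{\alpha}$). Thus $\NS(Y)=M$ will follow once the images $\pi(\phi^{*}L),\pi(E_{i,j}^{\alpha}),\pi(D_{k})$ generate $\NS(Y)/N$; equivalently, via $\chi$, once the classes $\tau$, the $\mathcal{E}_{i,j}^{\alpha}$ with $0\le i<j\le 3$, and the orbit sums $\mathcal{E}_{k,4}^{1}+\mathcal{E}_{k,4}^{\omega}+\mathcal{E}_{k,4}^{\omega^{2}}$ generate $\NS(S)^{\Gal(\rho)}$ over $\mathbf{Z}$.

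The second step is to pin down the $\Gal(\rho)$-action on Roulleau's generators (Theorem~\ref{rou}). With $\sigma\colon x_{4}\mapsto\omega x_{4}$ generating $\Gal(\rho)$, the point $e_{i,j}^{\alpha}$ has $x_{4}=0$ whenever $j\le 3$, so $\sigma$ fixes $\mathcal{E}_{i,j}^{\alpha}$ for $0\le i<j\le 3$; on the other hand $\sigma$ sends $e_{i,4}^{\alpha}$ to $e_{i,4}^{\alpha\omega^{2}}$, hence cyclically permutes $\mathcal{E}_{i,4}^{1},\mathcal{E}_{i,4}^{\omega},\mathcal{E}_{i,4}^{\omega^{2}}$ for each $i\le 3$, while $\tau$ is invariant. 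Therefore $(\NS(S)\otimes\mathbf{Q})^{\Gal(\rho)}$ is spanned by the $23$ classes above, and the non-invariant complement is spanned by the $\omega$- and $\omega^{2}$-parts of the four triples $\{\mathcal{E}_{k,4}^{\alpha}\}_{\alpha}$, so it has dimension at most $8$; hence $\rank\NS(S)^{\Gal(\rho)}\ge 17$. Combining the exact sequence $0\to N\to\NS(Y)\to\NS(S)^{\Gal(\rho)}\to 0$ with $\rank\NS(Y)\le h^{1}(Y,\Omega_{Y}^{1})=44$ gives $27+\rank\NS(S)^{\Gal(\rho)}\le 44$, so $\rank\NS(S)^{\Gal(\rho)}=17$ and $\rank\NS(Y)=44$.

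The crux is to upgrade this rational statement to an integral one: the $23$ invariant classes must generate all of $\NS(S)^{\Gal(\rho)}$, not merely a finite-index subgroup. Here I would use that, the intersection form being $\Gal(\rho)$-invariant, the invariant and non-invariant parts are mutually orthogonal, and then combine Roulleau's explicit data ($\rank\NS(S)=25$, $\det\NS(S)=3^{18}$) with the transfer computation $\NS(S)^{\Gal(\rho)}\big/(1+\sigma+\sigma^{2})\NS(S)\cong\hat H^{0}(\Gal(\rho),\NS(S))$ to control the index of the span of the invariant classes inside $\NS(S)^{\Gal(\rho)}$ and verify it equals $1$; concretely, one computes the determinant of the lattice spanned by the $23$ classes from Roulleau's Gram matrix and checks it matches $\det\NS(S)^{\Gal(\rho)}$. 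This finite but delicate lattice bookkeeping is the main obstacle, and it is precisely where Roulleau's \emph{determination} of $\NS(S)$ (rather than just its rank) is indispensable.

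Finally, once $\NS(Y)=M$ is established, the determinant is read off directly from the Gram matrix of the generators $L^{\pm}$, $E_{i,j}^{\alpha}$, $D_{k}$, whose pairwise intersection numbers are recorded in Remark~\ref{int} and Remark~\ref{int2}, after extracting a $\mathbf{Z}$-basis from the $76$ generators. This yields $\det\NS(Y)=-3^{12}$, the sign being forced by the Hodge index theorem, since $\NS(Y)$ has signature $(1,43)$.
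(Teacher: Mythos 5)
Your proposal is correct and follows essentially the same route as the paper: reduction via Lemma~\ref{chi} to determining $\NS{(S)}^{\Gal{(\rho)}}$ from Roulleau's Theorem~\ref{rou}, followed by the determinant computation using the intersection numbers in Remark~\ref{int} and Remark~\ref{int2}. The integral-generation step you isolate as the crux is exactly the first display of the paper's proof, which the paper asserts directly from Theorem~\ref{rou} without further argument, so your Galois-action check, rank count, and transfer/determinant strategy supply detail the paper omits rather than diverging from it.
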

\begin{proof}
 By Theorem~$\ref{rou}$, we have
 $$
 \NS{(S)}^{\Gal{(\rho)}}
 =\mathbf{Z}\tau
 +\sum_{0\leq{i<j}\leq{3}}(\mathbf{Z}\mathcal{E}_{i,j}^{1}
 +\mathbf{Z}\mathcal{E}_{i,j}^{\omega}
 +\mathbf{Z}\mathcal{E}_{i,j}^{\omega^{2}})
 +\sum_{0\leq{k}\leq3}\mathbf{Z}(\mathcal{E}_{k,4}^{1}+
 \mathcal{E}_{k,4}^{\omega}+\mathcal{E}_{k,4}^{\omega^{2}}).
 $$
 By Lemma~$\ref{chi}$, we have
 $$
 \NS{(Y)}=\phi^{*}\NS{(X)}
 +\sum_{L\in{Z_{\infty}}}\mathbf{Z}L^{+}
 +\sum_{0\leq{i<j}\leq{3}}
 (\mathbf{Z}E_{i,j}^{1}+\mathbf{Z}E_{i,j}^{\omega}
 +\mathbf{Z}E_{i,j}^{\omega^{2}})
 +\sum_{0\leq{k}\leq{3}}\mathbf{Z}D_{k},
 $$
 and by Remark~$\ref{int}$ and Remark~$\ref{int2}$, we can directly
 compute the determinant of the lattice.
\end{proof}
\begin{remark}
 The sublattice
 $$
 \sum_{L\in{Z_{\infty}}}(\mathbf{Z}L^{+}+\mathbf{Z}L^{-})
 +\sum_{0\leq{i<j}\leq{3}}
 (\mathbf{Z}E_{i,j}^{1}+\mathbf{Z}E_{i,j}^{\omega}
 +\mathbf{Z}E_{i,j}^{\omega^{2}})
 $$
 is of rank $44$ with the determinant $-2^{2}\cdot3^{12}$,
 hence it is a sublattice of index $2$ in $\NS{(Y)}$.
\end{remark}
\section{Infinitesimal variations of Hodge structure}\label{jr}
In this section, we compute the infinitesimal variations of Hodge
structure for the surface $Y\subset\Gamma(\mathbf{P}^{3})$, and we prove
Proposition~$\ref{ivd}$.
The method is introduced in \cite{i2} as a theory of Jacobian rings.
Let $Y=Y_{3}\subset{Y_{2}}\subset{Y_{1}}\subset\Gamma(\mathbf{P}^{3})$
be the varieties defined in Section~$\ref{vl}$.
Let
$$
\begin{matrix}
 \mathcal{Y}_{3}&\subset&\mathcal{M}\times\Gamma(\mathbf{P}^{3})\\
 \downarrow&\swarrow&\\
 \mathcal{M}
\end{matrix}
$$
be the family of the surface $Y_{3}$.
Let
$$
\kappa:
T_{\mathcal{M}}([F])\longrightarrow
H^{1}(Y_{3},T_{Y_{3}}),
$$
be the Kodaira-Spencer map.
By the duality, Proposition~$\ref{ivd}$ is a corollary of the
following proposition.
\begin{proposition}\label{iv}
 The homomorphism
 $$
 T_{\mathcal{M}}([F])\otimes
 H^{0}(Y_{3},\Omega_{Y_{3}}^{2})
 \longrightarrow
 H^{1}(Y_{3},\Omega_{Y_{3}}^{1});\
 \xi\otimes{\omega}\longmapsto
 c(\kappa(\xi)\cup\omega)
 $$
 is of rank $16$, where $c$ is the contraction homomorphism
 $$
 c:H^{1}(Y_{3},T_{Y_{3}}\otimes\Omega_{Y_{3}}^{2})
 \overset{\sim}{\longrightarrow}
 H^{1}(Y_{3},\Omega_{Y_{3}}^{1}).
 $$
\end{proposition}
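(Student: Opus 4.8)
The plan is to use the Jacobian ring description of the Hodge cohomology of $Y=Y_3\subset\Gamma(\mathbf{P}^3)$ from \cite{i2} to turn the map into an explicit multiplication of graded pieces, and then to recognize its image as the graph of a multiplication map in the Jacobian ring of the cubic surface. Write $R_F=\mathbf{C}[x_0,\dots,x_3]/J_F$ for the Jacobian ring of $F$, where $J_F=(\partial F/\partial x_0,\dots,\partial F/\partial x_3)$; since $X$ is nonsingular, $R_F$ is an Artinian Gorenstein ring with Hilbert function $(1,4,6,4,1)$ and socle in degree $4$. First I would record the identifications that the theory of \cite{i2} provides: $H^0(Y,\Omega_Y^2)=H^0(Y,K_Y)\simeq (R_F)_1=\langle x_0,\dots,x_3\rangle$ (this is already forced by $K_Y\simeq(\varPhi^*\mathcal{O}_{\mathbf{P}^3}(1))|_Y$ in Proposition~\ref{ni}), the Kodaira--Spencer image of a deformation $\xi\in T_{\mathcal{M}}([F])$ is represented by the corresponding degree $3$ form $\dot F\in S_3$, and the contraction $c(\kappa(\xi)\cup\omega)$ is computed by multiplication.

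The key structural input, which I would extract either directly from \cite{i2} or from Theorem~\ref{mt} together with Griffiths' description of the infinitesimal variation of the cubic threefold $V=\{F+x_4^3=0\}$, is the following. The Jacobian ring of $V$ is $R_F[x_4]/(x_4^2)=R_F\oplus R_F x_4$, and under the residue isomorphisms the $\Gal(\rho)$-eigenspaces of $H^3(V)(1)$ are $H^{1,0}_\omega\simeq(R_F)_1$, $H^{0,1}_\omega\simeq(R_F)_4$, $H^{1,0}_{\bar\omega}\simeq\langle x_4\rangle$, $H^{0,1}_{\bar\omega}\simeq(R_F)_3 x_4$. By Theorem~\ref{mt} the variable Hodge structure of $Y$ is $(\bigwedge^2 H^3(V)(1))^{\Gal(\rho)}=H^3_\omega\otimes H^3_{\bar\omega}$, so $H^{2,0}(Y)=(R_F)_1\otimes\langle x_4\rangle$ and the variable part of $H^{1,1}(Y)$ is $\bigl((R_F)_4\otimes\langle x_4\rangle\bigr)\oplus\bigl((R_F)_1\otimes(R_F)_3 x_4\bigr)$, of dimension $1+16$. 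Since the deformation $\dot F$ lies in the $x_4$-free part, multiplication by $\dot F$ preserves the eigenspaces and acts on $H^3(V)$ by the Jacobian ring multiplication; the Leibniz rule for the tensor product then identifies our map with
\[
(R_F)_1\otimes T_{\mathcal{M}}([F])\longrightarrow (R_F)_4\oplus\bigl((R_F)_1\otimes(R_F)_3\bigr),\qquad a\otimes\dot F\longmapsto\bigl(\dot F\cdot a,\ a\otimes[\dot F]\bigr),
\]
where $[\dot F]\in(R_F)_3$ is the class of $\dot F$ and $\dot F\cdot a\in(R_F)_4$ is the product in $R_F$.

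The rank computation is now formal. Let $m:(R_F)_1\otimes(R_F)_3\to(R_F)_4$ be the multiplication map. Because $3F=\sum_i x_i\,\partial F/\partial x_i\in J_F$ by the Euler relation, the natural map $T_{\mathcal{M}}([F])=S_3/\mathbf{C}F\to(R_F)_3$ is surjective; hence the second component $a\otimes\dot F\mapsto a\otimes[\dot F]$ is already surjective onto the $16$-dimensional space $(R_F)_1\otimes(R_F)_3$, giving rank $\geq16$. Conversely, for any $\sum_i a_i\otimes\dot F_i$ the first component equals $\sum_i \dot F_i a_i=m\bigl(\sum_i a_i\otimes[\dot F_i]\bigr)$, so the image is contained in the graph $\{(m(v),v)\}$ of $m$, which is $16$-dimensional. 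Therefore the image is exactly this graph and the rank is $16$.

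The hard part will be setting up the first two paragraphs rigorously: one must either transcribe the Jacobian ring dictionary of \cite{i2} for $Y\subset\Gamma(\mathbf{P}^3)$ so that $H^0(Y,\Omega^2_Y)$, $H^1(Y,\Omega^1_Y)$, the Kodaira--Spencer class and the contraction are all expressed through $R_F$, or verify that the isomorphism of Theorem~\ref{mt} is compatible with the Gauss--Manin connection over $\mathcal{M}$ (so that Griffiths' infinitesimal variation for $V$ transports to $Y$ and the Leibniz computation is legitimate). Once the map is in the displayed form the remaining argument, including the determination of the rank $16$, uses only the Euler relation and the ring structure of $R_F$ and is uniform in the smooth cubic $F$.
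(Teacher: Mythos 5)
You take a genuinely different route from the paper. Section~\ref{jr} never invokes Theorem~\ref{mt} or the cubic threefold: it builds a logarithmic Jacobian-ring calculus directly on $Y_{3}\subset Y_{2}\subset\Gamma(\mathbf{P}^{3})$, identifies $\kappa(T_{\mathcal{M}}([F]))$ with the cokernel of $\nu\colon V^{\vee}\otimes{V}\to\Sym^{3}V$, i.e.\ with $(R_{F})_{3}$ in your notation (Lemma~\ref{jt}), and gets the rank from the injectivity of $\delta_{1}$ (Lemma~\ref{jm} and the final diagram). Note that the first branch of the alternative in your last paragraph (``transcribe the Jacobian ring dictionary of \cite{i2} for $Y$'') essentially \emph{is} the paper's proof; the genuinely new branch is the second one, transporting Griffiths' calculus for $V$ through Theorem~\ref{mt}. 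Your central computation there is correct and consistent with the paper: the eigenspace bookkeeping in $R_{F+x_{4}^{3}}=R_{F}\oplus R_{F}x_{4}$, the identification of the invariant part of $\bigwedge^{2}H^{3}(V)(1)$ with $H_{\omega}\otimes H_{\bar{\omega}}$ (Hodge numbers $4,17,4$, matching $p_{g}=4$ and the rank-$25$ quotient), the Leibniz formula $a\otimes\dot{F}\mapsto(\dot{F}a,\ a\otimes[\dot{F}])$, and the image being the $16$-dimensional graph of the multiplication $(R_{F})_{1}\otimes(R_{F})_{3}\to(R_{F})_{4}$, which matches the paper's image $V\otimes(R_{F})_{3}$.

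There are, however, two gaps. The one you flag: your second and third paragraphs presuppose that the isomorphism of Theorem~\ref{mt} is an isomorphism of \emph{variations} of Hodge structure over $\mathcal{M}$, not merely a fiberwise one. Since that isomorphism is a composite of Theorem~\ref{aj}, Proposition~\ref{ht}, Proposition~\ref{mp}, Remark~\ref{yz} and Proposition~\ref{tor}, closing this requires relativizing $W$, $S$, the Albanese variety, $Z$ and $Y$ over $\mathcal{M}$ and checking each arrow is flat for Gauss--Manin. This is plausible because every arrow comes from an algebraic correspondence that exists in families, but it is the bulk of the work, and avoiding it is precisely the point of Section~\ref{jr}.

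The gap you do not flag is logically more serious. Theorem~\ref{mt} controls only the quotient $H^{2}(Y)/N$, where $N=\phi^{*}H^{2}_{\mathrm{prim}}(X,\mathbf{Z})+\sum_{L}\mathbf{Z}L^{+}$ consists of $(1,1)$-classes. So your Leibniz computation identifies not the map of Proposition~\ref{iv} but its composite with the projection $H^{1,1}(Y)\to H^{1,1}(Y)/N_{\mathbf{C}}$; this gives rank $\geq 16$, but not the upper bound, since a tensor killed by the composite could a priori map to a nonzero class inside $N_{\mathbf{C}}\subset H^{1,1}(Y)$. Two short patches are available. (i) Show $\kappa(\xi)=0$ for $\xi$ tangent to the $PGL_{4}$-orbit of $[F]$ (the family $\mathcal{Y}_{3}\subset\mathcal{M}\times\Gamma(\mathbf{P}^{3})$ is $PGL_{4}$-equivariant), so the map factors through $\bigl(\Sym^{3}V/(J_{F})_{3}\bigr)\otimes H^{0}(Y,\Omega_{Y}^{2})\simeq(R_{F})_{3}\otimes(R_{F})_{1}$, which is $16$-dimensional; this is exactly what the paper's Lemma~\ref{jt} provides. (ii) Observe that the classes in $N$ extend to flat sections of type $(1,1)$ near $[F]$, differentiate the identity $\langle\tilde{\omega},\lambda\rangle_{Y}=0$ (valid for type reasons) to conclude that the image of the map is orthogonal to $N_{\mathbf{C}}$, and then use the nondegeneracy of the intersection form on $N$ (determinant $-3^{15}$, proof of Proposition~\ref{tor}) to see that the projection to the quotient is injective on that image. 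Either patch is short, but one of them must be made explicit; as written, your argument establishes only rank $\geq 16$.
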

Let $\mathcal{S}_{\mathbf{P}^{3}}$ be the kernel of the homomorphism
$
\mathcal{O}_{\mathbf{P}^{3}}\otimes{V}
\rightarrow\mathcal{Q}_{\mathbf{P}^{3}}
\simeq\mathcal{O}_{\mathbf{P}^{3}}(1),
$
where $V=H^{0}(\mathbf{P}^{3},\mathcal{O}_{\mathbf{P}^{3}}(1))$.
Let $\mathcal{S}_{\Lambda(\mathbf{P}^{3})}$ be the kernel of the
homomorphism
$
\mathcal{O}_{\Lambda(\mathbf{P}^{3})}\otimes{V}
\rightarrow\mathcal{Q}_{\Lambda(\mathbf{P}^{3})}
$.
Then we have the natural exact sequence
$$
0\longrightarrow
\varPsi^{*}\mathcal{S}_{\Lambda(\mathbf{P}^{3})}
\overset{\sigma}{\longrightarrow}
\varPhi^{*}\mathcal{S}_{\mathbf{P}^{3}}
\overset{\lambda}{\longrightarrow}
\varPsi^{*}\mathcal{Q}_{\Lambda(\mathbf{P}^{3})}
\overset{\tau}{\longrightarrow}
\varPhi^{*}\mathcal{Q}_{\mathbf{P}^{3}}
\longrightarrow0
$$
of vector bundles on $\Gamma(\mathbf{P}^{3})$,
and we have the exact sequence
\begin{multline*}
0\longrightarrow
\mathcal{O}_{\Gamma(\mathbf{P}^{3})}
\overset{\lambda}{\longrightarrow}
\varPhi^{*}\mathcal{S}_{\mathbf{P}^{3}}^{\vee}\otimes
\varPsi^{*}\mathcal{Q}_{\Lambda(\mathbf{P}^{3})}
\overset{\tau\times{\sigma^{\vee}}}{\longrightarrow}\\
\varPhi^{*}(\mathcal{S}_{\mathbf{P}^{3}}^{\vee}\otimes
\mathcal{Q}_{\mathbf{P}^{3}})\oplus
\varPsi^{*}(\mathcal{S}_{\Lambda(\mathbf{P}^{3})}^{\vee}\otimes
\mathcal{Q}_{\Lambda(\mathbf{P}^{3})})
\overset{\sigma^{\vee}\oplus(-\tau)}{\longrightarrow}
\varPsi^{*}\mathcal{S}_{\Lambda(\mathbf{P}^{3})}^{\vee}\otimes
\varPhi^{*}\mathcal{Q}_{\mathbf{P}^{3}}
\longrightarrow0.
\end{multline*}
Since the homomorphism
$$
T_{\mathbf{P}^{3}\times\Lambda(\mathbf{P}^{3})}
\vert_{\Gamma(\mathbf{P}^{3})}
\simeq
\varPhi^{*}(\mathcal{S}_{\mathbf{P}^{3}}^{\vee}\otimes
\mathcal{Q}_{\mathbf{P}^{3}})\oplus
\varPsi^{*}(\mathcal{S}_{\Lambda(\mathbf{P}^{3})}^{\vee}\otimes
\mathcal{Q}_{\Lambda(\mathbf{P}^{3})})
\overset{\sigma^{\vee}\oplus(-\tau)}{\longrightarrow}
\varPsi^{*}\mathcal{S}_{\Lambda(\mathbf{P}^{3})}^{\vee}\otimes
\varPhi^{*}\mathcal{Q}_{\mathbf{P}^{3}}
$$
is identified with the natural homomorphism to the normal bundle
$
T_{\mathbf{P}^{3}\times\Lambda(\mathbf{P}^{3})}
\vert_{\Gamma(\mathbf{P}^{3})}
\rightarrow
\mathcal{N}_{\Gamma(\mathbf{P}^{3})
/\mathbf{P}^{3}\times\Lambda(\mathbf{P}^{3})},
$
we have the exact sequence
\begin{equation}\label{eu}
 0\longrightarrow
\mathcal{O}_{\Gamma(\mathbf{P}^{3})}
\overset{\lambda}{\longrightarrow}
\varPhi^{*}\mathcal{S}_{\mathbf{P}^{3}}^{\vee}\otimes
\varPsi^{*}\mathcal{Q}_{\Lambda(\mathbf{P}^{3})}
\longrightarrow
T_{\Gamma(\mathbf{P}^{3})}
\longrightarrow0.
\end{equation}
Let $(x_{0},\dots,x_{3})$ be a basis of the vector space
$V=H^{0}(\mathbf{P}^{3},\mathcal{O}_{\mathbf{P}^{3}}(1))$, and let
$(x_{0}^{\vee},\dots,x_{3}^{\vee})$ be the dual basis of
$(x_{0},\dots,x_{3})$.
\begin{lemma}\label{tv}
 $$
 H^{0}(Y_{2},T_{\Gamma(\mathbf{P}^{3})}\vert_{Y_{2}})\simeq
 \frac{V^{\vee}\otimes{V}}
 {\mathbf{C}\cdot\sum_{i=0}^{3}x_{i}^{\vee}\otimes{x_{i}}}
 $$
\end{lemma}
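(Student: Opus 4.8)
The plan is to read off the global tangent sections from the Euler-type sequence~(\ref{eu}). Restricting (\ref{eu}) to $Y_2$ gives
$$
0\longrightarrow\mathcal{O}_{Y_2}
\overset{\lambda}{\longrightarrow}
\bigl(\varPhi^*\mathcal{S}_{\mathbf{P}^3}^\vee\otimes\varPsi^*\mathcal{Q}_{\Lambda(\mathbf{P}^3)}\bigr)\big\vert_{Y_2}
\longrightarrow T_{\Gamma(\mathbf{P}^3)}\big\vert_{Y_2}\longrightarrow0 .
$$
In the proof of Proposition~\ref{ni} it is shown that $H^0(Y_2,\mathcal{O}_{Y_2})=\mathbf{C}$ and $H^1(Y_2,\mathcal{O}_{Y_2})=0$, so the long exact cohomology sequence collapses to
$$
0\longrightarrow\mathbf{C}\longrightarrow
H^0\bigl(Y_2,(\varPhi^*\mathcal{S}_{\mathbf{P}^3}^\vee\otimes\varPsi^*\mathcal{Q}_{\Lambda(\mathbf{P}^3)})\vert_{Y_2}\bigr)
\longrightarrow H^0\bigl(Y_2,T_{\Gamma(\mathbf{P}^3)}\vert_{Y_2}\bigr)\longrightarrow0 .
$$
Thus it suffices to identify the middle group with $V^\vee\otimes V$ and to check that the image of $\lambda$ is the line $\mathbf{C}\cdot\sum_i x_i^\vee\otimes x_i$; the lemma then follows by taking the quotient.

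First I would compute the global sections of $\mathcal{F}=\varPhi^*\mathcal{S}_{\mathbf{P}^3}^\vee\otimes\varPsi^*\mathcal{Q}_{\Lambda(\mathbf{P}^3)}$ on all of $\Gamma(\mathbf{P}^3)$. Tensoring the pulled-back dual Euler sequence $0\to\varPhi^*\mathcal{O}_{\mathbf{P}^3}(-1)\to V^\vee\otimes\mathcal{O}_{\Gamma(\mathbf{P}^3)}\to\varPhi^*\mathcal{S}_{\mathbf{P}^3}^\vee\to0$ with $\varPsi^*\mathcal{Q}_{\Lambda(\mathbf{P}^3)}$ and pushing forward along the $\mathbf{P}^1$-bundle $\varPsi$, the projection formula together with $H^\bullet(\mathbf{P}^1,\mathcal{O}(-1))=0$ yields $H^\bullet(\Gamma(\mathbf{P}^3),\varPhi^*\mathcal{O}_{\mathbf{P}^3}(-1)\otimes\varPsi^*\mathcal{Q}_{\Lambda(\mathbf{P}^3)})=0$, while $H^0(\Gamma(\mathbf{P}^3),\varPsi^*\mathcal{Q}_{\Lambda(\mathbf{P}^3)})=H^0(\Lambda(\mathbf{P}^3),\mathcal{Q}_{\Lambda(\mathbf{P}^3)})=V$ by the Euler sequence on the Grassmannian. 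Hence $H^0(\Gamma(\mathbf{P}^3),\mathcal{F})\simeq V^\vee\otimes V$.

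Next I would transport this identity to $Y_2$. Since $Y_2$ is the zero locus of a regular section of $\mathcal{E}=\varPhi^*\mathcal{O}_{\mathbf{P}^3}(2)\otimes\varPsi^*\mathcal{Q}_{\Lambda(\mathbf{P}^3)}$ (the case $m=2$ of Section~\ref{vl}), the Koszul resolution $0\to\wedge^2\mathcal{E}^\vee\to\mathcal{E}^\vee\to\mathcal{O}_{\Gamma(\mathbf{P}^3)}\to\mathcal{O}_{Y_2}\to0$, tensored with $\mathcal{F}$, shows that restriction $H^0(\Gamma(\mathbf{P}^3),\mathcal{F})\to H^0(Y_2,\mathcal{F}\vert_{Y_2})$ is an isomorphism once $H^i(\Gamma(\mathbf{P}^3),\mathcal{E}^\vee\otimes\mathcal{F})=0$ for $i=0,1$ and $H^i(\Gamma(\mathbf{P}^3),\wedge^2\mathcal{E}^\vee\otimes\mathcal{F})=0$ for $i=1,2$. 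Using $\det\mathcal{Q}_{\Lambda(\mathbf{P}^3)}=\mathcal{O}_{\Lambda(\mathbf{P}^3)}(1)$ these twists are $\varPhi^*(\mathcal{S}_{\mathbf{P}^3}^\vee(-2))\otimes\varPsi^*(\mathcal{Q}_{\Lambda(\mathbf{P}^3)}^\vee\otimes\mathcal{Q}_{\Lambda(\mathbf{P}^3)})$ and $\varPhi^*(\mathcal{S}_{\mathbf{P}^3}^\vee(-4))\otimes\varPsi^*(\mathcal{O}_{\Lambda(\mathbf{P}^3)}(-1)\otimes\mathcal{Q}_{\Lambda(\mathbf{P}^3)})$; I would evaluate their cohomology by the same dual-Euler plus $\varPsi_*$ procedure and Bott vanishing on $\mathrm{Gr}(2,4)$. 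Alternatively, since $\varPhi\vert_{Y_2}\colon Y_2\to X$ is a $\mathbf{P}^1$-bundle and $\mathcal{S}\simeq\varPhi^*\mathcal{O}_{\mathbf{P}^3}(-1)\otimes\varPsi^*\mathcal{O}_{\Lambda(\mathbf{P}^3)}(1)$ on $\Gamma(\mathbf{P}^3)$, one may push $\mathcal{F}\vert_{Y_2}$ forward to $X$ and reduce to cohomology of $\mathcal{S}_{\mathbf{P}^3}^\vee\vert_X$ twisted on the cubic surface, where $H^1(X,\mathcal{O}_X)=0$ controls the outcome.

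Finally, the section $\lambda(1)$ is, by construction, the class of the tautological homomorphism $\varPhi^*\mathcal{S}_{\mathbf{P}^3}\to\varPsi^*\mathcal{Q}_{\Lambda(\mathbf{P}^3)}$ of the four-term sequence preceding (\ref{eu}); as every arrow there is induced by the identity of $V$, under $H^0(\Gamma(\mathbf{P}^3),\mathcal{F})=\Hom(V,V)$ it is exactly $\mathrm{id}_V=\sum_i x_i^\vee\otimes x_i$. Combining the three displays then gives the asserted isomorphism. The main obstacle is precisely the vanishing required in the descent step: it is not immediate, because the negative powers of $\varPhi^*\mathcal{O}_{\mathbf{P}^3}(1)$ appearing in the Koszul twists force the nontrivial direct image $R^1\varPsi_*$ into play, so the argument must pass through genuine homogeneous-bundle cohomology on the Grassmannian rather than a single application of the projection formula.
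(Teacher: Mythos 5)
Your proposal is correct and follows essentially the same route as the paper: the paper likewise identifies $H^{0}(\Gamma(\mathbf{P}^{3}),\varPhi^{*}\mathcal{S}_{\mathbf{P}^{3}}^{\vee}\otimes\varPsi^{*}\mathcal{Q}_{\Lambda(\mathbf{P}^{3})})$ with $V^{\vee}\otimes V$, passes to the quotient by the line $\mathbf{C}\cdot\sum_{i}x_{i}^{\vee}\otimes x_{i}$ via the sequence $(\ref{eu})$, and then asserts that restriction to $Y_{2}$ induces an isomorphism, leaving that step as terse as you leave your Koszul vanishings. The only difference is bookkeeping: you restrict $(\ref{eu})$ to $Y_{2}$ first and descend $H^{0}$ of the middle bundle (using $h^{0}(\mathcal{O}_{Y_{2}})=1$, $h^{1}(\mathcal{O}_{Y_{2}})=0$ from Proposition~\ref{ni}), while the paper computes $H^{0}(\Gamma(\mathbf{P}^{3}),T_{\Gamma(\mathbf{P}^{3})})$ globally and restricts the tangent bundle at the end.
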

\begin{proof}
 The natural homomorphism
 $\mathcal{O}_{\Gamma(\mathbf{P}^{3})}\otimes{V^{\vee}}\otimes{V}
 \rightarrow
 \varPhi^{*}\mathcal{S}_{\mathbf{P}^{3}}^{\vee}\otimes
 \varPsi^{*}\mathcal{Q}_{\Lambda(\mathbf{P}^{3})}
 $
 induces the isomorphism
 ${V^{\vee}}\otimes{V}\simeq
 H^{0}(\Gamma(\mathbf{P}^{3}),
 \varPhi^{*}\mathcal{S}_{\mathbf{P}^{3}}^{\vee}\otimes
 \varPsi^{*}\mathcal{Q}_{\Lambda(\mathbf{P}^{3})})$.
 By the exact sequence $(\ref{eu})$, we have
 $$
 H^{0}(\Gamma(\mathbf{P}^{3}),T_{\Gamma(\mathbf{P}^{3})})\simeq
 \frac{V^{\vee}\otimes{V}}
 {\mathbf{C}\cdot\sum_{i=0}^{3}x_{i}^{\vee}\otimes{x_{i}}},
 $$
 and we can prove that
 $
 H^{0}(\Gamma(\mathbf{P}^{3}),T_{\Gamma(\mathbf{P}^{3})})\simeq
 H^{0}(Y_{2},T_{\Gamma(\mathbf{P}^{3})}\vert_{Y_{2}})
 $
 by the restriction.
\end{proof}
We define filtration on
$\varPsi^{*}\Sym^{3}\mathcal{Q}_{\Lambda(\mathbf{P}^{3})}$
by
$$
\Fil^{i}
=\Fil^{i}\varPsi^{*}\Sym^{3}\mathcal{Q}_{\Lambda(\mathbf{P}^{3})}
=\mathcal{S}^{\otimes{i}}\otimes
\varPsi^{*}\Sym^{3-i}\mathcal{Q}_{\Lambda(\mathbf{P}^{3})}
\subset
\varPsi^{*}\Sym^{3}\mathcal{Q}_{\Lambda(\mathbf{P}^{3})},
$$
where $\mathcal{S}$ denotes the line bundle defined as the kernel of the
homomorphism
$\varPsi^{*}\mathcal{Q}_{\Lambda(\mathbf{P}^{3})}
\overset{\tau}{\rightarrow}
\varPhi^{*}\mathcal{Q}_{\mathbf{P}^{3}}$.
For $G\in\Sym^{3}V$, we denote by $[G]_{i}$ the
image of $G$ by the natural homomorphism
$$
\Sym^{3}V\longrightarrow
H^{0}(\Lambda(\mathbf{P}^{3}),\frac{\Fil^{0}}{\Fil^{i}}),
$$
and denote by $[G]_{i,Y_{j}}$ its restriction to
$H^{0}(Y_{j},\frac{\Fil^{0}}{\Fil^{i}}\big\vert_{Y_{j}})$.
We remark that $Y_{j}$ is the zeros of the regular section $[F]_{j}$,
and if $i\geq{j}$ then $[F]_{i,Y_{j}}$ is contained in
$H^{0}(Y_{j},\frac{\Fil^{j}}{\Fil^{i}}\big\vert_{Y_{j}})$.
We define the sheaf of $\mathcal{O}_{Y_{2}}$-modules $\mathcal{N}$
as the cokernel of the homomorphism
$$
\mathcal{O}_{Y_{2}}\overset{[F]_{3,Y_{2}}}{\longrightarrow}
\frac{\Fil^{0}}{\Fil^{3}}\big\vert_{Y_{2}}.
$$
We remark that $T_{\Gamma(\mathbf{P}^{3})}\vert_{Y_{2}}$ is a quotient
bundle of $\mathcal{O}_{Y_{2}}\otimes{V^{\vee}\otimes{V}}$, and
$\mathcal{N}$ is a quotient $\mathcal{O}_{Y_{2}}$-module of
$\mathcal{O}_{Y_{2}}\otimes\Sym^{3}V$.
And we can check that the homomorphism
$$
\nu:
{V^{\vee}\otimes{V}}
\longrightarrow
\Sym^{3}V;\
x_{i}^{\vee}\otimes{A}\longmapsto
A\frac{\partial{F}}{\partial{x_{i}}}
$$
induces the homomorphism
$$
T_{\Gamma(\mathbf{P}^{3})}\vert_{Y_{2}}
\longrightarrow
\mathcal{N}.
$$
\begin{lemma}\label{ns}
 There is an exact sequence
 $$
 0\longrightarrow
 T_{Y_{2}}(-\log{Y_{3}})
 \longrightarrow
 T_{\Gamma(\mathbf{P}^{3})}\vert_{Y_{2}}
 \longrightarrow
 \mathcal{N}
 \longrightarrow0.
 $$
\end{lemma}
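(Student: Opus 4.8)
The plan is to realize both maps in the asserted sequence as derivatives of the sections $[F]_{2}$ and $[F]_{3}$ cutting out $Y_{2}$ and $Y_{3}$, and then to finish by a diagram chase. First I would record the structure of $\mathcal{N}$. Since $[F]_{2,Y_{2}}=0$ on $Y_{2}$, the section $[F]_{3,Y_{2}}$ takes values in the line subbundle $(\Fil^{2}/\Fil^{3})\vert_{Y_{2}}\simeq\mathcal{O}_{Y_{2}}(Y_{3})$ and vanishes exactly along $Y_{3}$; the composite of the defining injection $\mathcal{O}_{Y_{2}}\to(\Fil^{0}/\Fil^{3})\vert_{Y_{2}}$ with the quotient onto $(\Fil^{0}/\Fil^{2})\vert_{Y_{2}}$ is therefore zero. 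Hence the projection $(\Fil^{0}/\Fil^{3})\vert_{Y_{2}}\to(\Fil^{0}/\Fil^{2})\vert_{Y_{2}}$ descends to $\mathcal{N}$ and gives
$$0\longrightarrow N_{Y_{3}/Y_{2}}\longrightarrow\mathcal{N}\overset{q}{\longrightarrow}\mathcal{N}_{Y_{2}/\Gamma(\mathbf{P}^{3})}\longrightarrow0,$$
where I identify $(\Fil^{0}/\Fil^{2})\vert_{Y_{2}}$ with the normal bundle $\mathcal{N}_{Y_{2}/\Gamma(\mathbf{P}^{3})}$ and the cokernel of $\mathcal{O}_{Y_{2}}\to(\Fil^{2}/\Fil^{3})\vert_{Y_{2}}$ with $N_{Y_{3}/Y_{2}}=\mathcal{O}_{Y_{3}}(Y_{3})$. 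Alongside this I keep the normal bundle sequence $0\to T_{Y_{2}}\to T_{\Gamma(\mathbf{P}^{3})}\vert_{Y_{2}}\overset{\beta}{\to}\mathcal{N}_{Y_{2}/\Gamma(\mathbf{P}^{3})}\to0$ and the residue sequence $0\to T_{Y_{2}}(-\log Y_{3})\to T_{Y_{2}}\to N_{Y_{3}/Y_{2}}\to0$ that defines the logarithmic tangent sheaf.

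The geometric content I would exploit is that, under the presentation $(\ref{eu})$ of $T_{\Gamma(\mathbf{P}^{3})}$ as the infinitesimal image of the Lie algebra $V^{\vee}\otimes V$, the map $\nu(x_{i}^{\vee}\otimes A)=A\,\partial F/\partial x_{i}$ is precisely the Lie derivative $\theta\mapsto\mathcal{L}_{\theta}F$ of the cubic form along the vector field $\theta$. Because the sections $[F]_{m}$ are obtained from $F$ by restriction to the varying line and are $GL(V)$-equivariant, moving a pair $(p,L)$ by $\theta$ changes $[F]_{m}$ in the same way, up to sign, as moving $F$ by $\theta$. Consequently the induced map $\mu:T_{\Gamma(\mathbf{P}^{3})}\vert_{Y_{2}}\to\mathcal{N}$ is, up to sign, the derivative of the defining section $[F]_{3}$, and reducing modulo $\Fil^{2}$ turns $\mu$ into the derivative of $[F]_{2}$, i.e. the normal projection $\beta$; thus $q\circ\mu=\beta$. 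Verifying this equivariance identification — that $\mu$ and $q\circ\mu$ really are the second-fundamental-form maps of $[F]_{3}$ and $[F]_{2}$ — is the one point requiring a genuine local computation, and I expect it to be the main obstacle; everything else is formal.

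Granting $q\circ\mu=\beta$, the rest is the snake lemma. Any local section in $\Ker\mu$ has $\beta=q\circ\mu=0$, so $\Ker\mu\subset T_{Y_{2}}$, and the restriction $\bar\mu:=\mu\vert_{T_{Y_{2}}}$ lands in $\Ker q=N_{Y_{3}/Y_{2}}$. The same equivariance shows that $\bar\mu$ is the $\Fil^{2}/\Fil^{3}$-component of the derivative of $[F]_{3}$ along vector fields tangent to $Y_{2}$, that is, the canonical restriction map $T_{Y_{2}}\to N_{Y_{3}/Y_{2}}$ of the residue sequence, whose kernel is by definition $T_{Y_{2}}(-\log Y_{3})$. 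Hence $\Ker\mu=T_{Y_{2}}(-\log Y_{3})$. Finally $\bar\mu$ is surjective, being the composite of the surjections $T_{Y_{2}}\to T_{Y_{2}}\vert_{Y_{3}}\to N_{Y_{3}/Y_{2}}$; together with the surjectivity of $\beta=q\circ\mu$ this forces $\mu$ to be surjective onto $\mathcal{N}$, since $\operatorname{im}\mu$ then contains $\Ker q$ and surjects onto $\mathcal{N}/\Ker q$. This produces the desired exact sequence $0\to T_{Y_{2}}(-\log Y_{3})\to T_{\Gamma(\mathbf{P}^{3})}\vert_{Y_{2}}\to\mathcal{N}\to0$.
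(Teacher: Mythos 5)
Your proof is correct and takes essentially the same route as the paper's: the same filtration-induced exact sequence $0\to\mathcal{N}_{Y_{3}/Y_{2}}\to\mathcal{N}\to\mathcal{N}_{Y_{2}/\Gamma(\mathbf{P}^{3})}\to0$, the same identification of the composite $T_{\Gamma(\mathbf{P}^{3})}\vert_{Y_{2}}\to\mathcal{N}\to\mathcal{N}_{Y_{2}/\Gamma(\mathbf{P}^{3})}$ with the normal projection (with kernel $T_{Y_{2}}$), and the same identification of the kernel of the induced map $T_{Y_{2}}\to\mathcal{N}_{Y_{3}/Y_{2}}$ with $T_{Y_{2}}(-\log{Y_{3}})$. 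The equivariance compatibility you flag as the main obstacle is exactly the point the paper also leaves as an unproved assertion (``we can check\dots'', ``is identified with\dots''), and your explicit surjectivity argument merely fills in what the paper's diagram chase leaves implicit.
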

\begin{proof}
 By the definition of $\mathcal{N}$, we have the exact sequence
 $$
 0\longrightarrow
 \mathcal{N}_{Y_{3}/Y_{2}}
 \longrightarrow
 \mathcal{N}
 \longrightarrow
 \mathcal{N}_{Y_{2}/\Gamma(\mathbf{P}^{3})}
 \longrightarrow0,
 $$
 where we remark that
 $
 \mathcal{N}_{Y_{2}/\Gamma(\mathbf{P}^{3})}
 \simeq
 \frac{\Fil^{0}}{\Fil^{2}}\big\vert_{Y_{2}}
 $,
 and $\mathcal{N}_{Y_{3}/Y_{2}}$ is the cokernel of the homomorphism
 $$
 \mathcal{O}_{Y_{2}}\overset{[F]_{3,Y_{2}}}{\longrightarrow}
 \frac{\Fil^{2}}{\Fil^{3}}\big\vert_{Y_{2}}.
 $$
 Since the kernel of the composition
 $T_{\Gamma(\mathbf{P}^{3})}\vert_{Y_{2}}
 \rightarrow
 \mathcal{N}
 \rightarrow
 \mathcal{N}_{Y_{2}/\Gamma(\mathbf{P}^{3})}$
 is identified with $T_{Y_{2}}$,
 we have the homomorphism
 $T_{Y_{2}}\rightarrow
 \mathcal{N}_{Y_{3}/Y_{2}}$
 and its kernel is identified with $T_{Y_{2}}(-\log{Y_{3}})$.
\end{proof}
\begin{lemma}\label{nv}
 $$
 H^{0}(Y_{2},\mathcal{N})\simeq
 \frac{V\otimes\Sym^{2}V}{\mathbf{C}\cdot
 \sum_{i=0}^{3}x_{i}\otimes\frac{\partial{F}}{\partial{x_{i}}}},
 $$
\end{lemma}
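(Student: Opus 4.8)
The plan is to read $H^{0}(Y_{2},\mathcal{N})$ off the defining presentation of $\mathcal{N}$ and then to match the resulting space with $V\otimes\Sym^{2}V$ modulo the Euler relation. First I would use that $\mathcal{N}$ sits in the short exact sequence
$$
0\longrightarrow\mathcal{O}_{Y_{2}}\overset{[F]_{3,Y_{2}}}{\longrightarrow}
\tfrac{\Fil^{0}}{\Fil^{3}}\big\vert_{Y_{2}}\longrightarrow\mathcal{N}\longrightarrow0,
$$
which is exact on the left because $[F]_{3,Y_{2}}$ is a nonzero section of a vector bundle on the integral surface $Y_{2}$ (it vanishes only on the divisor $Y_{3}$), hence multiplication by it is injective. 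Since $h^{0}(Y_{2},\mathcal{O}_{Y_{2}})=1$ and $h^{1}(Y_{2},\mathcal{O}_{Y_{2}})=0$ by Proposition~\ref{ni}, the long exact sequence collapses to
$$
H^{0}(Y_{2},\mathcal{N})\simeq
H^{0}\bigl(Y_{2},\tfrac{\Fil^{0}}{\Fil^{3}}\big\vert_{Y_{2}}\bigr)\big/\mathbf{C}\cdot[F]_{3,Y_{2}}.
$$
Thus the lemma reduces to two assertions: that $H^{0}(Y_{2},\tfrac{\Fil^{0}}{\Fil^{3}}\vert_{Y_{2}})\simeq V\otimes\Sym^{2}V$, and that under this isomorphism the distinguished section $[F]_{3,Y_{2}}$ corresponds to $\sum_{i}x_{i}\otimes\frac{\partial F}{\partial x_{i}}$.

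For the first assertion I would compute the cohomology on the ambient flag variety and then descend to $Y_{2}$. Pushing $\tfrac{\Fil^{0}}{\Fil^{3}}$ forward along the $\mathbf{P}^{1}$-bundle $\varPsi:\Gamma(\mathbf{P}^{3})\to\Lambda(\mathbf{P}^{3})$ and using the filtration with graded pieces $\mathcal{S}^{\otimes i}\otimes\varPhi^{*}\mathcal{O}_{\mathbf{P}^{3}}(3-i)$, the top piece ($i=0$) contributes $H^{0}(\Lambda(\mathbf{P}^{3}),\Sym^{3}\mathcal{Q}_{\Lambda(\mathbf{P}^{3})})\simeq\Sym^{3}V$, the middle piece ($i=1$) contributes the global sections of $\Ker(\mathcal{Q}_{\Lambda(\mathbf{P}^{3})}\otimes\Sym^{2}\mathcal{Q}_{\Lambda(\mathbf{P}^{3})}\to\Sym^{3}\mathcal{Q}_{\Lambda(\mathbf{P}^{3})})$, and the bottom piece ($i=2$) has fibrewise degree $-1$ and so contributes nothing. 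A Borel--Weil--Bott computation identifies the total with the $GL(V)$-decomposition $V\otimes\Sym^{2}V\simeq\Sym^{3}V\oplus W$, where $W$ is the kernel of multiplication. The multiplication map $\mathcal{O}_{\Gamma(\mathbf{P}^{3})}\otimes V\otimes\Sym^{2}V\to\varPsi^{*}\Sym^{3}\mathcal{Q}_{\Lambda(\mathbf{P}^{3})}\to\tfrac{\Fil^{0}}{\Fil^{3}}$ then realizes the isomorphism $V\otimes\Sym^{2}V\simeq H^{0}(\Gamma(\mathbf{P}^{3}),\tfrac{\Fil^{0}}{\Fil^{3}})$ equivariantly, so it is an isomorphism by Schur's lemma once one checks it is nonzero on each of the two constituents.

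To transfer this to $Y_{2}$ I would tensor the Koszul resolution of $\mathcal{O}_{Y_{2}}$ attached to the regular section of the rank-$2$ bundle $\varPhi^{*}\mathcal{O}_{\mathbf{P}^{3}}(2)\otimes\varPsi^{*}\mathcal{Q}_{\Lambda(\mathbf{P}^{3})}$ cutting out $Y_{2}$ with $\tfrac{\Fil^{0}}{\Fil^{3}}$, and compute the hypercohomology using the vanishing theorems already established in the proof of Proposition~\ref{ni}, namely the vanishing of $H^{\ast}(\Gamma(\mathbf{P}^{3}),\varPhi^{*}\mathcal{O}_{\mathbf{P}^{3}}(j)\otimes\varPsi^{*}\mathcal{O}_{\Lambda(\mathbf{P}^{3})}(-1))$ and of the negative twists of $\varPhi^{*}\mathcal{O}_{\mathbf{P}^{3}}$. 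Finally, the Euler identity $\sum_{i}x_{i}\frac{\partial F}{\partial x_{i}}=3F$ shows that the image of $\sum_{i}x_{i}\otimes\frac{\partial F}{\partial x_{i}}$ under multiplication is $3F$, that is, the class $[F]_{3,Y_{2}}$ up to the invertible scalar $3$; dividing out yields the stated isomorphism.

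I expect the transfer step to be the main obstacle. Unlike the tangent bundle treated in Lemma~\ref{tv}, the graded pieces of $\tfrac{\Fil^{0}}{\Fil^{3}}$ twisted down by the equations of $Y_{1}$ and $Y_{2}$ do not all have vanishing cohomology on $\Gamma(\mathbf{P}^{3})$, so the restriction $H^{0}(\Gamma(\mathbf{P}^{3}),\tfrac{\Fil^{0}}{\Fil^{3}})\to H^{0}(Y_{2},\tfrac{\Fil^{0}}{\Fil^{3}}\vert_{Y_{2}})$ is not visibly term-by-term an isomorphism; one must verify that the spurious kernel and cokernel contributions in the Koszul spectral sequence cancel, so that the dimension comes out to exactly $40$. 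Carrying out these cancellations through the explicit $\varPsi_{*}$ and $R^{1}\varPsi_{*}$ of the bundles $\mathcal{S}^{\otimes i}\otimes\varPhi^{*}\mathcal{O}_{\mathbf{P}^{3}}(\ast)\otimes\varPsi^{*}\mathcal{Q}_{\Lambda(\mathbf{P}^{3})}^{\vee}$ is the technical heart of the argument.
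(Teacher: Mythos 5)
Your skeleton coincides with the paper's: the defining sequence $0\to\mathcal{O}_{Y_{2}}\to\frac{\Fil^{0}}{\Fil^{3}}\big\vert_{Y_{2}}\to\mathcal{N}\to0$ plus $h^{0}(Y_{2},\mathcal{O}_{Y_{2}})=1$, $h^{1}(Y_{2},\mathcal{O}_{Y_{2}})=0$, then an identification of $H^{0}(Y_{2},\frac{\Fil^{0}}{\Fil^{3}}\vert_{Y_{2}})$ with $V\otimes\Sym^{2}V$ carrying $[F]_{3,Y_{2}}$ into the line $\mathbf{C}\cdot\sum_{i}x_{i}\otimes\frac{\partial F}{\partial x_{i}}$. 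The genuine error is in the map you use to realize that identification. Your composite $\mathcal{O}_{\Gamma(\mathbf{P}^{3})}\otimes V\otimes\Sym^{2}V\to\varPsi^{*}\Sym^{3}\mathcal{Q}_{\Lambda(\mathbf{P}^{3})}\to\frac{\Fil^{0}}{\Fil^{3}}$ sends $A\otimes q$ at $(p,L)$ to $(Aq)\vert_{L}$ modulo $\Fil^{3}$, so as a sheaf map it factors through the multiplication $V\otimes\Sym^{2}V\to\Sym^{3}V$. Hence it annihilates the $20$-dimensional constituent $W=\Ker(V\otimes\Sym^{2}V\to\Sym^{3}V)$ and its image in the $40$-dimensional space $H^{0}(\Gamma(\mathbf{P}^{3}),\frac{\Fil^{0}}{\Fil^{3}})$ (the dimension your own Bott computation gives) is at most $20$-dimensional: your Schur's-lemma criterion fails, because the map is zero on one of the two constituents, and it can realize neither the isomorphism nor the location of the distinguished line, which is the entire content of the lemma. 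The paper's route avoids this: it first identifies $\frac{\Fil^{0}}{\Fil^{3}}\simeq\varPhi^{*}\mathcal{Q}_{\mathbf{P}^{3}}\otimes\varPsi^{*}\Sym^{2}\mathcal{Q}_{\Lambda(\mathbf{P}^{3})}$ by the polarization map $abc\mapsto\tau(a)\otimes bc+\tau(b)\otimes ca+\tau(c)\otimes ab$ (with $\tau$ the evaluation at the point $p$), and then uses the natural map $A\otimes q\mapsto\tau(A)\otimes q\vert_{L}$, which evaluates the first tensor factor at $p$ rather than restricting it to $L$; this map is nonzero on both constituents, hence an isomorphism on global sections, and under it $[G]_{3}$ corresponds exactly to $\sum_{i}x_{i}\otimes\frac{\partial G}{\partial x_{i}}$, with no Euler rescaling needed.

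The second gap is the transfer step $H^{0}(\Gamma(\mathbf{P}^{3}),\frac{\Fil^{0}}{\Fil^{3}})\simeq H^{0}(Y_{2},\frac{\Fil^{0}}{\Fil^{3}}\vert_{Y_{2}})$, which you explicitly defer as ``the main obstacle'': as submitted it is an intention, not a proof. In fact your worry about cancellations is unfounded, and your own Koszul plan closes it term by term. Writing $M=\varPhi^{*}\mathcal{O}_{\mathbf{P}^{3}}(1)\otimes\varPsi^{*}\Sym^{2}\mathcal{Q}_{\Lambda(\mathbf{P}^{3})}$ and $E=\varPhi^{*}\mathcal{O}_{\mathbf{P}^{3}}(2)\otimes\varPsi^{*}\mathcal{Q}_{\Lambda(\mathbf{P}^{3})}$ for the bundle cutting out $Y_{2}$, the term $E^{\vee}\otimes M$ has degree $-1$ on the fibers of $\varPsi$, so all of its cohomology vanishes; and $\wedge^{2}E^{\vee}\otimes M\simeq\varPhi^{*}\mathcal{O}_{\mathbf{P}^{3}}(-3)\otimes\varPsi^{*}(\mathcal{O}_{\Lambda(\mathbf{P}^{3})}(-1)\otimes\Sym^{2}\mathcal{Q}_{\Lambda(\mathbf{P}^{3})})$ has $\varPsi_{*}=0$ while $R^{1}\varPsi_{*}$ decomposes as $\bigl(\Sym^{3}\mathcal{Q}_{\Lambda(\mathbf{P}^{3})}\otimes\mathcal{O}_{\Lambda(\mathbf{P}^{3})}(-3)\bigr)\oplus\bigl(\mathcal{Q}_{\Lambda(\mathbf{P}^{3})}\otimes\mathcal{O}_{\Lambda(\mathbf{P}^{3})}(-2)\bigr)$, whose $H^{0}$ and $H^{1}$ on $\Lambda(\mathbf{P}^{3})$ vanish by Bott's theorem; so $H^{1}$ and $H^{2}$ of $\wedge^{2}E^{\vee}\otimes M$ vanish and restriction is bijective on $H^{0}$, with no spectral-sequence cancellation to track. (The paper is also brief here, but it asserts the restriction isomorphism as proved; you leave it as the acknowledged missing heart of the argument.) So: right architecture, but the identification map must be replaced by the evaluation/polarization map, and the transfer step must actually be carried out.
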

\begin{proof}
 By the homomorphism
 $$
 \varPsi^{*}\Sym^{3}\mathcal{Q}_{\Lambda(\mathbf{P}^{3})}
 \longrightarrow
 \varPhi^{*}\mathcal{Q}_{\mathbf{P}^{2}}\otimes
 \varPsi^{*}\Sym^{2}\mathcal{Q}_{\Lambda(\mathbf{P}^{3})};\
 abc\longmapsto
 \tau(a)\otimes{bc}
 +\tau(b)\otimes{ca}
 +\tau(c)\otimes{ab},
 $$
 we have the isomorphism
 $$
 \frac{\Fil^{0}}{\Fil^{3}}
 \simeq
 \varPhi^{*}\mathcal{Q}_{\mathbf{P}^{2}}\otimes
 \varPsi^{*}\Sym^{2}\mathcal{Q}_{\Lambda(\mathbf{P}^{3})}.
 $$
 The natural homomorphism
 $\mathcal{O}_{\Gamma(\mathbf{P}^{3})}\otimes{V}\otimes\Sym^{2}V
 \rightarrow
 \varPhi^{*}\mathcal{Q}_{\mathbf{P}^{2}}\otimes
 \varPsi^{*}\Sym^{2}\mathcal{Q}_{\Lambda(\mathbf{P}^{3})}$
 induces the isomorphism
 $$
 {V}\otimes\Sym^{2}V\simeq
 H^{0}(\Gamma(\mathbf{P}^{3}),
 \varPhi^{*}\mathcal{Q}_{\mathbf{P}^{2}}\otimes
 \varPsi^{*}\Sym^{2}\mathcal{Q}_{\Lambda(\mathbf{P}^{3})})
 \simeq
 H^{0}(\Gamma(\mathbf{P}^{3}),
 \frac{\Fil^{0}}{\Fil^{3}}),
 $$
 and we can prove that
 $$
 H^{0}(\Gamma(\mathbf{P}^{3}),
 \frac{\Fil^{0}}{\Fil^{3}})
 \simeq
 H^{0}(Y_{2},
 \frac{\Fil^{0}}{\Fil^{3}}\big\vert_{Y_{2}}).
 $$
 By the exact sequence
 $$
 0\longrightarrow
 \mathcal{O}_{Y_{2}}\overset{[F]_{3,Y_{2}}}{\longrightarrow}
 \frac{\Fil^{0}}{\Fil^{3}}\big\vert_{Y_{2}}
 \longrightarrow
 \mathcal{N}
 \longrightarrow0,
 $$
 we have
 $$
 H^{0}(Y_{2},\mathcal{N})\simeq
 \frac{V\otimes\Sym^{2}V}{\mathbf{C}\cdot
 \sum_{i=0}^{3}x_{i}\otimes\frac{\partial{F}}{\partial{x_{i}}}}.
 $$
\end{proof}
\begin{lemma}
 The kernel of the homomorphism
 $$
 H^{1}(Y_{2},T_{Y_{2}}(-\log{Y_{3}}))\longrightarrow
 H^{1}(Y_{2},T_{\Gamma(\mathbf{P}^{3})}\vert_{Y_{2}})
 $$
 is identified with the cokernel of the injective homomorphism
 \begin{equation*}
  \delta\circ\nu:
  V^{\vee}\otimes{V}
   \longrightarrow
   V\otimes\Sym^{2}V;\
   x_{j}^{\vee}\otimes{A}\longmapsto
   A\otimes\frac{\partial{F}}{\partial{x_{j}}}
   +\sum_{i=0}^{3}x_{i}\otimes{A}\frac{\partial^{2}{F}}
   {\partial{x_{i}}\partial{x_{j}}}.
 \end{equation*}
\end{lemma}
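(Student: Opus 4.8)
The plan is to read off the kernel from the long exact cohomology sequence attached to the short exact sequence
$$0\longrightarrow T_{Y_{2}}(-\log{Y_{3}})\longrightarrow T_{\Gamma(\mathbf{P}^{3})}\vert_{Y_{2}}\longrightarrow\mathcal{N}\longrightarrow0$$
of Lemma~\ref{ns}. Writing $\bar{g}\colon H^{0}(Y_{2},T_{\Gamma(\mathbf{P}^{3})}\vert_{Y_{2}})\to H^{0}(Y_{2},\mathcal{N})$ for the induced map on sections and $\partial$ for the connecting homomorphism, exactness shows that the kernel of $H^{1}(Y_{2},T_{Y_{2}}(-\log{Y_{3}}))\to H^{1}(Y_{2},T_{\Gamma(\mathbf{P}^{3})}\vert_{Y_{2}})$ equals the image of $\partial$, which is isomorphic to $\operatorname{coker}(\bar{g})$. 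So the task reduces to identifying $\operatorname{coker}(\bar{g})$ with $\operatorname{coker}(\delta\circ\nu)$.

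First I would make $\bar{g}$ explicit. By Lemma~\ref{tv} the source is $(V^{\vee}\otimes V)/\mathbf{C}w$ with $w=\sum_{i}x_{i}^{\vee}\otimes x_{i}$, and by Lemma~\ref{nv} the target is $(V\otimes\Sym^{2}V)/\mathbf{C}u$ with $u=\sum_{i}x_{i}\otimes\frac{\partial F}{\partial x_{i}}$. Since the bundle map $T_{\Gamma(\mathbf{P}^{3})}\vert_{Y_{2}}\to\mathcal{N}$ is the one induced by $\nu$ together with the polarization isomorphism $\frac{\Fil^{0}}{\Fil^{3}}\simeq\varPhi^{*}\mathcal{Q}_{\mathbf{P}^{2}}\otimes\varPsi^{*}\Sym^{2}\mathcal{Q}_{\Lambda(\mathbf{P}^{3})}$ used in Lemma~\ref{nv}, the map $\bar{g}$ descends from $\delta\circ\nu\colon V^{\vee}\otimes V\to V\otimes\Sym^{2}V$, where $\delta(G)=\sum_{i}x_{i}\otimes\frac{\partial G}{\partial x_{i}}$; expanding $\delta(A\frac{\partial F}{\partial x_{j}})$ reproduces the formula in the statement. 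A short Euler computation then gives $(\delta\circ\nu)(w)=3u$, so $u$ lies in the image of $\delta\circ\nu$. Consequently the extra relations $\mathbf{C}w$ and $\mathbf{C}u$ do not change the cokernel, and $\operatorname{coker}(\bar{g})=\operatorname{coker}(\delta\circ\nu)$, which is the asserted identification.

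It remains to verify that $\delta\circ\nu$ is injective, which I regard as the main point. The map $\delta$ is injective because composing it with the multiplication $V\otimes\Sym^{2}V\to\Sym^{3}V$ yields $3$ times the identity, by Euler's identity; hence $\ker(\delta\circ\nu)=\ker(\nu)$. Now $\ker(\nu)$ consists of the linear syzygies $\sum_{j}A_{j}\frac{\partial F}{\partial x_{j}}=0$ with $A_{j}\in V$. Here I would invoke that $X$ is nonsingular, so that $\frac{\partial F}{\partial x_{0}},\dots,\frac{\partial F}{\partial x_{3}}$ have no common zero in $\mathbf{P}^{3}$ and therefore form a regular sequence. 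For a regular sequence of four quadrics the first syzygy module is generated by the Koszul relations, whose coefficients are quadratic; in particular there is no nonzero syzygy with linear coefficients $A_{j}$. Thus $\ker(\nu)=0$, so $\delta\circ\nu$ is injective and the cokernel in the statement is well defined. The technical heart of the argument is this regular-sequence input, together with the bookkeeping that matches the abstract connecting map with the concrete polarization $\delta\circ\nu$.
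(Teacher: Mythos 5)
Your proposal is correct and follows essentially the same route as the paper's own proof: the long exact cohomology sequence attached to the short exact sequence of Lemma~\ref{ns}, the identification of the map on global sections via Lemma~\ref{tv} and Lemma~\ref{nv}, and the recognition of that map as the descent of $\delta\circ\nu$. The only difference is that you make explicit what the paper merely asserts — the Euler computation $(\delta\circ\nu)\bigl(\sum_{i}x_{i}^{\vee}\otimes x_{i}\bigr)=3\sum_{i}x_{i}\otimes\frac{\partial F}{\partial x_{i}}$ showing the quotients by these lines do not alter the cokernel, and the injectivity of $\nu$ (no linear syzygies among the partials of $F$, by the Koszul resolution of the regular sequence coming from nonsingularity of $X$) and of $\delta$ (Euler's identity) — and these justifications are all correct.
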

\begin{proof}
 By the exact sequence in Lemma~$\ref{ns}$,
 we have the exact sequence
 $$
 H^{0}(Y_{2},T_{\Gamma(\mathbf{P}^{3})}\vert_{Y_{2}})
 \longrightarrow
 H^{0}(Y_{2},\mathcal{N})
 \longrightarrow
 H^{1}(Y_{2},T_{Y_{2}}(-\log{Y_{3}}))\longrightarrow
 H^{1}(Y_{2},T_{\Gamma(\mathbf{P}^{3})}\vert_{Y_{2}}).
 $$
 By Lemma~$\ref{tv}$ and Lemma~$\ref{nv}$, we can check that
 $H^{0}(Y_{2},T_{\Gamma(\mathbf{P}^{3})}\vert_{Y_{2}})
 \rightarrow
 H^{0}(Y_{2},\mathcal{N})$
 is induced by the homomorphism
 $$
 \frac{V^{\vee}\otimes{V}}{\mathbf{C}\cdot
 \sum_{i=0}^{3}x_{i}^{\vee}\otimes{x_{i}}}
 \longrightarrow
 \frac{V\otimes\Sym^{2}V}{\mathbf{C}\cdot
 \sum_{i=0}^{3}x_{i}\otimes\frac{\partial{F}}{\partial{x_{i}}}};\
 x_{j}^{\vee}\otimes{A}\longmapsto
 A\otimes\frac{\partial{F}}{\partial{x_{j}}}
 +\sum_{i=0}^{3}x_{i}\otimes{A}\frac{\partial^{2}{F}}
 {\partial{x_{i}}\partial{x_{j}}}.
 $$
 We remark that the homomorphism $\delta\circ\nu$ is the composition of
 injective homomorphisms
 $
 \nu:
 V^{\vee}\otimes{V}\rightarrow\Sym^{3}V
 $
 and
 $$
 \delta:
 \Sym^{3}V\longrightarrow{V}\otimes\Sym^{2}V;\
 G\longmapsto
 \sum_{i=0}^{3}x_{i}\otimes\frac{\partial{G}}{\partial{x_{i}}}.
 $$
\end{proof}
\begin{remark}
 Since $H^{1}(Y_{2},T_{Y_{2}}(-{Y_{3}}))=0$, the homomorphism
 $$
 H^{1}(Y_{2},T_{Y_{2}}(-\log{Y_{3}}))
 \longrightarrow
 H^{1}(Y_{3},T_{Y_{3}})
 $$
 is injective.
\end{remark}
\begin{lemma}\label{jt}
 The Kodaira-Spencer map
 $
 \kappa:T_{\mathcal{M}}([F])\rightarrow
 H^{1}(Y_{3},T_{Y_{3}})
 $
 is computed by the homomorphism
 $$
 \kappa:T_{\mathcal{M}}([F])\simeq
 \frac{\Sym^{3}V}{\mathbf{C}\cdot{F}}
 \longrightarrow
 \frac{V\otimes\Sym^{2}V}{(\delta\circ\nu)(V^{\vee}\otimes{V})}
 \subset
 H^{1}(Y_{3},T_{Y_{3}});\
 G\longmapsto
 \sum_{i=0}^{3}x_{i}\otimes\frac{\partial{G}}{\partial{x_{i}}},
 $$
 and its image $\kappa(T_{\mathcal{M}}([F]))$ is identified with the
 cokernel of the injective homomorphism
 $
 \nu:
 V^{\vee}\otimes{V}\rightarrow\Sym^{3}V.
 $
\end{lemma}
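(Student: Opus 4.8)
The plan is to compute $\kappa$ through the deformation of the \emph{pair} $(Y_{2},Y_{3})$ inside the fixed ambient space $\Gamma(\mathbf{P}^{3})$, and then to read off both the explicit formula and the image from the normal bundle sequence of Lemma~\ref{ns}. As $F$ varies, both $Y_{2}=Y_{2}(F)$ and the divisor $Y_{3}\subset Y_{2}$ move, since they are the zero loci of the sections $[F]_{2,Y_{2}}$ and $[F]_{3,Y_{2}}$. A first-order deformation direction $G\in\Sym^{3}V$ (modulo $\mathbf{C}\cdot F$) therefore deforms $Y_{2}$ together with the divisor $Y_{3}\subset Y_{2}$, producing a class $\kappa'(G)\in H^{1}(Y_{2},T_{Y_{2}}(-\log Y_{3}))$ classifying the deformation of the pair. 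Composing with the injection $H^{1}(Y_{2},T_{Y_{2}}(-\log Y_{3}))\hookrightarrow H^{1}(Y_{3},T_{Y_{3}})$ of the Remark recovers $\kappa(G)$, so it suffices to compute $\kappa'$.

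First I would identify $\kappa'$ with a connecting homomorphism. Taking cohomology of the sequence in Lemma~\ref{ns} gives
$$
H^{0}(Y_{2},T_{\Gamma(\mathbf{P}^{3})}\vert_{Y_{2}})
\xrightarrow{\ \delta\circ\nu\ }
H^{0}(Y_{2},\mathcal{N})
\xrightarrow{\ \partial\ }
H^{1}(Y_{2},T_{Y_{2}}(-\log Y_{3}))
\xrightarrow{\ r\ }
H^{1}(Y_{2},T_{\Gamma(\mathbf{P}^{3})}\vert_{Y_{2}}),
$$
where the first map is the one identified in the preceding lemma. The infinitesimal variation of the defining section $[F]_{3,Y_{2}}$ in the direction $G$ is the image of $G$ in $\mathcal{N}$; using Lemma~\ref{nv} and the isomorphism $\frac{\Fil^{0}}{\Fil^{3}}\simeq\varPhi^{*}\mathcal{Q}_{\mathbf{P}^{3}}\otimes\varPsi^{*}\Sym^{2}\mathcal{Q}_{\Lambda(\mathbf{P}^{3})}$, this image is $\sum_{i=0}^{3}x_{i}\otimes\frac{\partial G}{\partial x_{i}}\in H^{0}(Y_{2},\mathcal{N})$. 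Hence $\kappa'(G)=\partial\bigl(\sum_{i=0}^{3}x_{i}\otimes\frac{\partial G}{\partial x_{i}}\bigr)$, and since exactness together with the preceding lemma gives $\mathrm{im}(\partial)=\mathrm{coker}(\delta\circ\nu)=\frac{V\otimes\Sym^{2}V}{(\delta\circ\nu)(V^{\vee}\otimes V)}$, this is precisely the stated formula with values in that quotient.

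For the image, I would use that $\delta\colon\Sym^{3}V/\mathbf{C}\cdot F\to H^{0}(Y_{2},\mathcal{N})$ is injective, since $\delta(F)=\sum_{i=0}^{3}x_{i}\otimes\frac{\partial F}{\partial x_{i}}$ is exactly the relation defining $H^{0}(Y_{2},\mathcal{N})$ in Lemma~\ref{nv}, and that $\nu$ is injective with $\nu\bigl(\sum_{i=0}^{3}x_{i}^{\vee}\otimes x_{i}\bigr)=3F$. As the first map above is $\delta\circ\nu$ and $\ker\partial=\mathrm{im}(\delta\circ\nu)$, the kernel of $\kappa$ is $\{G:\delta(G)\in(\delta\circ\nu)(V^{\vee}\otimes V)\}=\nu(V^{\vee}\otimes V)\pmod{\mathbf{C}\cdot F}$, using $\mathbf{C}\cdot F\subset\nu(V^{\vee}\otimes V)$. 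Consequently
$$
\mathrm{im}(\kappa)\simeq\frac{\Sym^{3}V}{\nu(V^{\vee}\otimes V)}=\mathrm{coker}(\nu),
$$
as claimed.

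The hard part will be the first step: rigorously justifying that the Kodaira-Spencer map of this two-stage (complete-intersection-in-stages) family is computed by the single connecting map $\partial$ applied to the normal direction $\sum_{i}x_{i}\otimes\frac{\partial G}{\partial x_{i}}$, and that replacing the deformation of $Y_{3}$ by the deformation of the pair $(Y_{2},Y_{3})$ loses no information. I would settle this by writing out the explicit local description of $\mathcal{Y}_{3}\subset\mathcal{M}\times\Gamma(\mathbf{P}^{3})$ near a point of $Y_{3}$, matching the trivialization of $\mathcal{N}$ coming from $[F]_{3,Y_{2}}$ with the standard \v{C}ech computation of the Kodaira-Spencer class, the injectivity $H^{1}(Y_{2},T_{Y_{2}}(-\log Y_{3}))\to H^{1}(Y_{3},T_{Y_{3}})$ of the Remark guaranteeing that the pair computation faithfully records $\kappa$.
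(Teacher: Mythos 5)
Your proposal is correct, and at its core it is the same computation as the paper's: both identify $\kappa(G)$ with the image of $\sum_{i=0}^{3}x_{i}\otimes\frac{\partial G}{\partial x_{i}}\in H^{0}(Y_{2},\mathcal{N})$ under the connecting homomorphism of the sequence in Lemma~\ref{ns}, and both then obtain $\ker\kappa=\nu(V^{\vee}\otimes V)/\mathbf{C}\cdot F$, hence $\mathrm{im}\,\kappa\simeq\mathrm{coker}\,\nu$, from the injectivity of $\delta$ and $\nu$ together with the identification of $\ker\partial$ with $(\delta\circ\nu)(V^{\vee}\otimes V)$, exactly as you argue (your observation that $\mathbf{C}\cdot F\subset\nu(V^{\vee}\otimes V)$ via the Euler relation is the right way to pass between the two quotients). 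The one genuine difference is how the reduction to that connecting map is justified --- the step you flag as the hard part. You propose to go through deformations of the pair $(Y_{2},Y_{3})$ and then verify by a local \v{C}ech computation that the pair class records $\kappa$. The paper avoids this entirely: it compares the restriction to $Y_{3}$ of the tangent sequence of the family $\mathcal{Y}_{3}\subset\mathcal{M}\times\Gamma(\mathbf{P}^{3})$ with the normal bundle sequence of $Y_{3}$ in $\Gamma(\mathbf{P}^{3})$, via the natural map $T_{\mathcal{Y}_{3}}\vert_{Y_{3}}\rightarrow T_{\Gamma(\mathbf{P}^{3})}\vert_{Y_{3}}$; this commutative diagram expresses $\kappa$ as the connecting map of the normal bundle sequence applied to $\tilde{\kappa}(G)=[G]_{3,Y_{3}}$, which is the standard normal-bundle description of the Kodaira--Spencer class and needs no pair-deformation formalism. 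Since $[G]_{3,Y_{3}}$ is the restriction of a section lying in $H^{0}(Y_{2},\mathcal{N})\subset H^{0}(Y_{3},\mathcal{N}_{Y_{3}/\Gamma(\mathbf{P}^{3})})$, and the sequence of Lemma~\ref{ns} maps naturally to the normal bundle sequence of $Y_{3}$, the connecting map factors through $H^{1}(Y_{2},T_{Y_{2}}(-\log Y_{3}))\hookrightarrow H^{1}(Y_{3},T_{Y_{3}})$ --- precisely your $\partial$ followed by the injection from the Remark. So your outline is sound as written; if you wish to avoid setting up the log/pair deformation theory or the local matching, replace that step by this diagram chase through the universal family.
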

\begin{proof}
 Let $(F,G_{1},\dots,G_{19})$ be a basis of
 $\Sym^{3}V\simeq{H^{0}(\mathbf{P}^{3},\mathcal{O}_{\mathbf{P}^{3}}(3))}$.
 We have a local coordinate of $\mathcal{M}$ at
 $[F]\in\mathcal{M}\subset\mathbf{P}
 (H^{0}(\mathbf{P}^{3},\mathcal{O}_{\mathbf{P}^{3}}(3))^{\vee})$
 by
 $$
 \begin{matrix}
  \mathbf{C}^{19}&\longrightarrow&
  \mathbf{P}
  (H^{0}(\mathbf{P}^{3},\mathcal{O}_{\mathbf{P}^{3}}(3))^{\vee});\\
  (\mu_{1},\dots,\mu_{19})&\longmapsto&
  F-\sum_{i=1}^{19}\mu_{i}G_{i},
 \end{matrix}
 $$
 and the tangent space of $\mathcal{M}$ at $[F]$ is identified with
 $\frac{\Sym^{3}V}{\mathbf{C}\cdot{F}}$ by
 $$
 T_{\mathcal{M}}([F])\simeq\frac{\Sym^{3}V}{\mathbf{C}\cdot{F}};\
 \frac{\partial}{\partial\mu_{j}}
 \longmapsto
 G_{j}.
 $$
 We have the commutative diagram of exact sequences
 $$
 \begin{matrix}
  0&\longrightarrow&
  T_{Y_{3}}
  &\longrightarrow&
  T_{\mathcal{Y}_{3}}\vert_{Y_{3}}
  &\longrightarrow&
  \mathcal{O}_{Y_{3}}\otimes{T_{\mathcal{M}}([F])}
  &\longrightarrow&0\\
  &&\|&&\downarrow&&\downarrow\tilde{\kappa}&&\\
  0&\longrightarrow&
  T_{Y_{3}}
  &\longrightarrow&
  T_{\Gamma(\mathbf{P}^{3})}\vert_{Y_{3}}
  &\longrightarrow&
  \mathcal{N}_{Y_{3}/\Gamma(\mathbf{P}^{3})}
  &\longrightarrow&0,
 \end{matrix}
 $$
 where
 $T_{\mathcal{Y}_{3}}\vert_{Y_{3}}\rightarrow
 T_{\Gamma(\mathbf{P}^{3})}\vert_{Y_{3}}$
 is induced by the natural projection and $\tilde{\kappa}$
 is defined by
 $$
 \tilde{\kappa}:
 T_{\mathcal{M}}([F])\longrightarrow
 H^{0}(Y_{3},\frac{\Fil^{0}}{\Fil^{3}}\big\vert_{Y_{3}})
 \simeq
 H^{0}(Y_{3},\mathcal{N}_{Y_{3}/\Gamma(\mathbf{P}^{3})});\
 \frac{\partial}{\partial\mu_{j}}
 \longmapsto
 [G_{j}]_{3,Y_{3}}.
 $$
 We can compute the homomorphism $\tilde{\kappa}$ by
 $$
 \begin{matrix}
  \tilde{\kappa}:&
  T_{\mathcal{M}}([F])&\longrightarrow&
  \frac{V\otimes\Sym^{2}V}{\mathbf{C}\cdot
  \sum_{i=0}^{3}x_{i}\otimes\frac{\partial{F}}{\partial{x_{i}}}}&
  \simeq
  H^{0}(Y_{2},\mathcal{N})
  \subset
  H^{0}(Y_{3},\mathcal{N}_{Y_{3}/\Gamma(\mathbf{P}^{3})});\\
  &\frac{\partial}{\partial\mu_{j}}
  &\longmapsto&
  \sum_{i=0}^{3}x_{i}\otimes\frac{\partial{G_{j}}}{\partial{x_{i}}},&
 \end{matrix}
 $$
 and $\tilde{\kappa}$ induces the homomorphism
 $$
 \kappa:T_{\mathcal{M}}([F])\simeq
 \frac{\Sym^{3}V}{\mathbf{C}\cdot{F}}
 \longrightarrow
 \frac{V\otimes\Sym^{2}V}{(\delta\circ\nu)(V^{\vee}\otimes{V})}
 \subset
 H^{1}(Y_{2},T_{Y_{2}}(-\log{Y_{3}}))
 \subset
 H^{1}(Y_{3},T_{Y_{3}}).
 $$
\end{proof}
\begin{lemma}\label{to}
 $
 H^{0}(Y_{2},(\varPhi^{*}\mathcal{Q}_{\mathbf{P}^{3}}\otimes
 {T_{\Gamma(\mathbf{P}^{3})}})
 \vert_{Y_{2}})
 $
 is naturally identified with the cokernel of the injective homomorphism
 $$
 \alpha:
 V\oplus{V}
 \longrightarrow
 V\otimes{V^{\vee}}\otimes{V};\
 A\oplus{B}\longmapsto
 \sum_{i=0}^{3}(x_{i}\otimes{x_{i}^{\vee}}\otimes{A}
 +B\otimes{x_{i}^{\vee}}\otimes{x_{i}})
 $$
\end{lemma}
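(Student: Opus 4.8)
The plan is to realise the space in question as the cokernel of two successive relations, each coming from one of the two Euler-type sequences available on $\Gamma(\mathbf{P}^{3})$, in exactly the style of Lemma~\ref{tv} and Lemma~\ref{nv}. Write $E=\varPhi^{*}\mathcal{Q}_{\mathbf{P}^{3}}\otimes\varPhi^{*}\mathcal{S}_{\mathbf{P}^{3}}^{\vee}\otimes\varPsi^{*}\mathcal{Q}_{\Lambda(\mathbf{P}^{3})}$, so that tensoring the Euler sequence $(\ref{eu})$ by $\varPhi^{*}\mathcal{Q}_{\mathbf{P}^{3}}$ produces
$$0\longrightarrow\varPhi^{*}\mathcal{Q}_{\mathbf{P}^{3}}\overset{\lambda}{\longrightarrow}E\longrightarrow\varPhi^{*}\mathcal{Q}_{\mathbf{P}^{3}}\otimes T_{\Gamma(\mathbf{P}^{3})}\longrightarrow0.$$
Restricting to $Y_{2}$ and taking cohomology, and using $H^{0}(Y_{2},\varPhi^{*}\mathcal{Q}_{\mathbf{P}^{3}}\vert_{Y_{2}})\simeq V$ and $H^{1}(Y_{2},\varPhi^{*}\mathcal{Q}_{\mathbf{P}^{3}}\vert_{Y_{2}})=0$ from the proof of Proposition~\ref{ni}, I obtain $H^{0}(Y_{2},(\varPhi^{*}\mathcal{Q}_{\mathbf{P}^{3}}\otimes T_{\Gamma(\mathbf{P}^{3})})\vert_{Y_{2}})\simeq H^{0}(Y_{2},E\vert_{Y_{2}})/\lambda(V)$, where the section $\lambda=\sum_{i}x_{i}^{\vee}\otimes x_{i}$ exhibits $\lambda(V)$ as the image of $B\mapsto\sum_{i}B\otimes x_{i}^{\vee}\otimes x_{i}$, the second summand of $\alpha$.

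Next I would compute $H^{0}(Y_{2},E\vert_{Y_{2}})$. First, the natural homomorphism identifies $H^{0}(\Gamma(\mathbf{P}^{3}),\varPhi^{*}\mathcal{Q}_{\mathbf{P}^{3}}\otimes\varPsi^{*}\mathcal{Q}_{\Lambda(\mathbf{P}^{3})})\simeq V\otimes V$: pushing forward along the $\mathbf{P}^{1}$-bundle $\varPsi$ gives $\varPsi_{*}\varPhi^{*}\mathcal{O}_{\mathbf{P}^{3}}(1)\simeq\mathcal{Q}_{\Lambda(\mathbf{P}^{3})}$, hence $\varPsi_{*}(\varPhi^{*}\mathcal{Q}_{\mathbf{P}^{3}}\otimes\varPsi^{*}\mathcal{Q}_{\Lambda(\mathbf{P}^{3})})\simeq\Sym^{2}\mathcal{Q}_{\Lambda(\mathbf{P}^{3})}\oplus\mathcal{O}_{\Lambda(\mathbf{P}^{3})}(1)$, whose sections are $\Sym^{2}V\oplus\bigwedge^{2}V\simeq V\otimes V$. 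Then the pulled-back and twisted dual Euler sequence of $\mathbf{P}^{3}$ reads
$$0\longrightarrow\varPsi^{*}\mathcal{Q}_{\Lambda(\mathbf{P}^{3})}\longrightarrow\varPhi^{*}\mathcal{Q}_{\mathbf{P}^{3}}\otimes\varPsi^{*}\mathcal{Q}_{\Lambda(\mathbf{P}^{3})}\otimes V^{\vee}\longrightarrow E\longrightarrow0,$$
and since $H^{1}(\Gamma(\mathbf{P}^{3}),\varPsi^{*}\mathcal{Q}_{\Lambda(\mathbf{P}^{3})})\simeq H^{1}(\Lambda(\mathbf{P}^{3}),\mathcal{Q}_{\Lambda(\mathbf{P}^{3})})=0$ while $H^{0}(\Gamma(\mathbf{P}^{3}),\varPsi^{*}\mathcal{Q}_{\Lambda(\mathbf{P}^{3})})\simeq V$, I find that $H^{0}(\Gamma(\mathbf{P}^{3}),E)$ is the cokernel of the injection $A\mapsto\sum_{i}x_{i}\otimes x_{i}^{\vee}\otimes A$, which is the first summand of $\alpha$.

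To transfer this to $Y_{2}$ I would show that the restriction $H^{0}(\Gamma(\mathbf{P}^{3}),E)\simeq H^{0}(Y_{2},E\vert_{Y_{2}})$ is an isomorphism, using the two Koszul sequences for $Y_{2}\subset Y_{1}\subset\Gamma(\mathbf{P}^{3})$ with $\mathcal{O}_{\Gamma(\mathbf{P}^{3})}(-Y_{1})\simeq\varPhi^{*}\mathcal{O}_{\mathbf{P}^{3}}(-3)$ and $\mathcal{O}_{Y_{1}}(-Y_{2})\simeq(\varPhi^{*}\mathcal{O}_{\mathbf{P}^{3}}(-1)\otimes\varPsi^{*}\mathcal{O}_{\Lambda(\mathbf{P}^{3})}(-1))\vert_{Y_{1}}$. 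Combining the two quotients then presents the target as the cokernel of $\alpha$, and a direct index computation (if $\alpha(A\oplus B)=0$, comparing coefficients of $x_{p}\otimes x_{q}^{\vee}\otimes x_{r}$ in the cases $p\neq q=r$ and $p=q\neq r$ forces $A=B=0$) shows $\alpha$ is injective, so the cokernel has the expected dimension $64-8=56$.

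The hard part will be the vanishing needed for the restriction isomorphism: I must verify that $H^{i}(\Gamma(\mathbf{P}^{3}),E\otimes\varPhi^{*}\mathcal{O}_{\mathbf{P}^{3}}(-3))$, the twist $H^{i}(\Gamma(\mathbf{P}^{3}),E\otimes\varPhi^{*}\mathcal{O}_{\mathbf{P}^{3}}(-1)\otimes\varPsi^{*}\mathcal{O}_{\Lambda(\mathbf{P}^{3})}(-1))$, and the twist by $-Y_{1}-Y_{2}$ all vanish for $i=0,1$. These are cohomologies of homogeneous bundles on the flag variety $\Gamma(\mathbf{P}^{3})$ and follow from Bott vanishing, exactly as the analogous vanishings invoked in Proposition~\ref{ni} and Lemma~\ref{tv}; I would reduce each to the Grassmannian via $\varPhi_{*}$ or $\varPsi_{*}$ together with the projection formula.
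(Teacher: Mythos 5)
Your proposal is correct and takes essentially the same route as the paper: the paper's own proof uses exactly your two exact sequences, namely the pulled-back dual Euler sequence of $\mathbf{P}^{3}$ tensored with $\varPhi^{*}\mathcal{Q}_{\mathbf{P}^{3}}\otimes\varPsi^{*}\mathcal{Q}_{\Lambda(\mathbf{P}^{3})}$ (producing the relation $A\mapsto\sum_{i}x_{i}\otimes{x_{i}^{\vee}}\otimes{A}$) and the sequence $(\ref{eu})$ tensored with $\varPhi^{*}\mathcal{Q}_{\mathbf{P}^{3}}$ (producing the relation $B\mapsto\sum_{i}B\otimes{x_{i}^{\vee}}\otimes{x_{i}}$), only in the opposite order and with the sequences restricted to $Y_{2}$ from the start, the needed restriction isomorphisms being left implicit there just as you make them explicit via the Koszul resolutions. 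The one bookkeeping point to fix when writing it up is that for the twist by $-Y_{1}-Y_{2}$ the long exact sequences require vanishing in degrees $i=1,2$ rather than $i=0,1$, which is harmless since those cohomology groups vanish in all degrees.
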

\begin{proof}
 By the exact sequence
 $$
 0\longrightarrow
 \varPsi^{*}\mathcal{Q}_{\Lambda(\mathbf{P}^{3})}
 \longrightarrow
 \varPhi^{*}\mathcal{Q}_{\mathbf{P}^{3}}
 \otimes{V^{\vee}}\otimes
 \varPsi^{*}\mathcal{Q}_{\Lambda(\mathbf{P}^{3})}
 \longrightarrow
 \varPhi^{*}(\mathcal{Q}_{\mathbf{P}^{3}}
 \otimes\mathcal{S}_{\mathbf{P}^{3}}^{\vee})\otimes
 \varPsi^{*}\mathcal{Q}_{\Lambda(\mathbf{P}^{3})}
 \longrightarrow0,
 $$
 $H^{0}(Y_{2},(\varPhi^{*}(\mathcal{Q}_{\mathbf{P}^{3}}
 \otimes\mathcal{S}_{\mathbf{P}^{3}}^{\vee})\otimes
 \varPsi^{*}\mathcal{Q}_{\Lambda(\mathbf{P}^{3})})\vert_{Y_{2}})$
 is identified with the cokernel of the injective homomorphism
 $$
 \lambda_{0}:V\longrightarrow
 {V}\otimes{V^{\vee}}\otimes{V};\
 A\longmapsto
 \sum_{i=0}^{3}x_{i}\otimes{x_{i}^{\vee}}\otimes{A}.
 $$
 By the exact sequence $(\ref{eu})$,
 we have the exact sequence
 $$
 0\longrightarrow
 \varPhi^{*}\mathcal{Q}_{\mathbf{P}^{3}}
 \overset{\lambda}{\longrightarrow}
 \varPhi^{*}(\mathcal{Q}_{\mathbf{P}^{3}}
 \otimes\mathcal{S}_{\mathbf{P}^{3}}^{\vee})\otimes
 \varPsi^{*}\mathcal{Q}_{\Lambda(\mathbf{P}^{3})}
 \longrightarrow
 \varPhi^{*}\mathcal{Q}_{\mathbf{P}^{3}}
 \otimes{T_{\Gamma(\mathbf{P}^{3})}}
 \longrightarrow0,
 $$
 and
 $H^{0}(Y_{2},(\varPhi^{*}\mathcal{Q}_{\mathbf{P}^{3}}
 \otimes{T_{\Gamma(\mathbf{P}^{3})}})\vert_{Y_{2}})$
 is identified with the cokernel of the injective homomorphism
 $$
 V\longrightarrow
 \frac{V\otimes{V^{\vee}}\otimes{V}}{\lambda_{0}(V)};\
 B\longmapsto
 \sum_{i=0}^{3}B\otimes{x_{i}^{\vee}}\otimes{x_{i}}.
 $$
\end{proof}
\begin{lemma}\label{no}
 $
 H^{0}(Y_{2},
 (\varPhi^{*}\mathcal{Q}_{\mathbf{P}^{3}})\vert_{Y_{2}}\otimes\mathcal{N})
 $
 is naturally identified with the cokernel of the injective homomorphism
 $$
 \beta:
 V\oplus{V}
 \longrightarrow
 \Sym^{2}V\otimes\Sym^{2}V;\
 A\oplus{B}\longmapsto
 \sum_{i=0}^{3}\bigl(\frac{\partial{F}}{\partial{x_{i}}}\otimes{Ax_{i}}
 +Bx_{i}\otimes\frac{\partial{F}}{\partial{x_{i}}}\bigr)
 $$
\end{lemma}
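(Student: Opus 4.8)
The plan is to imitate the proof of Lemma~\ref{to}: tensor the defining sequence of $\mathcal{N}$ by the line bundle $(\varPhi^{*}\mathcal{Q}_{\mathbf{P}^{3}})\vert_{Y_{2}}$ and read off $H^{0}$ of the quotient from the resulting long exact sequence. Tensoring
$$
0\longrightarrow\mathcal{O}_{Y_{2}}\overset{[F]_{3,Y_{2}}}{\longrightarrow}\frac{\Fil^{0}}{\Fil^{3}}\big\vert_{Y_{2}}\longrightarrow\mathcal{N}\longrightarrow0
$$
by this line bundle, and using the isomorphism $\frac{\Fil^{0}}{\Fil^{3}}\simeq\varPhi^{*}\mathcal{Q}_{\mathbf{P}^{3}}\otimes\varPsi^{*}\Sym^{2}\mathcal{Q}_{\Lambda(\mathbf{P}^{3})}$ from the proof of Lemma~\ref{nv} together with $\mathcal{Q}_{\mathbf{P}^{3}}\simeq\mathcal{O}_{\mathbf{P}^{3}}(1)$, the middle term becomes $\varPhi^{*}\mathcal{O}_{\mathbf{P}^{3}}(2)\otimes\varPsi^{*}\Sym^{2}\mathcal{Q}_{\Lambda(\mathbf{P}^{3})}\vert_{Y_{2}}$, whose sections will supply the target $\Sym^{2}V\otimes\Sym^{2}V$. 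The two copies of $V$ in the source of $\beta$ will come from the two outer terms of this sequence, and each summand of $\beta$ will be the corresponding multiplication map.

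The heart of the argument is the computation of $H^{0}(Y_{2},\varPhi^{*}\mathcal{O}_{\mathbf{P}^{3}}(2)\otimes\varPsi^{*}\Sym^{2}\mathcal{Q}_{\Lambda(\mathbf{P}^{3})}\vert_{Y_{2}})$. Over $\Gamma(\mathbf{P}^{3})$ I would push forward along the $\mathbf{P}^{1}$-bundle $\varPsi$; since $\varPsi_{*}\varPhi^{*}\mathcal{O}_{\mathbf{P}^{3}}(2)\simeq\Sym^{2}\mathcal{Q}_{\Lambda(\mathbf{P}^{3})}$, the projection formula and the natural evaluation map give an isomorphism $\Sym^{2}V\otimes\Sym^{2}V\simeq H^{0}(\Gamma(\mathbf{P}^{3}),\varPhi^{*}\mathcal{O}_{\mathbf{P}^{3}}(2)\otimes\varPsi^{*}\Sym^{2}\mathcal{Q}_{\Lambda(\mathbf{P}^{3})})$ with all higher cohomology vanishing, exactly as in Lemma~\ref{nv}. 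Unlike the bundles treated there, however, the restriction to $Y_{2}$ is \emph{not} injective: $Y_{2}$ is the complete intersection cut out by the regular section $[F]_{2}$ of $\varPhi^{*}\mathcal{O}_{\mathbf{P}^{3}}(2)\otimes\varPsi^{*}\mathcal{Q}_{\Lambda(\mathbf{P}^{3})}$, and feeding this bundle into the Koszul complex — after pushing the top Koszul term forward by $\varPsi$ and noting that $\Sym^{2}\mathcal{Q}_{\Lambda(\mathbf{P}^{3})}\otimes\mathcal{O}_{\Lambda(\mathbf{P}^{3})}(-2)$ is acyclic and that the contraction $\Sym^{2}\mathcal{Q}_{\Lambda(\mathbf{P}^{3})}\otimes\mathcal{Q}_{\Lambda(\mathbf{P}^{3})}^{\vee}\to\mathcal{Q}_{\Lambda(\mathbf{P}^{3})}$ has kernel with vanishing cohomology — shows the restriction is surjective with kernel the image of $H^{0}(Y_{2},\varPsi^{*}\mathcal{Q}_{\Lambda(\mathbf{P}^{3})}\vert_{Y_{2}})\simeq V$ under multiplication by $[F]_{2}$. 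Writing out this multiplication with the explicit isomorphisms for $\frac{\Fil^{0}}{\Fil^{2}}$ and $\frac{\Fil^{0}}{\Fil^{3}}$ identifies it with the first summand $A\mapsto\sum_{i}\frac{\partial F}{\partial x_{i}}\otimes Ax_{i}$ of $\beta$, so $H^{0}$ of the middle term is the cokernel of the $A$-part.

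Finally I would combine the two contributions. Since $\varPhi\vert_{Y_{2}}\colon Y_{2}\to X$ is a $\mathbf{P}^{1}$-bundle, $(\varPhi^{*}\mathcal{Q}_{\mathbf{P}^{3}})\vert_{Y_{2}}\simeq(\varPhi\vert_{Y_{2}})^{*}\mathcal{O}_{X}(1)$ has $H^{0}=H^{0}(X,\mathcal{O}_{X}(1))=V$ and $H^{1}=H^{1}(X,\mathcal{O}_{X}(1))=0$, and the connecting inclusion sends $B\in V$ to $B\cdot[F]_{3}=\sum_{i}Bx_{i}\otimes\frac{\partial F}{\partial x_{i}}$, the second summand of $\beta$. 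Because $H^{1}(Y_{2},(\varPhi^{*}\mathcal{Q}_{\mathbf{P}^{3}})\vert_{Y_{2}})=0$, the long exact sequence then exhibits $H^{0}(Y_{2},(\varPhi^{*}\mathcal{Q}_{\mathbf{P}^{3}})\vert_{Y_{2}}\otimes\mathcal{N})$ as the cokernel of the $B$-part acting on the cokernel of the $A$-part, i.e.\ the cokernel of $\beta=A$-part~$+~B$-part; injectivity of $\beta$ is automatic, since both multiplication maps are injective on global sections and the sequence is exact. The main obstacle is the second paragraph: identifying the two multiplication maps with the explicit summands of $\beta$ and establishing the vanishings on $\Lambda(\mathbf{P}^{3})$ that pin down the kernel of the restriction to $Y_{2}$. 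Everything else is the same pushforward-and-Euler-sequence machinery already used for Lemmas~\ref{tv}, \ref{nv} and~\ref{to}.
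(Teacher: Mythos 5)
Your proposal is correct, and its second step is exactly the paper's: the paper also tensors the defining sequence of $\mathcal{N}$ with $(\varPhi^{*}\mathcal{Q}_{\mathbf{P}^{3}})\vert_{Y_{2}}$, uses $H^{0}(Y_{2},(\varPhi^{*}\mathcal{Q}_{\mathbf{P}^{3}})\vert_{Y_{2}})\simeq V$ together with the vanishing of $H^{1}$, and obtains the $B$-summand of $\beta$ from multiplication by $[F]_{3,Y_{2}}$. Where you genuinely diverge is the first step, the computation of $H^{0}$ of the middle bundle $(\varPhi^{*}\mathcal{Q}_{\mathbf{P}^{3}}^{\otimes2}\otimes\varPsi^{*}\Sym^{2}\mathcal{Q}_{\Lambda(\mathbf{P}^{3})})\vert_{Y_{2}}$. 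The paper never uses a codimension-two Koszul complex: since $Y_{2}$ is a divisor in $Y_{1}$ with $\mathcal{O}_{Y_{1}}(-Y_{2})\simeq(\varPhi^{*}\mathcal{O}_{\mathbf{P}^{3}}(-2)\otimes\mathcal{S}^{\vee})\vert_{Y_{1}}$, it restricts from $Y_{1}$ through a single short exact sequence with kernel $(\mathcal{S}^{\vee}\otimes\varPsi^{*}\Sym^{2}\mathcal{Q}_{\Lambda(\mathbf{P}^{3})})\vert_{Y_{1}}$ and multiplication map $[F]_{2,Y_{1}}$, so only divisor-type restriction sequences of the kind already used for Lemmas~\ref{tv}, \ref{nv} and \ref{to} occur (the needed $H^{0}$ and $H^{1}$ on $Y_{1}$ being computed by restriction from $\Gamma(\mathbf{P}^{3})$ as before). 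You instead restrict directly from $\Gamma(\mathbf{P}^{3})$ via the Koszul complex of the regular section $[F]_{2}$ of the rank-two bundle $\varPhi^{*}\mathcal{O}_{\mathbf{P}^{3}}(2)\otimes\varPsi^{*}\mathcal{Q}_{\Lambda(\mathbf{P}^{3})}$; this bypasses $Y_{1}$ entirely, at the cost of the two acyclicity statements on $\Lambda(\mathbf{P}^{3})$, both of which are indeed true ($\Sym^{2}\mathcal{Q}_{\Lambda(\mathbf{P}^{3})}\otimes\mathcal{O}_{\Lambda(\mathbf{P}^{3})}(-2)\simeq\Sym^{2}\mathcal{Q}_{\Lambda(\mathbf{P}^{3})}^{\vee}$ and the contraction kernel $\Sym^{3}\mathcal{Q}_{\Lambda(\mathbf{P}^{3})}\otimes\mathcal{O}_{\Lambda(\mathbf{P}^{3})}(-1)$ have no cohomology in any degree, e.g.\ by Bott's theorem), and of the bookkeeping, which you rightly flag as the main obstacle, that the Koszul differential on global sections is $A\mapsto\sum_{i=0}^{3}\frac{\partial F}{\partial x_{i}}\otimes Ax_{i}$; this does work out, because $[F]_{2}$ corresponds to $\sum_{i=0}^{3}\frac{\partial F}{\partial x_{i}}\otimes x_{i}$ under the isomorphism $\frac{\Fil^{0}}{\Fil^{2}}\simeq\varPhi^{*}\mathcal{O}_{\mathbf{P}^{3}}(2)\otimes\varPsi^{*}\mathcal{Q}_{\Lambda(\mathbf{P}^{3})}$. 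One sentence of yours should be repaired: the kernel of the restriction is not the image of $H^{0}(Y_{2},\varPsi^{*}\mathcal{Q}_{\Lambda(\mathbf{P}^{3})}\vert_{Y_{2}})$ under multiplication by $[F]_{2}$ --- the section $[F]_{2}$ vanishes identically on $Y_{2}$, so that map is zero --- but rather the image of $H^{0}(\Gamma(\mathbf{P}^{3}),\varPsi^{*}(\Sym^{2}\mathcal{Q}_{\Lambda(\mathbf{P}^{3})}\otimes\mathcal{Q}_{\Lambda(\mathbf{P}^{3})}^{\vee}))\simeq V$ under contraction with $[F]_{2}$. With that rewording your argument is complete, and both routes produce the same two summands of $\beta$ and the same left-exactness argument for its injectivity.
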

\begin{proof}
 By the exact sequence
 \begin{multline*}
  0\longrightarrow
  (\mathcal{S}^{\vee}
  \otimes\varPsi^{*}\Sym^{2}\mathcal{Q}_{\Lambda(\mathbf{P}^{3})})\vert_{Y_{1}}
  \overset{[F]_{2,Y_{1}}}{\longrightarrow}
  (\varPhi^{*}\mathcal{Q}_{\mathbf{P}^{3}}^{\otimes{2}}
  \otimes\varPsi^{*}\Sym^{2}\mathcal{Q}_{\Lambda(\mathbf{P}^{3})})
  \vert_{Y_{1}}\\
  \longrightarrow
  (\varPhi^{*}\mathcal{Q}_{\mathbf{P}^{3}}^{\otimes{2}}
  \otimes\varPsi^{*}\Sym^{2}\mathcal{Q}_{\Lambda(\mathbf{P}^{3})})\vert_{Y_{2}}
  \rightarrow0,
 \end{multline*}
 $H^{0}(Y_{2},(\varPhi^{*}\mathcal{Q}_{\mathbf{P}^{3}}
 \otimes\frac{\Fil^{0}}{\Fil^{3}})\vert_{Y_{2}})
 \simeq
 H^{0}(Y_{2},(\varPhi^{*}\mathcal{Q}_{\mathbf{P}^{3}}^{\otimes{2}}
 \otimes\varPsi^{*}\Sym^{2}\mathcal{Q}_{\Lambda(\mathbf{P}^{3})})
 \vert_{Y_{2}})$
 is identified with the cokernel of the injective homomorphism
 $$
 [F]_{2}:
 V\longrightarrow\Sym^{2}V\otimes\Sym^{2}V;\
 A\longmapsto
 \sum_{i=0}^{3}\frac{\partial{F}}{\partial{x_{i}}}\otimes{Ax_{i}}.
 $$
 By the exact sequence
 $$
 0\longrightarrow
 (\varPhi^{*}\mathcal{Q}_{\mathbf{P}^{3}})\vert_{Y_{2}}
 \overset{[F]_{3,Y_{2}}}{\longrightarrow}
 \bigl(\varPhi^{*}\mathcal{Q}_{\mathbf{P}^{3}}
 \otimes\frac{\Fil^{0}}{\Fil^{3}}\bigr)\big\vert_{Y_{2}}
 \longrightarrow
 (\varPhi^{*}\mathcal{Q}_{\mathbf{P}^{3}})\vert_{Y_{2}}
 \otimes\mathcal{N}
 \longrightarrow0,
 $$
 $H^{0}(Y_{2},(\varPhi^{*}\mathcal{Q}_{\mathbf{P}^{3}})\vert_{Y_{2}}
 \otimes\mathcal{N})$
 is identified with the cokernel of the injective homomorphism
 $$
 V\longrightarrow\frac{\Sym^{2}V\otimes\Sym^{2}V}{[F]_{2}(V)};\
 B\longmapsto
 \sum_{i=0}^{3}B{x_{i}}\otimes\frac{\partial{F}}{\partial{x_{i}}}.
 $$
\end{proof}
\begin{lemma}\label{jm}
 $H^{1}(Y_{2},\Omega_{Y_{2}}^{2}(\log{Y_{3}}))$
 is naturally identified with the cokernel of the injective homomorphism
 \begin{equation*}
  \nu_{1}:
   V\otimes{V^{\vee}}\otimes{V}
   \longrightarrow
   \Sym^{2}V\otimes\Sym^{2}V;\
   A\otimes{x_{j}^{\vee}}\otimes{B}\longmapsto
   AB\otimes\frac{\partial{F}}{\partial{x_{j}}}
   +\sum_{i=0}^{3}Ax_{i}\otimes{B}\frac{\partial^{2}{F}}
   {\partial{x_{i}}\partial{x_{j}}}.
 \end{equation*}
\end{lemma}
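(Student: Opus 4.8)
The plan is to obtain $H^{1}(Y_{2},\Omega_{Y_{2}}^{2}(\log Y_{3}))$ from the descriptions of $T_{\Gamma(\mathbf{P}^{3})}\vert_{Y_{2}}$ and $\mathcal{N}$ in Lemmas~\ref{to} and~\ref{no}, in exact parallel with the way the kernel of $H^{1}(Y_{2},T_{Y_{2}}(-\log Y_{3}))\to H^{1}(Y_{2},T_{\Gamma(\mathbf{P}^{3})}\vert_{Y_{2}})$ was identified with $\operatorname{coker}(\delta\circ\nu)$ from Lemmas~\ref{tv} and~\ref{nv}. First I would rewrite the bundle in question: since $Y_{2}$ is a smooth threefold and $Y_{3}$ a smooth divisor, $\Omega_{Y_{2}}^{1}(\log Y_{3})$ is locally free of rank $3$, so $\Omega_{Y_{2}}^{2}(\log Y_{3})\simeq T_{Y_{2}}(-\log Y_{3})\otimes\Omega_{Y_{2}}^{3}(\log Y_{3})$ with $\Omega_{Y_{2}}^{3}(\log Y_{3})\simeq K_{Y_{2}}\otimes\mathcal{O}_{Y_{2}}(Y_{3})$. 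By the canonical and normal bundle formulae established in the proof of Proposition~\ref{ni}, this last tensor product is $(\varPhi^{*}\mathcal{O}_{\mathbf{P}^{3}}(1))\vert_{Y_{2}}=\varPhi^{*}\mathcal{Q}_{\mathbf{P}^{3}}\vert_{Y_{2}}$, hence $\Omega_{Y_{2}}^{2}(\log Y_{3})\simeq T_{Y_{2}}(-\log Y_{3})\otimes\varPhi^{*}\mathcal{Q}_{\mathbf{P}^{3}}\vert_{Y_{2}}$.

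Next I would tensor the exact sequence of Lemma~\ref{ns} with the line bundle $\varPhi^{*}\mathcal{Q}_{\mathbf{P}^{3}}\vert_{Y_{2}}$, obtaining a short exact sequence whose left-hand term is $\Omega_{Y_{2}}^{2}(\log Y_{3})$ and whose other two terms are $(\varPhi^{*}\mathcal{Q}_{\mathbf{P}^{3}}\otimes T_{\Gamma(\mathbf{P}^{3})})\vert_{Y_{2}}$ and $\varPhi^{*}\mathcal{Q}_{\mathbf{P}^{3}}\vert_{Y_{2}}\otimes\mathcal{N}$, whose global sections are $\operatorname{coker}\alpha$ and $\operatorname{coker}\beta$ by Lemmas~\ref{to} and~\ref{no}. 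In the associated long exact cohomology sequence, once one knows $H^{1}(Y_{2},(\varPhi^{*}\mathcal{Q}_{\mathbf{P}^{3}}\otimes T_{\Gamma(\mathbf{P}^{3})})\vert_{Y_{2}})=0$, the map $H^{1}(Y_{2},\Omega_{Y_{2}}^{2}(\log Y_{3}))\to H^{1}(Y_{2},(\varPhi^{*}\mathcal{Q}_{\mathbf{P}^{3}}\otimes T_{\Gamma(\mathbf{P}^{3})})\vert_{Y_{2}})$ vanishes, and $H^{1}(Y_{2},\Omega_{Y_{2}}^{2}(\log Y_{3}))$ becomes the cokernel of the induced map $\operatorname{coker}\alpha\to\operatorname{coker}\beta$.

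The induced map on numerators $V\otimes V^{\vee}\otimes V\to\Sym^{2}V\otimes\Sym^{2}V$ is computed from $\nu$ together with the isomorphism $\frac{\Fil^{0}}{\Fil^{3}}\simeq\varPhi^{*}\mathcal{Q}_{\mathbf{P}^{3}}\otimes\varPsi^{*}\Sym^{2}\mathcal{Q}_{\Lambda(\mathbf{P}^{3})}$, exactly as $\delta\circ\nu$ was obtained earlier; carrying the extra factor $\varPhi^{*}\mathcal{Q}_{\mathbf{P}^{3}}$ through the multiplication $V\otimes V\to\Sym^{2}V$ yields precisely $\nu_{1}$. It then remains to see that this map descends to the cokernels and that $\operatorname{coker}(\operatorname{coker}\alpha\to\operatorname{coker}\beta)=\operatorname{coker}\nu_{1}$, which amount to the two inclusions $\nu_{1}(\operatorname{im}\alpha)\subset\operatorname{im}\beta$ and $\operatorname{im}\beta\subset\operatorname{im}\nu_{1}$. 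Using Euler's identity $\sum_{i}x_{i}\frac{\partial^{2}F}{\partial x_{i}\partial x_{j}}=2\frac{\partial F}{\partial x_{j}}$ and the total symmetry of $\frac{\partial^{3}F}{\partial x_{i}\partial x_{j}\partial x_{k}}$ one finds $\nu_{1}(\alpha(A\oplus 0))=\beta(2A\oplus 0)+\beta(0\oplus A)$, $\nu_{1}(\alpha(0\oplus B))=3\beta(0\oplus B)$, and $3\beta(0\oplus A)=\sum_{j}\nu_{1}(A\otimes x_{j}^{\vee}\otimes x_{j})$, which together give both inclusions. Hence $H^{1}(Y_{2},\Omega_{Y_{2}}^{2}(\log Y_{3}))\simeq\operatorname{coker}\nu_{1}$; the injectivity of $\nu_{1}$ asserted in the statement is then confirmed by a rank count against the dimensions of the groups in the previous lemmas.

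The main obstacle is the vanishing $H^{1}(Y_{2},(\varPhi^{*}\mathcal{Q}_{\mathbf{P}^{3}}\otimes T_{\Gamma(\mathbf{P}^{3})})\vert_{Y_{2}})=0$, which is what ensures that nothing beyond $\operatorname{coker}\nu_{1}$ contributes. I would prove it by the device used throughout this section: resolve $\mathcal{O}_{Y_{2}}$ on $\Gamma(\mathbf{P}^{3})$ by the Koszul complexes of the regular sections cutting out $Y_{1}\subset\Gamma(\mathbf{P}^{3})$ and $Y_{2}\subset Y_{1}$, tensor with $\varPhi^{*}\mathcal{Q}_{\mathbf{P}^{3}}\otimes T_{\Gamma(\mathbf{P}^{3})}$ broken up through the Euler sequence~(\ref{eu}), and reduce to the cohomology of homogeneous bundles on the flag variety, where Bott-type vanishing applies. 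The second point requiring care is the bookkeeping of the point-side and line-side tensor factors in matching the connecting homomorphism with $\nu_{1}$.
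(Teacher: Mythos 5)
Your proof follows the paper's own route step for step: the identification $\Omega_{Y_{2}}^{2}(\log{Y_{3}})\simeq(\varPhi^{*}\mathcal{Q}_{\mathbf{P}^{3}})\vert_{Y_{2}}\otimes T_{Y_{2}}(-\log{Y_{3}})$, the twist of the exact sequence of Lemma~\ref{ns} by $(\varPhi^{*}\mathcal{Q}_{\mathbf{P}^{3}})\vert_{Y_{2}}$, the vanishing of $H^{1}$ of the middle term (which the paper also merely asserts with ``we can check''), and the resulting presentation of $H^{1}(Y_{2},\Omega_{Y_{2}}^{2}(\log{Y_{3}}))$ as the cokernel of the map $\operatorname{coker}\alpha\to\operatorname{coker}\beta$ induced by $\nu_{1}$, using Lemmas~\ref{to} and~\ref{no}. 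Your Euler-identity relations are correct (I checked them; the only nonobvious one, $\sum_{i,j}x_{i}x_{j}\otimes A\frac{\partial^{2}F}{\partial x_{i}\partial x_{j}}=2\sum_{i}\frac{\partial F}{\partial x_{i}}\otimes Ax_{i}$, does hold for cubic $F$), and since $(A,B)\mapsto(2A,A+3B)$ is invertible they prove exactly what the paper asserts without computation, namely that $\nu_{1}$ carries $\alpha(V\oplus V)$ isomorphically onto $\beta(V\oplus V)$; in particular both of your inclusions follow and $\operatorname{coker}(\operatorname{coker}\alpha\to\operatorname{coker}\beta)=\operatorname{coker}\nu_{1}$.

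The one genuine defect is your final step: the injectivity of $\nu_{1}$ cannot be ``confirmed by a rank count against the dimensions of the groups in the previous lemmas,'' because those lemmas give no independent value for $\dim H^{1}(Y_{2},\Omega_{Y_{2}}^{2}(\log{Y_{3}}))$, equivalently for $\operatorname{rank}\nu_{1}$; such a count would be circular. The paper argues differently: from the same long exact sequence, the kernel of the induced map $\operatorname{coker}\alpha\to\operatorname{coker}\beta$ is $H^{0}(Y_{2},\Omega_{Y_{2}}^{2}(\log{Y_{3}}))$, which vanishes (via the residue sequence, using $H^{0}(Y_{2},\Omega_{Y_{2}}^{2})=0$ and $q(Y_{3})=0$ from Proposition~\ref{ni}), so the induced map on cokernels is injective; combined with the isomorphism $\alpha(V\oplus V)\simeq\beta(V\oplus V)$, any $v$ with $\nu_{1}(v)=0$ must lie in $\alpha(V\oplus V)$ and hence be zero. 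Your own relations supply precisely the second ingredient, so replacing the rank count by the single observation $H^{0}(Y_{2},\Omega_{Y_{2}}^{2}(\log{Y_{3}}))=0$ closes the gap and makes your argument complete.
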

\begin{proof}
 Since
 $$
 \Omega_{Y_{2}}^{2}(\log{Y_{3}})\simeq
 \Omega_{Y_{2}}^{3}({Y_{3}})\otimes{T_{Y_{2}}(-\log{Y_{3}})}\simeq
 (\varPhi^{*}\mathcal{Q}_{\mathbf{P}^{3}})\vert_{Y_{2}}
 \otimes{T_{Y_{2}}(-\log{Y_{3}})},
 $$
 we have the exact sequence
 $$
 0\longrightarrow
 \Omega_{Y_{2}}^{2}(\log{Y_{3}})
 \longrightarrow
 (\varPhi^{*}\mathcal{Q}_{\mathbf{P}^{3}}
 \otimes{T_{\Gamma(\mathbf{P}^{3})}})\vert_{Y_{2}}
 \longrightarrow
 (\varPhi^{*}\mathcal{Q}_{\mathbf{P}^{3}})\vert_{Y_{2}}\otimes\mathcal{N}
 \longrightarrow0
 $$
 by Lemma~$\ref{ns}$, and we can check that
 $H^{1}(Y_{2},(\varPhi^{*}\mathcal{Q}_{\mathbf{P}^{3}}
 \otimes{T_{\Gamma(\mathbf{P}^{3})}})
 \vert_{Y_{2}})=0$.
 By Lemma~$\ref{to}$ and Lemma~$\ref{no}$,
 $H^{1}(Y_{2},\Omega_{Y_{2}}^{2}(\log{Y_{3}}))$ is identified with the
 cokernel of the homomorphism
 $$
 \frac{V\otimes{V^{\vee}}\otimes{V}}{\alpha(V\oplus{V})}
 \longrightarrow
 \frac{\Sym^{2}V\otimes\Sym^{2}V}{\beta(V\oplus{V})};\
 A\otimes{x_{j}^{\vee}}\otimes{B}\longmapsto
 AB\otimes\frac{\partial{F}}{\partial{x_{j}}}
 +\sum_{i=0}^{3}Ax_{i}\otimes{B}\frac{\partial^{2}{F}}
 {\partial{x_{i}}\partial{x_{j}}},
 $$
 and it is injective because
 $H^{0}(Y_{2},\Omega_{Y_{2}}^{2}(\log{Y_{3}}))=0$.
 Since the homomorphism $\nu_{1}$ induces an isomorphism
 $\alpha(V\oplus{V})\simeq\beta(V\oplus{V})$,
 the homomorphism $\nu_{1}$ is injective.
\end{proof}
\begin{proof}[Proof of Proposition~$\ref{iv}$]
 By Lemma~$\ref{jt}$ and Lemma~$\ref{jm}$,
 we have a commutative diagram of exact sequences
 $$
 \begin{matrix}
  0&&0\\
  \downarrow&&\downarrow\\
  V\otimes{V^{\vee}}\otimes{V}&=&V\otimes{V^{\vee}}\otimes{V}\\
  {1\otimes\nu}\downarrow\hspace*{32pt}
  &&\hspace*{15pt}\downarrow\nu_{1}\\
  V\otimes\Sym^{3}V&\overset{\delta_{1}}{\longrightarrow}&
  \Sym^{2}V\otimes\Sym^{2}V\\
  \downarrow&&\downarrow\\
  H^{0}(Y_{3},\Omega_{Y_{3}}^{2})\otimes
  \kappa(T_{\mathcal{M}}([F]))
  &\longrightarrow&
  H^{1}(Y_{3},\Omega_{Y_{3}}^{1})\\
  \downarrow&&\\
  0,&&\\
 \end{matrix}
 $$
 where we remark that
 $$
 V\simeq
 H^{0}(Y_{2},(\varPhi^{*}\mathcal{Q}_{\mathbf{P}^{3}})\vert_{Y_{2}})
 \simeq
 H^{0}(Y_{2},\Omega_{Y_{2}}^{3}(Y_{3}))
 \simeq
 H^{0}(Y_{3},\Omega_{Y_{3}}^{2}),
 $$
 $$
 \frac{\Sym^{2}{V}\otimes\Sym^{2}{V}}
 {\nu_{1}(V\otimes{V^{\vee}}\otimes{V})}
 \simeq
 H^{1}(Y_{2},\Omega_{Y_{2}}^{2}(\log{Y_{3}}))
 \subset
 H^{1}(Y_{3},\Omega_{Y_{3}}^{1})
 $$
 and the homomorphism $\delta_{1}$ is defined by
 $$
 \delta_{1}:
 V\otimes\Sym^{3}V\longrightarrow\Sym^{2}V\otimes\Sym^{2}V;\
 A\otimes{B}\longmapsto
 \sum_{i=0}^{3}Ax_{i}\otimes\frac{\partial{B}}{\partial{x_{i}}}.
 $$
 Since $\delta_{1}$ is injective, the homomorphism
 $H^{0}(Y_{3},\Omega_{Y_{3}}^{2})\otimes
 \kappa(T_{\mathcal{M}}([F]))
 \rightarrow
 H^{1}(Y_{3},\Omega_{Y_{3}}^{1})$
 is also injective, hence the dimension of its image is equal to $16$.
\end{proof}

\bigskip
\begin{flushleft}
 \textsc{Graduate School of Science\\
 Osaka University\\
 Toyonaka, Osaka, 560-0043\\
 Japan}\\
 \textit{E-mail address}:
 {\ttfamily atsushi@math.sci.osaka-u.ac.jp}
\end{flushleft}
\end{document}